\documentclass[12pt]{article}
\usepackage[utf8]{inputenc}
\usepackage{color}
\usepackage{xcolor}
\definecolor{note_fontcolor}{rgb}{0.800781, 0.800781, 0.800781}
\usepackage{amsmath}
\usepackage{amsthm}
\usepackage{amssymb}
\usepackage[letterpaper]{geometry}
\PassOptionsToPackage{normalem}{ulem}
\usepackage{ulem}
\usepackage{enumitem}
\usepackage{soul}
\usepackage[unicode=true,pdfusetitle, bookmarks=true,bookmarksnumbered=false,bookmarksopen=false,
 breaklinks=false,pdfborder={0 0 1},backref=page,colorlinks=true]
 {hyperref}
\hypersetup{
 linkcolor=blue, citecolor=blue}
\usepackage{graphicx} 
\usepackage{dsfont} 
\usepackage[all]{xy} 
\usepackage{bbm} 

\newtheorem{theorem}{Theorem}[section]
\newtheorem{cor}[theorem]{Corollary}
\newtheorem{lemma}[theorem]{Lemma}
\newtheorem{prop}[theorem]{Proposition}
\newtheorem{remark}[theorem]{Remark}
\newtheorem{conj}[theorem]{Conjecture}

\newtheorem{defn}[theorem]{Definition}
\newtheorem{exm}[theorem]{Example}

\numberwithin{equation}{section}

\def\Z{\mathbb{Z}}
\def\R{\mathbb{R}}
\def\N{\mathbb{N}}
\def\Q{\mathbb{Q}}
\def\C{\mathbb{C}}

\def\U{\mathrm{U}}
\def\L{{\cal L}}
\def\Reg{\mathrm{Reg}}

\def\Regun{\mathrm{Reg}_{\U(n)}}
\def\std{\mathrm{std}}
\def\triv{\mathrm{triv}}
\def\supp{\mathrm{supp}}
\def\sym{\mathrm{Sym}}  

\def\defi{\stackrel{\text{def}}{=}}

\def\reg{\mathrm{Reg}}
\def\N{{\cal N}}
\def\MS{{\cal MS}}
\def\pure{{\cal PURE}} 
\def\ii{\mathbf{I}} 
\def\jj{\mathbf{J}}
\def\jjj{\mathfrak{J}} 

\def\gl{\mathrm{GL}}
\def\irr{\mathrm{Irr}}
\def\kmp{\mathcal{KMP}}

\def\End{\mathrm{End}}
\def\rkl{R_{k,m}}
\def\Sym{\mathrm{Sym}} 
\def\skm{S_{k,m}}
\def\torinv{\mathrm{TorInv}}
\def\wg{\mathrm{Wg}}
\def\sign{\mathrm{sgn}}
\def\under{\underline{\hspace{3mm}}} 
\def\gnl{\Gamma^n_\ell} 
\def\gn{\Gamma^n} 

\newcommand\ms[2]{\left(\!\left(\begin{smallmatrix} #1 \\ #2 \end{smallmatrix} \right)\!\right)}

\title{Aldous-type Spectral Gaps in Unitary Groups}
\author{Gil Alon and Doron Puder}

\begin{document}
\maketitle
\begin{abstract}


Aldous' spectral gap conjecture, proven by Caputo, Liggett and Richthammer, states the following: for any set of transpositions in the symmetric group $\sym(n)$, the spectral gap of the corresponding random walk on the group -- an $n!$-state process -- coincides with that of the corresponding random walk of a single element -- an $n$-state process.

This paper presents an analog of this conjecture in the unitary group $\U(n)$, and proves it in several non-trivial cases. The phenomenon we discover is that for some natural families of probability distributions on $\U(n)$, the spectral gap of the corresponding random walk, which has a continuous state space, is identical to that of a discrete KMP process (also known as the uniform reshuffling process) with two indistinguishable particles on a hypergraph on $n$ vertices -- a discrete Markov chain with $\binom{n+1}{2}$ states.

\end{abstract}
\tableofcontents{}    

\section{Introduction}

This paper grew out of attempts to understand the depth and width of the phenomenon known as the Aldous spectral gap conjecture. This conjecture, which was open for nearly two decades until it was settled in \cite{caputo2010proof}, states the following: For any weighted set $\Sigma$ of transpositions in the symmetric group $\sym(n)$, the spectral gap of the Cayley graph $\mathrm{Cay}(\sym(n),\Sigma)$ is identical to that of the Schreier graph $\mathrm{Sch}(\sym(n)\curvearrowright[n],\Sigma)$ -- a graph on $n$ vertices depicting the standard action of $\sym(n)$ on the numbers $[n]\defi\{1,\ldots,n\}$.  This is a remarkable result in that the spectral gap of a process with $n!$ states is completely determined by the much smaller process with only $n$ states. 

Our intuition is that such an astonishing theorem cannot be isolated and must hint at a broader phenomenon. Some attempts to generalize the phenomenon (in this perspective of group theory) were made by several researchers, e.g.,  \cite{piras2010generalizations,parzanchevski2020aldous,cesi2020spectral,Ghosh23,alon2025aldous-caputo,AlonGhosh25}, yet all these prior works were confined to settings rather close to the original conjecture, in the symmetric group or generalized symmetric groups. 
In more distant groups, some very concrete special cases of the Aldous' phenomenon can be traced in older works -- most prominently Kac's master equation, relevant in the group $\mathrm{SO}(n)$, as elaborated in $\S$\ref{subsec:related works}, yet we are not aware of any attempt to find a general statement parallel to Aldous' spectral gap conjecture. 
In this paper, we introduce a conjecture in the case of the unitary group\footnote{In $\S$\ref{sec:open problems} we briefly discuss generalizations of this conjecture to other sequences of groups.} $\U(n)$ which completely parallels Aldous' conjecture in $\sym(n)$ (Conjectures \ref{conj:main conj} and \ref{conj:main KMP style}), establish supporting evidence (e.g., Theorems \ref{thm:(k,-k) dominates non-balanced parts} and \ref{thm:kmp_k is central block of...}), and prove it in full in several non-trivial cases (Theorems \ref{thm:mean-field} and \ref{thm:n-1s}). We also show the somewhat surprising fact that the $U(n)$-spectrum of a graph (or more generally a hypergraph) contains its $\sym(n)$-spectrum (Theorem \ref{thm:evalues of Sn}).

\subsection{The Aldous phenomenon in $\sym(n)$}
\subsubsection*{Aldous' original conjecture in $\sym(n)$}
We begin with describing precisely the original conjecture of Aldous, as well as its extension to hypergraphs by Caputo. Let $Q\colon \sym(n)\to\R_{\ge0}$ be a non-negative symmetric function on $\sym(n)$ (here symmetric means that $Q(\sigma^{-1})=Q(\sigma)$ for all $\sigma\in \sym(n)$). Let $\rho\colon \sym(n)\to \gl_d(\mathbb{C})$ be a $d$-dimensional representation of $\sym(n)$. Then the Laplacian operator associated with $\rho$ and $Q$ is
\[
    \L(Q,\rho)\defi \sum_{\sigma\in \sym(n)}Q(\sigma)\left(I_d-\rho(\sigma)\right)\in \mathrm{M}_d(\mathbb{C}),
\]
where $I_d$ is the $d\times d$ identity matrix. When $Q$ is a probability measure, namely, when \linebreak$\sum_{\sigma\in\sym(n)} Q(\sigma)=1$, the definition is slightly more intuitive, as we simply choose a random element according to $Q$, and $\L(Q,\rho)=\mathbb{E}_{\sigma\sim Q}[I_d-\rho(\sigma)]$.

As $Q$ is symmetric, the resulting matrix $\L(Q,\rho)$ is hermitian and has a real spectrum.\footnote{Every finite-dimensional representation of a finite (or compact) group $G$ is equivalent to a unitary representation $\rho\colon G\to\U(d)$, and $\rho(g)+\rho(g^{-1})$ is hermitian for all $g\in G$.} We denote the smallest eigenvalue of $\L(Q,\rho)$ by 
\[
    \lambda_{\min}(Q,\rho)\defi
        \mathrm{the~smallest~eigenvalue~of}~ \L(Q,\rho).
\]
As the operator norm of $\rho(\sigma)$ is 1 for all $\sigma$, the spectrum must be non-negative, so $\lambda_{\min}(Q,\rho)\ge0$.

Every finite-dimensional complex representation of any finite (or compact) group, decomposes as a direct sum of irreducible representations (or \textbf{irreps} for short). In matrix terms, this means that after an appropriate change of basis, $\L(Q,\rho)$ becomes a block matrix, with each block defined by some irrep. Each eigenvalue of $\L(Q,\rho)$ (here we think of the spectrum as a multiset) is thus associated with some irrep of $\sym(n)$. Any summand in the decomposition of $\rho$ corresponding to the trivial representation of $\sym(n)$, gives rise to an eigenvalue of 0. We define
\[
    \lambda_{\min}^*(Q,\rho)\defi\left(
    \begin{gathered} 
        \mathrm{the~smallest~eigenvalue~of}~ \L(Q,\rho) \\
        \mathrm{not~ associated~with~the~trivial~representation}
    \end{gathered}\right).
\]
Equivalently, this is the smallest eigenvalue whose associated eigenvector is not invariant under $\rho(\sigma)$ for all $\sigma\in\sym(n)$. If $\rho$ has only trivial components, we define $\lambda_{\min}^*(Q,\rho)=\infty$. Of course, $\lambda_{\min}^*(Q,\rho)\ge0$. 

Whenever a representation $\rho$ contains precisely one copy of the trivial representation, the spectral gap of $\L(Q,\rho)$, which is the difference between its smallest two eigenvalues, is equal to $\lambda_{\min}^*(Q,\rho)$.
The theorem of Caputo, Liggett and Richthammer \cite[Thm~1.1]{caputo2010proof}, which is slightly stronger than Aldous' original conjecture, says that whenever $Q$ is supported only on  \textit{transpositions} (permutations of the form $(ij)$), the spectral gap of the \textit{regular} representation $\Reg_{\sym(n)}\colon \sym(n)\to\mathrm{GL}_{n!}(\mathbb{C})$, is identical to that of the standard $n$-dimensional permutation representation \label{std of sym}$\pi_{\std}\colon \sym(n)\to \mathrm{GL}_n(\mathbb{C})$\marginpar{$\pi_{\std}$},\footnote{We try to mark new notation on the right margin to make it easier for the reader to locate it.} namely,\footnote{Both $\Reg_{\sym(n)}$ and $\pi_{\std}$ contain exactly one copy of the trivial representation in their decomposition.}
\[
    \lambda_{\min}^*(Q,\Reg_{\sym(n)})=\lambda_{\min}^*(Q,\pi_{\std}).
\]
(The regular representation is, too, a permutation representation of $\sym(n)$, describing the action of $\sym(n)$ on itself by multiplication from the left.) 

Denote by $\irr(\sym(n))$\marginpar{$\irr$} the set of (equivalence classes) of irreducible representations of $\sym(n)$. Recall that the irreps of $\sym(n)$ are classified by partitions of $n$ or, equivalently, Young diagrams with $n$ blocks. If $\nu=(\nu_1\ldots,\nu_r)\vdash n$ is a partition of $n$, we mark by $\pi_\nu$\marginpar{$\pi_\nu$} the corresponding irrep of $\sym(n)$. In particular, the trivial representation of $\sym(n)$ is $\triv=\pi_{(n)}$.  The regular representation contains every irrep of $\sym(n)$ in its decomposition (with multiplicity equal to the dimension of the irrep), while $\pi_{\std}$ decomposes as $\triv\oplus\pi_{(n-1,1)}$. Hence, we may equivalently state the result as follows:

\begin{theorem} \cite[Thm.~1.1]{caputo2010proof} \label{thm:CLR}
    Let $Q\colon \sym(n)\to \R_{\ge0}$ be supported on transpositions. Then for every $\triv\ne\rho\in\irr(\sym(n))$,
    \[
        \lambda_{\min}(Q,\pi_{(n-1,1)}) \le \lambda_{\min}(Q,\rho).
    \]
\end{theorem}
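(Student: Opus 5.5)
The plan is to follow the Caputo--Liggett--Richthammer strategy: induction on $n$, combined with an ``octopus''-type comparison inequality and an electric-network reduction. Write $Q=\sum_{i<j} c_{ij}\,\delta_{(ij)}$ with weights $c_{ij}\ge 0$. By Young's branching rule, the restriction of $\pi_\nu$ to $\sym(n-1)$ contains only $\pi_\mu$ with $\mu\subset\nu$ of size $n-1$.

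It suffices to prove the statement for the regular representation: every nontrivial eigenvalue of $\L(Q,\Reg_{\sym(n)})$ is at least $\lambda_{\min}(Q,\pi_{(n-1,1)})$. Indeed, every irrep occurs in $\Reg_{\sym(n)}$, so this implies the claim for every $\rho\ne\triv$.

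\textbf{Reduction step.} Eliminate the vertex $n$, as in the Schur complement of a network. Define new weights on $[n-1]$ by
\[
c'_{ij}=c_{ij}+\frac{c_{in}c_{jn}}{\sum_{k<n}c_{kn}},
\]
and let $Q'$ be the corresponding measure on transpositions in $\sym(n-1)$. For the $n$-point chain (the representation $\pi_{\std}$), a direct computation shows
\[
\lambda_{\min}^*(Q,\pi_{\std})\le\lambda_{\min}^*(Q',\pi_{\std}^{(n-1)}),
\]
because eliminating a vertex by a Schur complement can only raise the nontrivial spectral gap. A second point is that any eigenvalue of the regular representation of $\sym(n)$ whose eigenvector is invariant under $\sym(n-1)$ is already an eigenvalue of the $n$-point chain. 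So one must control the eigenvectors orthogonal to the $\sym(n-1)$-invariants. Equivalently, it suffices to show that every irrep $\rho\neq\triv,\pi_{(n-1,1)}$ has $\lambda_{\min}(Q,\rho)\ge \lambda_{\min}(Q',\rho')$ for some nontrivial irrep $\rho'$ of $\sym(n-1)$. Induction then bounds $\lambda_{\min}(Q',\rho')\ge\lambda_{\min}(Q',\pi_{(n-2,1)})$, and the previous inequality closes the loop.

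\textbf{Octopus inequality.} The key step is the CLR octopus inequality. As operators on $\C[\sym(n)]$, writing $T_{ij}=I-\Reg(ij)$,
\[
\sum_{i<n} c_{in}T_{in}\;\ge\;\sum_{i<j<n}\frac{c_{in}c_{jn}}{\sum_k c_{kn}}\,T_{ij}
\]
on the orthogonal complement of the functions depending only on $\sigma^{-1}(n)$ (the ``$\pi_{\std}$-part''). This gives $\L(Q)\ge \L(Q')$ on the relevant subspace. Combining it with the decomposition $\L(Q)=\L(Q|_{\text{edges not at }n})+\sum_i c_{in}T_{in}$, the branching rule, and induction yields the required bound.

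\textbf{Main obstacle.} Proving the octopus inequality is the hardest part. My first attempt would use the CLR approach: reduce to checking positivity on each irrep of $\sym(n)$ appearing in $\Reg$. Writing the difference as a quadratic form in $f$ over $\sym(n)$, I would group terms by cosets of $\sym(n-2)$, which reduces the statement to a finite-dimensional inequality in the weights $c_{in}$. That inequality should then follow from a Cauchy--Schwarz argument on the vectors $f-f\circ(in)$.
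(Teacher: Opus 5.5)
The paper gives no proof of this theorem; it quotes it from Caputo--Liggett--Richthammer \cite{caputo2010proof}. Your outline is their argument, and the reduction part is sound. The harmonic-extension argument gives $\lambda^*_{\min}(Q,\pi_\std)\le\lambda^*_{\min}(Q',\pi^{(n-1)}_\std)$. The octopus inequality gives $\L(Q,\rho)\ge\L(Q',\rho|_{\sym(n-1)})$; it actually holds on all of $\C[\sym(n)]$, so restricting it to a subspace is harmless but unnecessary. For $\rho\ne\triv,\pi_{(n-1,1)}$, the branching rule keeps the trivial irrep of $\sym(n-1)$ out of $\rho|_{\sym(n-1)}$, and induction closes the loop. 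You should also treat $\sum_k c_{kn}=0$ separately: there $Q'$ is undefined, but vertex $n$ is isolated and $\lambda_{\min}(Q,\pi_{(n-1,1)})=0$.

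\textbf{The gap.} The octopus inequality is the whole difficulty of Aldous' conjecture; everything else is a few lines. You only announce it, and the route you sketch for it does not work.

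First, grouping over cosets gives no reduction to a small inequality. The transpositions $(i\,n)$, $i<n$, already generate $\sym(n)$, so the quadratic form does not split along cosets of any proper subgroup. Positivity also cannot be checked triple by triple. After multiplying by $c=\sum_k c_{kn}$, the inequality reads
\[
\sum_{i<n}c_{in}^2T_{in}+\sum_{i<j<n}c_{in}c_{jn}\left(T_{in}+T_{jn}-T_{ij}\right)\ge0 .
\]
On the two-dimensional irrep of $\sym(\{i,j,n\})$ one has $T_{in}+T_{jn}+T_{ij}=3I$. Hence the cross term $T_{in}+T_{jn}-T_{ij}=3I-2T_{ij}$ has eigenvalue $-1$. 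A short computation shows that once three of the $c_{in}$ are positive, the diagonal terms $c_{in}^2T_{in}$ cannot be shared among the triples $\{i,j,n\}$ so that each triple's contribution is positive semidefinite.

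Second, the inequality is sharp. In $\pi_\std$ it is the star--mesh identity, with equality when $f(n)=\sum_{i<n}c_{in}f(i)/c$. So the difference operator has a nonzero kernel in every copy of $\pi_{(n-1,1)}$ inside $\C[\sym(n)]$, including the copies in the subspace you restrict to. Any Cauchy--Schwarz estimate on the vectors $f-f\circ(i\,n)$ must therefore be lossless there. In $\pi_\std$, losslessness comes from minimizing over the single free value $f(n)$ at the centre, and that mechanism has no counterpart in the regular representation. Handling this is exactly the content of CLR's proof of the octopus inequality. As it stands, your proposal proves the theorem only modulo its hardest step. If you instead cite the octopus inequality from CLR, it proves it modulo the main lemma of the very paper being cited.
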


\subsubsection*{Caputo's hypergraph conjecture in $\sym(n)$}

Caputo suggested the following conjecture which generalizes Theorem \ref{thm:CLR}. Let $\Gamma=([n],w)$ be a weighted hypergraph on $n$ vertices, given by the assignment of a non-negative weight $w_B$ to any hyperedge $B\subseteq [n]$. This hypergraph determines a non-negative measure $Q_\Gamma\colon \sym(n)\to\R_{\ge0}$ by ``distributing" the weight of every subset $B$ uniformly among all the permutations it supports. Namely, $$Q_\Gamma(\sigma)=\sum_{B\colon B\supseteq\supp(\sigma)}\frac{w_B}{|B|!}.$$ 
(The support of a permutation $\sigma\in \sym(n)$ is $\supp(\sigma)=\{i\in[n]\,\mid\,\sigma(i)\ne i\}$.) If the weights $w_B$ sum up to 1 (so we start with a probability distribution on the subsets of $[n]$), this is equivalent to picking a subset $B$ according to the given distribution, and then picking uniformly at random a permutation in $\sym(B)$. Abusing notation, we denote
\begin{equation}\label{eq:notation for lambda_min of hypergraph}
    \L(\Gamma,\rho)\defi\L(Q_\Gamma,\rho),~~~~~\lambda_{\min}(\Gamma,\rho)\defi \lambda_{\min}(Q_\Gamma,\rho)~~~~~\mathrm{and}~~~~~\lambda^*_{\min}(\Gamma,\rho)\defi \lambda^*_{\min}(Q_\Gamma,\rho).
\end{equation}
    
The following conjecture of Caputo appears in \cite[Conj.~3]{piras2010generalizations}, \cite[p.~301]{cesi2016few} and \cite[p.~78]{aldous2020life}:

\begin{conj}[Caputo's hypergraph conjecture] \label{conj:Caputo}
For any weighted hypergraph $\Gamma=\left([n],w\right)$ with non-negative weights and every non-trivial irrep $\rho$ of $\sym(n)$,
    \[
        \lambda_{\min}(\Gamma,\pi_{(n-1,1)}) \le \lambda_{\min}(\Gamma,\rho).
    \]    
\end{conj}

As the value of $Q$ at the identity element of $\sym(n)$ does not change $\L(Q,\rho)$, the case where $w$ is supported on subsets of $[n]$ of size $\le2$ is precisely Theorem \ref{thm:CLR}. Conjecture \ref{conj:Caputo} is still open in general. It is known in the mean-field case (where $w_B$ depends only on the size of $B$) \cite[Thm.~1.8]{bristiel2024entropy}, and in some additional non-trivial cases \cite{alon2025aldous-caputo}.

We remark that Theorem \ref{thm:CLR} and Conjecture \ref{conj:Caputo} can be equivalently stated in terms of certain discrete processes with particles on the hypergraph $\Gamma$. One is the Interchange Process, where $n$ distinct particles are placed one on each vertex of the hypergraph $\Gamma$. Every hyperedge $B\subseteq[n]$ rings with Poisson rate $w_B$, and when it does, the $|B|$ particles it touches are permuted uniformly at random. This is analogous to the regular representation. The other process is the Random Walk, where we only have one particle, which sits on some vertex of $\Gamma$. When a hyperedge $B$ rings (with the same rate as before), if the particle sits at a vertex contained in $B$ it is moved uniformly at random to one of the $|B|$ vertices in $B$, and otherwise it does not budge. This process is equivalent to the standard representation. Conjecture \ref{conj:Caputo} states that the spectral gaps of the Laplacians of both processes are equal.

\subsection{Hypergraph measures on $\U(n)$} \label{subsec: hypergraphs measures on U(n)}
We now introduce an analog of Caputo's hypergraph measures in the unitary group $\U(n)$ of
complex unitary $n\times n$ matrices. For every subset $B\subseteq[n]$, let $\U_B\le \U(n)$\marginpar{$U_B$} be the subgroup of unitary matrices which are identical to the identity matrix outside the $|B|\times|B|$-minor determined by $B$. So $\U_B\cong \U(|B|)$. For example, inside $\U(5)$, 
\[
    \U_{\{2,3,5\}}=\left\{\begin{pmatrix}
            1&  &  &  &  \\
             &* &* &  &* \\
             &* &* &  &* \\
             &  &  &1 &  \\ 
             &*  &* &  &* 
        \end{pmatrix}\right\}\le\U(5).
\]

\begin{defn}
     Let $\Gamma=([n],w)$ be a weighted hypergraph. The measure induced by $\Gamma$ on $\U(n)$ is \marginpar{$\mu_\Gamma$}
    \[
        \mu_{\Gamma}=\sum_{B\subseteq[n]}w_B\mu_B,
    \]
    where $\mu_B$ \marginpar{$\mu_B$}is the Haar probability measure on the subgroup $\U_B\cong \U(|B|)$.
\end{defn}

If $\sum_Bw_B=1$, then $\mu_\Gamma$ is a probability measure, where a random element is chosen by first picking $B\subseteq[n]$ at random according to $w$, and then sampling $A\in \U_B$ by the Haar measure on $\U_B$. Analogously to our notation above, for any $d$-dimensional representation $\rho\colon\U(n)\to \gl_d(\C)$, we let\footnote{We use the same notation $\L(\Gamma,\rho)$ when $\rho$ is a representation of $\sym(n)$ and when it is a representation of $\U(n)$, and likewise with $\lambda_{\min}(\Gamma,\rho)$ or $\lambda_{\min}^*(\Gamma,\rho)$. This should not cause any confusion as $\rho$ is always defined to be a representation of one group or the other.}
\begin{equation} \label{eq:def-laplacian-of-Gamma-and-rho}
    \L(\Gamma,\rho)\defi \int_{A\in\U(n)}\left(I_d-\rho(A)\right)\mu_\Gamma=\sum_{B\subseteq[n]}w_B\left[I_d-\int_{A\in\U_B}\rho(A)\mu_B\right]\in \mathrm{M}_d(\mathbb{C}).
\end{equation}
In this case, too, the Laplacian has a non-negative real spectrum (see Lemma \ref{lem:L of reps - basic properties of spectrum}), and $\rho$ decomposes as a direct sum of irreducible irreps of $\U(n)$. Similarly to \eqref{eq:notation for lambda_min of hypergraph}, denote
\begin{eqnarray}
    \lambda_{\min}(\Gamma,\rho)&\defi&
    \mathrm{the~smallest~eigenvalue~of}~ \L(\Gamma,\rho).    \\
    \lambda_{\min}^*(\Gamma,\rho)&\defi&\left(
    \begin{gathered} 
        \mathrm{the~smallest~eigenvalue~of}~ \L(\Gamma,\rho) \\
        \mathrm{not~ associated~with~the~trivial~representation}
    \end{gathered}\right).    \label{eq:lambda_min of irrep of U}
\end{eqnarray}

By \textbf{the $\U(n)$-spectrum of $\Gamma$} we mean the set of eigenvalues of $\L(\Gamma,\rho)$ across all finite dimensional (irreducible) representations $\rho$ of $\U(n)$. The closure of this set is the spectrum of $\mu_\Gamma$ in the regular representation of $\U(n)$ -- see $\S$\ref{subsec:representations theory of U(n)}.


\subsection{An Aldous phenomenon in $\U(n)$} \label{subsec:Aldous phenomenon in U}

\subsubsection*{The mean-field case}
Our first result concerns weighted hypergraphs in which the weight of a hyperedge depends only on its size. This is sometimes called the ``mean-field'' case (e.g., in \cite{bristiel2024entropy}). The irreps $\irr(\U(n))$ of $\U(n)$ are all finite-dimensional and are parametrized by their ``highest weight vectors": non-increasing length-$n$ sequences of integers, namely,  $\mu=(\mu_1,\mu_2,\ldots,\mu_n)\in \Z^n$ with $\mu_1\ge\mu_2\ge\ldots\ge\mu_n$ (e.g., \cite[\S36]{bump2013lie}, and see $\S$\ref{subsec:representations theory of U(n)}). The trivial irrep corresponds to $\mu=(0,\ldots,0)$. We denote by $\rho_\mu\in\irr(\U(n))$\marginpar{$\rho_\mu$} the irrep corresponding to $\mu\in\mathbb{Z}^n$. 

\begin{theorem}\label{thm:mean-field}
Let $n\ge2$ and consider the hypergraph $\Gamma=([n],w)$ where $w_B=c_{|B|}$ and $c_\ell\ge0$ for $\ell=0,\ldots,n$. 
Then\footnote{We use the convention that $\binom{a}{b}=0$ when $b<0$ or $a<b$.}
\begin{equation} \label{eq:mean-field}
    \min_{\triv\ne\rho\in\irr(\U(n))} \lambda_{\min}(\Gamma,\rho) = \lambda_{\min}(\Gamma,\rho_{(2,0,
    \ldots,0,-2)}) = 
    \sum_{\ell=0}^n c_\ell \frac{n+1}{\ell+1}\binom{n-2}{\ell-2}.
\end{equation}
\end{theorem}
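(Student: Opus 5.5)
In the mean-field case the measure $\mu_\Gamma$ is invariant under conjugation by permutation matrices, but not under conjugation by all of $\U(n)$. The plan is therefore to write $\L(\Gamma,\rho)=\sum_\ell c_\ell \binom{n}{\ell}\big(I-\mathbb{E}_{|B|=\ell}P_B\big)$, with $P_B=\int_{\U_B}\rho\,d\mu_B$. Here $P_B$ is the orthogonal projection onto the $\U_B$-invariant vectors of $\rho$. Since all $c_\ell\ge0$, it suffices to prove the statement one level at a time: for each fixed $\ell$, show that the operator $T_\ell=\sum_{|B|=\ell}(I-P_B)$ has smallest eigenvalue on every nontrivial irrep at least its smallest eigenvalue on $\rho_{(2,0,\ldots,0,-2)}$. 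We would also show that a \emph{single} vector of $\rho_{(2,0,\ldots,0,-2)}$ attains all these minima simultaneously. Then the minima add up, and both equalities in \eqref{eq:mean-field} follow.

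\textbf{Step 1: the lower bound.} For $v$ in a nontrivial irrep $\rho_\mu$, write $\langle T_\ell v,v\rangle=\sum_{|B|=\ell}\|v-P_Bv\|^2$. Since $\U_B\cap\U_{B'}=\U_{B\cap B'}$, the projections $P_B$ are restrictions of the invariants of a family of nested subgroups. The first route I would try is an averaging (Garland/local-to-global) argument. Summing over $\ell$-sets that contain a fixed $(\ell-1)$-set, one compares $T_\ell$ with $T_{\ell-1}$, giving an inequality of the form
\[
T_\ell\ \ge\ \alpha_\ell\, T_{\ell-1}+\beta_\ell .
\]
This reduces everything to the base case $\ell=2$. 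That case is the ``Kac-type'' statement that $\sum_{i<j}(I-P_{\{i,j\}})$ has minimal nontrivial eigenvalue at $\rho_{(2,0,\ldots,0,-2)}$.

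If this recursion is not tight enough, the fallback is an exact computation. The operator $T_\ell$ commutes with the action of $\sym(n)\ltimes$(diagonal torus). On each torus weight space one can therefore compute $T_\ell$ via branching $\U(n)\downarrow\U(B)\times\U(B^c)$. A vector lies in the image of $P_B$ only if it has weight $0$ on the coordinates in $B$. This immediately gives $\|v-P_Bv\|=\|v\|$ whenever $v$ has a weight $\nu$ with $\nu|_B\neq0$. Counting such $B$ gives strong bounds for most $\mu$. The dangerous irreps are those with $\sum\mu_i=0$ and small $|\mu|$, namely $\mu=(1,0,\ldots,0,-1)$ and $(1,1,0,\ldots,0,-1,-1)$, $(2,0,\ldots,-2)$, and so on. For these I would compute explicitly on the zero-weight space, using the Zelevinsky/Gelfand–Tsetlin description of the $\U(\ell)$-invariants.

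\textbf{Step 2: evaluate at $(2,0,\ldots,0,-2)$.} Realize $\rho_{(2,0,\ldots,0,-2)}$ inside $\Sym^2(\C^n)\otimes\Sym^2(\overline{\C^n})$ as the traceless part, i.e.\ as polynomials $f(z,\bar z)$ of bidegree $(2,2)$ that are harmonic. Then $P_B$ averages over $\U_B$, and the candidate minimizer is the $\sym(n)$-invariant zero-weight vector $\sum_i|z_i|^4-\frac{c}{n}\big(\sum|z_i|^2\big)^2$, corrected to be harmonic. Computing $P_B$ of $|z_i|^4$ and of $|z_i|^2|z_j|^2$ uses the Haar moments $\mathbb{E}|u_{11}|^4=2/(\ell(\ell+1))$ and $\mathbb{E}|u_{11}|^2|u_{21}|^2=1/(\ell(\ell+1))$. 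This should produce the eigenvalue $\binom{n}{\ell}^{-1}\cdot\frac{n+1}{\ell+1}\binom{n-2}{\ell-2}$ times $\binom{n}{\ell}$, i.e.\ exactly the summand in \eqref{eq:mean-field}. It also shows that the same vector is an eigenvector for every $\ell$.

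\textbf{Main obstacle.} The hard step is the lower bound for the irreps near the trivial one. The adjoint $\rho_{(1,0,\ldots,0,-1)}$ must be checked to have a \emph{larger} eigenvalue; its eigenvalue is presumably $\ell$-dependent, like $\binom{n-2}{\ell-1}\cdot$const, and comparing it with the formula is a direct inequality. The real difficulty is ruling out all larger $\mu$ uniformly. Here I would rely on the weight-support counting of Step 1, together with monotonicity under adding boxes: show that $\lambda_{\min}(\Gamma,\rho_\mu)$ is nondecreasing when $\mu$ moves away from $0$ in dominance order. This would be done by embedding $\rho_\mu$ into $\rho_{\mu'}\otimes\rho_{\mu''}$ and using subadditivity of $I-P_B$ on tensor products.
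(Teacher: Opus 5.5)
Your Step~2 is sound. The $\sym(n)$-invariant zero-weight vector of $\rho_{(2,0,\ldots,0,-2)}$ is unique up to scalar and corresponds to the vector $v_0$ the paper uses. It is a common eigenvector of every $T_\ell$, and the Haar moments you quote give the right eigenvalue.

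The gap is the lower bound in Step~1, which is the real content of the theorem.

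\textbf{The upward recursion is lossy.} Your recursion runs upward from $\ell=2$. The only general comparison in that direction is $\L_{B'}\le\L_B$ for $B'\subseteq B$, which gives $T_\ell\ge\frac{n-\ell+1}{\ell}T_{\ell-1}$. Even granting the exact value $\frac{n+1}{3}$ at $\ell=2$, iterating this yields only $\frac{2(\ell+1)}{3\ell(\ell-1)}$ times the target (e.g.\ $4/9$ instead of $1$ for $n=\ell=3$). You do not exhibit any valid pair $\alpha_\ell,\beta_\ell$ that is tight.

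\textbf{The base case is unproved.} The ``base case'' $\ell=2$ for all $n$ is itself a special case of the theorem, with no easier proof available. Nothing in the proposal establishes it.

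\textbf{The fallback does not close the gap.} The weight-support observation only controls vectors outside the zero-weight space. There it gives eigenvalues $\ge c_\ell\binom{n-1}{\ell-1}$, which does exceed the target (compare Lemma~\ref{lem:blocks outside tor-int have large evalues}). This leaves the zero-weight spaces of infinitely many balanced irreps. The monotonicity in dominance order that you want is false: $(2,0,\ldots,0,-2)$ dominates $(1,0,\ldots,0,-1)$, yet it has the strictly smaller eigenvalue $\frac{n+1}{\ell+1}\binom{n-2}{\ell-2}<\frac{n}{\ell}\binom{n-2}{\ell-2}$ for $2\le\ell<n$. Also, the subadditivity $\langle\L_B(v\otimes w),v\otimes w\rangle\le\langle\L_Bv,v\rangle+\langle\L_Bw,w\rangle$ bounds Rayleigh quotients from above, so it can never give lower bounds on $\lambda_{\min}(\Gamma,\rho_\mu)$.

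\textbf{The missing idea: run the comparison downward.} Start from $\ell=n$, where $T_n=\L_{[n]}$ is the identity on every nontrivial irrep, and use the codimension-one case as the local input.
For $\Gamma^n_{n-1}$, the branching rule shows that only the irreps $\rho_{(k,0,\ldots,0,-m)}$ have nonzero $\U(n-1)$-invariants, each one-dimensional. The unbalanced ones are dominated by Theorem~\ref{thm:(k,-k) dominates non-balanced parts}. On $\rho_{(k,0,\ldots,0,-k)}$ the Laplacian reduces to the $n\times n$ matrix $(n-1+t_k)I_n-t_kJ_n$ with $t_k=(-1)^k/\binom{n+k-2}{k}$. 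Its smallest eigenvalue, $(n-1)(1-t_k)$ or $n-1+t_k$ according to the sign of $t_k$, is minimized at $k=2$. This gives $\sum_{x\in[n]}\L_{[n]\setminus\{x\}}\ge\frac{(n+1)(n-2)}{n}\L_{[n]}$ (Corollary~\ref{cor:smallest-evalue for n-1 mean-field}).
Apply this inside each $\U_B$ with $|B|=\ell$; operator inequalities lift from $\U_B$ to $\U(n)$ by Lemma~\ref{lem:inequalities remain true with branching}. Averaging gives
\[
T_{\ell-1}=\frac{1}{n-\ell+1}\sum_{|B|=\ell}\,\sum_{x\in B}\L_{B\setminus\{x\}}\ \ge\ \frac{(\ell+1)(\ell-2)}{\ell(n-\ell+1)}\,T_\ell ,
\]
which telescopes from $T_n=\L_{[n]}$ exactly to $T_\ell\ge\frac{n+1}{\ell+1}\binom{n-2}{\ell-2}\L_{[n]}$. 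The paper organizes the same chain as an induction on $n$, via $\L(\Gamma^n_\ell)=\frac{1}{n-\ell}\sum_{x}\L(\Gamma^{[n]\setminus\{x\}}_\ell)$.
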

Namely, in the mean-field case, the smallest non-trivial eigenvalue in the $\U(n)$-spectrum of $\Gamma$ is always obtained in the irrep with highest weight vector $(2,0,\ldots,0,-2)$. Interestingly, in the symmetric group, the mean-field special case of Conjecture \ref{conj:Caputo} is known \cite[Thm.~1.8]{bristiel2024entropy}, and the smallest eigenvalue, obtained in the irrep $\pi_{(n-1,1)}$, is the slightly larger
\begin{equation}\label{eq:mean-field-S_n-evalue}
    \sum_{\ell=0}^n c_\ell \frac{n}{\ell}\binom{n-2}{\ell-2}.
\end{equation}
As we explain below, this eigenvalue \eqref{eq:mean-field-S_n-evalue} is also obtained in the $\U(n)$-spectrum of $\Gamma$, as the minimal eigenvalue of  $\L(\Gamma,\rho_{(1,0,\ldots,0,-1)})$.

\subsubsection*{The codimension-1 case}

A similar result holds in the following large family of hypergraphs:

\begin{theorem} \label{thm:n-1s}
    Let $\Gamma=([n],w)$ be a hypergraph with non-negative weights supported on subsets of size $\ge n-1$. Then the smallest non-trivial eigenvalue of the Laplacian is attained in one of the irreps $(1,0,\ldots,0,-1)$ or $(2,0,\ldots,0,-2)$. 
\end{theorem}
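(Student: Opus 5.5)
Since every weight sits on $[n]$ or on a set $[n]\setminus\{i\}$, I will split $\L(\Gamma,\rho)$ into two parts and analyse each separately. The full set $B=[n]$ contributes $w_{[n]}\bigl(I-\int_{\U(n)}\rho\bigr)$. For nontrivial irreducible $\rho$ this is $w_{[n]}I$, because $\int\rho\,d\mu_{\U(n)}$ is the projection onto the invariants. So it only shifts every nontrivial eigenvalue by the same constant $w_{[n]}$, and I may assume $w$ is supported on the $(n-1)$-subsets $B_i=[n]\setminus\{i\}$. Write $P_i\defi\int_{\U_{B_i}}\rho\,d\mu_{B_i}$. This is the orthogonal projection onto the $\U(n-1)$-invariant vectors for the copy of $\U(n-1)$ fixing $e_i$. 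Then $\L(\Gamma,\rho)=\sum_i w_{B_i}(I-P_i)$.

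The key input is the branching rule $\U(n)\downarrow\U(n-1)$. The restriction of $\rho_\mu$ contains the trivial representation at most once, and it does so iff $(0,\ldots,0)$ interlaces $\mu$, i.e.\ iff $\mu=(k,0,\ldots,0,-k)$ with $k\ge0$. For every other $\mu$ we get $P_i=0$ for all $i$, so $\L=(\sum_i w_{B_i})I$. This is the largest value the Laplacian can have, since each $I-P_i\le I$. Hence only the irreps $\rho_k\defi\rho_{(k,0,\ldots,0,-k)}$ with $k\ge1$ need to be considered.

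For $\rho_k$, let $v_i$ be a unit vector spanning the $\U_{B_i}$-invariant line, so $P_i=v_iv_i^*$. The matrix $\sum_iw_{B_i}P_i$ has the same nonzero spectrum as the $n\times n$ weighted Gram matrix $G^{(k)}_{ij}=\sqrt{w_{B_i}w_{B_j}}\langle v_j,v_i\rangle$. To compute it I realize $\rho_k$ inside polynomials on the sphere $S^{2n-1}\subset\C^n$, as the harmonic polynomials of bidegree $(k,k)$. In that model the $\U(n-1)$-fixed vector $v_i$ is the zonal spherical function in $z_i$. This function is a Jacobi polynomial in $|z_i|^2$, namely $P_k^{(n-2,0)}(2|z_i|^2-1)$ up to normalization. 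The inner products become
\[
  \langle v_j,v_i\rangle=\frac{\phi_k(\langle g e_i,e_j\rangle)}{\phi_k(1)}=\phi_k(0)/\phi_k(1)\defi t_k\quad(i\ne j),
\]
where $\phi_k$ is the zonal function and $g$ is any unitary carrying $e_j$ to $e_i$. So $G^{(k)}=(1-t_k)D+t_k\,ss^T$, where $D=\mathrm{diag}(w_{B_i})$ and $s=(\sqrt{w_{B_i}})_i$. The smallest eigenvalue of $\L(\Gamma,\rho_k)$ is $W-\lambda_{\max}(G^{(k)})$, with $W=\sum_i w_{B_i}$. Up to sign, $t_k$ is an explicit ratio, essentially $\frac{(-1)^k}{\binom{n-2+k}{k}}$.

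It remains to compare $\lambda_{\max}\bigl((1-t)D+t\,ss^T\bigr)$ as $k$ varies, and I expect this to be the main obstacle. The function $f(t)=\lambda_{\max}((1-t)D+t\,ss^T)$ is convex in $t$, as a maximum of linear functions. The values $t_k$ alternate in sign, with $t_1<0$ and $t_2>0$, and their magnitudes decrease in $k$. Both $D$ and $ss^T$ are positive semidefinite, and the relevant combinations stay PSD because $|t_k|<1$. So for positive $t_k$ the maximum should be at the largest positive value $t_2$, which requires $f$ to be monotone increasing on $[0,1]$. Among negative $t_k$ the maximum should be at the most negative value $t_1$. The plan is to prove these monotonicity claims by applying convexity together with $f(0)=\max_i w_{B_i}$. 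I also need $f(t)\ge f(0)$ on each side, or at least that $f$ on each side is maximized at the endpoint farthest from zero. That follows from convexity once I check that $f$ is nondecreasing as $|t|$ grows on each side. Checking this may need a Rayleigh-quotient argument using the top eigenvector of $D$, and this sign and monotonicity bookkeeping is where I expect the difficulty. After it, the minimum over $k\ge1$ is attained at $k=1$ or $k=2$, which is exactly the claim of the theorem.
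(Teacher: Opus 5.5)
\textbf{Gap: the unbalanced irreps are dropped.} You misapply the branching rule. The zero vector of $\Z^{n-1}$ interlaces $\mu$ iff $\mu_1\ge0\ge\mu_n$ and $\mu_2=\cdots=\mu_{n-1}=0$. So the irreps with $P_i\neq0$ are all the $\rho_{(k,0,\ldots,0,-m)}$ with $k,m\ge0$, not only those with $k=m$. For example, $\rho_\std=\rho_{(1,0,\ldots,0)}$ has $P_i=e_ie_i^*$, so $\L(\Gamma,\rho_\std)=WI-\mathrm{diag}(w_{B_1},\ldots,w_{B_n})\neq WI$. As written, your argument therefore says nothing about infinitely many unbalanced irreps. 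The paper handles these separately, via Theorem~\ref{thm:(k,-k) dominates non-balanced parts}.

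The repair inside your framework is short. Take $k\neq m$ and write $\mathrm{diag}(\theta_1,\ldots,\theta_n)=\theta_iI\cdot g$ with $g\in\U_{B_i}$. This shows that $T_n$ acts on the fixed vector $v_i$ by the character $\theta_i^{k-m}$. These characters are distinct for distinct $i$, so the $v_i$ are pairwise orthogonal. The Gram matrix is then $D$, which is your pencil at $t=0$, and the smallest eigenvalue is $W-\max_i w_{B_i}=W-f(0)$.

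\textbf{The rest is correct, and it is a genuinely different and much shorter route than the paper's.} Your value $t_k=(-1)^k/\binom{n+k-2}{k}$ is exact, by $P_k^{(\alpha,\beta)}(1)=\binom{k+\alpha}{k}$ and $P_k^{(\alpha,\beta)}(-1)=(-1)^k\binom{k+\beta}{k}$. It agrees with the paper's $t_k$.

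The step you flag as the main obstacle is immediate. Since $f$ is convex, on $[t_1,t_2]$ it is bounded by $\max(f(t_1),f(t_2))$. Every $t_k$, and also $0$, lies in $[t_1,t_2]$, because $\binom{n+k-2}{k}$ is nondecreasing in $k$. So no sign or monotonicity bookkeeping is needed. If you do want monotonicity in $|t|$, note that the diagonal of $G(t)$ is identically $(w_{B_i})_i$, so $f(t)\ge f(0)$, and convexity then gives monotonicity. This is the reverse of the implication you wrote.

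Comparison with the paper: it reaches the same $n\times n$ problem through the KMP model, using the explicit vectors $g_{k,x}$ rather than zonal functions. It then works with the non-symmetric matrix $D\,\bigl(I+t(J-I)\bigr)$, where $J$ is the all-ones matrix. This matrix has the same spectrum as your $D^{1/2}\bigl((1-t)I+tJ\bigr)D^{1/2}$. Because it is non-symmetric, the paper must:
\begin{itemize}
\item prove real-rootedness by interlacing with $Q(x,t)$;
\item establish local monotonicity of the top root;
\item perturb to handle repeated weights;
\item prove strictness separately.
\end{itemize}
Your symmetrization makes the real spectrum automatic and needs no special treatment of $k=1$, where the $g_{1,x}$ are dependent. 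It replaces all of that with convexity of $\lambda_{\max}$ along an affine pencil. What the paper's analysis gains is strict monotonicity for connected $\Gamma$, which gives the strict ordering of the $\omega_k$. What your route costs is importing the Jacobi-polynomial description of the zonal spherical functions of $\U(n)/\U(n-1)$.
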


That is, for every $\Gamma$ as in the theorem, 
\[
    \min_{\triv\ne\rho\in\irr(\U(n))} \lambda_{\min}(\Gamma,\rho) = \lambda_{\min}\left(\Gamma,\rho_{(1,0,\ldots,0,-1)}\oplus \rho_{(2,0,\ldots,0,-2)}\right) .
\]
We stress that in the setting of Theorem \ref{thm:n-1s}, both irreps mentioned are necessary: each of them captures the spectral gap for certain values of the weights: see Corollary \ref{cor:smallest-evalue for n-1 mean-field} and Example \ref{exm:n-1s with smallest evalue in (1,...,-1)}.

For every $n\ge2$, the dimension of $\rho_{(1,0,\ldots,0,-1)}$ is $n^2-1$, and that of $\rho_{(2,0,\ldots,0,-2)}$ is $\frac{n^2(n-1)(n+3)}{4}$. Along the paper we provide a rather concrete description of these two irreps from Theorems \ref{thm:mean-field} and \ref{thm:n-1s}. First,  let $\rho_\std\colon\U(n)\to\gl(V)$\marginpar{$(\rho_\std,V)$} with $V=\mathbb{C}^n$ be the standard, $n$-dimensional representation of $\U(n)$ given by $A\mapsto A$. For $k,m\in\mathbb{Z}_{\ge0}$, define the $\U(n)$-representation $\skm$\marginpar{$\skm$} by
\begin{equation} \label{eq:def of Sk,m}
    \skm\defi \Sym^k(\rho_\std)\otimes \Sym^m(\rho_\std^*) = \Sym^k(V)\otimes \Sym^m(V^*),
\end{equation}
where $\rho_\std^*$ is the representation dual to $\rho_\std$. A special case of Corollary \ref{cor:decomp of sym-k tensor sym-m-dual} below is that
\[
    S_{2,2}=\rho_{(2,0,\ldots,0,-2)}\oplus\rho_{(1,0,\ldots,0,-1)} \oplus \triv.
\]
Hence, Theorem \ref{thm:n-1s} is equivalent to that for every $n\ge2$ and every hypergraph $\Gamma$ as in the theorem (so weights are supported on subsets of size $\ge n-1$),
\[
    \min_{\triv\ne\rho\in\irr(\U(n))} \lambda_{\min}(\Gamma,\rho) = \lambda^*_{\min}(\Gamma,S_{2,2}).
\]  
Of course, a slightly weaker form of Theorem \ref{thm:mean-field} can use the same formulation.

Moreover, as explained in $\S$\ref{subsec:intro-KMP} below, $\lambda_{\min}(\Gamma,\rho_{(2,0,\ldots,0,-2)}\oplus\rho_{(1,0,\ldots,0,-1)})=\lambda_{\min}^*(\Gamma,S_{2,2})$ is also the smallest non-trivial eigenvalue of a very concrete process on $\binom{n+1}{2}$ states, known as the discrete KMP process on $\Gamma$ with 2 indistinguishable particles. 

Finally, one can also consider the two irreps (or $S_{2,2}$) as a subrepresentation of the easy-to-construct representation $R_{2,2}$: For $k,m\in\mathbb{Z}_{\ge0}$, define the $n^{k+m}$-dimensional representation $\rkl$\marginpar{$\rkl$}
\begin{equation} \label{eq:def of Rkl}
    \rkl\defi (\rho_\std)^{\otimes k} \otimes (\rho_\std^*)^{\otimes m}=V^{\otimes k}\otimes (V^*)^{\otimes m}~~~\mathrm{defined~by}~~~A\mapsto A^{\otimes k}\otimes \overline{A}^{\otimes m}.
\end{equation}
We cite relevant information about the decomposition of $\rkl$ to irreps in $\S$\ref{subsec:representations theory of U(n)}. As $S_{2,2}$ is a sub-representation of $R_{2,2}$ we obtain that a slightly weaker form of Theorems \ref{thm:mean-field} and \ref{thm:n-1s} is that for every $n\ge2$ and every hypergraph $\Gamma$ as in the theorems (so mean-field or supported on subsets of size $\ge n-1$), 
\[
    \min_{\triv\ne\rho\in\irr(\U(n))} \lambda_{\min}(\Gamma,\rho) =\lambda^*_{\min}(\Gamma,R_{2,2}).
\]  

\begin{remark}
    Theorems \ref{thm:mean-field} and \ref{thm:n-1s} can be equivalently stated in terms of the infinite-dimensional regular representation of $\U(n)$, as we explain in $\S$\ref{subsec:representations theory of U(n)}. In both cases, the spectral gap of $S_{2,2}$ is equal to the spectral gap of the Laplacian corresponding to $\Gamma$ in the regular representation of $\U(n)$. 
\end{remark}


\subsubsection*{An Aldous-Caputo-type conjecture for $U(n)$}
The following conjecture arises naturally from the theorems above. It is also supported by additional results we present below, as well as by computer simulations. It says that the statement of Theorem \ref{thm:n-1s} should hold for all weighted hypergraphs (with non-negative weights).

\begin{conj} \label{conj:main conj}
    Let $\Gamma=([n],w)$ be an arbitrary hypergraph with non-negative weights. Then the smallest non-trivial eigenvalue of the $\U(n)$-spectrum of $\Gamma$ is attained in one of the irreps $(1,0,\ldots,0,-1)$ or $(2,0,\ldots,0,-2)$. 
\end{conj}

Equivalently, the spectral gap of the $\U(n)$-spectrum of $\Gamma$ coincides with that of $\L(\Gamma,S_{2,2})$.
Note that there are usually infinitely many distinct eigenvalues in the $\U(n)$-spectrum of $\Gamma$. So it is not at all obvious that the infimum over all these eigenvalues is attained. The conjecture states not only that it is attained, but also always at one of two specific irreps.
Conjecture \ref{conj:main KMP style} below gives an equivalent statement in terms of a discrete KMP process. 

\subsubsection*{Results holding for arbitrary hypergraphs}

The following result, holding for every hypergraph with non-negative weights, guarantees that the infimum of the non-trivial spectrum is obtained only in a certain subset of the irreps, and only in certain invariant subspaces of the latter.

\begin{defn}
    Let $\rho\in\irr(\U(n))$ be associated with the highest weight vector $(\mu_1,\ldots,\mu_n)$. We say that $\rho$ is \textbf{balanced} if and only if $\sum \mu_i=0$. Otherwise, $\rho$ is called \textbf{unbalanced}.
\end{defn}

(The irrep $\rho$ is balanced if and only if the center of $\U(n)$ acts trivially.) Consider the subgroup $T_n\le\U(n)$\marginpar{$T_n$} consisting of all diagonal matrices, also called the \textbf{torus} of $\U(n)$. Namely,  
\[
    T_n\defi \left\{\begin{pmatrix}
            *&  &  &   \\
             &* & &   \\
             & &\ddots   & \\
             &  &  &*
        \end{pmatrix}\right\}\le\U(n).
\]

\begin{defn} \label{def:torus-invariant subspace}
    Let $\rho\colon\U(n)\to\gl(V)$ be a representation of $\U(n)$ over a finite dimensional complex vector space $V$.\footnote{We restrict to finite dimensional representations for simplicity. The definition makes sense and its properties remain true for any unitary representation of $\U(n)$.} The \textbf{torus-invariant subspace} of $\rho$ is\marginpar{$\scriptstyle{\torinv(\rho)}$}
    \[
        \torinv(\rho)\defi V^{T_n}=\left\{ v\in V\,\mid\,\rho(A)v=v \mathrm{~for~all~} A\in T_n\right\}.
    \]
\end{defn}

As stated in Proposition \ref{prop:torus-inv subspace is invariant for every Gamma} below, $\torinv(\rho)$ is invariant under the Laplacian $\L(\Gamma,\rho)$ corresponding to any weighted hypergraph $\Gamma$. In fact, it also has a complement subspace invariant under $\L(\Gamma,\rho)$. We stress that this is relevant even when $\rho$ is irreducible: while $\rho$ does not have any proper non-zero subspace which is invariant under all $A\in\U(N)$, it may have, and usually does have, non-trivial subspaces invariant under $\L(\Gamma,\rho)$ for all $\Gamma$. See $\S$\ref{sec:torus-invariant subspace} for a discussion of additional subspaces of $\rho$ which are invariant under $\L(\Gamma,\rho)$ for all $\Gamma$. 

Denote by  $\L_\torinv(\Gamma,\rho)$\marginpar{$\scriptscriptstyle{\L_\torinv(\Gamma,\rho)}$}\label{page:L_torinv} the restriction of $\L(\Gamma,\rho)$ to $\torinv(\rho)$. In particular, the spectrum of $\L_\torinv(\Gamma,\rho)$ is a sub-multiset of the spectrum of $\L(\Gamma,\rho)$. By Corollary \ref{cor:torus-inv subspace empty for unbalanced irreps} below, the torus-invariant subspace of an \textit{unbalanced} irrep is trivial (namely, the zero subspace). If $\rho$ is a balanced irrep, its torus-invariant subspace is of dimension roughly $\sqrt{\dim(\rho)}$ -- see Corollary \ref{cor:torus-inv of balanced is of dim sqrt}. 




\begin{theorem} \label{thm:(k,-k) dominates non-balanced parts}
Let $k\in\mathbb{Z}_{\ge1}$ and denote $\rho_k=\rho_{(k,0,\ldots,0,-k)}$. Let $\rho\in\irr(\U(n))$ and let $\alpha$ be any eigenvalue of $\L(\Gamma,\rho)$ which is \textbf{not} associated with $\L_\torinv(\Gamma,\rho)$.
Then $\lambda_{\min}(\Gamma,\rho_k)\le\alpha$.

In particular, if $\rho$ is unbalanced, then for all $k$, $\rho_k$ ``spectrally dominates $\rho$":
\[
    \lambda_{\min}(\Gamma,\rho_k)\le\lambda_{\min}(\Gamma,\rho).
\]    
\end{theorem}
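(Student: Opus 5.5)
Write $W$ for the space of $\rho$, and $W=\torinv(\rho)\oplus W'$ for the decomposition into $\L(\Gamma,\rho)$-invariant subspaces (Proposition \ref{prop:torus-inv subspace is invariant for every Gamma}), where $W'$ is the sum of the nonzero weight spaces of $T_n$. We must show every eigenvalue of $\L(\Gamma,\rho)|_{W'}$ is at least $\lambda_{\min}(\Gamma,\rho_k)$. Take a unit vector $v\in W'$; its Rayleigh quotient is $\langle \L v,v\rangle=\sum_B w_B(1-\|P_Bv\|^2)$, where $P_B=\int_{\U_B}\rho\,d\mu_B$ is the orthogonal projection onto the $\U_B$-fixed vectors. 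It therefore suffices to produce a unit vector $u$ in the space of $\rho_k$ with $\|P_Bu\|\ge\|P_Bv\|$ for every $B$ simultaneously; then $\lambda_{\min}(\Gamma,\rho_k)\le\langle\L u,u\rangle\le\langle \L v,v\rangle$. Since eigenvectors may be taken inside a single weight space (the Laplacian commutes with $T_n$, as $T_n$ normalizes each $\U_B$), we may assume $v$ is a weight vector of a nonzero weight $\beta\in\Z^n$.

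The key observation concerns which $\U_B$ can fix $v$. A $\U_B$-fixed vector is fixed by the torus of $\U_B$, so $P_Bv$ is supported on weights vanishing on $B$. But $P_B$ commutes with $T_n$ (since $T_n$ normalizes $\U_B$), so $P_Bv$ also has weight $\beta$. Hence $P_Bv=0$ unless $\beta_i=0$ for all $i\in B$, i.e.\ unless $B\subseteq Z(\beta)\defi\{i:\beta_i=0\}$. Because $\beta\neq0$, $Z(\beta)$ misses some index $j$, so the only $B$ contributing are subsets of $[n]\setminus\{j\}$. This yields the bound
\[
\langle \L v,v\rangle\ \ge\ \sum_{B\not\subseteq Z(\beta)} w_B\ \ge\ \sum_{B\ni j} w_B .
\]

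Now we build $u$ in $\rho_k$. Any weight $\gamma$ of $\rho_k$ with $\gamma_j\neq0$ and $\gamma_i=0$ for $i\ne j$ would be unbalanced, which is impossible since $\rho_k$ is balanced. So take instead the weight $\gamma=k(e_j-e_i)$, or more cleverly a vector fixed by $\U_{[n]\setminus\{j\}}$. Restricting $\rho_k$ to $\U(n-1)\times\U(1)$ via branching, $\rho_{(k,0,\dots,0,-k)}$ contains an $\U(n-1)$-invariant vector $u$ of $T_n$-weight $0$ (the branching rule allows the interlacing pattern $(0,\dots,0)$, since $k\ge0\ge\dots\ge0\ge -k$). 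Such a $u$ is fixed by every $\U_B$ with $B\subseteq[n]\setminus\{j\}$, so $\|P_Bu\|=1$ for those $B$. Its Rayleigh quotient is therefore at most $\sum_{B\ni j} w_B\le\langle\L v,v\rangle$. This is exactly the domination we need.

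The main obstacle is the invariance-and-weight bookkeeping: checking that $P_B$ preserves weight spaces, which requires that $T_n$ normalizes $\U_B$, and verifying that the $\U(n-1)$-fixed vector exists in $\rho_k$ for every $k\ge1$. The latter follows from Gelfand--Tsetlin branching. For the unbalanced claim, Corollary \ref{cor:torus-inv subspace empty for unbalanced irreps} gives $\torinv(\rho)=0$, so every eigenvalue, including $\lambda_{\min}(\Gamma,\rho)$, falls under the main statement.
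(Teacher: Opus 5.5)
Your proof is correct, and for the upper bound it takes a different route from the paper. The lower bound on $W'$ is essentially the paper's Lemma~\ref{lem:blocks outside tor-int have large evalues} in weight-space language. The paper splits $V'$ into the blocks $V^{\{1\},\varepsilon_1}\cap\cdots\cap V^{\{n\},\varepsilon_n}$, which are exactly the sums of weight spaces with a given zero pattern. It then uses $P_B=P_BP_{\{j\}}=0$ on a block with $\varepsilon_j=0$ whenever $B\ni j$; this is your observation that $P_Bv=0$ unless $\beta|_B=0$.

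The upper bound $\lambda_{\min}(\Gamma,\rho_k)\le\phi(\Gamma)$ is where the routes differ. The paper passes to the KMP model, $\L_{\torinv}(\Gamma,\rho_k)\cong\L(\Gamma,\kmp_k)|_{\pure(n,k)}$, which rests on the Weingarten computation behind Theorem~\ref{thm:kmp_k is central block of...}. It then uses the echelon-form argument of Proposition~\ref{prop:embedding MS(n,k) in MS(n,k+1) properties} to build $v_x\in\pure(n,k)$ with coefficient $1$ on every multiset avoiding $x$, which $\N_B$ fixes for $B\not\ni x$. You get the same test vector directly from the branching rule; the paper itself later identifies it as spanning the $\U_{[n]\setminus\{x\}}$-fixed line of $\rho_k$ (its $g_{k,x}$).

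What each route buys:
\begin{itemize}
\item Yours avoids Weingarten calculus and the KMP identification.
\item Yours gives the clean pointwise domination $\|P_Bu\|\ge\|P_Bv\|$ for every $B$ at once.
\item Yours works verbatim for any irrep whose restriction to $\U(n-1)$ contains the trivial irrep, i.e.\ every $\rho_{(a,0,\ldots,0,-b)}$ with $a+b\ge1$. So it also recovers the paper's closing corollary about $\rho_\std$.
\item The paper's route yields an explicit combinatorial description of the test vector inside $\kmp_k(\Gamma)$, which it reuses in Section~\ref{sec:n-1}.
\end{itemize}

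Some cosmetic fixes:
\begin{itemize}
\item Delete the abandoned sentence about the weight $k(e_j-e_i)$.
\item The branching rule is stated for $\U_{[n-1]}$; it applies to $\U_{[n]\setminus\{j\}}$ after conjugating by a permutation matrix.
\item The claim that $u$ has weight $0$ is true by balancedness but not needed.
\end{itemize}
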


\begin{cor} \label{cor:spectral gap obtained in non-balanced parts}
Let $\Gamma$ be a hypergraph with non-negative weights. Then
\[
    \inf_{\triv\ne\rho\in\irr(\U(n))} \lambda_{\min}(\Gamma,\rho) = \inf_{\substack{\triv\ne\rho\in\irr(\U(n)):\\\rho~\mathrm{is~balanced}}} \lambda_{\min}(\L_\torinv(\Gamma,\rho)).
\]
\end{cor}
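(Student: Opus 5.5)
We prove the corollary as a direct consequence of Theorem \ref{thm:(k,-k) dominates non-balanced parts}, by showing that both infima equal the same quantity. Write $I$ for the left-hand side and $J$ for the right-hand side.

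First, $I\le J$. Let $\rho$ be any nontrivial balanced irrep. By Proposition \ref{prop:torus-inv subspace is invariant for every Gamma}, $\torinv(\rho)$ is invariant under $\L(\Gamma,\rho)$, so the spectrum of $\L_\torinv(\Gamma,\rho)$ is a sub-multiset of the spectrum of $\L(\Gamma,\rho)$. In particular $\lambda_{\min}(\Gamma,\rho)\le\lambda_{\min}(\L_\torinv(\Gamma,\rho))$. If $\torinv(\rho)=0$ the right-hand term is $+\infty$ by convention, and the inequality is trivial. Taking the infimum over balanced nontrivial $\rho$ gives $I\le J$, since $I$ ranges over a larger set of irreps.

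Next, $J\le I$. Fix a nontrivial irrep $\rho$ and let $\alpha=\lambda_{\min}(\Gamma,\rho)$. Since $\L(\Gamma,\rho)$ is hermitian and preserves $\torinv(\rho)$, it also preserves the orthogonal complement of $\torinv(\rho)$. Hence the eigenvalue $\alpha$ is either associated with $\L_\torinv(\Gamma,\rho)$ or is not.
\begin{itemize}
\item If $\alpha$ is associated with $\L_\torinv(\Gamma,\rho)$, then $\torinv(\rho)\neq0$. By Corollary \ref{cor:torus-inv subspace empty for unbalanced irreps}, $\rho$ is therefore balanced, and $\alpha\ge\lambda_{\min}(\L_\torinv(\Gamma,\rho))\ge J$.
\item If $\alpha$ is not associated with $\L_\torinv(\Gamma,\rho)$, Theorem \ref{thm:(k,-k) dominates non-balanced parts} gives $\lambda_{\min}(\Gamma,\rho_k)\le\alpha$ for every $k\ge1$.
\end{itemize}

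The second case needs one more step, because we must bound $\lambda_{\min}(\Gamma,\rho_k)$ from below by $J$. The irrep $\rho_k$ is itself balanced and nontrivial, so we apply the same dichotomy to its minimal eigenvalue. If that eigenvalue lives in $\torinv(\rho_k)$, then $\lambda_{\min}(\Gamma,\rho_k)\ge J$ and we are done. If not, we could in principle pass to $\rho_{k'}$ for another $k'$ and repeat, which raises the danger of circularity.

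The clean way to avoid this is to fix $k=1$ and argue about the minimal eigenvalue of $\L(\Gamma,\rho_1)$ directly. Suppose, for contradiction, that this eigenvalue lies only in the complement of $\torinv(\rho_1)$. Then we would need Theorem \ref{thm:(k,-k) dominates non-balanced parts} in a stronger form, with the two spectral parts compared across $\rho_1$ and $\rho_2$. I expect this to be the main obstacle.

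I would try to resolve it using the proof of that theorem rather than its bare statement. I expect the proof to produce, for every eigenvalue $\alpha$ off the torus-invariant part, a vector in $\torinv(\rho_k)$ with Rayleigh quotient at most $\alpha$. Such a proof would presumably restrict to a sub-torus $T\le T_n$ and use a weight-vector argument that maps the eigenvector into the trivial-$T_n$-weight space of $\rho_k$. If so, $\lambda_{\min}(\L_\torinv(\Gamma,\rho_k))\le\alpha$ directly, and the second case also yields $\alpha\ge J$.

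With that in hand, in both cases $\lambda_{\min}(\Gamma,\rho)\ge J$. Taking the infimum over nontrivial $\rho$ gives $I\ge J$, which together with the first step proves the corollary.
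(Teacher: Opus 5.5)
Your inequality $I\le J$ and your Case 1 are fine. You are also right that the bare statement of Theorem \ref{thm:(k,-k) dominates non-balanced parts} cannot close Case 2. That statement only gives $\lambda_{\min}(\Gamma,\rho_k)\le\alpha$, which is consistent with $\lambda_{\min}(\Gamma,\rho_k)$ itself being attained only off $\torinv(\rho_k)$.

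However, the proposal stops at that point. The decisive step, $\lambda_{\min}(\L_\torinv(\Gamma,\rho_k))\le\alpha$, is only something you expect the proof of the theorem to provide. The mechanism you guess for it is unsupported: you would transport an eigenvector of $\L(\Gamma,\rho)$, for an arbitrary irrep $\rho$, into the zero-weight space of $\rho_k$ by a sub-torus argument. You give no map that does this while controlling the Rayleigh quotient, and restricting to a sub-torus does not obviously produce one. As written, $J\le I$ is not proved.

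The missing idea is a uniform scalar threshold, not an eigenvector-by-eigenvector comparison. Set $\phi_j=\sum_{B\ni j}w_B$ and $\phi(\Gamma)=\min_j\phi_j$.

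First, every eigenvalue of $\L(\Gamma,\rho)$ off $\torinv(\rho)$ is at least $\phi(\Gamma)$. The projections $P_{\{i\}}$ commute with every $P_B$. Hence the complement of $\torinv(\rho)$ splits into $\L(\Gamma,\rho)$-invariant pieces, each contained in $\ker P_{\{j\}}$ for some $j$. Since $\mu_B=\mu_B*\mu_{\{j\}}$ whenever $B\ni j$, we get $P_B=P_BP_{\{j\}}=0$ on such a piece. So there $\L(\Gamma,\rho)=\phi_j I+\sum_{B\not\ni j}w_B(I-P_B)$, whose eigenvalues are at least $\phi_j$ (Lemma \ref{lem:blocks outside tor-int have large evalues}).

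Second, $\torinv(\rho_{(1,0,\ldots,0,-1)})$ contains a vector with Rayleigh quotient at most $\phi(\Gamma)$. Via Theorem \ref{thm:kmp_k is central block of...}, identify it with the sum-zero vectors of $\MS(n,1)$. Take $v=\sum_{i\ne j}\delta_i-(n-1)\delta_j$, which is fixed by $\N_B$ whenever $j\notin B$. Since $I-\N_B$ has spectrum in $\{0,1\}$, the Rayleigh quotient of $v$ is at most $\phi_j$ (Lemma \ref{lem:R11 dominates non-central blocks}).

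Combining the two, any $\alpha$ in your Case 2 satisfies $\alpha\ge\phi(\Gamma)\ge\lambda_{\min}(\L_\torinv(\Gamma,\rho_{(1,0,\ldots,0,-1)}))\ge J$, because $\rho_{(1,0,\ldots,0,-1)}$ is balanced and non-trivial. This is exactly how the paper derives the corollary. It is also what the proof of the theorem actually establishes for general $k$, with the test vector $v_x\in\pure(n,k)$ in place of $v$.
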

Namely, the infimum of the non-trivial $\U(n)$-spectrum of a weighted hypergraph $\Gamma$ (or the smallest eigenvalue if it exists), is always obtained by considering only the linear action defined by $\Gamma$ on the torus-invariant subspaces of balanced representations.

\subsection{A discrete KMP (or multiset) process}\label{subsec:intro-KMP}

In \cite{kipnis1982heat}, Kipnis, Marchioro and Presutti describe a continuous-time process on a finite graph, a process which is usually referred to as the KMP process after these three authors. In this process, each vertex of the graph contains a certain finite mass. Each edge is equipped with a Poisson clock with some rate. Whenever an edge ``rings'', the combined mass at the two incident vertices $u$ and $v$ is redistributed among $u$ and $v$, where $u$ gets a $p$-fraction of the total mass and $v$ a $(1-p)$-fraction, $p\in[0,1]$ taken uniformly at random.\footnote{We remark that in the original setting in \cite{kipnis1982heat}, the graph is a finite path, all the edges have the same rate, and there are additionally two ``absorbing vertices'' at the two endpoints of the path.} 

In \cite[$\S$2]{kipnis1982heat}, the authors describe a discrete version of their process, which is nowadays known as the ``discrete KMP process'' or the ``uniform reshuffling process''. We extend the definition to weighted hypergraphs, and show that the spectrum of the discrete KMP process on the weighted hypergraph $\Gamma$ is contained in the $\U(n)$-spectrum of $\Gamma$. Moreover, the KMP spectrum is precisely the part that conjecturally, and at times provably, dominates the spectral gap.
\medskip

Given a set $S$ and an integer $k\geq 0$, consider the set $\ms{S}{k}$\marginpar{$\ms{S}{k}$} of multisets of size $k$ with elements from $S$. Equivalently, these are functions $f\colon S\to\mathbb{Z}_{\ge0}$ with $\sum_{s\in S} f(s)=k$. When $S=[n]$ is the set of vertices of a hypergraph $\Gamma$, we think of the multisets $\ms{[n]}{k}$ as the possible configurations of $k$ indistinguishable particles located at the vertices (with no limitation on the number of particles at the same vertex). Note that \marginpar{$\ms{n}{k}$}
\[
    \left|\ms{[n]}{k}\right|=\ms{n}{k}\defi\binom{n+k-1}{k}.
\]
Denote by $\MS(n,k)$\marginpar{$\scriptstyle{\MS(n,k)}$} the formal $\mathbb{C}$-span of these configurations. 

Every subset $B\subseteq[n]$ defines a linear action $\N_B$\marginpar{$\N_B$} on $\MS(n,k)$ as follows. Its action on some configuration $f\in\ms{[n]}{k}$ is given by collecting all the particles at the vertices belonging to $B$, and redistributing them among the same vertices of $B$ in a uniformly random way among all possibilities. Equivalently, $\N_B$ maps $f\colon[n]\to\mathbb{Z}_{\ge0}$ uniformly at random to one of the functions $g\colon[n]\to\mathbb{Z}_{\ge0}$ satisfying $\sum_i g(i)=\sum_i f(i)$ and $g(j)=f(j)$ for all $j\notin B$. If there are $\ell$ particles in total in the vertices contained in $B$, then there are $\ms{|B|}{\ell}=\binom{|B|+\ell-1}{\ell}$ such possibilities. 

 
\begin{defn}[Discrete KMP on hypergraphs]
    Let $\Gamma=([n],w)$ be a hypergraph with non-negative weights and fix $k\in\mathbb{Z}_{\ge1}$. The discrete KMP process with $k$ particles on $\Gamma$, denoted $\kmp_k(\Gamma)$\marginpar{$\scriptstyle{\kmp_k(\Gamma)}$}, is a process with states $\ms{[n]}{k}$, in which every hyperedge $B\subseteq[n]$ ``rings'' and acts by $\N_B$ with rate $w_B$.
    The corresponding Laplacian is given by
    \begin{equation} \label{eq:kmp-k generator}
        \L(\Gamma,\kmp_k)\defi \sum_{B\subseteq[n]}w_B(I-\N_B)\in\End(\MS(n,k)).
    \end{equation}
\end{defn}

The Laplacian $\L(\Gamma,\kmp_k)$ has non-negative real spectrum (see Corollary \ref{cor:spectrum of Laplacian on KMP in [0,sum of weights]}). There is always a single trivial zero eigenvalue corresponding to the uniform distribution among all particle configurations. In analogy with \eqref{eq:lambda_min of irrep of U}, we denote  
\begin{equation} \label{eq:lambda_min of KMP}
    \lambda_{\min}^*(\Gamma,\kmp_k) \defi
    \mathrm{the~smallest~non\text{-}trivial~eigenvalue~of}~ \L(\Gamma,\kmp_k).    
\end{equation}

\medskip
We can identify the spectrum of $\L(\Gamma,\kmp_k)$ inside the $\U(n)$-spectrum of $\Gamma$. By Corollary \ref{cor:decomp of sym-k tensor sym-m-dual} below,
\[
    S_{k,k}\cong\bigoplus_{j=0}^k\rho_{(j,0,\ldots,0,-j)}.
\]
Recall the torus-invariant subspace of a representation from Definition \ref{def:torus-invariant subspace} and the notation $\L_\torinv(\Gamma,\rho)$ for every (finite-dimensional) representation $\rho$ of $\U(n)$. 

\begin{theorem} \label{thm:kmp_k is central block of...}
 For any $k\in\mathbb{Z}_{\ge1}$ and weighted hypergraph $\Gamma=([n],w)$, 
\begin{equation} \label{eq:KMP is central block of..}
    \L(\Gamma,\kmp_k)\cong \L_\torinv\left(\Gamma,S_{k,k}\right) \cong \L_\torinv\left(\Gamma,\bigoplus_{j=0}^k\rho_{(j,0,\ldots,0,-j)}\right).
\end{equation}    
\end{theorem}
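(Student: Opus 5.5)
We will identify $\torinv(S_{k,k})$ with $\MS(n,k)$ explicitly and then show that, for each $B\subseteq[n]$, the averaging operator $P_B\defi\int_{\U_B}\rho(A)\,\mu_B$ restricted to $\torinv(S_{k,k})$ coincides with $\N_B$ (or its adjoint, which suffices since both sides will turn out to be self-adjoint in the appropriate inner product). The first isomorphism in \eqref{eq:KMP is central block of..} then follows by summing over $B$ with weights $w_B$. The second isomorphism is immediate from the decomposition $S_{k,k}\cong\bigoplus_{j=0}^k\rho_{(j,0,\ldots,0,-j)}$ of Corollary \ref{cor:decomp of sym-k tensor sym-m-dual}, because taking $T_n$-invariants and restricting the Laplacian both commute with direct sums.

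\textbf{Step 1: the torus-invariant subspace.} Realize $\Sym^k(V)$ as homogeneous polynomials of degree $k$ in $x_1,\dots,x_n$, and $\Sym^k(V^*)$ as polynomials of degree $k$ in $\bar x_1,\dots,\bar x_n$. Then $S_{k,k}$ is spanned by the monomials $x^a\bar x^b$ with $|a|=|b|=k$. A diagonal matrix $\mathrm{diag}(t_1,\dots,t_n)$ acts on such a monomial by the scalar $\prod t_i^{a_i-b_i}$. Hence $\torinv(S_{k,k})$ has basis $\{x^f\bar x^f=|x|^{2f}\}_{f\in\ms{[n]}{k}}$, which gives a linear bijection $\Phi:\MS(n,k)\to\torinv(S_{k,k})$, $f\mapsto |x|^{2f}$, up to a normalization to be chosen below. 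Since $\U_B$ commutes with $T_n$ only partially, we also note that $P_B$ preserves $\torinv$: indeed $P_B$ commutes with $T_B$, and $T_{[n]\setminus B}$ commutes with $\U_B$. This is consistent with Proposition \ref{prop:torus-inv subspace is invariant for every Gamma}.

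\textbf{Step 2: computing $P_B$ on $|x|^{2f}$.} This is the main computation. Write $f=f_B+f_{B^c}$. The factor $|x|^{2f_{B^c}}$ is fixed by $\U_B$, so it suffices to compute $\int_{\U(m)} |Ax|^{2g}\,dA$ with $m=|B|$ and $|g|=\ell$, where $g$ is the restriction of $f$ to $B$. The function $x\mapsto\int |Ax|^{2g}\,dA$ is $\U(m)$-invariant and is a polynomial in $x,\bar x$ of bidegree $(\ell,\ell)$, so it equals $c_g\,\|x\|^{2\ell}=c_g\bigl(\sum_{i\in B}|x_i|^2\bigr)^{\ell}$. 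Evaluating at a unit vector, and using the fact that $Ae_1$ is Haar-distributed on the sphere $S^{2m-1}$, we get $c_g=\mathbb{E}_{u\in S^{2m-1}}\prod|u_i|^{2g_i}=\frac{(m-1)!\prod g_i!}{(m+\ell-1)!}$, by the Dirichlet distribution of $(|u_i|^2)$. Expanding $\|x\|^{2\ell}=\sum_{h}\binom{\ell}{h}|x|^{2h}$ over $h\in\ms{B}{\ell}$ then gives
\[
P_B|x|^{2f}=\frac{(m-1)!\,\prod_i g_i!}{(m+\ell-1)!}\sum_{h}\frac{\ell!}{\prod_i h_i!}\,|x|^{2(h+f_{B^c})}.
\]

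\textbf{Step 3: matching with $\N_B$.} Rescale the basis by setting $e_f\defi |x|^{2f}/\prod_i f_i!$. In this basis the coefficients become $\frac{(m-1)!\,\ell!}{(m+\ell-1)!}=\ms{m}{\ell}^{-1}$ for every $h$, which is exactly the uniform redistribution $\N_B$ (or its transpose, depending on the convention for acting on configurations). We expect the bookkeeping of these normalizations to be the only delicate point. If $P_B$ comes out as the transpose, note that $\N_B$ is symmetric, since it is uniform on classes of configurations with the same $f_{B^c}$ and the same $\ell$; so the two agree as matrices. Summing $w_B(I-P_B)$ over $B$ then gives $\L_\torinv(\Gamma,S_{k,k})=\Phi\,\L(\Gamma,\kmp_k)\,\Phi^{-1}$.
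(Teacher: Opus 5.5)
Your proposal is correct, and your basis $e_f=|x|^{2f}/\prod_i f_i!$ is exactly the paper's basis $c_\ii x_\ii y_\ii$ divided by the constant $k!$, since $c_\ii=k!/\prod_i f_i!$. The bookkeeping in Step 3 gives $P_Be_f=\ms{m}{\ell}^{-1}\sum_h e_{h+f_{B^c}}$ exactly, so the hedge about adjoints is not needed.

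Where you genuinely differ from the paper is Step 2. The paper expands $P_B(c_\ii x_\ii y_\ii)$ into integrals of products of matrix entries and applies the Weingarten formula (Theorem \ref{thm:weingarten}). It then counts the contributing pairs $(\sigma,\tau)$, showing that each $\theta=\tau\sigma^{-1}\in\sym(\ell)$ occurs $\ell!/c_{\ii_1}$ times. It finishes with $\sum_\theta\wg_{\ell,b}(\theta)=1/(\ell!\,\ms{b}{\ell})$, which itself comes from character orthogonality and $\dim\Sym^\ell(\C^b)=\ms{b}{\ell}$.

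You instead note that the averaged function is $\U(m)$-invariant and homogeneous, hence equal to $c_g\|x_B\|^{2\ell}$. You then read off $c_g$ as a Dirichlet$(1,\ldots,1)$ moment of the first column of a Haar unitary. This is more elementary and more conceptual. It shows that $\U_B$-averaging collapses each block onto the unique invariant $\|x_B\|^{2\ell}$. The uniformity of $\N_B$ is then just the multinomial expansion of this invariant, cancelled by the factorial normalization. Your commutation argument that $P_B$ preserves $\torinv$ is likewise a shortcut compared with the paper's route through the commuting projections $P_{\{i\}}$.

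The paper's Weingarten route is more mechanical, but it reuses a single tool. The signed analogue of the same sum handles the exterior-power representation $Y_k$ in Section \ref{sec:sym(n) spectrum contained}, where your polynomial-function trick is not directly available.
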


Namely, all Laplacians in \eqref{eq:KMP is central block of..} are the same operators, up to conjugation. In particular, $\kmp_2(\Gamma)$, which is a discrete process of dimension $\binom{n+1}{2}$, satisfies
\[
    \L(\Gamma,\kmp_2)\cong \L_\torinv\left(\Gamma,S_{2,2}\right) \cong \L_\torinv(\Gamma,\rho_{(2,0,\ldots,0,-2)}\oplus\rho_{(1,0,\ldots,0,-1)}\oplus \triv).
\]    
By Theorem \ref{thm:(k,-k) dominates non-balanced parts}, the smallest eigenvalue of $\L(\Gamma,S_{k,k})$ is associated with its torus-invariant subspace. Hence, by Theorem \ref{thm:kmp_k is central block of...},
\begin{equation} \label{eq:lam-min-star of KPMk same as Skk}
    \lambda_{\min}^*(\Gamma,\kmp_k) = \lambda_{\min}^*(\Gamma,S_{k,k}).
\end{equation}    
This shows that Conjecture \ref{conj:main conj} is equivalent to the following one:

\begin{conj} \label{conj:main KMP style}
    Let $\Gamma=([n],w)$ be an arbitrary hypergraph with non-negative weights. Then $\lambda_{\min}^*(\Gamma,\kmp_2)$ is the smallest non-trivial eigenvalue in the $\U(n)$-spectrum of $\Gamma$. Namely,
    \[
        \lambda_{\min}^*(\Gamma,\kmp_2) = \inf_{\triv\ne\rho\in\irr(\U(n))}\lambda_{\min}(\Gamma,\rho) = \min_{\triv\ne\rho\in\irr(\U(n))}\lambda_{\min}(\Gamma,\rho).
    \]
\end{conj}

As explained in $\S$\ref{subsec:representations theory of U(n)}, this is equivalent to that the spectral gap of the Laplacian corresponding to $\Gamma$ on the regular representation of $\U(n)$ coincides with that of $\kmp_2(\Gamma)$. 

By \eqref{eq:lam-min-star of KPMk same as Skk}, the following weaker conjecture, that can be stated completely in terms of discrete KMP processes without mentioning of any representations of $\U(n)$, follows from Conjecture \ref{conj:main KMP style}.

\begin{conj} \label{conj:pure KMP}
    For any weighted hypergraph $\Gamma$ with non-negative weights and $k\in\mathbb{Z}_{\ge2}$ we have
    \[
        \lambda_{\min}^*(\Gamma,\kmp_k)=\lambda_{\min}^*(\Gamma,\kmp_2).
    \]
\end{conj}

In particular, this conjecture is true when $\Gamma$ is as in Theorems \ref{thm:mean-field} or \ref{thm:n-1s}. Note that as an immediate corollary from Theorem \ref{thm:kmp_k is central block of...}, the spectrum of $\L(\Gamma,\kmp_\ell)$ is contained in the spectrum of $\L(\Gamma,\kmp_k)$ whenever $\ell\le k$ (there is also a  direct simple proof of this fact -- see Proposition \ref{prop:embedding MS(n,k) in MS(n,k+1) properties}). So for every $k\ge2$, 
\[
    \lambda_{\min}^*(\Gamma,\kmp_k)\le\lambda_{\min}^*(\Gamma,\kmp_2)\le\lambda_{\min}^*(\Gamma,\kmp_1).
\]
We also suggest a more concrete conjecture about the smallest eigenvalue of $\L(\Gamma,\kmp_k)$ not appearing in $\L(\Gamma,\kmp_{k-1})$: see Conjecture \ref{conj:order of omega_k} below.

\begin{remark}
    Consider a similar process, where there are $k\le n$ indistinguishable particles at the vertices, but \textit{at most one} at every vertex, and where every $B\subseteq[n]$ acts by randomly moving the particles at its vertices among all $\binom{|B|}{K}$ possibilities. Since this process corresponds to a representation of $\sym(n)$ that contains $\rho_{(n-1,1)}$, Caputo's Conjecture \ref{conj:Caputo} yields that the spectral gap with a single particle ($k=1$) is equal to the spectral gap with arbitrary $k$. In particular, this conjecture is known when $\Gamma$ is a graph. In contrast, to the best of our knowledge, Conjecture \ref{conj:pure KMP} is not known even when $\Gamma$ is a graph.
\end{remark}

\subsection{The $\U(n)$-spectrum of a hypergraph contains its $\sym(n)$-spectrum}

Let $\Gamma$ be an arbitrary weighted hypergraph. It is far from a priori clear that the spectrum corresponding to the Laplacian of $\Gamma$ in $\U(n)$ should have anything to do with the corresponding spectrum in $\sym(n)$. However, consider $\kmp_1(\Gamma)$, the discrete KMP process on $\Gamma$ with a single particle, and notice that this is precisely the random walk of a single particle on $\Gamma$. The Laplacian spectrum of this random walk is precisely $\L(\Gamma,\pi_\std)$, where $\pi_\std\cong\pi_{(n-1,1)}\oplus\triv$ is the standard $n$-dimensional permutation representation of $\sym(n)$ mentioned on page \pageref{std of sym}. In particular, Theorem \ref{thm:kmp_k is central block of...} yields that the spectrum of $\L(\Gamma,\pi_\std)$ (this is in $\sym(n)$) is contained in the spectrum of $\L(\Gamma,\rho_{(1,0,\ldots,0,-1)}\oplus\triv)$ in $\U(n)$, and more precisely, $\L(\Gamma,\pi_{(n-1,1)})$ is equivalent to $\L_\torinv(\Gamma,\rho_{(1,0,\ldots,0,-1)})$. 

We show that this fact is not an outlier but the law: 

\begin{theorem} \label{thm:evalues of Sn}
    Let $\Gamma=([n],w)$ be an arbitrary hypergraph with non-negative weights. Then the $\sym(n)$-spectrum of $\Gamma$ is contained in the $\U(n)$-spectrum of $\Gamma$.
   
    More precisely, if $k\in[n]$ and $\nu=(\nu_1,\ldots,\nu_r)\vdash n$ is a partition of $n$ with at most $k$ boxes outside the first row (namely,  $k\le n-\nu_1$), and $\pi_\nu$ the corresponding irreducible $\sym(n)$-representation, then the spectrum of $\L(\Gamma,\pi_\nu)$ is contained in that of $\L(\Gamma,R_{k,k})$.
\end{theorem}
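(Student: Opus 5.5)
Our plan is to find, inside $R_{k,k}=V^{\otimes k}\otimes (V^*)^{\otimes k}$, a subspace that is invariant under $\L(\Gamma,R_{k,k})$ for every $\Gamma$, and on which this Laplacian acts exactly as $\L(\Gamma,\pi)$ for the $\sym(n)$-permutation representation $\pi$ on injective $k$-tuples $(i_1,\ldots,i_k)$ of distinct elements of $[n]$. This $\pi$ is $\mathrm{Ind}_{\sym(n-k)}^{\sym(n)}\triv$, and by Young's rule it contains every $\pi_\nu$ with $n-\nu_1\le k$. The spectrum of $\L(\Gamma,\pi_\nu)$ is therefore contained in that of $\L(\Gamma,\pi)$, which would be contained in that of $\L(\Gamma,R_{k,k})$.

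The natural candidate subspace sits in the torus-invariant part. Writing $e_i$ for the standard basis of $V$ and $e_i^*$ for the dual basis, $\torinv(R_{k,k})$ is spanned by the vectors $e_{i_1}\otimes\cdots\otimes e_{i_k}\otimes e^*_{j_1}\otimes\cdots\otimes e^*_{j_k}$ with $(j_1,\ldots,j_k)$ a permutation of $(i_1,\ldots,i_k)$ as a multiset. Let $W$ be the span of the ``diagonal'' vectors $x_{\mathbf i}=e_{i_1}\otimes\cdots\otimes e_{i_k}\otimes e^*_{i_1}\otimes\cdots\otimes e^*_{i_k}$ over injective tuples $\mathbf i$. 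The map $x_{\mathbf i}\mapsto \mathbf i$ identifies $W$ with the permutation module $\pi$.

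The key computation is $P_B=\int_{\U_B}R_{k,k}(A)\,d\mu_B$ applied to $x_{\mathbf i}$, done via Weingarten calculus. Let $I=\{t : i_t\in B\}$ and $m=|I|$. The tensor factors outside $B$ are fixed. On the factors in $B$, the expectation of $\prod_{t\in I}A_{a_t i_t}\overline{A_{b_t i_t}}$ is
\[
\sum_{\sigma,\tau\in\sym(m)}\delta_{a=b\circ\sigma}\,\delta_{i=i\circ\tau}\,\wg(\sigma\tau^{-1}).
\]
Since the $i_t$ are distinct, $\tau=\mathrm{id}$, and the result is
\[
\sum_{a,\sigma}\wg(\sigma)\, e_a\otimes e^*_{a\circ\sigma}.
\]
This generally leaves $W$: the image contains non-diagonal vectors with $\sigma\ne\mathrm{id}$, as well as vectors with repeated indices $a$. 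So $W$ is not itself invariant, and I must choose differently.

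The better choice is the full torus-invariant subspace, or the space $\MS$-type structure from Theorem~\ref{thm:kmp_k is central block of...}. I would instead use the dual description: define a linear map $\Phi\colon \C[\text{injective }k\text{-tuples}]\to R_{k,k}$ intertwining $\N$-type averaging operators, or rather find a quotient. Concretely, I would consider the $\sym(n)$-equivariant projection $E\colon R_{k,k}\to W$ onto diagonal injective coordinates and show that $E\,P_B|_W$ equals the $\sym(B)$-average on tuples. Spectral containment then needs invariance of $W$ or of its complement.

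The cleanest route I would try first is to replace $W$ by the image of $W$ under the orthogonal projection onto the subspace of $R_{k,k}$ annihilated by all partial contractions. By Corollary~\ref{cor:decomp of sym-k tensor sym-m-dual}-type decompositions, this subspace is the isotypic part of $R_{k,k}$ whose irreps have exactly $k$ positive and $k$ negative weight entries. On this part, Weingarten terms with $a$ repeated or $\sigma\ne\mathrm{id}$ should cancel in the leading block structure, and I would verify that $P_B$ maps the projected $x_{\mathbf i}$ to the average of the projected $x_{\sigma\mathbf i}$ over $\sigma\in\sym(B)$, up to a scalar $c_{m,|B|}$. Since a scalar mismatch would break the claim, I must check that this constant equals $1$; this is the main obstacle. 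If it fails, my fallback is to build the $\sym(n)$-module inside $\torinv(R_{k,k})$ via a Schur--Weyl argument. Here the commutant of $\U_B$ on $R_{k,k}$ is generated by walled Brauer diagrams, and the $\U_B$-invariant projection on torus-invariant vectors is computed combinatorially, matching $Q_\Gamma$ on the $\sym(n)$ side through the $\sym(n)\subset\U(n)$ permutation matrices. That last inclusion is the conceptual reason the statement should hold.
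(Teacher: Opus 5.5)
\textbf{There is a genuine gap.} Your reduction is right: it suffices to find an $\L(\Gamma,\cdot)$-invariant subspace of $R_{k,k}$ on which each $P_B$ acts as the $\sym(B)$-average on injective $k$-tuples, and Young's rule then finishes the proof. You also correctly see that the span $W$ of the $x_{\mathbf{i}}$ is not invariant. But you never produce a subspace that works.

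\emph{The traceless route fails, and not by a fixable scalar.} Let $\Pi$ be the orthogonal projection onto the contraction-free part of $R_{k,k}$. It commutes with $P_B$, and that part has no trivial $\U(n)$-constituent, so $P_{[n]}\Pi x_{\mathbf{i}}=0$. On the other hand, the $\sym([n])$-average of the $\Pi x_{\sigma\mathbf{i}}$ is a multiple of $\Pi\bigl(\sum_{\mathbf{j}}x_{\mathbf{j}}\bigr)$, and this is nonzero for $k\ge 2$. For $k=2$, in the polynomial model, its $\Sym^2V\otimes\Sym^2V^*$-component is proportional to $\sum_{c<d}x_cx_dy_cy_d$. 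This is not in the torus-invariant trace part spanned by the $x_cy_c\sum_x x_xy_x$, since its diagonal coefficients vanish. So your constant is forced to be $0$ for $B=[n]$. More generally, $\Pi$ commutes with permutations of tensor positions, so the twisted terms $e_{\mathbf{a}}\otimes e^*_{\mathbf{a}\circ\sigma}$ become $\sigma$-twists of $\Pi x_{\mathbf{a}}$ rather than cancelling. The walled-Brauer fallback is only a sketch.

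\emph{The missing idea is to antisymmetrize the $V$-factors instead.} Apply to $x_{\mathbf{i}}$ the sign projector of $\sym(k)$ acting on the $V$-positions. Equivalently, work in the subrepresentation $Y_k=\bigwedge^kV\otimes(V^*)^{\otimes k}\subseteq R_{k,k}$ with the vectors $v_{\mathbf{i}}=(e_{i_1}\wedge\cdots\wedge e_{i_k})\otimes e^{i_1}\otimes\cdots\otimes e^{i_k}$, for $\mathbf{i}$ injective.
\begin{itemize}
\item These $v_{\mathbf{i}}$ form a basis of $\torinv(Y_k)$, so their span is $\L(\Gamma,Y_k)$-invariant by Proposition~\ref{prop:torus-inv subspace is invariant for every Gamma}.
\item In your own Weingarten computation, antisymmetry kills every term with a repeated index in $\mathbf{a}$.
\item Each $\sigma$-twisted term folds back onto $v_{\mathbf{a}}$ with the factor $\sign(\sigma)$.
\item Hence the coefficient of $v_{\mathbf{a}}$ in $P_Bv_{\mathbf{i}}$ is $\sum_{\sigma\in\sym(m)}\sign(\sigma)\wg_{m,b}(\sigma)=\frac{1}{b(b-1)\cdots(b-m+1)}=\frac{(b-m)!}{b!}$, by Lemma~\ref{lem:sum and signed sum of weingarten}.
\end{itemize}
This is exactly the probability that a uniform element of $\sym(B)$ sends $\mathbf{i}$ to $\mathbf{a}$. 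So the scalar you identified as the main obstacle comes out to exactly $1$. Symmetrizing instead gives the unsigned Weingarten sum and multisets, i.e.\ the KMP picture of Theorem~\ref{thm:kmp_k is central block of...}, not the permutation module.
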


In particular, the entire $\sym(n)$-spectrum of $\Gamma$ is contained in $\L(\Gamma,R_{n,n})$ (in fact, even in $\L(\Gamma,R_{n-1,n-1})$).
We give a more precise version of these results in $\S$\ref{sec:sym(n) spectrum contained}. 

This connection between $U(n)$ and $\sym(n)$ shows that Caputo's Conjecture \ref{conj:Caputo} in $\sym(n)$ and our Conjecture \ref{conj:main conj} in $\U(n)$ are related not only is spirit but in very concrete way: indeed, together with Theorem \ref{thm:evalues of Sn}, Conjecture \ref{conj:main conj} yields that the minimal non-trivial eigenvalue in the $\sym(n)$-spectrum of $\Gamma$ is at least the smallest one in the spectrum corresponding to $\rho_{(1,0,\ldots,0,-1)}$ (which is precisely the one in Caputo's conjecture), or the smallest one corresponding to $\rho_{(2,0,\ldots,0,-2)}$. In fact, a slightly stronger version of Conjecture \ref{conj:main conj}, a version that we suggest in $\S$\ref{sec:open problems}, yields Caputo's Conjecture \ref{conj:Caputo} for $\sym(n)$ as a special case.

\subsection{Outline and notation}
\subsubsection*{Outline of the paper}
After reviewing some related works in $\S$\ref{subsec:related works}, we collect some preliminary results and facts in $\S$\ref{sec:prelimininaries}: basic facts we will use about the representation theory of $\U(n)$, a quick introduction to Weingarten calculus -- a tool for integrating over Haar-random unitaries, and some basic properties of the spectrum of the Laplacians we study. Section \ref{sec:torus-invariant subspace} discusses the torus-invariant subspaces of a $\U(n)$-representation $\rho$, proves it is invariant under $\L(\Gamma,\rho)$, identifies the torus-invariant subspace of $S_{k,k}$ as KMP with $k$ particles (thus proving Theorem \ref{thm:kmp_k is central block of...}), and shows that torus-invariant subspaces control the spectral gap of $\Gamma$, establishing Theorem \ref{thm:(k,-k) dominates non-balanced parts}. Subsection $\S$\ref{subsec:embed KMP_k in KMP_k+1} also discusses directly the embedding of $\kmp_k(\Gamma)$ in $\kmp_{k+1}(\Gamma)$.

In $\S$\ref{sec:n-1} we analyze hypergraphs supported on subsets of size $\ge n-1$. First, after pointing to useful special properties of this case, we establish in Corollary \ref{cor:smallest-evalue for n-1 mean-field} the precise spectral gap when all weights of subsets of size $n-1$ are equal. This is used, later, in the proof of the general mean-field case in $\S$\ref{sec:mean-field}. Then, in $\S$\ref{subsec:proof of n-1s general case} we carry a detailed analysis of certain real-rooted polynomials leading to a full proof of Theorem \ref{thm:n-1s}. 

Section \ref{sec:mean-field} first proves the mean-field case (Theorem \ref{thm:mean-field}) in full, and then shows that any connected hypergraph admits a positive spectral gap in its $\U(n)$-spectrum. In $\S$\ref{sec:sym(n) spectrum contained} we prove Theorem \ref{thm:evalues of Sn} that the $\sym(n)$-spectrum of $\Gamma$ is contained in its $\U(n)$-spectrum. Section \ref{sec:Possible and impossible extensions} mentions three natural extensions of our conjectures: two of which are probably impossible as they are ruled out by simulations, and one is more plausible. We end in $\S$\ref{sec:open problems} with some intriguing open problems that naturally arise from this work.

\subsubsection*{Notation}
To facilitate the detection of new notation, we usually mark new notation in the right margin. We also list here notation that is used across different sections. Throughout the paper, an ``irrep'' means an irreducible (complex) representation, and ``the $\U(n)$-spectrum of a hypergraph $\Gamma$'' is the union over all finite dimensional representations $\rho$ of $\U(n)$ of the spectrum of $\L(\Gamma,\rho)$. The closure of this set is the spectrum of the Laplacian $\L(\Gamma)=\L(\Gamma,\Regun)$ of the regular representation -- see $\S$\ref{subsec:representations theory of U(n)}.

\paragraph{Representations:} The set of (equivalence classes of) irreps of a group $G$ is denoted $\irr(G)$. We try to denote representations of the symmetric group $\sym(n)$ with the letter $\pi$, and those of $\U(n)$ with the letter $\rho$. In particular, for a partition $\nu\vdash n$, the corresponding irrep of $\sym(n)$ is $\pi_\nu$, and for a non-increasing $\mu\in\Z^n$, the corresponding irrep of $\U(n)$ is $\rho_\mu$. The standard $n$-dimensional representations are marked $\pi_\std$ (for $\sym(n)$) and $\rho_\std$ (for $\U(n)$). The $\U(n)$-representations $S_{k,m}$ and $R_{k,m}$ are defined in $\S$\ref{subsec:Aldous phenomenon in U}. Any two partitions $\nu^+,\nu^-$ give rise to a $\U(n)$-irrep $\rho_{\nu^+,\nu^-}^{(n)}$ for every $n\ge|\nu^+|+|\nu^-|$, introduced in $\S$\ref{subsec:representations theory of U(n)}.
The notation $\wg$ for the Weingarten function is introduced in \ref{subsec:Weingarten}.

\paragraph{Hypergraph measures and Laplacians:} For $B\subseteq[n]$ we denote by $\U_B$ the corresponding subgroup of $\U(n)$ defined in \ref{subsec: hypergraphs measures on U(n)}, by $\mu_B$ the Haar measure on $\U_B$, and by $\mu_\Gamma$ the hypergraph measure corresponding to the hypergraph $\Gamma$. The Laplacian corresponding to $\Gamma$ and a representation $\rho$ is denoted $\L(\Gamma,\rho)$. The smallest eigenvalue of its spectrum is $\lambda_{\min}(\Gamma,\rho)$, and the smallest eigenvalue not associated with the trivial component of $\rho$ is $\lambda^*_{\min}(\Gamma,\rho)$. We denote by $P_B$ the orthogonal projection onto the $\U_B$-invariant subspace in some $\U(n)$-representation $\rho$, and by $\L_B=I-P_B$ the associated Laplacian. 

\paragraph{Torus-invariant subspace:} We mark the torus subgroup of $\U(n)$ by $T_n$, the torus-invariant subspace of a $\U(n)$-representation $\rho$ by $\torinv(\rho)$, and the restriction of $\L(\Gamma,\rho)$ to this invariant subspace by $\L_\torinv(\Gamma,\rho)$. 

\paragraph{KMP:} The basic notation for the KMP processes is introduced in $\S$\ref{subsec:intro-KMP}. This includes the family $\ms{S}{k}$ of size-$k$ multisets with elements from the set $S$, the size $\ms{|S|}{k}$ of this family, the complex vector space $\MS(n,k)$ formally spanned by $\ms{[n]}{k}$, and the linear action $\N_B$ of $B\subseteq[n]$ on this space. This leads to $\kmp_k(\Gamma)$ -- the discrete KMP process with $k$ indistinguishable particles defined on $\Gamma$. The linear embedding of $\MS(n,k)$ into $\MS(n,k+1)$ is denoted $\Psi_k$ and introduced in $\S$\ref{subsec:embed KMP_k in KMP_k+1}. We also introduce there the subspace $\pure(n,k)\le\MS(n,k)$ and the notation $\delta_\ii$ and $\#_x(\ii)$ for $\ii\in\ms{[n]}{k}$.

\paragraph{Additional notation:} We mark by $I_d,J_d\in\gl_d(\C)$ the  identity matrix and the all-one matrix, respectively. If $T_1,T_2$ are two operators on the regular representation of $\U(n)$, we write $T_1\le T_2$ if $T_2-T_1$ is positive semi-definite -- see $\S$\ref{subsec:spectrum is real non-negative and bounded}.

\subsection{Related works} \label{subsec:related works}

\paragraph{Aldous-type phenomena in groups.} 
Several attempts to find extensions within the symmetric group of the original Aldous spectral gap conjecture are discussed in \cite{piras2010generalizations,aldousorder,cesi2016few,parzanchevski2020aldous,alon2025aldous-caputo}. Generalizations to wreath products of the form $G\wr\sym(n)$ include \cite{cesi2020spectral,Ghosh23,AlonGhosh25,levhari2026wreath}. 

In a rather different circle of ideas, Kac \cite{kac1956foundations} introduced a model of random energy-preserving ``molecular collisions'', which translates to a random walk on the group $\mathrm{SO}(n)$, where 
first a pair of coordinates $i,j\in[n]$, $i\ne j$ is chosen uniformly at random, and then a random element in $\mathrm{SO}_{\{i,j\}}\le\mathrm{SO}(n)$ is chosen according to some fixed, symmetric distribution on $\mathrm{SO}(2)\cong S^1$. When this distribution is Haar, we get an instance of our hypergraph measures model (corresponding to the complete graph with constant weight). This model was studied extensively. In particular, the papers \cite{maslen2003eigenvalues,carlen2003determination} prove that under mild assumptions on the fixed probability distribution on $\mathrm{SO}(2)$, the spectral gap is obtained in a certain irreducible representation of $\mathrm{SO}(n)$ of dimension of order $n^4$ (e.g., \cite[Thm.~2.1]{maslen2003eigenvalues}): a representation involving polynomials in $n$ variables of total degree 4. This is very much in the same spirit of our Theorem \ref{thm:mean-field}. 
It is highly plausible that a suitable version of Conjecture \ref{conj:main conj} holds for general hypergraph measures in $\mathrm{SO}(n)$ (or in $\mathrm{O}(n)$) -- see also $\S$\ref{sec:open problems}.

\paragraph{Random walks on $\U(n)$.} 
Quite a few papers study the spectral gaps of various random walks on $\U(n)$, $\mathrm{SU}(n)$ or in more general compact Lie groups. We mention some highlights (we certainly do not aim to present an exhaustive list). The works \cite{bourgain2008spectral,bourgain2012spectral,benoist2016spectral} establish a spectral gap for random walks supported on a finite set of elements with algebraic entries generating dense subgroups. (It is a major open problem whether the same result holds for Haar-random finite tuples of elements.) In \cite{bourgain2017random}, Bourgain considers a measure on $\mathrm{SU}(n)$ (or similarly on $\mathrm{SO}(n)$) which resembles the Kac's model from above only the underlying graph is not the complete graph but rather a cycle on $n$ vertices, with a fixed measure on $\mathrm{SU}(2)$ (or on $\mathrm{SO}(2)$, respectively), and analyses a type of mixing time for this random walk.

\paragraph{KMP and related processes.}
Several works analyze processes that are related or similar to the discrete KMP process we describe in $\S$\ref{subsec:intro-KMP} above. We mentioned above the original paper \cite{kipnis1982heat} where both the continuous and discrete versions of KMP were introduced. The recent paper \cite{kim2025spectral} analyses the spectral gap of the continuous KMP process on graphs. In \cite{quattropani2023mixing}, the authors study a process where $k$ \textit{distinguishable} particles lie on the vertices of a graph, and similarly to the discrete KMP process, when an edge rings the adjacent particles are redistributed uniformly among all possible options (this is at least one prominent special case of what they call the ``Binomial splitting process''). The spectral gap of this processes is obtained already with one particle ($k=1$) \cite[Thm.~2.1]{quattropani2023mixing}, in contrast to our results in the current paper. Two other related processes, although a bit more remote, are the ``symmetric inclusion process'' discussed in \cite{kim2024spectral}, and the ``generalized exclusion process'' discussed in \cite{Kanegae04032026}.
    
\paragraph{Quantum circuits.} Finally, we also mention the recent line of works about spectral gaps of certain random quantum circuits, e.g., \cite{haah2025efficient,chen2025incompressibility}. In these models, one considers the group $\mathrm{SU}(2^n)$ and certain diagonal embedding of Haar-random unitary matrices from $\mathrm{SU}(2^k)$. These papers give bounds for the spectral gaps of these measures on $\mathrm{SU}(2^n)$. In particular, in page 12 of the arXiv version of \cite{chen2025incompressibility}, it is pointed out that simulation carried out by Nick Hunter-Jones suggest a phenomenon similar to our Conjecture \ref{conj:main conj}: that the spectral gap is obtained in $R_{2,2}$. See also $\S$\ref{sec:open problems}.

\subsection*{Acknowledgments}
We are in debt to Gady Kozma for sharing our obsession with the Aldous spectral gap conjecture and for taking part in a crucial discussion in the summer of 2023 which was the starting point of this paper. Had he not gone on sabbatical a week later, he would have certainly been an integral part of this work. We also thank Elon Lindenstrauss, Omri Solan, Thomas Spencer and Avi Wigderson for beneficial discussions. We thank the Institute for Advanced Study in Princeton, NJ for hosting the second author long-term, and allowing the first author to come for a long visit: much of the progress towards this paper was obtained during this visit. D.P.~was supported by the European Research Council (ERC) under the European Union’s Horizon 2020 research and innovation programme (grant agreement No 850956), by the Israel Science Foundation, ISF grants 1140/23, the National Science Foundation under Grant No.~DMS-1926686, as well as by the Kovner Member Fund at the IAS, Princeton.


\section{Preliminaries} \label{sec:prelimininaries}

\subsection{The representation theory of $U(n)$} \label{subsec:representations theory of U(n)}

We mention some facts about the representation theory of $\U(n)$ which will be useful below. 

\subsubsection*{Representations of compact groups}
The most basic principles of the representation theory of $\U(n)$ come from its being compact. Let $G$ be an arbitrary compact group. A (complex) representation of $G$ is a continuous homomorphism $G\to\mathrm{GL}(V)$ where $V$ is a complex Banach space. It is irreducible if it has no proper nonzero invariant subspaces. All irreps (complex irreducible representations) of $G$ are finite-dimensional, and every finite $m$-dimensional representation is equivalent to a unitary representation (a continuous homomorphism $G\to\U(m)$).\footnote{These and other general facts about the representation theory of compact groups can be found in, e.g., \cite[Part I]{bump2013lie} or \cite[$\S$5]{folland2016course}.} Each finite-dimensional representation has a unique decomposition to irreps. (More precisely, it has a unique decomposition into isotypic spaces, and the multiplicity of every irrep is unique.) This is true, more generally, to every possibly-infinite dimensional \textit{unitary} representation of $G$ (a not-necessarily-finite-dimensional unitary representation is a homomorphism $\rho\colon G\to\mathrm{GL}(H)$ where $H$ is a Hilbert space and $\rho$ preserves the inner product in $H$).

The (left) regular representation $\Reg_G$ of $G$ is the linear action by $G$ on $L^2(G)$ by $(g.f)(x)=f(g^{-1}x)$. The Peter-Weyl theorem states that 
\[
    \reg_G\cong \widehat{\bigoplus}_{\rho\in\irr(G)}\rho^{\oplus \dim(\rho)},
\]
where $\widehat{\bigoplus}$ denotes the closure of the direct sum.

\subsubsection*{The regular representation of $\U(n)$ and the existence of a spectral gap}

Recall that in $\S$\ref{subsec: hypergraphs measures on U(n)} we defined the measure $\mu_B$ on $\U(n)$ for every $B\subseteq[n]$, the measure $\mu_\Gamma=\sum w_B\mu_B$ for any weighted hypergraph $\Gamma=([n],w)$, and the Laplacian operator $\L(\Gamma,\rho)$ for any finite dimensional representation $\rho$ of $\U(n)$. The definition of the Laplacian naturally extends to infinite dimensional representations. In particular, denote by $\L(\Gamma)=\L(\Gamma,\Regun)$\marginpar{$\L(\Gamma)$} the Laplacian operator on $\Regun$\marginpar{$\Regun$}, the regular representation of $\U(n)$ associated with $\Gamma$. It is given by
\begin{equation} \label{eq:Laplacian on Reg of U(n)}
    \left(\L(\Gamma).f\right)(x) = \int_{A\in \U(n)} \left(f(x)-f(A^{-1}x)\right)\mu_\Gamma = \sum_{B\subseteq[n]} w_B\int_{A\in U_B} \left(f(x)-f(A^{-1}x)\right)\mu_B.
\end{equation}
for any $f\in L^2(\U(n))$. By the Peter-Weyl theorem, the spectrum of $\L(\Gamma)$ is the closure of the $\U(n)$-spectrum of $\Gamma$, namely,
\[ 
    \overline{\bigcup_{\rho\in\irr(\U(n))}\mathrm{Spec}\left(\L(\Gamma,\rho)\right)}.
\] 
There is one trivial, zero eigenvalue in the spectrum of $\L(\Gamma,\Regun)$, corresponding to the constant function in $L^2(\U(n))$ (equivalently, to the trivial representation). 
The non-trivial spectrum of $\L(\Gamma)$ is the closure of the same union without the trivial irrep, and its infimum is the spectral gap of this Laplacian operator. Hence, Theorems \ref{thm:mean-field} and \ref{thm:n-1s}, and Conjectures \ref{conj:main conj} and \ref{conj:main KMP style}, can be equivalently stated as saying that for the appropriate weighted hypergraphs, the spectral gap of $\L(\Gamma)$ (which could be zero) is equal to $\lambda_{\min}\left(\Gamma, \rho_{(1,0,\ldots,0,-1)}\oplus \rho_{(2,0,\ldots,0,-2)}\right)$, or, equivalently, to the spectral gap $\lambda_{\min}^*(\Gamma,\kmp_2)$ of $\L(\Gamma,\kmp_2)$.

\medskip

One may wonder if our conjectures pass the following, much easier, ``test": is it true that both operators -- $\L(\Gamma,\kmp_2)$ and $\L(\Gamma)$ -- admit positive spectral gaps\footnote{For most authors, the word `positive' here is redundant, as by saying that some operator has a spectral gap, they mean explicitly it has a positive one.} for the same hypergraphs $\Gamma$?

Indeed, in both cases we know precisely when the spectral gap is positive: when the hypergraph $\Gamma=([n],w)$ is \textit{connected}\label{page:connected definition}. We say that the hypergraph $\Gamma=([n],w)$ is \textit{connected} if the underlying hypergraph given by the support of $w$ is connected. Namely, if there is no non-trivial partition of the vertices $[n]=E_1\sqcup E_2$ with $E_1,E_2\ne\emptyset$, such that for all $B\subseteq[n]$, if $w_B>0$ then $B\subseteq E_1$ or $B\subseteq E_2$.

Assume first that $\Gamma$ is not connected. It is easy to see it has zero spectral gap in both operators: In $L^2(\U(n))$, the indicator function of $\U_{E_1}$ is another eigenfunction with eigenvalue zero, which is not spanned by the constant function. Likewise, the uniform distribution on $\ms{E_1}{2}$ has eigenvalue zero in $\kmp_2(\Gamma)$.

Conversely, assume that $\Gamma$ is connected. A vector $v\in\MS(n,2)$ is an eigenvector of $\L(\Gamma,\kmp_2)$ with eigenvalue zero if and only if it is invariant under $N_B$ for all $B$ in the support of $\Gamma$. It is a simple exercise to see that this condition yields that $v$ must be a constant function on $\ms{[n]}{2}$.

As for the regular representation, the existence of a spectral gap when $\Gamma$ is connected basically follows from the criterion \cite[Thm.~1.1]{benoist2016spectral}.\footnote{We thank Elon Lindenstrauss for pointing this out to us.} We give an alternative proof of this fact in $\S$\ref{subsec:spectral gap for connected hypergraphs}.

\subsubsection*{The branching rule in $\U(n)$}
Recall that the irreps of $\U(n)$ are classified by their highest weight vectors which are given by non-increasing integer sequences of length $n$. The entries of the vectors are called weights. (These vectors are based on eigenvectors and eigenvalues of the restriction of the irrep to the torus $T_n$.)

Let $\mu=(\mu_1,\ldots,\mu_n)\in\mathbb{Z}^n$ and $\pi=(\pi_1,\ldots,\pi_{n-1})\in\mathbb{Z}^{n-1}$ be non-increasing vectors. We say that $\mu$ and $\pi$ \textbf{interlace} if
\[
    \mu_1 \ge \pi_1 \ge \mu_2 \ge \pi_2 \ge \ldots \ge \pi_{n-1} \ge \mu_n.
\]
The branching rule of $\U(n)$, which describes the decomposition of an irrep of $\U(n)$ when restricted to $U(n-1)\cong\U_{[n-1]}$, will be useful for us. It appears, for example, as \cite[Thm.~41.1]{bump2013lie}. 

\begin{theorem}[The branching rule in $\U(n)$] \label{thm:branching rule}
Let $\mu=(\mu_1,\ldots,\mu_n)\in\mathbb{Z}^n$ be a non-increasing vector. Then the restriction of $\rho_\mu$ to $\U_{[n-1]}\cong\U(n-1)$ is given by 
\[
    \rho_\mu|_{\U_{[n-1]}} \cong \bigoplus_{\pi\,\colon\,\pi,\mu~\mathrm{interlace}} \rho_\pi,
\]
where the sum is over the non-increasing vectors $\pi\in\mathbb{Z}^{n-1}$ such that $\pi$ and $\mu$ \textit{interlace}. In particular, the decomposition of $\rho_\mu|_{\U_{[n-1]}}$ is multiplicity-free.
\end{theorem}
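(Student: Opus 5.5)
The plan is to prove the branching rule by comparing characters. A finite-dimensional representation of $\U(n-1)$ is determined up to equivalence by its character, and a class function on $\U(n-1)$ is determined by its restriction to the torus $T_{n-1}$, since every element of $\U(n-1)$ is diagonalizable by a unitary. So it suffices to show that the character of $\rho_\mu$, evaluated at $\mathrm{diag}(x_1,\ldots,x_{n-1},1)$, equals $\sum_\pi \chi_{\rho_\pi}(x_1,\ldots,x_{n-1})$, where the sum is over the $\pi$ that interlace $\mu$. Multiplicity-freeness is then automatic, because each interlacing $\pi$ occurs exactly once in the sum.

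\textbf{Step 1: reduce to polynomial representations.} Tensoring with $\det^{c}$ shifts every weight by $c$. On $\U_{[n-1]}$ the restriction of $\det^c$ is $\det^c$ of $\U(n-1)$, and interlacing is preserved under the simultaneous shift $\mu\mapsto\mu+c$, $\pi\mapsto\pi+c$. Hence we may assume $\mu_n\ge 0$, so that $\mu$ is a partition with at most $n$ parts.

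\textbf{Step 2: pass to Schur polynomials.} By the Weyl character formula, the character of $\rho_\mu$ on the torus is
\[
s_\mu(x_1,\ldots,x_n)=\frac{\det\bigl(x_i^{\mu_j+n-j}\bigr)}{\det\bigl(x_i^{n-j}\bigr)}.
\]
So we must prove the identity
\[
s_\mu(x_1,\ldots,x_{n-1},1)=\sum_{\pi\ \mathrm{interlacing}\ \mu} s_\pi(x_1,\ldots,x_{n-1}).
\]

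\textbf{Step 3: prove the identity.} I would use the bialternant directly. Set $x_n=1$ and expand the numerator determinant along the row of $x_n$. Alternatively, subtract consecutive columns and use the telescoping identity
\[
\frac{x^{a}-x^{b}}{x-1}=x^{b}+x^{b+1}+\cdots+x^{a-1},
\]
together with the factorization of the Vandermonde denominator $\prod_{i<n}(x_i-1)$. This expresses the numerator as an $(n-1)\times(n-1)$ determinant whose $j$-th column is a sum of monomial columns with exponents $\pi_j+n-1-j$, where $\pi_j$ ranges over $[\mu_{j+1},\mu_j]$. Multilinearity in the columns then gives exactly the sum of $(n-1)$-variable bialternants over interlacing $\pi$. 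Terms in which two columns coincide vanish, and the admissible ranges are precisely the interlacing inequalities.

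A quicker alternative for Step 3 is to use the combinatorial formula for $s_\mu$ as a sum over semistandard tableaux. The entries equal to $n$ occupy a horizontal strip $\mu/\pi$, and $\mu/\pi$ is a horizontal strip exactly when $\pi$ interlaces $\mu$. Removing those entries gives the identity at once. However, this requires the equivalence of the tableau definition with the Weyl character, which is itself nontrivial.

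The main obstacle is the determinant manipulation in Step 3: one must check carefully that after the column operations the exponent ranges come out as exactly $\mu_{j+1}\le\pi_j\le\mu_j$, with no overlaps or sign errors. I would handle this by first running the computation in the case $n=2$ as a sanity check, and then doing the general column reduction by induction on the columns. Everything else is standard character theory of compact groups.
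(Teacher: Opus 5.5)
Your argument is correct. The paper gives no proof of this statement. It imports the branching rule from \cite[Thm.~41.1]{bump2013lie} and uses it only in $\S$\ref{sec:n-1}, to see that $\rho_\mu$ has nonzero $\U_{[n-1]}$-invariants only when $\mu=(k,0,\ldots,0,-m)$, and that these invariants then form a line. You instead give a self-contained derivation from the Weyl character formula. You reduce, via characters on the torus, to the identity $s_\mu(x_1,\ldots,x_{n-1},1)=\sum_\pi s_\pi(x_1,\ldots,x_{n-1})$, and prove it by differencing consecutive columns of the bialternant. This needs nothing beyond linear algebra: no Pieri rule and no tableau expansion of $s_\mu$. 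Multiplicity-freeness then follows from the linear independence of irreducible characters of $\U(n-1)$.

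The step you flag as the main obstacle is clean, provided you use the column-difference version rather than a raw expansion along the $x_n$-row. Set $\ell_j=\mu_j+n-j$ and replace column $j$ by column $j$ minus column $j+1$ for $j<n$. This turns the row of $x_n=1$ into $(0,\ldots,0,1)$, so the cofactor expansion carries sign $+1$. Row $i$ of the remaining minor factors as $(x_i-1)$ times $\bigl(\sum_{m=\ell_{j+1}}^{\ell_j-1}x_i^m\bigr)_j$. The same operations on the Vandermonde (the case $\mu=0$) give exactly $\prod_{i<n}(x_i-1)$ times the $(n-1)$-variable Vandermonde, so that factor cancels.

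The ranges $[\ell_{j+1},\ell_j-1]$ are pairwise disjoint, each lying strictly above the next. So in the multilinear expansion no two columns ever coincide and no reordering sign arises; your remark about vanishing terms is vacuous rather than needed. The substitution $m_j=\pi_j+n-1-j$ turns these ranges into exactly $\mu_{j+1}\le\pi_j\le\mu_j$.
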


\subsubsection*{The decomposition of $\rkl$ and of $\skm$}

Any non-increasing sequence $\mu\in\mathbb{Z}^n$ can also be described in terms of a pair of partitions $(\nu^+,\nu^-)$ as follows. Denote by $r^+$ (respectively, $r^-$) the number of positive (respectively, negative) weights in $\mu$. Let $\nu^+=(\nu^+_1,\ldots,\nu^+_{r^+})$ with $\nu^+_1\ge\ldots\ge\nu^+_{r^+}\ge1$ consist of the positive weights in $\mu$, and let $\nu^-=(\nu^-_1,\ldots,\nu^-_{r^-})$ with $\nu^-_1\ge\ldots\ge\nu^-_{r^-}\ge1$ consist of the opposites of the negative weights in $\mu$. Obviously, $r^++r^-\le n$, and we have 
\[
    \mu=(\nu^+_1,\ldots,\nu^+_{r^+},\underbrace{0,\ldots,0}_{\scriptstyle{n-r^+-r^-~\mathrm{times}}},-\nu^-_{r^-},\ldots,-\nu^-_1).
\]
We also write the irrep $\rho_\mu$ as $\rho_{\nu^+,\nu^-}^{(n)}$.\marginpar{$\rho_{\nu^+,\nu^-}^{(n)}$}

Recall from \eqref{eq:def of Rkl} the representation $\rkl$ of $\U(n)$ given by $A\mapsto A^{\otimes k}\otimes \overline{A}^{\otimes m}$ for any $k,m\in\mathbb{Z}_{\ge0}$. This notation applies to arbitrary $n$. When we need to specify the particular value of $n$ considered, we denote $\rkl^{(n)}$\marginpar{$\rkl^{(n)}$}.
It follows from the Schur-Weyl duality that the decomposition of $R_{k,0}^{(n)}$ consists of all the representations $\rho_{\nu,\emptyset}^{(n)}$ with $\nu\vdash k$ and $\ell(\nu)\le n$,\footnote{For a partition $\nu=(\nu_1,\ldots,\nu_r)\vdash k$ denote by $|\nu|$ the sum of numbers, namely, $|\nu|=k=\sum\nu_i$, and by $\ell(\nu):=r$ the number of parts. We denote by $\emptyset$ the empty partition, so $\emptyset\vdash0$.} where if $\rho_{\nu,\emptyset}^{(n)}$ appears, it does so with multiplicity given by the dimension of the $\sym(k)$-representation $\pi_\nu$. This result generalizes to all $\rkl$, even when $m\ge1$, as long as $n$ is large enough.

\begin{theorem} \cite[Thm.~2.12]{benkart1994tensor} \label{thm:decomposition of Rkl} Let $k,m\in\mathbb{Z}_{\ge0}$. For all $n\ge k+m$, 
\begin{equation} \label{eq:decomp of Rkm for large n}
    \rkl^{(n)}\cong \sum_{j=0}^{\min(k,m)} 
    \sum_{\substack{ \nu^+\vdash k-j \\ \nu^-\vdash m-j}}
    j!\binom{k}{j}\binom{m}{j}\dim(\pi_{\nu^+})\dim(\pi_{\nu^-}) \rho^{(n)}_{\nu^+,\nu^-}
\end{equation}
where $\pi_{\nu^+}$ and $\pi_{\nu^-}$ are the corresponding representations of~ $\Sym(k-j)$ and~ $\sym(m-j)$, respectively.
\end{theorem}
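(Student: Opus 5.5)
This is a cited result (Benkart et al., Thm.~2.12), but a self-contained proof would go as follows.

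The plan is an induction on $m$ that iterates a contraction decomposition. Start from $m=0$, where Schur--Weyl duality gives $R_{k,0}^{(n)}\cong\bigoplus_{\nu\vdash k}\dim(\pi_\nu)\rho^{(n)}_{\nu,\emptyset}$, and the condition $\ell(\nu)\le n$ holds automatically since $n\ge k$. Dually, $R_{0,m}\cong\bigoplus_{\nu\vdash m}\dim(\pi_\nu)\rho^{(n)}_{\emptyset,\nu}$, because the dual of $\rho_{\nu,\emptyset}$ is $\rho_{\emptyset,\nu}$. This reduces the problem to decomposing the tensor product
\[
\rho^{(n)}_{\alpha,\emptyset}\otimes\rho^{(n)}_{\emptyset,\beta},\qquad \alpha\vdash k,\ \beta\vdash m,
\]
for $n\ge k+m$.

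\textbf{Step 1: the mixed tensor product.} For $n\ge|\alpha|+|\beta|$ I expect the stable Koike formula
\[
\rho_{\alpha,\emptyset}\otimes\rho_{\emptyset,\beta}\cong\bigoplus_{\gamma}\bigoplus_{\nu^+,\nu^-}c^{\alpha}_{\gamma\nu^+}c^{\beta}_{\gamma\nu^-}\,\rho_{\nu^+,\nu^-},
\]
with Littlewood--Richardson coefficients $c$. I would prove it by comparing characters. Multiply the Schur polynomials $s_\alpha(x)s_\beta(x^{-1})$ and expand in the rational characters $s_{[\nu^+,\nu^-]}$ of $\U(n)$ using Littlewood's identity $s_{[\nu^+,\nu^-]}=\sum_{\delta}(-1)^{|\delta|}s_{\nu^+/\delta}(x)s_{\nu^-/\delta'}(x^{-1})$. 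Inverting this identity, which is valid once $n\ge\ell(\nu^+)+\ell(\nu^-)$, yields the formula. This step is the main obstacle. The delicate point is that no modification rules (cancellations between terms) occur, and that is exactly what the hypothesis $n\ge k+m$ guarantees.

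\textbf{Step 2: summing up.} Substitute Step 1 into $R_{k,m}\cong R_{k,0}\otimes R_{0,m}$. Fix $j=|\gamma|$, $\nu^+\vdash k-j$ and $\nu^-\vdash m-j$. The multiplicity of $\rho_{\nu^+,\nu^-}$ is then
\[
\sum_{\gamma\vdash j}\Big(\sum_{\alpha\vdash k}f^\alpha c^\alpha_{\gamma\nu^+}\Big)\Big(\sum_{\beta\vdash m}f^\beta c^\beta_{\gamma\nu^-}\Big),
\]
where $f^\lambda=\dim\pi_\lambda$. Next use the branching identity $\sum_{\alpha}f^\alpha c^\alpha_{\gamma\nu^+}=\binom{k}{j}f^\gamma f^{\nu^+}$. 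This is the dimension count of $\mathrm{Ind}_{\sym(j)\times\sym(k-j)}^{\sym(k)}(\pi_\gamma\otimes\pi_{\nu^+})$ after restricting back. The multiplicity becomes
\[
\binom kj\binom mj f^{\nu^+}f^{\nu^-}\sum_{\gamma\vdash j}(f^\gamma)^2=j!\binom kj\binom mj f^{\nu^+}f^{\nu^-},
\]
which is exactly \eqref{eq:decomp of Rkm for large n}.

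\textbf{Alternative route.} One could instead use walled Brauer algebra Schur--Weyl duality: the commutant of $\U(n)$ on $V^{\otimes k}\otimes(V^*)^{\otimes m}$ is the walled Brauer algebra $B_{k,m}(n)$, which is semisimple for $n\ge k+m$. Its irreps are indexed by $(\nu^+,\nu^-)$ with $|\nu^\pm|=k-j,\,m-j$. Their dimensions are $j!\binom kj\binom mj f^{\nu^+}f^{\nu^-}$: choose $j$ contracted pairs across the wall, match them in $j!$ ways, and tensor with Specht modules on the remaining strands. The first route is more elementary given the paper's tools, so I would follow it.
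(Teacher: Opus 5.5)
Your argument is correct, but it follows a different route from the source. The paper gives no proof of this statement; it imports it from Benkart et al., where it is obtained through the centralizer algebra of the mixed tensor space. That is essentially your ``alternative route'':
\begin{itemize}
\item for $n\ge k+m$ the walled Brauer algebra $B_{k,m}(n)$ maps isomorphically onto $\End_{\U(n)}(R_{k,m})$ and is semisimple;
\item the double-centralizer theorem then turns the dimensions $j!\binom kj\binom mj f^{\nu^+}f^{\nu^-}$ of its simple modules into multiplicities.
\end{itemize}

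Your main route is Schur--Weyl duality on each factor, then Koike's stable formula \eqref{eq:Koike formula for tensor of poly and anti-poly}, which the paper already quotes. You finish with the identities $\sum_{\alpha\vdash k}f^\alpha c^\alpha_{\gamma\nu^+}=\binom kj f^\gamma f^{\nu^+}$ and $\sum_{\gamma\vdash j}(f^\gamma)^2=j!$. The key check is right. Every $\rho_{\eta,\xi}$ that occurs has $\ell(\eta)+\ell(\xi)\le|\eta|+|\xi|\le k+m\le n$, so all of Koike's terms $[\rho_{\eta,\xi}]_n$ are genuine irreps. Distinct pairs $(\eta,\xi)$ have distinct highest weights, so collecting multiplicities pair by pair is legitimate. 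It would be worth stating this last point explicitly.

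What your route buys is brevity and uniformity with the paper. It is exactly the mechanism the paper uses for Corollary~\ref{cor:decomp of sym-k tensor sym-m-dual}, just summed over the Schur--Weyl multiplicities $f^\alpha f^\beta$. The cost is that all the depth is outsourced to Koike's specialization theorem for universal characters.

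The walled Brauer route needs more machinery: the first fundamental theorem of invariant theory, and faithfulness of $B_{k,m}(n)$ for $n\ge k+m$. In exchange it explains the multiplicity structurally: $j$ cross-wall arcs, $j!$ matchings of them, and Specht modules on the free strands. It also identifies the multiplicity spaces as modules over the commutant.

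One presentational point: your opening sentence announces ``an induction on $m$ that iterates a contraction decomposition,'' but no induction or contraction map appears in what you actually carry out. That description fits the traceless-tensor approach instead, so make the stated plan match the argument.
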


For example, for every $n\ge4$ we have \label{page:R_22 decomp}
\begin{equation*}
    R_{2,2}  \cong  \rho_{(2,0,\ldots,0,-2)} \oplus \rho_{(2,0,\ldots,0,-1,-1)} \oplus \rho_{(1,1,0,\ldots,0,-2)} \oplus \rho_{(1,1,0,\ldots,0,-1,-1)} \oplus 4\cdot\rho_{(1,0,\ldots,0,-1)} \oplus 2\cdot\triv.
\end{equation*}
(In contrast, in $\U(3)$ the decomposition is $R_{2,2} \cong \rho_{(2,0,-2)} \oplus \rho_{(2,-1,-1)} \oplus \rho_{(1,1,-2)} \oplus 4\cdot\rho_{(1,0,-1)} \oplus 2\cdot\triv$, and in $\U(2)$ it is  $R_{2,2} \cong \rho_{(2,-2)} \oplus 3\cdot\rho_{(1,-1)} \oplus 2\cdot\triv$.)

When $n$ is small, namely, when $n< k+m$, Koike \cite{koike1989decomposition} gives a precise recipe for computing the decomposition of $\rkl^{(n)}$ to irreps. An important property for us is that for all $n$, $\rkl^{(n)}$ contains a sub-representation he denotes by $T^\bullet_{k,m}$, and its decomposition, given in \cite[Thm.~1.1]{koike1989decomposition}, is precisely
\[
    T^\bullet_{k,m}\cong \bigoplus_{\substack{\nu^+\vdash k,\nu^-\vdash m\\\ell(\nu^+)+\ell(\nu^-)\le n}} \dim(\pi_{\nu^+})\dim(\pi_{\nu^-})  \rho^{(n)}_{\nu^+,\nu^-}.
\]
Note that when $n\ge k+m$, the sub-representation $T^\bullet_{k,m}$ corresponds exactly to the summand $j=0$ in the main sum in \eqref{eq:decomp of Rkm for large n}.

\begin{cor}\cite{koike1989decomposition} \label{cor:koike nu+nu- appears in R_km}
    Let $\nu^+\vdash k$ and $\nu^-\vdash m$ be integer partition. Then $\rho_{\nu^+,
    \nu^-}^{(n)}$ appears in the decomposition of $\rkl^{(n)}$ for every $n\ge\ell(\nu^+)+\ell(\nu^-)$.
\end{cor}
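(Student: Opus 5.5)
The plan is to deduce the corollary directly from Koike's decomposition of $T^\bullet_{k,m}$ quoted above, so essentially no new argument is required. Set $k=|\nu^+|$ and $m=|\nu^-|$, and fix $n\ge\ell(\nu^+)+\ell(\nu^-)$. By \cite[Thm.~1.1]{koike1989decomposition}, for every $n$ the representation $\rkl^{(n)}$ contains the sub-representation $T^\bullet_{k,m}$. Its decomposition into irreps runs over all pairs $(\lambda^+,\lambda^-)$ with $\lambda^+\vdash k$, $\lambda^-\vdash m$ and $\ell(\lambda^+)+\ell(\lambda^-)\le n$, and each pair occurs with multiplicity $\dim(\pi_{\lambda^+})\dim(\pi_{\lambda^-})$.

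Our pair $(\nu^+,\nu^-)$ satisfies the indexing condition by assumption. Its multiplicity $\dim(\pi_{\nu^+})\dim(\pi_{\nu^-})$ is a product of dimensions of genuine irreps of $\sym(k)$ and $\sym(m)$, hence is at least $1$.

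It remains to confirm that $\rho^{(n)}_{\nu^+,\nu^-}$ is well defined under this hypothesis. The vector $\mu=(\nu^+_1,\ldots,\nu^+_{r^+},0,\ldots,0,-\nu^-_{r^-},\ldots,-\nu^-_1)$ has length $n$ exactly when $r^++r^-\le n$, which is our hypothesis. So $\rho^{(n)}_{\nu^+,\nu^-}$ is a summand of $T^\bullet_{k,m}$, and since $T^\bullet_{k,m}\le\rkl^{(n)}$ it appears in the decomposition of $\rkl^{(n)}$.

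The only step needing care is this bookkeeping: the condition $\ell(\nu^+)+\ell(\nu^-)\le n$ is exactly what makes $\rho^{(n)}_{\nu^+,\nu^-}$ defined, and it matches Koike's indexing set. There is no real obstacle.

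For $n\ge k+m$ there is an alternative route through Theorem~\ref{thm:decomposition of Rkl}: take the $j=0$ summand of \eqref{eq:decomp of Rkm for large n}. That route does not cover $\ell(\nu^+)+\ell(\nu^-)\le n<k+m$, which is why Koike's result is needed.
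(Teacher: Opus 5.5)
Your proposal is correct and matches the paper's argument. The corollary is read off directly from Koike's decomposition of the subrepresentation $T^\bullet_{k,m}\le R_{k,m}^{(n)}$: the pair $(\nu^+,\nu^-)$ lies in its indexing set exactly when $\ell(\nu^+)+\ell(\nu^-)\le n$, and it then occurs with multiplicity $\dim(\pi_{\nu^+})\dim(\pi_{\nu^-})\ge1$.
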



Koike also provides in \cite[Cor.~2.3.1]{koike1989decomposition} a precise formula for the decomposition of $\rho_{\nu^+,\emptyset}^{(n)}\otimes\rho_{\emptyset,\nu^-}^{(n)}$:
\begin{equation}\label{eq:Koike formula for tensor of poly and anti-poly}
    \rho_{\nu^+,\emptyset}^{(n)}\otimes\rho_{\emptyset,\nu^-}^{(n)} = \sum_{\tau,\eta,\xi}c^{\nu^+}_{\tau,\eta}~ c^{\nu^-}_{\tau,\xi} ~\left[\rho_{\eta,\xi}\right]_n,
\end{equation}
where the sum is over partitions $\tau,\eta,\xi$, the numbers $c^{\nu^+}_{\tau,\eta}$ and $c^{\nu^-}_{\tau,\xi}$ are the Littlewood-Richardson coefficients (e.g., \cite[$\S$I.9]{macdonald1998symmetric}), and $[\rho_{\eta,\xi}]_n$ is given in \cite[Prop.~2.2]{koike1989decomposition}: if $n\ge\ell(\eta)+\ell(\xi)$, it is simply $\rho_{\eta,\xi}^{(n)}$, and otherwise it is zero or $\pm$(some irreducible representation). Note that the summation is finite as, for example, we must have that in terms of Young diagrams, $\tau$ is a subdiagram of $\nu^+$ and of $\nu^-$, and also $|\tau|+|\eta|=|\nu^+|$ and $|\tau|+|\xi| = |\nu^-|$ for the Littlewood-Richardson coefficients to not vanish.

Let us apply this formula of Koike when $\nu^+=(k)$ and $\nu^-=(m)$. Recall our notation $\skm$ from \eqref{eq:def of Sk,m}. Note that $\rho_{(k),\emptyset}^{(n)}$ is the $k^\mathrm{th}$-symmetric power $S_{k,0}=\mathrm{Sym}^k(\rho_{\std})$, where $\rho_\std=\rho_\std^{(n)}=R_{1,0}^{(n)}=\rho_{(1),\emptyset}^{(n)}=\rho_{(1,0,\ldots,0)}$ is the standard representation of $\U(n)$ mapping $A\mapsto A$. Likewise, $\rho_{\emptyset,(m)}^{(n)}=S_{0,m}=\Sym^m(\rho_{\std}^{~*})=\Sym^m(\rho_{\std})^*$ is the \textit{dual} representation of the $m^\mathrm{th}$-symmetric power. In this case, the summand corresponding to some $\tau$, $\eta$ and $\xi$ in \eqref{eq:Koike formula for tensor of poly and anti-poly} does not vanish if and only if for some $0\le j\le \min(k,m)$ we have $\tau=(j)\vdash j$, $\eta=(k-j)\vdash k-j$ and $\xi=(m-j)\vdash m-j$. In each of these cases, the corresponding Littlewood-Richardson coefficients are 1. We conclude:

\begin{cor} \label{cor:decomp of sym-k tensor sym-m-dual}
    Let $k,m\in\mathbb{Z}_{\ge0}$. Then for all $n\ge2$
    \[
        \skm=\mathrm{Sym}^k(\rho_{\std})\otimes \mathrm{Sym}^m(\rho_{\std}^{~*})=
        \rho_{(k),\emptyset}^{(n)}\otimes\rho_{\emptyset,(m)}^{(n)}=\bigoplus_{j=0}^{\min(k,m)}\rho_{\left(k-j,0,\ldots,0,-(m-j)\right)}.
    \]
\end{cor}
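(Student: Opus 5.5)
The statement follows from Koike's formula \eqref{eq:Koike formula for tensor of poly and anti-poly} with $\nu^+=(k)$, $\nu^-=(m)$. Two things need checking: which triples $(\tau,\eta,\xi)$ survive, and that each surviving bracket $[\rho_{\eta,\xi}]_n$ is a genuine irrep for every $n\ge2$.

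\textbf{Surviving triples.} By the Pieri rule, $c^{(k)}_{\tau,\eta}\neq0$ forces $\tau$ and $\eta$ to be one-row partitions with $|\tau|+|\eta|=k$. In that case the coefficient equals $1$, since a one-row skew shape admits exactly one LR filling. The same reasoning applies to $c^{(m)}_{\tau,\xi}$. Hence the only nonzero terms are
\[
\tau=(j),\qquad \eta=(k-j),\qquad \xi=(m-j),\qquad 0\le j\le\min(k,m),
\]
each with coefficient $1$.

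\textbf{Nonvanishing of the brackets.} Each $\eta$ and $\xi$ has at most one part, so $\ell(\eta)+\ell(\xi)\le 2\le n$. Thus Koike's modification rule never applies, and $[\rho_{\eta,\xi}]_n=\rho^{(n)}_{(k-j),(m-j)}$. Written as a highest weight, this is $(k-j,0,\ldots,0,-(m-j))$: when $k=j$ or $m=j$ the corresponding end is simply $0$, and for $n=2$ the vector is $(k-j,-(m-j))$. Summing over $j$ gives exactly the claimed decomposition.

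\textbf{Identification of the factors.} It remains to justify $\rho^{(n)}_{(k),\emptyset}\cong\Sym^k(V)$ and $\rho^{(n)}_{\emptyset,(m)}\cong\Sym^m(V^*)$. The first is standard, since $\Sym^k(V)$ is the irrep of highest weight $(k,0,\ldots,0)$. The second follows by duality, because the dual of $\rho_\mu$ is $\rho_{(-\mu_n,\ldots,-\mu_1)}$.

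\textbf{Independent check.} The main risk is misremembering Koike's conventions, so I would verify the result without them. Compare characters or dimensions, or restrict via the branching rule (Theorem \ref{thm:branching rule}). A more concrete route: the contraction map $\Sym^k V\otimes\Sym^m V^*\to\Sym^{k-1}V\otimes\Sym^{m-1}V^*$ is equivariant and surjective. Its kernel, the harmonic part, is irreducible of highest weight $(k,0,\ldots,0,-m)$, with highest weight vector $e_1^k\otimes (e_n^*)^m$. Induction on $\min(k,m)$ then recovers the decomposition directly.
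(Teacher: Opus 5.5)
Your proof is correct and follows the paper's argument: specialize Koike's formula to $\nu^+=(k)$, $\nu^-=(m)$, and observe that only $\tau=(j)$, $\eta=(k-j)$, $\xi=(m-j)$ with $0\le j\le\min(k,m)$ survive, each with Littlewood--Richardson coefficient $1$. Your explicit remark that $\ell(\eta)+\ell(\xi)\le 2\le n$, so that Koike's modification rule never applies, is the reason for the hypothesis $n\ge2$ (the paper leaves this step implicit), and your contraction-map cross-check is a valid optional alternative.
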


\subsection{Integration in $\U(n)$ and Weingarten calculus} \label{subsec:Weingarten}

The Weingarten calculus is a method to compute integrals over Haar-random unitary matrices (as well as Haar-random matrices in other natural families of groups). Originally developed in \cite{samuel1980integral,collins2003moments,collins2006integration}, this method gives concrete, finite combinatorial formulas for the average value of polynomial expressions in matrix entries and their complex-conjugates. A first observation is that ``non-balanced" integrals vanish. We repeat the simple argument here for the benefit of the reader. Below we apply the results of this subsection to the various subgroups $\U_B$ of $\U(n)$, so we choose to denote the dimension of the unitary group here by $d$. Recall that $\mu_{[d]}$ is the Haar measure on $\U(d)$.

\begin{lemma} \label{lem:unbalanced integrals vanish}
    The integral
    \[
        \int_{A\in\U(d)}A_{i_1,j_1}\cdots A_{i_k,j_k}\cdot \overline{A_{i'_1,j'_1}}\cdots \overline{A_{i'_m,j'_m}} d\mu_{[d]}
    \]
    vanishes unless $k=m$ and there are equalities of multisets $\{i_1,\ldots,i_k\}=\{i'_1,\ldots,i'_m\}$ and\linebreak $\{j_1,\ldots,j_k\}=\{j'_1,\ldots,j'_m\}$.
\end{lemma}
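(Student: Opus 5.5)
We will use the invariance of the Haar measure $\mu_{[d]}$ under left and right multiplication by elements of the torus $T_d\le\U(d)$. Denote the integral by $I$. For diagonal unitaries $D=\mathrm{diag}(z_1,\ldots,z_d)$ and $E=\mathrm{diag}(u_1,\ldots,u_d)$ with $|z_i|=|u_j|=1$, the substitution $A\mapsto DAE$ preserves $\mu_{[d]}$. It multiplies each entry $A_{i,j}$ by $z_iu_j$ and each conjugate entry $\overline{A_{i,j}}$ by $\overline{z_i}\,\overline{u_j}=z_i^{-1}u_j^{-1}$. Hence
\[
    I = \Big(\prod_{r=1}^k z_{i_r}\prod_{s=1}^m z_{i'_s}^{-1}\Big)\Big(\prod_{r=1}^k u_{j_r}\prod_{s=1}^m u_{j'_s}^{-1}\Big)\cdot I
\]
for all choices of $z,u\in (S^1)^d$.

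Now we take $E=I_d$ and $D=\mathrm{diag}(z,\ldots,z)$ a scalar, so that the factor is $z^{k-m}$. If $k\ne m$, we may choose $z\in S^1$ with $z^{k-m}\ne1$, which forces $I=0$.

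Next, suppose $k=m$ but the multisets $\{i_1,\ldots,i_k\}$ and $\{i'_1,\ldots,i'_k\}$ differ. Then some index $x\in[d]$ occurs $a$ times among the $i_r$ and $b\ne a$ times among the $i'_s$. We take $E=I_d$ and let $D$ have $z_x=z$ and all other diagonal entries equal to $1$. The factor becomes $z^{a-b}$, and choosing $z$ with $z^{a-b}\ne1$ gives $I=0$. The same argument using $E$ in place of $D$ handles the case where the column multisets differ.

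There is no real obstacle here; the only care needed is to write out the bookkeeping of the multiplicities correctly.
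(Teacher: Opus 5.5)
Your proof is correct and follows essentially the same route as the paper: invariance of the Haar measure under left (respectively right) multiplication by a diagonal unitary with a single nontrivial entry $z$ multiplies the integral by $z^{a-b}$, which forces it to vanish when $a\neq b$. Your separate treatment of $k\neq m$ via a scalar matrix is harmless but redundant, since when $k\neq m$ the row multisets necessarily differ and the single-entry argument already covers that case.
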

\begin{proof}
    Denote the integral by $I$. First assume that $\{i_1,\ldots,i_k\}\ne\{i'_1,\ldots,i'_m\}$ as multisets (this is certainly the case if $k\ne m$). Then there is some index $t\in[d]$ that appears $a_t$ times in the first multiset and $b_t$ times in the second, with $a_t\ne b_t$. The Haar measure is left-invariant, so in the integral we may replace $A$ with $g_{t,\theta} A$, where $g_{t,\theta}\in\U(d)$ is the diagonal matrix with $\theta\in S^1$ in the $(t,t)$-entry, and 1 in every other diagonal entry. The result is
    \[
        I=\int_{A\in\U(d)}(g_{t,\theta}A)_{i_1,j_1}\cdots (g_{t,\theta}A)_{i_k,j_k}\cdot \overline{(g_{t,\theta}A)_{i'_1,j'_1}}\cdots \overline{(g_{t,\theta}A)_{i'_m,j'_m}} = \theta^{a_t-b_t}I.
    \]
    But $\theta\in S^1$ is arbitrary, so if $a_t\ne b_t$ we must have $I=0$. A similar argument, using the right-invariance of the Haar measure, proves the necessity of the equality of the other two multisets.
\end{proof}

The Weingarten calculus allows one to approach these integrals in the cases where they do not vanish, in order to compute them explicitly as well as to derive asymptotic properties as $d$ grows. The formulas use the ``Weingarten function'': for every $k,d\in\Z_{\ge1}$, this is a function $\wg_{k,d}\colon\sym(k)\to\mathbb{Q}$.\marginpar{$\wg$} When $d\ge k$, it can be defined by 
\begin{equation} \label{eq:wg definition when d ge k}
    \wg_{k,d}(\sigma)\defi\int_{A\in\U(d)}A_{1,1}A_{2,2}\cdots A_{k,k}\overline{A_{1,\sigma(1)}A_{2,\sigma(2)}\cdots A_{k,\sigma(k)}}.
\end{equation}
The definition \eqref{eq:wg definition when d ge k} coincides with the following one, appearing in \cite[Prop.~2.3]{collins2006integration}, which applies to all values $k,d\in\Z_{\ge1}$. It expresses $\wg_{k,d}$, which is a class function on $\sym(k)$, as a linear combination of the irreducible characters of $\sym(k)$.

\begin{equation} \label{eq:wein function by CS06}
    \wg_{k,d} \defi \frac{1}{k!^2}\sum_{\substack{\nu\vdash k\\\ell(\nu)\le d}}\frac{\dim(\pi_\nu)^2}{\dim\left(\rho_{\nu,\emptyset}^{(d)}\right)} \chi_\nu,
\end{equation}
where $\pi_\nu$ is the irrep of $\sym(k)$ corresponding to $\nu$ (as above), and \marginpar{$\chi_\nu$}$\chi_v=\mathrm{tr}\circ\pi_\nu\colon \sym(k)\to\Z$ its character. Note that for any fixed partition $\nu$, the denominator $\dim(\rho_{\nu,\emptyset}^{(d)})$ in \eqref{eq:wein function by CS06} is a polynomial in $d$ (it is also equal to the Schur polynomial $s_\nu$ on $d$ variables, evaluated at $1,\ldots,1$). Hence, for fixed $k$ and $\sigma\in\sym(k)$, the function $\wg_{k,d}(\sigma)$ coincides with some rational function in $\Q(d)$ for all $d\ge k$. For example, for all $d\ge2$, $\wg_{2,d}(\mathrm{Id})=\frac{1}{d^2-1}$ and $\wg_{2,d}((1~2))=\frac{-1}{d^3-d}$. 

The main significance of the Weingarten function is the following result. In this form, it appeared first in $\cite{collins2006integration}$.

\begin{theorem}\cite[Cor.~2.4]{collins2006integration} \label{thm:weingarten}
    Let $k,d\ge1$, and $i_1,\ldots,i_k,j_1,\ldots,j_k,i'_1,\ldots,i'_k,j'_1,\ldots,j'_k\in[d]$. Then
    \begin{eqnarray} \label{eq:Weingarten main result}
        \int_{A\in\U(d)}A_{i_1,j_1}\cdots A_{i_k,j_k}\cdot \overline{A_{i'_1,j'_1}}\cdots \overline{A_{i'_k,j'_k}} ~d\mu_{[d]} = ~~~~~~~~~~~~~~~~~~~~~~~~~~~~~~~~~~~~\nonumber \\
        \sum_{\sigma,\tau\in\sym(k)} \delta_{i_1,i'_{\sigma(1)}}\cdots\delta_{i_k,i'_{\sigma(k)}} \delta_{j_1,j'_{\tau(1)}}\cdots\delta_{j_k,j'_{\tau(k)}}\wg_{k,d}(\tau\sigma^{-1}),
    \end{eqnarray}
    where $\delta_{i,j}$ is the Kronecker delta.
\end{theorem}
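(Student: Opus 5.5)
The integral on the left of \eqref{eq:Weingarten main result} is a matrix coefficient of the operator $\Phi\defi\int_{\U(d)}A^{\otimes k}\otimes\overline{A}^{\otimes k}\,d\mu_{[d]}$ acting on $R_{k,k}^{(d)}$. Since the Haar measure is invariant under $A\mapsto A^{-1}$, $\Phi$ is the orthogonal projection onto the $\U(d)$-invariant vectors of $R_{k,k}^{(d)}$. I will proceed in three steps: identify that invariant space, compute the projection onto it, and then diagonalize the resulting Gram matrix with characters of $\sym(k)$.

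\textbf{Step 1: the invariant space.} Identify $V^{\otimes k}\otimes (V^*)^{\otimes k}$ with $\End(V^{\otimes k})$ for $V=\C^d$. Under this identification the $\U(d)$-action becomes conjugation $X\mapsto A^{\otimes k}XA^{-\otimes k}$. The invariants are therefore the commutant of $\{A^{\otimes k}\}$, and by Schur–Weyl duality this commutant is spanned by the permutation operators $P_\sigma$, $\sigma\in\sym(k)$. The $P_\sigma$ are linearly independent exactly when $d\ge k$; in general they are not.

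\textbf{Step 2: the projection formula.} Let $G(\sigma,\tau)\defi\mathrm{tr}(P_\sigma^{*}P_\tau)=d^{\#\mathrm{cyc}(\sigma^{-1}\tau)}$ be the Gram matrix of the $P_\sigma$ under the Hilbert–Schmidt inner product. Let $W$ be its Moore–Penrose pseudo-inverse. Then
\[
\Phi(X)=\sum_{\sigma,\tau}W(\sigma,\tau)\,\mathrm{tr}(P_\sigma^*X)\,P_\tau .
\]
This holds even when $d<k$: the pseudo-inverse still yields the orthogonal projection onto the span. I will apply this to the matrix unit $X=E_{j,j'}$ and read off the $(i,i')$ entry. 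Each factor $\mathrm{tr}(P_\sigma^* E_{j,j'})$ and each entry of $P_\tau$ is a product of Kronecker deltas. The result has the shape of \eqref{eq:Weingarten main result}, with $\wg_{k,d}(\tau\sigma^{-1})$ replaced by $W(\sigma,\tau)$ after relabelling indices and permutations.

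\textbf{Step 3: identifying $W$ with $\wg_{k,d}$.} I expect this step to be the main obstacle. The matrix $G$ is right convolution, in $\C[\sym(k)]$, by the central function $g(\pi)=d^{\#\mathrm{cyc}(\pi)}$, so $W$ is convolution by a central pseudo-inverse. Taking the trace of $P_\pi$ on $V^{\otimes k}$ and applying Schur–Weyl duality gives
\[
g=\sum_{\nu\vdash k,\ \ell(\nu)\le d}\dim\bigl(\rho^{(d)}_{\nu,\emptyset}\bigr)\chi_\nu .
\]
The characters satisfy the orthogonality relation $\chi_\nu*\chi_\mu=\delta_{\nu\mu}\frac{k!}{\dim\pi_\nu}\chi_\nu$, and the central idempotents are $e_\nu=\frac{\dim\pi_\nu}{k!}\chi_\nu$. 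From these, the pseudo-inverse of $a_\nu\chi_\nu$ inside the block of $\nu$ is $\frac{\dim(\pi_\nu)^2}{k!^2a_\nu}\chi_\nu$. Summing over $\nu$ with $\ell(\nu)\le d$ gives exactly \eqref{eq:wein function by CS06}.

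The remaining care is bookkeeping. First, convolution must be matched with the argument $\tau\sigma^{-1}$; since $\wg_{k,d}$ is central and invariant under inversion, the order is harmless. Second, one must confirm that restricting to the blocks with $\ell(\nu)\le d$ is precisely the pseudo-inverse on the image of $G$. As a consistency check, for $d\ge k$ the formula should reproduce \eqref{eq:wg definition when d ge k} when applied to the indices $i=j=(1,\dots,k)$.
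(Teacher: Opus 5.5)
Your proposal is correct. The paper gives no proof of its own; it quotes the statement from \cite[Cor.~2.4]{collins2006integration}, and your argument is essentially the one given there. That argument views the integral as a matrix entry of the orthogonal projection onto the commutant of $\{A^{\otimes k}\}$, applies Schur--Weyl duality, and inverts $\sum_{\sigma}d^{\#\mathrm{cyc}(\sigma)}\sigma$ in $\C[\sym(k)]$ (with a pseudo-inverse) using central idempotents. Your computation of that pseudo-inverse reproduces exactly the character expansion \eqref{eq:wein function by CS06}, which the paper takes as the definition of $\wg_{k,d}$ for all $k,d$, so your proof also covers the case $d<k$.
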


We will need the following lemmas about the sum, and signed sum, of the values of the Weingarten function for a fixed $k$ and $d$.

\begin{lemma} \label{lem:sum and signed sum of weingarten}
    For every $k,d\in\Z_{\ge1}$, 
    \begin{enumerate}
        \item \label{enu:sum of wg} $$\sum_{\sigma\in\sym(k)}\wg_{k,d}(\sigma)=\frac{1}{k!\cdot\ms{d}{k}}=\frac{1}{d(d+1)(d+2)\cdots(d+k-1)}.$$
        \item \label{enu:signed sum of wg} $$\sum_{\sigma\in\sym(k)}\sign(\sigma)\cdot\wg_{k,d}(\sigma)=
        \begin{cases}
            \frac{1}{k!\cdot\binom{d}{k}}=\frac{1}{d(d-1)(d-2)\cdots(d-k+1)} & d\ge k\\
            0 & d<k.
        \end{cases}
        $$
    \end{enumerate}    
\end{lemma}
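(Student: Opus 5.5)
We will evaluate both sums using the character expansion \eqref{eq:wein function by CS06} together with orthogonality of characters of $\sym(k)$. Summing over $\sigma$ is the same as pairing with the trivial character $\chi_{(k)}$, and the signed sum is pairing with the sign character $\chi_{(1^k)}$.

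For the first item, write $\sum_{\sigma}\wg_{k,d}(\sigma)=k!\,\langle \wg_{k,d},\chi_{(k)}\rangle$, where $\langle f,g\rangle=\frac{1}{k!}\sum_\sigma f(\sigma)\overline{g(\sigma)}$. Plugging in \eqref{eq:wein function by CS06} and using $\langle\chi_\nu,\chi_{(k)}\rangle=\delta_{\nu,(k)}$, only $\nu=(k)$ survives. This term is always present, since $\ell((k))=1\le d$. We have $\dim\pi_{(k)}=1$ and $\rho^{(d)}_{(k),\emptyset}=\Sym^k(\C^d)$, whose dimension is $\ms{d}{k}$. Hence
\[
\sum_\sigma \wg_{k,d}(\sigma)=k!\cdot\frac{1}{k!^2}\cdot\frac{1}{\ms{d}{k}}=\frac{1}{k!\,\ms{d}{k}},
\]
and $k!\binom{d+k-1}{k}=d(d+1)\cdots(d+k-1)$.

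For the second item, use the same computation with $\sign=\chi_{(1^k)}$. Now only $\nu=(1^k)$ can contribute. It appears in the sum if and only if $\ell(\nu)=k\le d$; otherwise the sum is $0$. When $d\ge k$, we have $\dim\pi_{(1^k)}=1$ and $\rho^{(d)}_{(1^k),\emptyset}=\Lambda^k(\C^d)$, of dimension $\binom{d}{k}$. This gives $\frac{1}{k!\binom dk}=\frac{1}{d(d-1)\cdots(d-k+1)}$.

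The only points needing care are the identifications of $\rho^{(d)}_{(k),\emptyset}$ and $\rho^{(d)}_{(1^k),\emptyset}$ with the symmetric and exterior powers, which is standard Schur–Weyl duality, and the normalization of the inner product. There is no real obstacle.

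As a sanity check against the values quoted earlier: for $k=2$,
\[
\frac{1}{d^2-1}-\frac{1}{d^3-d}=\frac{1}{d(d+1)},
\]
which matches the first item.
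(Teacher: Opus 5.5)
Your proposal is correct and is essentially the paper's own proof: pair $\wg_{k,d}$ with the trivial character $\chi_{(k)}$ (respectively the sign character $\chi_{(1^k)}$) using the character expansion \eqref{eq:wein function by CS06} and orthogonality of characters, then use $\dim\Sym^k(\C^d)=\ms{d}{k}$ and $\dim\bigwedge^k(\C^d)=\binom{d}{k}$. Your explicit remark that for $d<k$ the partition $(1^k)$ is excluded from the expansion, so the signed sum vanishes, correctly fills in the case the paper covers with ``proven similarly.''
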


\begin{proof}
    Recall that the ordinary inner product in a finite group $G$ is defined by $\langle f_1,f_2\rangle_G\defi \frac{1}{|G|}\sum_{g\in G}f_1(g)\overline{f_2(g)}$ for all $f_1,f_2\colon G\to\C$. We have,
    \[
        \frac{1}{k!}\sum_{\sigma\in\sym(k)}\wg_{k,d}(\sigma) = \langle \wg_{k,d},\chi_{(k)}\rangle_{\sym(k)} = \frac{1}{k!^2}\frac{\dim(\pi_{(k)})^2}{\dim\left(\rho_{(k),\emptyset}^{(d)}\right)} = \frac{1}{k!^2\cdot \ms{d}{k}},
    \]
    where the first equality holds as $\pi_{(k)}$ is the trivial representation of $\sym(k)$,  the second equality by \eqref{eq:wein function by CS06} and the orthogonality of characters, and the third by recalling that $\rho_{(k),\emptyset}^{(d)}$ is the representation $S_{k,0}$ of $\U(d)$, so its dimension is $\ms{d}{k}$. This proves item \ref{enu:sum of wg}.

    Item \ref{enu:signed sum of wg} is proven similarly, recalling that the sign character of $\sym(k)$ corresponds to the partition $(1^k)=(1,\ldots,1)$ and that $\rho_{(1^k),\emptyset}^{(d)}$ is the representation $\wedge^k(\rho_\std)$ whose dimension is $\binom{d}{k}$.
\end{proof}

\subsection{Basic properties of the spectrum and operator inequalities} \label{subsec:spectrum is real non-negative and bounded}
We collect here some basic facts about the spectra of $\L(\Gamma,\rho)$ and $\L(\Gamma,\kmp_k)$, and mention some easy operator inequalities.

\begin{lemma} \label{lem:L as sum of projections}
    Let $\rho\colon\U(n)\to\gl(V)$ be a $d$-dimensional unitary representation of $\U(n)$ and $\Gamma=([n],w)$ a weighted hypergraph. Then $\L(\Gamma,\rho)=\sum_{B\subseteq[n]}w_B(I_d-P_B)\in\End(V)$, where $P_B\in\End(V)$\marginpar{$P_B$} is the orthogonal projection on the $\U_B$-invariant subspace.
\end{lemma}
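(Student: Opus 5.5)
By linearity of the integral and the definition of $\mu_\Gamma$, it suffices to show that for each fixed $B\subseteq[n]$,
\[
    \int_{A\in\U_B}\rho(A)\,d\mu_B = P_B .
\]
Summing $w_B(I_d-P_B)$ over $B$ then gives exactly \eqref{eq:def-laplacian-of-Gamma-and-rho}. So the whole proof reduces to the standard fact that averaging a unitary representation of a compact group over its Haar measure yields the orthogonal projection onto the invariant vectors. Here the group is $\U_B$ and the representation is $\rho|_{\U_B}$.

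Set $E\defi\int_{\U_B}\rho(A)\,d\mu_B$. I will verify three properties of $E$.

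\begin{enumerate}
    \item \emph{$E$ fixes invariant vectors.} If $v$ is $\U_B$-invariant, then $\rho(A)v=v$ for every $A\in\U_B$, and $\mu_B$ is a probability measure. Hence $Ev=v$.
    \item \emph{The image of $E$ consists of invariant vectors.} For $g\in\U_B$, left-invariance of the Haar measure gives
    \[
        \rho(g)E=\int\rho(gA)\,d\mu_B=E,
    \]
    so every vector $Ev$ is $\U_B$-invariant.
    \item \emph{$E$ is hermitian.} Since $\rho$ is unitary, $\rho(A)^*=\rho(A^{-1})$. The Haar measure on a compact group is inversion-invariant, so $E^*=E$.
\end{enumerate}

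From the first two properties, $E^2=E$ and the image of $E$ is precisely the $\U_B$-invariant subspace. A self-adjoint idempotent is an orthogonal projection, so $E$ is the orthogonal projection onto the invariant subspace, that is, $E=P_B$.

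There is no real obstacle here. The only points needing care are that $\rho$ is taken unitary, as in the statement (any finite-dimensional representation of the compact group can be unitarized), and that the Haar measure on the compact group $\U_B\cong\U(|B|)$ is both bi-invariant and inversion-invariant.
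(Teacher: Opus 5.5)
Your proposal is correct and takes the same approach as the paper: both reduce the lemma to the per-$B$ identity $\int_{\U_B}\rho(A)\,d\mu_B=P_B$. The paper simply cites the general fact that the Haar average over a closed subgroup is the orthogonal projection onto its invariant vectors, whereas you also prove that fact directly (idempotence and image from left-invariance, self-adjointness from inversion-invariance).
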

\begin{proof}
    This is based on the following general fact: if $H$ is a closed subgroup of a compact group $G$ and $\mu_H$ is the normalized Haar measure of $H$, then for any finite-dimensional unitary representation $\pi\colon G\to\gl(V)$, the operator 
    \[
        P_H\defi \int_{h\in H}\pi(h)d\mu_H(h)\in\gl(V)
    \]
    is the orthogonal projection onto the $H$-invariant subspace $V^H=\{v\in V\,\mid\,\pi(h)v=v ~~\forall h\in H\}$.
\end{proof}

\begin{lemma} \label{lem:L of reps - basic properties of spectrum}
    Let $\rho$ be a $d$-dimensional representation of $\U(n)$ and $\Gamma=([n],w)$ a hypergraph with non-negative weights. Then the spectrum of $\L(\Gamma,\rho)$ is real and contained in the interval $[0,\sum_{B\in[n]}w_B]$.
\end{lemma}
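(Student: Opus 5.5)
The plan is to use Lemma \ref{lem:L as sum of projections} and reduce the claim to facts about orthogonal projections. Since every finite-dimensional representation of the compact group $\U(n)$ is equivalent to a unitary one, and $\L(\Gamma,\rho)$ transforms by conjugation under such an equivalence, we may assume $\rho$ is unitary with respect to some inner product on $V$. Conjugation preserves the spectrum, so this assumption costs nothing.

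With $\rho$ unitary, Lemma \ref{lem:L as sum of projections} gives
\[
    \L(\Gamma,\rho)=\sum_{B\subseteq[n]}w_B(I_d-P_B),
\]
where each $P_B$ is an orthogonal projection. Then $I_d-P_B$ is also an orthogonal projection. Hence each summand is hermitian, and since $w_B\ge0$ the whole sum is hermitian, so its spectrum is real.

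For the bounds, every orthogonal projection $Q$ satisfies $0\le \langle Qv,v\rangle=\|Qv\|^2\le\|v\|^2$. Summing with the non-negative weights $w_B$ gives
\[
    0\le\langle \L(\Gamma,\rho)v,v\rangle\le\Big(\sum_B w_B\Big)\|v\|^2
\]
for every $v\in V$. For a hermitian operator, all eigenvalues lie between the minimum and maximum of the Rayleigh quotient. This places the spectrum in $[0,\sum_B w_B]$.

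No step poses a real obstacle. The only point needing care is the reduction to a unitary $\rho$: the statement does not assume unitarity, but being real and lying in an interval are spectral properties invariant under similarity, so the reduction is legitimate.
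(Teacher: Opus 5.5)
Your proof is correct and follows the same route as the paper: reduce to a unitary $\rho$, write $\L(\Gamma,\rho)=\sum_B w_B(I_d-P_B)$ as a sum of orthogonal projections to get self-adjointness, and bound the spectrum by $[0,\sum_B w_B]$. The paper justifies the bound only by noting that projections have eigenvalues $0$ or $1$. Your quadratic-form/Rayleigh-quotient argument is the rigorous version of that step, since eigenvalues of a sum are not simply sums of eigenvalues.
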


\begin{proof}
    Every finite dimensional representation of a compact group admits an inner product which makes it unitary, so we assume without loss of generality that $\rho$ is unitary. As orthogonal projections are self-adjoint, so it $\L(\Gamma,\rho)$, by Lemma \ref{lem:L as sum of projections}. Hence the spectrum of $\L(\Gamma,\rho)$ is real. As all the eigenvalues of a projection are $0$ or $1$, the spectrum of $\L(\Gamma,\rho) = \sum_{B\subseteq[n]}w_B(I_d-P_B)$ lies in $[0,\sum_{B\in[n]}w_B]$.
\end{proof}

\begin{lemma} \label{lem:evalues of N_B are 0 or 1}
    Let $B\subseteq[n]$. Then for all $k\in\Z_{\ge1}$, the operator $\N_B\in\End(\MS(n,k))$ is self-adjoint and its eigenvalues are all 0 or 1. 
\end{lemma}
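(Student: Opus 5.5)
Since $\N_B$ is a stochastic operator, the claim follows once we show that $\N_B$ is an orthogonal projection with respect to a suitable inner product on $\MS(n,k)$. The natural choice is the standard inner product in which the configurations $f\in\ms{[n]}{k}$ form an orthonormal basis. The proof then has two steps: show that $\N_B$ is symmetric in this basis, and show that it is idempotent. A self-adjoint idempotent is an orthogonal projection, so its eigenvalues lie in $\{0,1\}$.

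For symmetry, we write the matrix entries explicitly. For $f,g\in\ms{[n]}{k}$, the entry $(\N_B)_{g,f}$ is nonzero only if $g|_{[n]\setminus B}=f|_{[n]\setminus B}$. This condition forces $\sum_{i\in B}g(i)=\sum_{i\in B}f(i)=:\ell$, since both total $k$. In that case the entry equals $1/\ms{|B|}{\ell}$. The condition ``$f$ and $g$ agree outside $B$'' is symmetric in $f$ and $g$, and so is the value $\ell$. Hence $(\N_B)_{g,f}=(\N_B)_{f,g}$. The entries are also real, so $\N_B$ is self-adjoint.

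For idempotence, fix a configuration $f$. Then $\N_B f$ is the uniform average over the equivalence class $C(f)$ of configurations agreeing with $f$ outside $B$. Every $g\in C(f)$ satisfies $C(g)=C(f)$, so applying $\N_B$ to each such $g$ again yields the uniform average over $C(f)$. Therefore $\N_B^2f=\N_Bf$.

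Equivalently, $\MS(n,k)$ decomposes orthogonally as $\bigoplus_C \C^{C}$ over these equivalence classes, and on each block $\N_B$ acts as $\frac{1}{|C|}J_{|C|}$, the orthogonal projection onto the constant vector. This makes both properties immediate. No step is a real obstacle. The only point requiring care is that $\ell$ is determined by the configuration outside $B$, which makes the block normalization consistent.
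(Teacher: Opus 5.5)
Your proof is correct and matches the paper's argument. The paper also notes that, in the basis $\ms{[n]}{k}$, $\N_B$ is block-diagonal with blocks indexed by the configuration outside $B$; each block is $\frac{1}{c_f}J_{c_f}$, which is symmetric and has spectrum consisting of one $1$ and $c_f-1$ zeros, and your symmetry-plus-idempotence argument is an unpacking of that same block structure.
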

Here, we mean self-adjoint with respect to the standard inner product defined with the basis elements corresponding to $\ms{[n]}{k}$.
\begin{proof}
    Let $J_d$\marginpar{$J_d$} be the $d\times d$ all-1 matrix. By definition, the matrix corresponding to $\N_B$ with respect to the basis $\ms{[n]}{k}$ of $\MS(n,k)$ is block-diagonal, with blocks corresponding to the possible values of multisets $f\in\ms{[n]}{k}$ outside $B$. For a given $f\in\ms{[n]}{k}$, denote by $\ell=\sum_{i\in B}f(i)$ the number of particles from $B$ in $f$, and by $c_f=\ms{|B|}{\ell}$ the number of multisets $g\in\ms{[n]}{k}$ agreeing with $f$ outside $B$. Then the block corresponding to $f$ is of size $c_f\times c_f$, and its value is precisely $\frac{1}{c_f}J_{c_f}$, whose spectrum consists of a single one and $c_f-1$ zeros. This also shows that this matrix is symmetric, hence $\N_B$ is self-adjoint.
\end{proof}

\begin{cor} \label{cor:spectrum of Laplacian on KMP in [0,sum of weights]}
    For every hypergraph $L=([n],w)$ with non-negative weights and every $k\in\Z_{\ge1}$, the spectrum of $\L(\Gamma,\kmp_k)$ is real and contained in $[0,\sum_B w_B]$.
\end{cor}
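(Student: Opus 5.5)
The plan is to deduce the corollary from Lemma \ref{lem:evalues of N_B are 0 or 1}, copying the argument of Lemma \ref{lem:L of reps - basic properties of spectrum} with $\N_B$ playing the role of the projection $P_B$.

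First, by \eqref{eq:kmp-k generator},
\[
\L(\Gamma,\kmp_k)=\sum_{B\subseteq[n]}w_B(I-\N_B).
\]
By Lemma \ref{lem:evalues of N_B are 0 or 1}, each $\N_B$ is self-adjoint with respect to the standard inner product on $\MS(n,k)$, in which the configurations $\ms{[n]}{k}$ are orthonormal. Since the weights $w_B$ are real, $\L(\Gamma,\kmp_k)$ is a real linear combination of self-adjoint operators. It is therefore self-adjoint, and its spectrum is real.

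Next, a self-adjoint operator with eigenvalues in $\{0,1\}$ is an orthogonal projection. So each $I-\N_B$ is an orthogonal projection, and satisfies $0\le I-\N_B\le I$ in the positive semi-definite order. Multiplying by $w_B\ge0$ and summing over $B$ gives
\[
0\le \L(\Gamma,\kmp_k)\le \Big(\sum_B w_B\Big) I.
\]
Equivalently, for any unit vector $v$, the Rayleigh quotient $\langle \L(\Gamma,\kmp_k)v,v\rangle=\sum_B w_B\|(I-\N_B)v\|^2$ lies in $[0,\sum_B w_B]$. Hence all eigenvalues lie in $[0,\sum_B w_B]$.

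The only point that needs care is that self-adjointness must hold with respect to one fixed inner product, used for all $B$ simultaneously. Otherwise the sum of the operators need not be self-adjoint. Lemma \ref{lem:evalues of N_B are 0 or 1} supplies exactly this: it gives self-adjointness with respect to the single standard inner product determined by the basis $\ms{[n]}{k}$.
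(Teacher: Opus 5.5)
Your proof is correct and is the argument the paper intends. The paper states this corollary without proof, as an immediate consequence of Lemma \ref{lem:evalues of N_B are 0 or 1}, mirroring the proof of Lemma \ref{lem:L of reps - basic properties of spectrum} with $\N_B$ in place of $P_B$; your use of the positive semi-definite order (equivalently Rayleigh quotients) to go from the $\{0,1\}$-spectrum of each $I-\N_B$ to the bound on the weighted sum is slightly more careful than the paper's wording there.
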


Let $T_1,T_2$ be two operators on the regular representation $L^2(\U(n))$. We write $T_1\le T_2$\marginpar{$T_1\le T_2$} to mean that the difference $T_2-T_1$ is a positive semi-definite operator on $L^2(\U(n))$: $\langle(T_2-T_1).f,f\rangle\ge0$ for all $f\in L^2(\U(n))$. Whenever we use this notation, the operators preserve sub-representations, and then the inequality means, equivalently, that the corresponding spectrum in every finite dimensional representation $\rho$ of $\U(n)$ is real and non-negative.

\begin{lemma} \label{lem:inequality between subset}
    For $B\subseteq[n]$ denote by $\L_B=I-P_B$ the operator in the regular representation $L^2(\U(n))$.
    If $B_1\subseteq B_2\subseteq[n]$ then $\L_{B_1}\le\L_{B_2}$.
\end{lemma}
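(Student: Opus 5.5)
Since $\L_B=I-P_B$, the inequality $\L_{B_1}\le\L_{B_2}$ amounts to $P_{B_2}\le P_{B_1}$ on $L^2(\U(n))$. I would deduce this from an inclusion of invariant subspaces rather than from any computation. The plan has three steps.

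First, note that $B_1\subseteq B_2$ implies $\U_{B_1}\le\U_{B_2}$. Indeed, a matrix equal to the identity outside the $B_1$-minor is also equal to the identity outside the larger $B_2$-minor. Consequently, any vector fixed by all of $\U_{B_2}$ is in particular fixed by all of $\U_{B_1}$. So in every unitary representation $V$ (in particular $L^2(\U(n))$, or any finite-dimensional $\rho$) we get
\[
V^{\U_{B_2}}\subseteq V^{\U_{B_1}}.
\]

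Second, I use the general fact quoted in the proof of Lemma~\ref{lem:L as sum of projections}: $P_{B_i}=\int_{\U_{B_i}}\rho(h)\,d\mu_{B_i}(h)$ is the orthogonal projection onto $V^{\U_{B_i}}$. That argument was stated for finite-dimensional $V$, but it works verbatim for unitary representations on Hilbert spaces. The averaged operator is self-adjoint and idempotent by invariance of Haar measure, and its image is exactly the fixed subspace.

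Third, for orthogonal projections $P,Q$ onto closed subspaces $W\subseteq W'$, we have $QP=P$, and hence $P=PQP$ (using $PQ=P$ after taking adjoints). Therefore
\[
\langle Pf,f\rangle=\|Pf\|^2=\|PQf\|^2\le\|Qf\|^2=\langle Qf,f\rangle .
\]
Applied with $P=P_{B_2}$ and $Q=P_{B_1}$, this gives $P_{B_2}\le P_{B_1}$, that is, $\L_{B_1}=I-P_{B_1}\le I-P_{B_2}=\L_{B_2}$. Since both operators preserve every subrepresentation, the same inequality holds for the restriction to any finite-dimensional $\rho$, matching the equivalent formulation in the definition of $\le$.

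There is no real obstacle here. The only point needing care is extending the projection identification to the infinite-dimensional regular representation, and the Haar-averaging argument handles that without change.
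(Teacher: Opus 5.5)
Your proof is correct and takes the same route as the paper. Both identify $P_B$ as the orthogonal projection onto the $\U_B$-invariant vectors and use $V^{\U_{B_2}}\subseteq V^{\U_{B_1}}$. The only difference is that you spell out, via $P=PQP$, why nested orthogonal projections satisfy $P_{B_2}\le P_{B_1}$; the paper leaves this step implicit.
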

\begin{proof}
    Recall that $P_B$ is the orthogonal projection on the trivial isotypic component in the restriction of the regular representation to $\U_B$. Hence $I-P_B$ is the orthogonal projection on its orthogonal complement. The lemma follows as the trivial component in the restriction to $\U_{B_2}$ is a subspace of the trivial component in the restriction to $\U_{B_1}$.
\end{proof}

\begin{lemma} \label{lem:inequalities remain true with branching}
    Let $S\subseteq [n]$, and let $\Gamma=(S,w)$ and $\Gamma'=(S,w')$ be two weighted hypergraphs defined on the set of vertices $S$. If $\L(\Gamma)\le\L(\Gamma')$ in $L^2(\U_S)$, then the inequality holds also in $L^2(\U(n))$.
\end{lemma}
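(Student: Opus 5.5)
The plan is to reduce the inequality on $L^2(\U(n))$ to the given inequality on $L^2(\U_S)$ by decomposing $L^2(\U(n))$ into pieces on which $\U_S$ acts by left translation. Every operator involved has the form $\L_B=I-P_B$ with $B\subseteq S$. By \eqref{eq:Laplacian on Reg of U(n)}, $P_B$ acts on $f\in L^2(\U(n))$ by averaging $f(A^{-1}x)$ over $A\in\U_B\le\U_S$. Thus $\L(\Gamma)$ and $\L(\Gamma')$ on $L^2(\U(n))$ are the operators $\int(I-\lambda(A))\,d\mu_\Gamma$, where $\lambda$ is the left regular representation of $\U(n)$ restricted to the subgroup $\U_S$. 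The same formulas applied to an arbitrary unitary representation of $\U_S$ define the corresponding Laplacians there.

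First, I will observe that the hypothesis $\L(\Gamma)\le\L(\Gamma')$ in $L^2(\U_S)$ holds in every irreducible representation of $\U_S$. By Peter--Weyl, $L^2(\U_S)$ contains every irrep $\sigma$ of $\U_S\cong\U(|S|)$ as an invariant subspace. The operators $\L(\Gamma),\L(\Gamma')$ preserve these subspaces and restrict to $\L(\Gamma,\sigma),\L(\Gamma',\sigma)$. Positive semi-definiteness of $\L(\Gamma')-\L(\Gamma)$ on $L^2(\U_S)$ therefore gives $\L(\Gamma,\sigma)\le\L(\Gamma',\sigma)$ for every $\sigma\in\irr(\U_S)$. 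By taking direct sums, the inequality extends to any finite-dimensional unitary representation of $\U_S$.

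Second, I will restrict the regular representation of $\U(n)$ to $\U_S$. Any unitary representation of the compact group $\U_S$, in particular $L^2(\U(n))$ with the left action of $\U_S$, decomposes as a Hilbert direct sum of finite-dimensional irreducible $\U_S$-subrepresentations. Each $\L_B$ with $B\subseteq S$ is built from $\U_S$-translates, so it preserves each summand and acts there as the corresponding $\L_B$ of that irrep of $\U_S$. Concretely, $P_B$ is the orthogonal projection onto the $\U_B$-invariant vectors (Lemma \ref{lem:L as sum of projections}), and this is compatible with the orthogonal decomposition. Alternatively, one can first apply Peter--Weyl for $\U(n)$ and then restrict each finite-dimensional irrep $\rho$ of $\U(n)$ to $\U_S$; for $S=[n-1]$ this is the branching rule, Theorem \ref{thm:branching rule}, and iterating it covers general $S$ up to conjugation by a permutation matrix.

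Finally, I will sum the inequalities. For $f=\sum_i f_i$ with $f_i$ in the $i$-th summand, the summands are orthogonal and each is preserved by the difference $D=\L(\Gamma')-\L(\Gamma)$, so $\langle Df,f\rangle=\sum_i\langle Df_i,f_i\rangle$. Each term is nonnegative by the first step. The sum converges because $D$ is bounded, with norm at most $\sum_B(w_B+w'_B)$.

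The only point that needs care is justifying the direct-sum decomposition with $D$ preserving each summand. This is where I would be most explicit, citing the decomposition of unitary representations of compact groups recalled in $\S$\ref{subsec:representations theory of U(n)}.
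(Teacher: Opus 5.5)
Your proof is correct and follows essentially the paper's argument. The paper observes in one line that, for any representation $\rho$ of $\U(n)$, the inequality $\L(\Gamma,\rho)\le\L(\Gamma',\rho)$ is the same statement as the corresponding inequality for $\rho|_{\U_S}$ viewed as a representation of $\U_S$; this is your reduction to irreps of $\U_S$, and your alternative route (Peter--Weyl for $\U(n)$, then restricting each irrep to $\U_S$) is exactly the paper's. The branching rule is not needed there, since any finite-dimensional representation of the compact group $\U_S$ already splits into irreps.
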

\begin{proof}
    For any representation $\rho$ of $\U(n)$, the inequality $\L(\Gamma,\rho)\le\L(\Gamma',\rho)$ when $\Gamma,\Gamma'$ are thought of as hypergraphs on the vertex set $[n]$, is equivalent to the inequality $\L(\Gamma,\rho|_{\U_S})\le\L(\Gamma',\rho|_{\U_S})$ when $\Gamma,\Gamma'$ are thought of as hypergraphs on the vertex set $S$.
\end{proof}

\section{The torus-invariant subspace of a representation of $\U(n)$} \label{sec:torus-invariant subspace}

Recall the definition of the torus subgroup $T_n\le \U(n)$ and of the torus-invariant subspace $\torinv(\rho)$ of a representation $\rho$ (Definition \ref{def:torus-invariant subspace}). In this section we prove that $\torinv(\rho)$ is indeed invariant under $\L(\Gamma,\rho)$ for all weighted hypergraphs $\Gamma$ (Proposition \ref{prop:torus-inv subspace is invariant for every Gamma}), and that it is trivial for non-balanced representations (Corollary \ref{cor:torus-inv subspace empty for unbalanced irreps}). We also prove Theorem \ref{thm:kmp_k is central block of...}, which identifies the discrete KMP processes on $\Gamma$ as torus-invariant subspaces of certain representations, and Theorem \ref{thm:(k,-k) dominates non-balanced parts}, which shows that the spectral gap of the $\U(n)$-spectrum of $\Gamma$ is obtained by torus-invariant subspaces.

\begin{prop} \label{prop:torus-inv subspace is invariant for every Gamma}
    The torus-invariant subspace $\torinv(\rho)$ of any finite-dimensional representation $\rho$ of $\U(n)$ is invariant under $\L(\Gamma,\rho)$ for any weighted hypergraph $\Gamma$.
\end{prop}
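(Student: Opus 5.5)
By Lemma \ref{lem:L as sum of projections} we may assume $\rho$ is unitary and write $\L(\Gamma,\rho)=\sum_{B\subseteq[n]}w_B(I-P_B)$, where $P_B=\int_{\U_B}\rho(A)\,d\mu_B(A)$ is the orthogonal projection onto the $\U_B$-invariant vectors. Since $\torinv(\rho)$ is trivially invariant under $I$, it suffices to show that $\torinv(\rho)$ is invariant under each $P_B$ separately. The plan is to show that $P_B$ commutes with $\rho(t)$ for every $t\in T_n$. This immediately gives the claim: if $v\in\torinv(\rho)$, then $\rho(t)P_Bv=P_B\rho(t)v=P_Bv$ for all $t\in T_n$, so $P_Bv\in\torinv(\rho)$.

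The key step is that the torus $T_n$ normalizes each subgroup $\U_B$. Indeed, for a diagonal $t$ and $A\in\U_B$, the matrix $tAt^{-1}$ has entries $t_{ii}A_{ij}\overline{t_{jj}}$. It therefore still equals the identity outside the $B\times B$ minor, since off that minor $A$ is the identity, and diagonal conjugation preserves both the zero off-diagonal entries and the unit diagonal entries. It is also unitary, so $tAt^{-1}\in\U_B$. Moreover, conjugation by $t$ is a continuous automorphism of the compact group $\U_B$, so it preserves the normalized Haar measure $\mu_B$ by uniqueness of Haar measure.

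The commutation then follows by a change of variables:
\[
\rho(t)P_B\rho(t)^{-1}=\int_{\U_B}\rho(tAt^{-1})\,d\mu_B(A)=\int_{\U_B}\rho(A')\,d\mu_B(A')=P_B .
\]
As an alternative route that avoids Haar invariance, one can argue directly: if $u$ is $\U_B$-invariant, then $\rho(t)u$ is invariant under $t\U_Bt^{-1}=\U_B$. Hence $\rho(t)$ preserves the image of $P_B$, and being unitary it also preserves the orthogonal complement. Therefore $\rho(t)$ commutes with the orthogonal projection $P_B$.

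No step here is a real obstacle. The only point needing care is the normalization $T_n\U_BT_n^{-1}=\U_B$, which is the elementary matrix computation above. The argument also shows that $\torinv(\rho)^\perp$ is $\L(\Gamma,\rho)$-invariant, because $\L(\Gamma,\rho)$ is self-adjoint.
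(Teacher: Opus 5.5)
Your proof is correct, but it goes through a different key fact than the paper's proof.

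\textbf{The paper's route.} The paper never looks at individual torus elements. It shows that each $P_{\{i\}}$ commutes with each $P_B$, in two cases:
\begin{enumerate}
\item for $i\in B$, it uses the convolution identities $\mu_{\{i\}}*\mu_B=\mu_B*\mu_{\{i\}}=\mu_B$ (Lemma \ref{lem:B_1 contained in B_2});
\item for $i\notin B$, it uses that the subgroups $\U_{\{i\}}$ and $\U_B$ commute elementwise (Lemma \ref{lem:convolution of disjoint mu_Bs}).
\end{enumerate}
It then concludes that $\torinv(\rho)=\bigcap_i P_{\{i\}}(V)$ is preserved by $\L(\Gamma,\rho)$.

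\textbf{Your route.} You use the single fact that $T_n$ normalizes every $\U_B$, so $P_B$ commutes with $\rho(t)$ for every $t\in T_n$. This has several advantages.
\begin{itemize}
\item The main identity $\rho(t)P_B\rho(t)^{-1}=P_B$ holds for any finite-dimensional representation, so unitarity is not needed there.
\item It gives more. $\L(\Gamma,\rho)$ commutes with all of $\rho(T_n)$, so every weight space of $\rho$ is $\L(\Gamma,\rho)$-invariant.
\item This refines the $2^n$ subspaces of Corollary \ref{cor:2 to the n invariant subspaces}, each of which is a sum of weight spaces.
\item It also recovers the paper's commutation, since $P_{\{i\}}$ is the average of $\rho(t)$ over the $i$-th coordinate circle in $T_n$.
\end{itemize}

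\textbf{What the paper's route sets up.} It establishes the absorption identity $P_BP_{\{j\}}=P_B$ for $j\in B$. This is exactly what Lemma \ref{lem:blocks outside tor-int have large evalues} uses next, to show that $P_B$ vanishes on $\ker P_{\{j\}}$. Your argument does not produce this identity, though it is immediate from $\U_{\{j\}}\le\U_B$.
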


We use the following lemmas. Recall that for every $B\subseteq[n]$ we defined the measure $\mu_B$ on $\U(n)$ as the Haar measure on the subgroup $\U_B$.

\begin{lemma} \label{lem:B_1 contained in B_2}
    Let $B_1\subseteq B_2\subseteq[n]$. Then $\mu_{B_1}*\mu_{B_2}=\mu_{B_2}*\mu_{B_1}=\mu_{B_2}$. 
\end{lemma}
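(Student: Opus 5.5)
The plan is to compute the convolution directly from the definition and reduce it to invariance of Haar measure. Recall that for two finite Borel measures $\nu_1,\nu_2$ on $\U(n)$, the convolution $\nu_1*\nu_2$ is the push-forward of $\nu_1\times\nu_2$ under multiplication $(g,h)\mapsto gh$. Equivalently, for every continuous $f\colon\U(n)\to\C$,
\[
    \int f\, d(\nu_1*\nu_2)=\int\!\!\int f(gh)\,d\nu_1(g)\,d\nu_2(h).
\]
Since $B_1\subseteq B_2$, the subgroup $\U_{B_1}$ is contained in $\U_{B_2}$. So $\mu_{B_1}$ may be viewed as a probability measure supported on the compact group $\U_{B_2}$, and $\mu_{B_2}$ is the normalized Haar measure there.

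First I show $\mu_{B_1}*\mu_{B_2}=\mu_{B_2}$. For fixed $g\in\U_{B_1}\subseteq\U_{B_2}$, left-invariance of Haar measure on $\U_{B_2}$ gives
\[
    \int f(gh)\,d\mu_{B_2}(h)=\int f(h)\,d\mu_{B_2}(h).
\]
Integrating this over $g$ against the probability measure $\mu_{B_1}$, using Fubini, yields $\int f\,d\mu_{B_2}$. The same argument with right-invariance handles $\mu_{B_2}*\mu_{B_1}$: for fixed $h\in\U_{B_1}$, the inner integral $\int f(gh)\,d\mu_{B_2}(g)$ equals $\int f\,d\mu_{B_2}$, because $\U(|B_2|)$ is compact and hence unimodular. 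Then integrate over $h$.

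Since continuous functions determine Radon measures on the compact space $\U(n)$, both convolutions equal $\mu_{B_2}$.

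I do not expect a real obstacle. The only points needing care are that $\mu_{B_1}$ is a probability measure, so the total mass is $1$, and that the Haar measure on $\U_{B_2}\cong\U(|B_2|)$ is bi-invariant. The degenerate cases $|B_i|\le 1$ are also covered: $\U_\emptyset$ is trivial, so $\mu_\emptyset$ is the Dirac mass at $I$, and $\U_{\{i\}}\cong S^1$ with its Haar measure. In these cases the same computation goes through verbatim.

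As an alternative consistent with Lemma \ref{lem:L as sum of projections}, one could phrase the statement operator-theoretically: $P_{B_1}P_{B_2}=P_{B_2}P_{B_1}=P_{B_2}$ in every unitary representation, because $V^{\U_{B_2}}\subseteq V^{\U_{B_1}}$. Applying this in the regular representation recovers the measure identity.
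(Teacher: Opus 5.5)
Your proof is correct and follows essentially the same route as the paper: you integrate against the product measure and use left-invariance of the Haar measure on $\U_{B_2}$ (together with $\U_{B_1}\le\U_{B_2}$ and total mass $1$ of $\mu_{B_1}$) for $\mu_{B_1}*\mu_{B_2}$, and right-invariance for the other order. The only cosmetic difference is that you test against continuous functions, while the paper works directly with indicator functions $\mathds{1}_E$ of Borel sets.
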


\begin{proof}
    We have
    \begin{eqnarray*}
       (\mu_{B_1}*\mu_{B_2})(E) &=& \int_{x\in\U_{B_1}}\left(\int_{y\in\U_{B_2}}\mathds{1}_E(xy)d\mu_{B_2}(y)\right)d\mu_{B_1}(x) \\
       &=& \int_{x\in\U_{B_1}}\left(\int_{y\in\U_{B_2}}\mathds{1}_E(y)d\mu_{B_2}(y)\right)d\mu_{B_1}(x) \\
       &=&  \int_{x\in\U_{B_1}}\mu_{B_2}(E)d\mu_{B_1}(x) = \mu_{B_2}(E),
    \end{eqnarray*}
    where the second equality is by the left-invariance of the Haar measure on $\U_{B_2}$ and the fact that $x\in\U_{B_1}\le\U_{B_2}$. Hence $\mu_{B_1}*\mu_{B_2}=\mu_{B_2}$. A parallel argument, using the right-invariance of Haar measures, shows that $\mu_{B_2}*\mu_{B_1}=\mu_{B_2}$.
\end{proof}

\begin{lemma} \label{lem:convolution of disjoint mu_Bs}
    Let $B_1,B_2\subseteq[n]$ satisfy $B_1\cap B_2=\emptyset$. Then $\mu_{B_1}*\mu_{B_2}=\mu_{B_2}*\mu_{B_1}$.
\end{lemma}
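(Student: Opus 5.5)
The plan is to use the fact that, when $B_1\cap B_2=\emptyset$, the subgroups $\U_{B_1}$ and $\U_{B_2}$ commute elementwise. An element of $\U_{B_1}$ acts as the identity outside the coordinates in $B_1$, and an element of $\U_{B_2}$ acts as the identity outside $B_2$. After permuting coordinates so that $B_1$, $B_2$ and the remaining indices form consecutive blocks, both matrices are block-diagonal with respect to the same decomposition $\C^n=\C^{B_1}\oplus\C^{B_2}\oplus\C^{[n]\setminus(B_1\cup B_2)}$. In each block at least one of the two matrices is the identity. Hence $xy=yx$ for every $x\in\U_{B_1}$ and $y\in\U_{B_2}$.

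With this in hand, I would compute the convolution directly, exactly as in the proof of Lemma \ref{lem:B_1 contained in B_2}. For a Borel set $E\subseteq\U(n)$,
\[
(\mu_{B_1}*\mu_{B_2})(E)=\int_{x\in\U_{B_1}}\int_{y\in\U_{B_2}}\mathds{1}_E(xy)\,d\mu_{B_2}(y)\,d\mu_{B_1}(x).
\]
Replace $xy$ by $yx$, and swap the order of integration; Fubini applies since the integrand is bounded and both measures are finite. The result is
\[
\int_{y\in\U_{B_2}}\int_{x\in\U_{B_1}}\mathds{1}_E(yx)\,d\mu_{B_1}(x)\,d\mu_{B_2}(y)=(\mu_{B_2}*\mu_{B_1})(E).
\]
Alternatively, one can observe that both convolutions equal the pushforward of $\mu_{B_1}\times\mu_{B_2}$ under the multiplication map. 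That map is a homomorphism $\U_{B_1}\times\U_{B_2}\to\U(n)$, precisely because the factors commute, so the order of multiplication does not matter.

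No step here is a real obstacle. The only point needing care is the elementwise commutation, and that is immediate from the block structure.
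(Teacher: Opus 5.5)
Your proof is correct and follows essentially the same route as the paper, which also concludes from Fubini's theorem after noting that $\U_{B_1}$ and $\U_{B_2}$ are commuting subgroups. You simply write out the block-diagonal commutation argument and the integral swap explicitly.
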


\begin{proof}
    If $B_1\cap B_2=\emptyset$, then $U_{B_1}$ and $\U_{B_2}$ are commuting subgroups with trivial intersection, and the conclusion follows from Fubini's theorem.
\end{proof}

\begin{proof}[Proof of Proposition \ref{prop:torus-inv subspace is invariant for every Gamma}]
    Let $\rho\colon\U(n)\to\gl(V)$ be a $d$-dimensional representation and $\Gamma=([n],w)$ a weighted hypergraph. By Lemma \ref{lem:L as sum of projections}, $\L(\Gamma,\rho)=\sum_{B\subseteq[n]}w_B(I_d-P_B)\in\mathrm{End}(V)$. By Lemmas \ref{lem:B_1 contained in B_2} and \ref{lem:convolution of disjoint mu_Bs}, for every $i\in[n]$, $P_{\{i\}}$ commutes with $P_B$ for all $B$. Hence all the operators
    \[
        \L(\Gamma,\rho),P_{\{1\}},\ldots,P_{\{n\}}\in\mathrm{End}(V)
    \]
    commute with each other and thus admit a simultaneous diagonalization. In particular, the intersection $V^{\{1\}}\cap\ldots\cap V^{\{n\}}$, which is precisely the torus-invariant subspace $\torinv(\rho)$, is invariant under $\L(\Gamma,\rho)$.    
\end{proof}

The proof of Proposition \ref{prop:torus-inv subspace is invariant for every Gamma} actually yields a decomposition of $V$ as a direct sum of $2^n$ subspaces, each of which is invariant under $\L(\Gamma,\rho)$. (Of course, each of these subspaces may be trivial for specific $\rho$.)

\begin{cor} \label{cor:2 to the n invariant subspaces}
    Let $\rho\colon\U(n)\to\gl(V)$ be a finite dimensional representation. Denote
    \[
        V^{\{i\},1}\defi V^{\{i\}}=P_{\{i\}}(V) \mathrm{~~~and~~~} V^{\{i\},0}\defi\ker\left(P_{\{i\}}\right).
    \] 
    Then
    \[
        V=\bigoplus_{(\varepsilon_1,\ldots,\varepsilon_n)\in\{0,1\}^n}V^{\{1\},\varepsilon_1}\cap\ldots\cap V^{\{n\},\varepsilon_n},
    \]
    and each of these (possibly trivial) subspaces is invariant under $\L(\Gamma,\rho)$ for every weighted hypergraph $\Gamma$.
\end{cor}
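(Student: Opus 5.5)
The plan is to run the argument of the proof of Proposition \ref{prop:torus-inv subspace is invariant for every Gamma} more carefully. There we saw that the $n+1$ operators $\L(\Gamma,\rho),P_{\{1\}},\ldots,P_{\{n\}}$ pairwise commute. The relevant facts are Lemma \ref{lem:B_1 contained in B_2} when $i\in B$, and Lemma \ref{lem:convolution of disjoint mu_Bs} when $i\notin B$. These translate into commutation of the projections $P_{\{i\}}$ and $P_B$, because $P_B=\int\rho\,d\mu_B$ and convolution of measures corresponds to composition of the integrated operators. By Lemma \ref{lem:L as sum of projections}, $\L(\Gamma,\rho)$ is a linear combination of the $P_B$, so it commutes with every $P_{\{i\}}$. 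Moreover, the $P_{\{i\}}$ commute among themselves, either by Lemma \ref{lem:convolution of disjoint mu_Bs} or simply because $\U_{\{i\}}$ and $\U_{\{j\}}$ are commuting subgroups.

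The first step is the direct-sum decomposition. Each $P_{\{i\}}$ is an orthogonal projection (we assume, as usual, that $\rho$ is unitary), so $V=V^{\{i\},1}\oplus V^{\{i\},0}$, with $V^{\{i\},0}=\ker P_{\{i\}}=\operatorname{Im}(I-P_{\{i\}})$. For $\varepsilon\in\{0,1\}^n$, set $Q_\varepsilon\defi\prod_{i=1}^n Q_i^{\varepsilon_i}$, where $Q_i^1=P_{\{i\}}$ and $Q_i^0=I-P_{\{i\}}$. Since these are commuting idempotents, each $Q_\varepsilon$ is an idempotent. Expanding $\prod_i\bigl(P_{\{i\}}+(I-P_{\{i\}})\bigr)=I$ gives $\sum_\varepsilon Q_\varepsilon=I$. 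Distinct $\varepsilon\ne\varepsilon'$ differ in some coordinate $i$, and $P_{\{i\}}(I-P_{\{i\}})=0$, so $Q_\varepsilon Q_{\varepsilon'}=0$. Hence $V=\bigoplus_\varepsilon\operatorname{Im}Q_\varepsilon$.

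Next I will identify each image with the corresponding intersection, that is, show $\operatorname{Im}Q_\varepsilon=\bigcap_i V^{\{i\},\varepsilon_i}$. For the inclusion $\subseteq$, commutativity lets me put the factor $Q_i^{\varepsilon_i}$ last in the product, so the image lies in $\operatorname{Im}Q_i^{\varepsilon_i}=V^{\{i\},\varepsilon_i}$ for every $i$. For $\supseteq$, a vector $v$ in the intersection satisfies $Q_i^{\varepsilon_i}v=v$ for each $i$, and therefore $Q_\varepsilon v=v$.

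Finally, invariance is immediate. $\L(\Gamma,\rho)$ commutes with every $P_{\{i\}}$, hence with every $Q_\varepsilon$, and so it preserves $\operatorname{Im}Q_\varepsilon$.

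No step is a real obstacle. The only point needing care is the commutation $P_{\{i\}}P_B=P_BP_{\{i\}}$, which must be justified from the convolution identities. One either uses $\int\rho\,d(\mu_1*\mu_2)=\int\rho\,d\mu_1\int\rho\,d\mu_2$, or argues directly: if $i\in B$ then $P_{\{i\}}P_B=P_BP_{\{i\}}=P_B$, and if $i\notin B$ the groups $\U_{\{i\}}$ and $\U_B$ commute elementwise.
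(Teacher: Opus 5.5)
Your proof is correct and takes the same route as the paper, which derives the corollary from the proof of Proposition \ref{prop:torus-inv subspace is invariant for every Gamma}: there $\L(\Gamma,\rho),P_{\{1\}},\ldots,P_{\{n\}}$ are shown to pairwise commute via Lemmas \ref{lem:B_1 contained in B_2} and \ref{lem:convolution of disjoint mu_Bs}, and the decomposition is read off from their simultaneous diagonalization. Your explicit commuting idempotents $Q_\varepsilon$ simply spell out that last step, and they need only that each $P_{\{i\}}$ is idempotent, not that $\rho$ is unitary.
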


In the notation of Corollary \ref{cor:2 to the n invariant subspaces}, $\torinv(\rho)=V^{\{1\},1}\cap\ldots\cap V^{\{n\},1}$.


\subsection{Discrete KMP as a torus-invariant subspace} \label{subsec:discrete KMP as tor-inv subspace}

Recall the notation $\L_\torinv(\Gamma,\rho)$ from page \pageref{page:L_torinv} for the restriction of $\L(\Gamma,\rho)$ to the invariant subspace $\torinv(\rho)$. We now find explicitly the torus-invariant subspaces of the representations $\rkl$ and $\skm$, and show that  $\L_\torinv(\Gamma,S_{k,k})$ is precisely the discrete KMP process on $\Gamma$ with $k$ particles. 
\medskip

We begin with $\rkl$. Consider the standard basis $e_1,\ldots,e_n$ of $V=\mathbb{C}^n$, and mark the dual basis of the dual space $V^*$ by $e^1,\ldots,e^n$ (so $\langle e_i,e^j\rangle=\delta_{ij}$) . The canonical basis for $V^{\otimes k}\otimes (V^*)^{\otimes m}$ is
\[
    \left\{ e_{i_1}\otimes\ldots\otimes e_{i_k}\otimes e^{j_1}\otimes\ldots\otimes e^{j_m} \,\mid\, i_1,\ldots,i_k,j_1,\ldots,j_m\in[n] \right\}.
\]
Note that the action of $A\in\U(n)$ on $\varphi\in V^*$ is given by $\varphi\mapsto\overline{A}\varphi$, so its action on $V^{\otimes k}\otimes (V^*)^{\otimes m}$ is given by the matrix $A^{\otimes k}\otimes \overline{A}^{\otimes m}$.

\begin{prop} \label{prop:the torus-invariant subspace of Rkm}
    The torus-invariant subspace of $\rkl$ is spanned by the basis elements 
    \[
    \left\{ e_{i_1}\otimes\ldots\otimes e_{i_k}\otimes e^{j_1}\otimes\ldots\otimes e^{j_m} \,\mid\, \{i_1,\ldots,i_k\}\stackrel{\mathrm{as~multisets}}{=}\{j_1,\ldots,j_m\} \right\}.
\]
    In particular, if $k\ne m$, the torus-invariant subspace of $\rkl$ is $\{0\}$.
\end{prop}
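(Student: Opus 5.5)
The plan is to note that the canonical basis of $\rkl$ consists of simultaneous eigenvectors for the whole torus $T_n$. The torus-invariant subspace is then just the span of those basis vectors whose character is trivial.

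Let $t=\mathrm{diag}(t_1,\ldots,t_n)\in T_n$ with $t_i\in S^1$. The action on $\rkl$ is by $t^{\otimes k}\otimes\overline{t}^{\otimes m}$, so
\[
    t.\left(e_{i_1}\otimes\cdots\otimes e_{i_k}\otimes e^{j_1}\otimes\cdots\otimes e^{j_m}\right)=t_{i_1}\cdots t_{i_k}\,\overline{t_{j_1}}\cdots\overline{t_{j_m}}\,\left(e_{i_1}\otimes\cdots\otimes e^{j_m}\right).
\]
Since $\overline{t_j}=t_j^{-1}$, the scalar equals $\prod_{x\in[n]} t_x^{a_x-b_x}$. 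Here $a_x$ is the number of occurrences of $x$ among $i_1,\ldots,i_k$, and $b_x$ is the number of occurrences of $x$ among $j_1,\ldots,j_m$. Thus each basis vector spans a $T_n$-stable line, on which $T_n$ acts by the character $\chi_{a-b}(t)=\prod_x t_x^{a_x-b_x}$.

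Next I would write a general vector as $v=\sum c_\beta\,\beta$ over basis elements $\beta$. The condition $t.v=v$ for all $t\in T_n$ becomes $c_\beta(\chi_\beta(t)-1)=0$ for every $\beta$ and every $t$. Hence $v$ is invariant if and only if every $\beta$ with $c_\beta\neq0$ has trivial character. A character $\prod_x t_x^{c_x}$ of the torus is trivial exactly when all exponents $c_x$ vanish: if some $c_x\neq0$, take $t_x=\theta$ with $\theta^{c_x}\neq1$ and all other entries equal to $1$, just as in the proof of Lemma \ref{lem:unbalanced integrals vanish}. So $\beta$ is invariant if and only if $a_x=b_x$ for every $x$, that is, $\{i_1,\ldots,i_k\}=\{j_1,\ldots,j_m\}$ as multisets. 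This proves the spanning statement.

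For the second claim, summing $a_x=b_x$ over $x$ forces $k=m$. So when $k\neq m$ no basis vector is invariant, and the torus-invariant subspace is $\{0\}$.

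I expect no real obstacle here. The only point needing care is the passage from invariance of an arbitrary linear combination to invariance of each basis vector, and this follows from the character-separation argument above, or equivalently from the linear independence of distinct torus characters.
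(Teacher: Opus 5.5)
Your proof is correct and follows essentially the same route as the paper: both use the fact that each standard basis vector of $\rkl$ is a torus weight vector with character $\prod_x t_x^{a_x-b_x}$. The paper averages this character over $T_n$, showing that $\mathbb{E}_{A\in T_n}[A^{\otimes k}\otimes\overline{A}^{\otimes m}]$ is a diagonal $0/1$ matrix with ones exactly at the balanced basis elements, whereas you check invariance under each individual torus element directly, which amounts to the same thing.
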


\begin{proof}
    The Haar measure on $T_n$ is the Lebesgue measure on $S^1\times\ldots\times S^1$. Let $A\in T_n$ be Haar-random and consider $\mathbb{E}_{A\in T_n}[A^{\otimes k}\otimes \overline{A}^{\otimes m}]$ -- the matrix giving the projection onto $\torinv(R_{k,m})$. The entry in row $(i_1,\ldots,i_k,j_1,\ldots,j_m)$ and column $(i'_1,\ldots,i'_k,j'_1,\ldots,j'_m)$ is 
    \[
        \mathbb{E}_{A\in T_n}\left[A_{i_1,i'_1}\cdots A_{i_k,i'_k}\cdot\overline{A_{j_1,j'_1}}\cdots\overline{A_{j_m,j'_m}}\right],
    \]
    which is either 1 or 0. It is 1 if and only if only diagonal entries of $A$ are in play, and each of these appears in a ``balanced'' manner. This is the case precisely if $(i)$ $i_t=i'_t$ for all $t$, $(ii)$ $j_t=j'_t$ for all $t$, and $(iii)$ $\{i_1,\ldots,i_k\}=\{j_1,\ldots,j_m\}$ as multisets.
\end{proof}

\begin{cor} \label{cor:torus-inv subspace empty for unbalanced irreps}
    The torus-invariant subspace of an unbalanced irreducible representation of $\U(n)$ is $\{0\}$. In particular, whenever $k\ne m$, we also have $\torinv(\skm)=\{0\}$.
\end{cor}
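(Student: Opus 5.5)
The argument is short: the center of $\U(n)$ lies inside the torus, so any torus-invariant vector is also fixed by the center. On an unbalanced irrep the center acts by a nontrivial character, which forces such a vector to vanish.

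\emph{Step 1: the center acts by a scalar.} Let $\rho=\rho_\mu$ be an irrep with highest weight $\mu=(\mu_1,\ldots,\mu_n)$, and set $s=\sum\mu_i$. The center of $\U(n)$ is $Z=\{zI_n \mid z\in S^1\}$. By Schur's lemma, $\rho(zI_n)$ is a scalar, and in fact $\rho(zI_n)=z^{s}I$. To see the exponent, evaluate on a highest weight vector $v$. The torus acts on $v$ by the character $\mathrm{diag}(t_1,\ldots,t_n)\mapsto t_1^{\mu_1}\cdots t_n^{\mu_n}$, and specialising to $t_1=\cdots=t_n=z$ gives $z^{s}$. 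Since $\rho(zI_n)$ is scalar, it equals $z^sI$ on all of $V$.

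\emph{Step 2: unbalanced irreps have no torus-invariant vectors.} Suppose $\rho$ is unbalanced, so $s\ne0$, and let $v\in\torinv(\rho)$. Because $Z\le T_n$, we have $v=\rho(zI_n)v=z^{s}v$ for every $z\in S^1$. Choosing $z$ with $z^s\ne1$ forces $v=0$, hence $\torinv(\rho)=\{0\}$.

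\emph{Step 3: the statement for $\skm$.} This can be obtained in either of two ways.
\begin{itemize}
\item Via Corollary \ref{cor:decomp of sym-k tensor sym-m-dual}: $\skm$ decomposes into irreps $\rho_{(k-j,0,\ldots,0,-(m-j))}$, each with weight sum $k-m\ne0$. The torus-invariant subspace of a direct sum is the direct sum of the torus-invariant subspaces, so $\torinv(\skm)=\{0\}$ by Step 2.
\item Directly: $zI_n$ acts on $\skm$ by $z^{k}\bar z^{m}=z^{k-m}$, and the argument of Step 2 applies verbatim.
\end{itemize}
Alternatively, $\skm$ embeds $\U(n)$-equivariantly in $\rkl$ as symmetric tensors, so Proposition \ref{prop:the torus-invariant subspace of Rkm} gives the claim at once, since $\torinv$ of a subrepresentation is contained in $\torinv$ of the ambient representation.

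No step is a real obstacle. The only point needing care is that the central character of $\rho_\mu$ is $z\mapsto z^{\sum\mu_i}$, which is the highest-weight evaluation in Step 1.
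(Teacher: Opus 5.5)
Your proof is correct, but it takes a different route from the paper's. The paper never uses the center. It invokes Koike's result (Corollary~\ref{cor:koike nu+nu- appears in R_km}) that $\rho_{\nu^+,\nu^-}^{(n)}$ is a subrepresentation of $R_{|\nu^+|,|\nu^-|}^{(n)}$, and notes that unbalanced means $|\nu^+|\ne|\nu^-|$. It then appeals to the explicit computation in Proposition~\ref{prop:the torus-invariant subspace of Rkm} that $\torinv(R_{k,m})=\{0\}$ for $k\ne m$. For $S_{k,m}$ it uses the decomposition of Corollary~\ref{cor:decomp of sym-k tensor sym-m-dual}, exactly as in your first bullet. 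Your central-character argument has real advantages. The scalars $zI_n$ lie in $T_n$ and act on $\rho_\mu$ by $z^{\sum\mu_i}$, so the proof is elementary and self-contained, needing only Schur's lemma and the meaning of the highest weight. It also makes precise the paper's parenthetical remark that $\rho$ is balanced if and only if the center acts trivially. Moreover it applies verbatim to any unitary representation, giving $\torinv(\rho)\subseteq\rho^{Z}$ in general. The paper's route, in exchange, sets up the embedding of balanced irreps into $R_{k,k}$, which it immediately reuses in Corollary~\ref{cor:torus-inv of balanced is of dim sqrt} to bound $\dim\torinv\bigl(\rho^{(n)}_{\nu^+,\nu^-}\bigr)$ by $O(n^k)$. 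A central-character argument cannot give that bound, since the center acts trivially on every balanced irrep.
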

\begin{proof}
    The first statement is immediate from Corollary \ref{cor:koike nu+nu- appears in R_km} combined with Proposition \ref{prop:the torus-invariant subspace of Rkm}, noticing that if $\rho'$ is a sub-representation of $\rho$, then the torus-invariant subspace of $\rho'$ is contained in that of $\rho$. For the second statement, recall that $S_{k,m}$ decomposes to unbalanced irreps by Corollary \ref{cor:decomp of sym-k tensor sym-m-dual}.
\end{proof}

Fix two partitions $\nu^+$ and $\nu^-$. It follows from \cite[Eq.~(0.3)]{koike1989decomposition} that the dimension of $\rho_{\nu^+,\nu^-}^{(n)}$ is given by a polynomial in $n$ of degree $|\nu^+|+|\nu^-|$ (the precise derivation of this result from \cite{koike1989decomposition} is elaborated, e.g., in \cite[$\S$3.2]{puder2023stable}). In particular, if the partition is balanced, namely, if $|\nu^+|=|\nu^-|=k$ for some $k$, the dimension is of order $n^{2k}$.
The following corollary shows that even in this case, where the torus-invariant subspace of $\rho$ is not empty, it must be of dimension at most of order square root of $\dim(\rho)$.

\begin{cor} \label{cor:torus-inv of balanced is of dim sqrt}
    Let $\nu^+,\nu^-\vdash k$ be two partitions. Then 
    \[        \dim\left(\torinv\left(\rho_{\nu^+,\nu^-}^{(n)}\right)\right)=O(n^k).
    \]
\end{cor}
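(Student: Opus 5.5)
The plan is to realize $\rho_{\nu^+,\nu^-}^{(n)}$ as a sub-representation of $R_{k,k}^{(n)}$ and bound the dimension of $\torinv(R_{k,k})$ directly. Corollary \ref{cor:koike nu+nu- appears in R_km} gives that, since $|\nu^+|=|\nu^-|=k$, the irrep $\rho_{\nu^+,\nu^-}^{(n)}$ appears in $R_{k,k}^{(n)}$ for every $n\ge\ell(\nu^+)+\ell(\nu^-)$. For the finitely many smaller $n$ nothing needs proving, because the statement is asymptotic. As in the proof of Corollary \ref{cor:torus-inv subspace empty for unbalanced irreps}, a sub-representation has torus-invariant subspace contained in that of the ambient representation. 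Therefore
\[
\dim\torinv\left(\rho_{\nu^+,\nu^-}^{(n)}\right)\le\dim\torinv\left(R_{k,k}^{(n)}\right).
\]

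Next I would count the right-hand side using Proposition \ref{prop:the torus-invariant subspace of Rkm}. It says that $\torinv(R_{k,k})$ is spanned by the basis vectors $e_{i_1}\otimes\cdots\otimes e_{i_k}\otimes e^{j_1}\otimes\cdots\otimes e^{j_k}$ for which the multisets $\{i_1,\ldots,i_k\}$ and $\{j_1,\ldots,j_k\}$ coincide. To count these, fix the sequence $(i_1,\ldots,i_k)\in[n]^k$, which leaves $n^k$ choices. Then $(j_1,\ldots,j_k)$ must be a rearrangement of it, which leaves at most $k!$ choices. Hence $\dim\torinv(R_{k,k})\le k!\,n^k$. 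Since $k$ is fixed, this is $O(n^k)$, as claimed.

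No step is a genuine obstacle. The only point to watch is that the containment of $\rho_{\nu^+,\nu^-}^{(n)}$ in $R_{k,k}^{(n)}$ requires $n\ge\ell(\nu^+)+\ell(\nu^-)$. For $n\ge 2k$ this also follows from Theorem \ref{thm:decomposition of Rkl} with $j=0$, and in any case Koike's corollary covers all relevant $n$.

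Finally, I would note why the result matches the preceding discussion. The dimension of $\rho_{\nu^+,\nu^-}^{(n)}$ is a polynomial in $n$ of degree $2k$, so the bound $O(n^k)$ is of the order of the square root of $\dim\rho$.
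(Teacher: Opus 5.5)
Your proposal is correct and follows the paper's proof: you embed $\rho_{\nu^+,\nu^-}^{(n)}$ in $R_{k,k}^{(n)}$ via Corollary \ref{cor:koike nu+nu- appears in R_km}, pass to torus-invariant subspaces, and count $\torinv(R_{k,k}^{(n)})$ using Proposition \ref{prop:the torus-invariant subspace of Rkm}. Your explicit bound $k!\,n^k$ is a slightly more concrete version of the paper's remark that this dimension is a polynomial of degree $k$ in $n$.
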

\begin{proof}
    By Corollary \ref{cor:koike nu+nu- appears in R_km}, $\rho_{\nu^+,\nu^-}^{(n)}$ is a sub-representation of $R_{k,k}$, so $\torinv(\rho_{\nu^+,\nu^-}^{(n)})$ can be realized as a subspace of $\torinv(R_{k,k}^{(n)})$. By Proposition \ref{prop:the torus-invariant subspace of Rkm}, the dimension of $\torinv(R_{k,k}^{(n)})$ is given by a polynomial in $n$ of degree $k$.
\end{proof}

\begin{remark}
    It is likely that the order $n^k$ is precise, and not only an upper bound. In fact, it is likely that for all $\nu^+,\nu^-\vdash k$, the dimension of $\torinv(\rho_{\nu^+,\nu^-}^{(n)})$ is given by a polynomial in $n$ of degree $k$ for all $n\ge2k$. 
\end{remark}

Our next goal is to prove Theorem \ref{thm:kmp_k is central block of...}, which states that 
\[
    \L_\torinv\left(\Gamma,S_{k,k}\right) \cong \L(\Gamma,\kmp_k).
\]

We begin by identifying the subspace $\torinv(S_{k,k})$. A convenient way to think about $\Sym^k(\rho_\std)$, the $k^\mathrm{th}$-symmetric power of the standard representation of $\U(n)$, is by $k$-homogeneous polynomials in $n$ commuting variables, namely, as the space
\[
    \mathbb{C}[x_1,\ldots,x_n]_k\defi \mathrm{Span}_{\mathbb{C}}\left\{ x_1^{\alpha_1}\cdots x_n^{\alpha_n}\,\mid\, \alpha_1,\ldots,\alpha_n\in\mathbb{Z}_{\ge0}, \sum\alpha_i=k \right\},
\]
where the action of $A\in\U(n)$ is defined by its action on a single variable, given by 
\begin{equation} \label{eq:action of A on variable}
    A.x_i\defi\sum_{j=1}^n A_{j,i}x_j.
\end{equation}
Similarly, the representation $S_{k,k}=\Sym^k(\rho_\std)\otimes \Sym^k(\rho_\std^{~*})$ can be realized as the action on the space of polynomials in $2n$ commuting variables $x_1,\ldots,x_n,y_1,\ldots,y_n$, where $A\in\U(n)$ acts on every $x_i$ as in \eqref{eq:action of A on variable}, and on every $y_i$ as $A.y_i=\sum_{j=1}^n\overline{A_{j,i}}y_j$.

\begin{prop} \label{prop:the tor-inv subspace of sym-k otimes sym-k-dual}
    The torus-invariant subspace of $S_{k,k}$ has a basis
    \begin{equation} \label{eq:basis of tor-inv of Skk}
        \left\{ x_{i_1}\cdots x_{i_k} y_{i_1}\cdots y_{i_k} \,\mid\, 1\le i_1\le\ldots\le i_k \le n\right\}.
    \end{equation}
\end{prop}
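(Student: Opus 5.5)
The plan is to compute the action of the torus directly on the monomial basis of $S_{k,k}$, realized as polynomials in $x_1,\ldots,x_n,y_1,\ldots,y_n$ that are homogeneous of degree $k$ in the $x$'s and of degree $k$ in the $y$'s. The monomials $x^\alpha y^\beta=x_1^{\alpha_1}\cdots x_n^{\alpha_n}y_1^{\beta_1}\cdots y_n^{\beta_n}$, with $\sum\alpha_i=\sum\beta_i=k$, form a basis of this space.

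The key observation is that this basis consists of simultaneous eigenvectors for $T_n$. Let $A=\mathrm{diag}(\theta_1,\ldots,\theta_n)\in T_n$. By \eqref{eq:action of A on variable}, $A.x_i=\theta_ix_i$, and likewise $A.y_i=\overline{\theta_i}y_i$. Since the action on polynomials is multiplicative (it is induced from the action on the variables), we get
\[
A.(x^\alpha y^\beta)=\prod_{i=1}^n\theta_i^{\alpha_i-\beta_i}\,x^\alpha y^\beta .
\]
So each monomial spans a one-dimensional $T_n$-invariant line on which the torus acts by the character $\theta\mapsto\prod_i\theta_i^{\alpha_i-\beta_i}$.

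Next, we use that distinct basis monomials carry characters and that the fixed space of the torus is spanned by the monomials whose character is trivial. A general vector $v=\sum c_{\alpha,\beta}x^\alpha y^\beta$ is fixed by all of $T_n$ if and only if each $c_{\alpha,\beta}$ with nontrivial character vanishes. This follows from linear independence of the monomials: comparing coefficients in $A.v=v$ gives $c_{\alpha,\beta}\bigl(\prod_i\theta_i^{\alpha_i-\beta_i}-1\bigr)=0$ for all $\theta\in(S^1)^n$. Equivalently, one can average over $T_n$, as in the proof of Proposition \ref{prop:the torus-invariant subspace of Rkm}, to obtain the projection, which is diagonal with entries $1$ or $0$.

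The character $\theta\mapsto\prod_i\theta_i^{\alpha_i-\beta_i}$ is trivial on $(S^1)^n$ exactly when $\alpha=\beta$. The monomials $x^\alpha y^\alpha$ with $|\alpha|=k$ are in bijection with non-decreasing sequences $1\le i_1\le\cdots\le i_k\le n$, where $\alpha_j$ counts the occurrences of $j$ in the sequence. Under this bijection, $x^\alpha y^\alpha=x_{i_1}\cdots x_{i_k}y_{i_1}\cdots y_{i_k}$, which is exactly the set \eqref{eq:basis of tor-inv of Skk}. Linear independence of this set is inherited from the monomial basis.

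There is no real obstacle. The only points needing care are the convention $A.y_i=\sum_j\overline{A_{j,i}}y_j$, which makes the diagonal action conjugate, and the check that the polynomial model is faithful to $\Sym^k(V)\otimes\Sym^k(V^*)$. Both are given in the paper's setup.
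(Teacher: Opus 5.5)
Your proof is correct and follows essentially the same route as the paper. The paper computes the torus-averaging projection $P_{T_n}$ in the monomial basis and finds it diagonal with $0/1$ entries; this is exactly your observation, organized slightly more directly, that each monomial $x^\alpha y^\beta$ is a torus weight vector with character $\prod_i\theta_i^{\alpha_i-\beta_i}$, which is trivial if and only if $\alpha=\beta$.
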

\begin{proof}
    We will prove that the projection matrix of $P_{T_n}$ in the standard basis of $S_{k,k}$ is $0,1$-diagonal matrix with $1$ precisely in the basis elements of the form \eqref{eq:basis of tor-inv of Skk}.    
    Consider the action of $P_{T_n}$, the projection on the torus-invariant subspace, on the monomial $x_{i_1}\cdots x_{i_k} y_{j_1}\cdots y_{j_k}$ with $i_1\le i_2\le\ldots\le i_k$ and $j_1\le j_2\le\ldots\le j_k$. In the resulting polynomial, the coefficient of $x_{i'_1} \cdots x_{i'_k} y_{j'_1} \cdots y_{j'_k}$ (here the indices are sorted as well) is
    \[
        \mathbb{E}_{A\in T_n}\left[\left(\sum_{\substack{i''_1,\ldots,i''_k\in[n]~\mathrm{s.t.}\\ \{i''_1,\ldots,i''_k\}\stackrel{\mathrm{as~multisets}}{=}\{i'_1,\ldots,i'_k\}}} A_{i''_1,i_1}\cdots A_{i''_k,i_k} \right)
        \left(\sum_{\substack{j''_1,\ldots,j''_k\in[n]~\mathrm{s.t.}\\ \{j''_1,\ldots,j''_k\}\stackrel{\mathrm{as~multisets}}{=}\{j'_1,\ldots,j'_k\}}} \overline{A_{j''_1,j_1}}\cdots \overline{A_{j''_k,j_k}} \right) \right].        
    \]
    By the linearity of the expectation, this is equal to a sum over 
    \[
        \mathbb{E}_{A\in T_n}\left[A_{i''_1,i_1}\cdots A_{i''_k,i_k} \cdot
        \overline{A_{j''_1,j_1}}\cdots \overline{A_{j''_k,j_k}} \right]
    \]
    with $i''_1,\ldots,i''_k,j''_1,\ldots,j''_k$ as in the original expression. But for this term to not vanish, we must have $i''_t=i_t$ and $j''_t=j_t$ for all $t\in[k]$ (as $A\in T_n$ is diagonal). In particular, this means that the $i''_t$'s are sorted, and hence $i_t=i''_t=i'_t$ and $j_t=j''_t=j'_t$ for all $t$. So the coefficient computed above does not vanish only if it is a diagonal coefficient: $i'_t=i_t$ and $j'_t=j_t$ for all $t$. In this case, it is equal to 
    \[
        \mathbb{E}_{A\in T_n}\left[\left(A_{i_1,i_1}\cdots A_{i_k,i_k} \right)
        \left( \overline{A_{j_1,j_1}}\cdots \overline{A_{j_k,j_k}} \right) \right].        
    \]
    Finally, this diagonal entry of $P_{T_n}$ does not vanish if and only if each diagonal entry of $A$ appears a balanced number of times. This translates exactly to that $(i_1,\ldots,i_k)=(j_1,\ldots,j_k)$. We conclude that $P_{T_n}$ is the projection onto the subspace spanned by the polynomials in \eqref{eq:basis of tor-inv of Skk}.
\end{proof}

So far we have established that the torus-invariant subspace of $S_{k,k}$ has a basis corresponding to $\ms{[n]}{k}$: the multisets of size $k$ with elements from $[n]$ are given by $\{i_1,\ldots,i_k\}$ from \eqref{eq:basis of tor-inv of Skk}. We now show that the linear action of $\L(\Gamma,S_{k,k})$ on this subspace is identical to that of $\L(\Gamma,\kmp_k)$. 

In fact, even though the basis \eqref{eq:basis of tor-inv of Skk} of $\torinv(S_{k,k})$ has a natural one-to-one correspondence with the basis $\ms{[n]}{k}$ of the space $\MS(k,n)$ on which $\kmp_k(\Gamma)$ acts, it is not properly normalized. It turns out that the right normalization, which makes the two actions of $\Gamma$ identical, is the one turning the basis \eqref{eq:basis of tor-inv of Skk} into an orthonormal basis for $\torinv(S_{k,k})$, when the $S_{k,k}$ is realized as a subrepresentation of $R_{k,k}$. Let us explain what this means.

Consider first $R_{k,0}=V^{\otimes k}$ where $V=\C^n$ is the standard representation. The elements $e_{i_1}\otimes\ldots\otimes e_{i_k}$ form an orthonormal basis. To realize $S_{k,0}=\Sym^k(V)$ as a subspace of $R_{k,0}$, one can take as an orthonormal basis the elements 
\[
    \left\{ \frac{1}{\sqrt{C_{\ii}}}\sum_{\substack{i'_1,\ldots,i'_k\in[n]\\ \{i'_1,\ldots,i'_k\}\stackrel{\mathrm{as~multisets}}{=}\ii}}  e_{i'_1}\otimes\ldots\otimes e_{i'_k}\,\middle|\,\ii=\{i_1,\ldots,i_k\},~1\le i_1\le\ldots\le i_k\le n\right\},
\]
where $c_\ii$\marginpar{$c_\ii$} is the number of elements in the sum, namely, the number of $(i'_1,\ldots,i'_k)$ which are identical to $\ii$ as multisets. When $S_{k,0}$ is constructed as an action on commuting homogeneous, degree-$k$, polynomials on $n$ variables $x_1,\ldots,x_n$, all $c_\ii$ summands become identical, and this orthonormal basis translates to\marginpar{$x_\ii$}
\[
    \left\{ \sqrt{C_{\ii}}\cdot x_\ii \defi \sqrt{C_{\ii}}\cdot x_{i_1}\cdots x_{i_k}
    \,\middle|\,\ii=\{i_1,\ldots,i_k\},~1\le i_1\le\ldots\le i_k\le n\right\}.
\]
Finally, in $S_{k,k}=S_{k,0}\otimes S_{0,k}$, this leads to the orthonormal basis with elements $\sqrt{c_\ii c_\jj}\cdot x_\ii y_\jj$. We obtain the following orthonormal basis for the torus-invariant subspace $\torinv(S_{k,k})$:
\begin{equation} \label{eq:ortho basis of tor-inv of Skk}
    \left\{ c_\ii \cdot x_\ii y_\ii = c_\ii\cdot x_{i_1}\cdots x_{i_k} y_{i_1}\cdots y_{i_k} \,\middle|\, \ii=\{i_1,\ldots,i_k\},~1\le i_1\le\ldots\le i_k \le n\right\}.
\end{equation}

\begin{proof}[Proof of Theorem \ref{thm:kmp_k is central block of...}]
    Let $\Gamma=([n],w)$ be a weighted hypergraph. Recall that $\L(\Gamma,\kmp_k)=\sum_{B\subseteq[n]}w_B(I-\N_B)$, and that $\L(\Gamma,S_{k,k})=\sum_{B\subseteq[n]}w_B(I-P_B)$, where $P_B$ is the projection onto the $\U_B$-invariant subspace of $S_{k,k}$. 
    It is thus enough to show that with suitable bases, the linear operators $P_B|_{\torinv(S_{k,k})}$ and $\N_B$ are identical for all $B\subseteq[n]$. We show this using the bases \eqref{eq:ortho basis of tor-inv of Skk} for $\torinv(S_{k,k})$ and $\ms{[n]}{k}$ for $\MS(n,k)$ (with the natural one-to-one correspondence between them).

    So let $B\subseteq[n]$ and $\ii=f\in\ms{[n]}{k}$. We think of $\ii$ as a multiset of size $k$ of elements from $[n]$, and of $f$ as a function $[k]\to\Z_{\ge0}$ with $\sum f(i)=k$. Write $\ii=\ii_1\sqcup\ii_2$ with $\ii_1$ a multiset containing only elements from $B$, and $\ii_2$ a multiset containing only elements from $[n]\setminus B$. Denote $b=|B|$ and $\ell=|\ii_1|$. We have
    \[
        \N_B.f = \frac{1}{\ms{b}{\ell}}\sum_{\substack{g\in\ms{[n]}{k}\\g|_{[n]\setminus B}=f|_{[n]\setminus B}}} g.
    \]
    Now consider the basis element $c_\ii x_\ii y_\ii=c_\ii x_{\ii_1} x_{\ii_2}y_{\ii_1} y_{\ii_2}$. Note that $c_\ii=\binom{k}{\ell}c_{\ii_1}c_{\ii_2}$.     
    Every $A\in\U_B$ acts trivially on $x_{\ii_2}y_{\ii_2}$, and maps $x_{\ii_1}$ to a linear combination of $x_{\jj_1}$ with $\jj_1$ a multiset of size $\ell$ of elements from $B$. Hence $P_B.(c_\ii x_\ii y_\ii)$ is a linear combination of the elements $x_\jj y_\jj$ with $\jj=\jj_1\sqcup\ii_2$ and $\jj_1$ a multiset of size $\ell$ of elements from $B$. Let us compute the coefficient of  $x_\jj y_\jj$ for some fixed $\jj_1$. Denote $\ii_1=\{\alpha_1,\ldots,\alpha_\ell\}$
    \[
        [x_\jj y_\jj]P_B.(c_\ii x_\ii y_\ii) = \binom{k}{\ell}c_{\ii_1}c_{\ii_2}\int_{A\in\U_B} \sum_{\substack{\jjj'=(j'_1,\ldots,j'_\ell),\jjj''=(j''_1,\ldots,j''_\ell)\\ \jjj'=\jjj''=\jj_1~\mathrm{as~multisets}}}  A_{j'_1,\alpha_1}\cdots A_{j'_\ell,\alpha_\ell} \overline{A_{j''_1,\alpha_1}}\cdots \overline{A_{j''_\ell,\alpha_\ell}} d\mu_B 
    \]
    (We use here the notation $\jjj'$ rather than $\jj'$ to stress these are tuples and not multisets.) Now apply Theorem \ref{thm:weingarten} to write each summand as a summation over a pair of permutations $\sigma,\tau\in \sym(\ell)$. Change the order of summation to 
    \[
        \sum_{\jjj'}\sum_{\sigma,\tau\in\sym(\ell)}\sum_{\jjj''}.
    \]
    The number of possible $\jjj'$ is $c_{\jj_1}$. No matter what $\jjj'$ is, the permutation $\tau$ needs to satisfy $\alpha_t=\alpha_{\tau(t)}$ for all $t$. The number of such $\tau$'s is precisely $\frac{\ell!}{c_{\ii_1}}$: indeed, this is the size of the stabilizer of $(\alpha_1,\ldots,\alpha_\ell)$ in the action of $\sym(\ell)$ to $\ell$-tuples, and $c_{\ii_1}$ is the number of elements in its orbit. Given $\jjj'$, any $\sigma\in\sym(\ell)$ agrees with a \textit{unique} $\jjj''$: the one equal to $\sigma.\jjj'$. So for a fixed $\jjj'$, when scanning all valid permutations $(\sigma,\tau)$ (valid in the sense that the corresponding summand in \eqref{eq:Weingarten main result} does not vanish), the values of $\theta=\tau\sigma^{-1}$ we get are all the permutations in $\sym(\ell)$, each one exactly $\frac{\ell!}{c_{\ii_1}}$ times. 
    
    We obtain that the coefficient is
    \begin{eqnarray*}
        &=& \binom{k}{\ell}c_{\ii_1}c_{\ii_2}\cdot c_{\jj_1}\frac{\ell!}{c_{\ii_1}}\sum_{\theta\in\sym(\ell)}\wg_{\ell,b}(\theta) \\
        &\stackrel{\mathrm{Lemma}~\ref{lem:sum and signed sum of weingarten}}{=}& 
        \binom{k}{\ell}c_{\ii_1}c_{\ii_2}\cdot c_{\jj_1}\frac{\ell!}{c_{\ii_1}} \frac{1}{\ell!\ms{b}{\ell}} \\
        &=& \binom{k}{\ell}c_{\jj_1}c_{\ii_2} \frac{1}{\ms{b}{\ell}} = c_\jj\frac{1}{\ms{b}{\ell}}.
    \end{eqnarray*}
    Hence 
    \[
        P_B.(c_\ii x_\ii y_\ii)=\frac{1}{\ms{b}{\ell}} \sum_{\substack{\jj\in\ms{[n]}{k}\\ \jj|_{[n]\setminus B}=\ii|_{[n]\setminus B}}} c_\jj x_\jj y_\jj,
    \]
    which is identical to the action of $\N_B$ above.
\end{proof}

\subsection{Torus-invariant subspaces control the spectral gap}

We now prove a weaker version of Theorem \ref{thm:(k,-k) dominates non-balanced parts} which suffices to yield Corollary \ref{cor:spectral gap obtained in non-balanced parts}, that the spectral gap of the $\U(n)$-spectrum of any weighted hypergraph $\Gamma$ is obtained in torus-invariant subspaces. Given $\Gamma=([n],w)$, the proof goes by considering the values\marginpar{$\scriptstyle{\phi_i(\Gamma),\phi(\Gamma)}$}
\begin{equation} \label{eq:phi_i and phi}
    \phi=\phi(\Gamma)\defi\min_{i\in[n]}\{\phi_i\}~~~~\mathrm{where}~~~~\phi_i=\phi_i(\Gamma)\defi\sum_{B\ni i}w_B.
\end{equation}
On the one hand, we show that all eigenvalues of $\Gamma$ outside the torus-invariant subspaces are at least $\phi$. On the other hand, certain torus-invariant subspaces admit eigenvalues \textit{at most} $\phi$. 

We begin with the first assertion. Recall the decomposition of any irrep $\rho\colon\U(n)\to\gl(V)$ to $\L(\Gamma,\rho)$-invariant subspaces from Corollary \ref{cor:2 to the n invariant subspaces}.

\begin{lemma} \label{lem:blocks outside tor-int have large evalues}
    Let $\Gamma=([n],w)$ be a hypergraph with non-negative weights, $\rho\colon\U(n)\to\gl(V)$ a finite-dimensional representation, and
    $V=\torinv(\rho)\oplus V'$ the decomposition to two $\L(\Gamma,\rho)$-invariant subspaces, which exists by Corollary \ref{cor:2 to the n invariant subspaces}. Then every eigenvalue of $\L(\Gamma,\rho)|_{V'}$ is at least $\phi(\Gamma)$, defined in \eqref{eq:phi_i and phi}.
    
\end{lemma}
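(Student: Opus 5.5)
Work block by block in the decomposition of Corollary \ref{cor:2 to the n invariant subspaces}. Every block $W=V^{\{1\},\varepsilon_1}\cap\ldots\cap V^{\{n\},\varepsilon_n}$ is invariant under $\L(\Gamma,\rho)$, and $V'$ is the direct sum of the blocks with $\varepsilon\ne(1,\ldots,1)$. So it is enough to fix such a block $W$, pick an index $i$ with $\varepsilon_i=0$, and show that every eigenvalue of $\L(\Gamma,\rho)|_W$ is at least $\phi_i\ge\phi(\Gamma)$. We may assume $\rho$ is unitary. Then $\L(\Gamma,\rho)$ is self-adjoint and the decomposition is orthogonal, since the $P_{\{j\}}$ are commuting orthogonal projections.

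The key observation is that $P_BP_{\{i\}}=P_B$ whenever $i\in B$. This holds because $\U_{\{i\}}\le\U_B$, so $\U_B$-invariant vectors are $\U_{\{i\}}$-invariant; equivalently, Lemma \ref{lem:B_1 contained in B_2} gives $\mu_{\{i\}}*\mu_B=\mu_B$. Now take $v\in W$. Since $\varepsilon_i=0$, we have $P_{\{i\}}v=0$, and hence $P_Bv=P_BP_{\{i\}}v=0$ for every $B\ni i$. For these hyperedges this gives $(I-P_B)v=v$.

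Splitting the sum of Lemma \ref{lem:L as sum of projections} according to whether $i\in B$, we get for $v\in W$
\[
\langle\L(\Gamma,\rho)v,v\rangle=\sum_{B\ni i}w_B\|v\|^2+\sum_{B\not\ni i}w_B\langle(I-P_B)v,v\rangle\ge\phi_i\|v\|^2.
\]
Here the second sum is non-negative because each $I-P_B$ is an orthogonal projection and $w_B\ge0$. Since $\L(\Gamma,\rho)|_W$ is self-adjoint on the invariant subspace $W$, this Rayleigh-quotient bound shows that all its eigenvalues are at least $\phi_i\ge\phi(\Gamma)$. Taking the union over all blocks $W\subseteq V'$ gives the lemma.

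There is no real obstacle. The only points needing care are that the blocks are orthogonal and $\L$-invariant, which justifies passing from eigenvalues on $V'$ to eigenvalues on the individual blocks, and the inclusion $P_BP_{\{i\}}=P_B$.
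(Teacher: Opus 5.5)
Your proof is correct and is essentially the paper's argument: you restrict to a block $V^{\{1\},\varepsilon_1}\cap\ldots\cap V^{\{n\},\varepsilon_n}$ with some $\varepsilon_i=0$, use $P_BP_{\{i\}}=P_B$ for $B\ni i$ (via Lemma \ref{lem:B_1 contained in B_2}) to get $P_B=0$ on the block, and bound the remaining sum of non-negative terms below by zero. Your quadratic-form estimate $\langle \L(\Gamma,\rho)v,v\rangle\ge\phi_i\|v\|^2$ is a slightly more explicit version of the paper's operator identity $\L(\Gamma,\rho)=\phi_i+\sum_{B\not\ni i}w_B(I-P_B)$ on that block.
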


\begin{proof}
    By Corollary \ref{cor:2 to the n invariant subspaces}, 
    \[
        V'=\bigoplus_{\substack{\varepsilon_1,\ldots,\varepsilon_n\in\{0,1\} \\ \mathrm{not~all~ones}}} V_{\varepsilon_1,\ldots,\varepsilon_n},
    \]
    where $V_{\varepsilon_1,\ldots,\varepsilon_n}\defi V^{\{1\},\varepsilon_1}\cap\ldots\cap V^{\{n\},\varepsilon_n}$, and all are subspaces invariant under $\L(\Gamma,\rho)$. Consider one of this subspaces $V_{\varepsilon_1,\ldots,\varepsilon_n}$, and let $j\in[n]$ satisfy $\varepsilon_j=0$. Recall that $V^{\{j\},0}=\ker(P_{\{j\}})$, so $V_{\varepsilon_1,\ldots,\varepsilon_n}\subseteq\ker(P_{\{j\}})$. 
    
    By Lemma \ref{lem:B_1 contained in B_2}, whenever $B\ni j$, we have $\mu_{B}=\mu_{B}*\mu_{\{j\}}$. So in the subspace $V_{\varepsilon_1,\ldots,\varepsilon_n}$ we have 
    \[
        P_B=P_B\cdot P_{\{j\}}=0,
    \] 
    and
    \[
        \L(\Gamma,\rho)=\sum_{B\subseteq[n]} w_B(I-P_B)=\left(\sum_{B\ni j} w_B\right) + \left(\sum_{B\not\ni j} w_B(I-P_B)\right)=\phi_j+\left(\sum_{B\not\ni j} w_B(I-P_B)\right).
    \]
    We are done as $\phi_j\ge\phi$ and the sum in the right hand side is over non-negative operators.    
\end{proof}

Recall that Theorem \ref{thm:(k,-k) dominates non-balanced parts} states that all the irreps of the form $\rho_{(k,0\ldots,0,-k)}$ "spectrally dominate" all invariant subspaces which are not the torus-invariant ones. We will show that in shortly, in $\S$\ref{subsec:embed KMP_k in KMP_k+1}. As a warm-up, we show that $\rho_{(1,0,\ldots,0,-1)}$ dominates these subspaces. This is already enough in order to conclude that the spectral gap of the $\U(n)$-spectrum of $\Gamma$ lies in torus-invariant subspaces. 

\begin{lemma} \label{lem:R11 dominates non-central blocks}
    For any hypergraph $\Gamma=([n],w)$ with non-negative weights,
    \[
        \lambda_{\min}\left(\Gamma,\rho_{(1,0,\ldots,0,-1)}\right) \le \phi(\Gamma).
    \]
    Moreover, this smallest eigenvalue lies in the restriction of $\L(\Gamma,\rho_{(1,0,\ldots,0,-1)})$ to the torus-invariant subspace.
\end{lemma}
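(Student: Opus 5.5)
The plan is to work entirely inside $S_{1,1}$, where the torus-invariant part is the single-particle random walk, and bound its smallest eigenvalue by a Rayleigh quotient with a test vector concentrated at a vertex $j$ with $\phi_j=\phi(\Gamma)$.

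\textbf{Step 1: reduction to the random walk.} By Corollary \ref{cor:decomp of sym-k tensor sym-m-dual}, $S_{1,1}\cong\rho_{(1,0,\ldots,0,-1)}\oplus\triv$. By Theorem \ref{thm:kmp_k is central block of...} with $k=1$, $\L_\torinv(\Gamma,S_{1,1})\cong\L(\Gamma,\kmp_1)$, the Laplacian of the single-particle random walk on $\MS(n,1)\cong\C^n$. Under this identification (orthonormal basis $x_iy_i$), the torus-invariant vectors of the trivial summand form the line spanned by $\sum_i x_iy_i$. This line corresponds to the constant vector $\mathbf 1\in\C^n$. So $\torinv(\rho_{(1,0,\ldots,0,-1)})$ corresponds to $\mathbf 1^\perp$, and $\L_\torinv(\Gamma,\rho_{(1,0,\ldots,0,-1)})$ is $\L(\Gamma,\kmp_1)$ restricted to $\mathbf 1^\perp$. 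This restriction is legitimate because $\mathbf 1^\perp$ is invariant, each $\N_B$ being self-adjoint and fixing $\mathbf 1$.

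\textbf{Step 2: a test vector.} Pick $j$ with $\phi_j=\phi(\Gamma)$ and set $v=\delta_j-\frac1n\mathbf 1\in\mathbf 1^\perp$, so $\|v\|^2=1-\frac1n$. By Lemma \ref{lem:evalues of N_B are 0 or 1}, each $I-\N_B$ is an orthogonal projection. Here $\N_B$ replaces the coordinates in $B$ by their average, so
\[
\langle (I-\N_B)v,v\rangle=\sum_{i\in B}\bigl(v_i-\overline{v}_B\bigr)^2,
\qquad \overline{v}_B\defi\frac{1}{|B|}\sum_{i\in B}v_i .
\]
This expression is unchanged by adding constants to $v$, so it equals the same quantity for $\delta_j$. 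If $j\notin B$ it is therefore $0$. If $j\in B$ it is $1-\frac1{|B|}\le 1-\frac1n$.

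\textbf{Step 3: Rayleigh quotient.} Combining Step 2 over all $B$ gives
\[
\frac{\langle\L(\Gamma,\kmp_1)v,v\rangle}{\|v\|^2}\le\sum_{B\ni j}w_B=\phi_j=\phi(\Gamma).
\]
By the min–max principle on $\mathbf 1^\perp$, the restriction $\L_\torinv(\Gamma,\rho_{(1,0,\ldots,0,-1)})$ has an eigenvalue at most $\phi(\Gamma)$. Hence $\lambda_{\min}(\Gamma,\rho_{(1,0,\ldots,0,-1)})\le\phi(\Gamma)$.

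\textbf{Step 4: the ``moreover'' part.} Decompose $V=\torinv(\rho)\oplus V'$ for $\rho=\rho_{(1,0,\ldots,0,-1)}$ as in Lemma \ref{lem:blocks outside tor-int have large evalues}. All eigenvalues on $V'$ are $\ge\phi(\Gamma)$, while Step 3 gives an eigenvalue $\le\phi(\Gamma)$ on $\torinv(\rho)$. So the minimum is attained on $\torinv(\rho)$; in case of equality it is attained there as well.

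The only delicate point is Step 1: checking precisely that the trivial component's torus-invariant part is exactly the constant vector in the $\kmp_1$ model, so that the test vector really lives in $\rho_{(1,0,\ldots,0,-1)}$. The rest is a routine computation.
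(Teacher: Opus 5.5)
Your proposal is correct and follows essentially the same argument as the paper. The paper also identifies $\L_\torinv(\Gamma,\rho_{(1,0,\ldots,0,-1)})$ with $\L(\Gamma,\kmp_1)$ restricted to sum-zero vectors. It uses the same test vector up to scaling, since $(\sum_{i\ne j}\delta_i)-(n-1)\delta_j=-n\,(\delta_j-\frac1n\mathbf{1})$, and obtains the ``moreover'' part from Lemma \ref{lem:blocks outside tor-int have large evalues}. The only difference is cosmetic: you compute $\langle (I-\N_B)v,v\rangle$ exactly via the averaging formula, whereas the paper simply bounds each term by $\|v\|^2$ because $I-\N_B$ has spectrum in $\{0,1\}$.
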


\begin{proof}
    We show that $\L_\torinv(\Gamma,\rho_{(1,0,\ldots,0,-1)})$ admits an eigenvalue which is at most $\phi(\Gamma)$. Combined with Lemma \ref{lem:blocks outside tor-int have large evalues}, this would prove the statement.
    
    By Theorem \ref{thm:kmp_k is central block of...}, $$\L(\Gamma,\kmp_1)\cong\L_\torinv(\Gamma,\rho_{(1,0,\ldots,0,-1)}\oplus\triv)=\L_\torinv(\Gamma,\rho_{(1,0,\ldots,0,-1)})\oplus\L_\torinv(\Gamma,\triv).$$
    Of course, $\torinv(\triv)$ is the entire one-dimensional trivial representation, and thus the smallest eigenvalue of $\L_\torinv(\Gamma,\rho_{(1,0,\ldots,0,-1)})$ is $\lambda^*_{\min}(\Gamma,\kmp_1(\Gamma))$.
    
    Inside $\MS(n,1)$, denote by $\delta_i$ the characteristic vector of the multiset of size 1 $\{i\}\in\ms{[n]}{1}$ (so $\delta_1,\ldots,\delta_n$ are the standard basis vectors of $\MS(n,1)$). The subspace of $\MS(n,1)$ corresponding to the trivial representation is the constant vectors $\alpha\sum_{i=1}^n\delta_i$ for arbitrary $\alpha\in\C$. Hence the subspace of $\MS(n,1)$ corresponding to $\torinv(\rho_{(1,0,\ldots,0,-1)})$ is 
    \[
        \left\{ \sum_{i=1}^n\alpha_i\delta_i \middle|\sum\alpha_i=0 \right\}.
    \]
    Fix $j\in[n]$ and consider the vector $v=\left(\sum_{i\ne j}\delta_i\right)-(n-1)\delta_j\in\MS(n,1)$. Clearly, if $B\not\ni j$, then $\N_B.v=v$ and $(I-\N_B).v=0$. Hence, the Rayleigh quotient satisfies
    \[
        \frac{\left\langle \L(\Gamma,\kmp_1).v,v\right\rangle}{\langle v,v\rangle} = \sum_{B\subseteq[n]}w_B\frac{\left\langle (I-\N_B).v,v\right\rangle}{\langle v,v\rangle} = \sum_{B\ni j}w_B\frac{ \left\langle I-\N_B.v,v\right\rangle}{\langle v,v\rangle}\le \sum_{B\ni j} w_B=\phi_j(\Gamma),
    \]
    where in the inequality we relied on that the spectrum of $I-\N_B$ is contained in $\{0,1\}$ (by Lemma \ref{lem:evalues of N_B are 0 or 1}), so the Rayleigh quotient satisfies $\frac{ \left\langle I-\N_B.v,v\right\rangle}{\langle v,v\rangle}\in[0,1]$. We conclude that $$\lambda^*_{\min}(\Gamma,\kmp_1(\Gamma))\le\min_j\phi_j(\Gamma)=\phi(\Gamma).$$    
\end{proof}

Lemmas \ref{lem:blocks outside tor-int have large evalues} and \ref{lem:R11 dominates non-central blocks} yield Corollary \ref{cor:spectral gap obtained in non-balanced parts}: the spectral gap of the $\U(n)$-spectrum of $\Gamma$ is obtained in torus-invariant subspaces.

\subsection{Embedding $\kmp_k(\Gamma)$ in $\kmp_{k+1}(\Gamma)$} \label{subsec:embed KMP_k in KMP_k+1}

Next, we generalize the statement and proof of Lemma \ref{lem:R11 dominates non-central blocks} to $\rho_{(k,0,\ldots,0,-k)}$ for arbitrary $k\in\Z_{\ge1}$, thus proving Theorem \ref{thm:(k,-k) dominates non-balanced parts} in full. The analysis of the KMP process we elaborate towards this proof will be useful also in the following sections.

Recall that by Theorem \ref{thm:kmp_k is central block of...}
\[
    \L(\Gamma,\kmp_k)\cong  \bigoplus_{j=0}^k\L_\torinv\left(\Gamma,\rho_{(j,0,\ldots,0,-j)}\right).
\]
In particular, this means that $\L(\Gamma,\kmp_{k})$ is a direct summand of $\L(\Gamma,\kmp_{k+1})$. We begin by identifying, inside $\MS(n,k+1)$, the embedding of $\MS(n,k)$, as well as its orthogonal complement which is isomorphic to $\torinv(\rho_{(k+1,0,\ldots,0,-k-1)})$.

\begin{defn} \label{def:embed MS(n,k) in MS(n,k+1)}
    Define a linear map $\Psi_k \colon \MS(n,k) \to \MS(n,k+1)$\marginpar{$\Psi_k$} by
    \[
        \MS(n,k)\ni\sum_{\ii\in\ms{[n]}{{k}}}\alpha_\ii\delta_\ii ~~~~\stackrel{\Psi_k}{\longmapsto}~~~~\sum_{\jj\in\ms{[n]}{{k+1}}}\beta_\jj\delta_\jj\in\MS(n,k+1),
    \]
    where $\delta_\ii\in\MS(n,k)$\marginpar{$\delta_\ii$} is the indicator vector of $\ii\in\ms{[n]}{k}$, and for $\jj=\{j_1,\ldots,j_{k+1}\}$ we have 
    \[
        \beta_\jj=\sum_{t=1}^{k+1} \alpha_{\jj\setminus\{j_t\}}.
    \]
    Denote by $\pure(n,k)$\marginpar{$\scriptstyle{\pure(n,k)}$} the orthogonal complement of $\Psi_{k-1}(\MS(n,k-1))$ in $\MS(n,k)$.\footnote{We use the standard inner product on $\MS(n,k)$ with respect to the basis given by $\{\delta_\ii\mid\ii\in\ms{[n]}{k}\}$.}
\end{defn}
For example, if $v=\sum_{1\le a\le b\le n} v_{a,b}\delta_{\{a,b\}}\in\MS(n,2)$, then the coefficient of $\delta_{\{c,c,d\}}$ in $\Psi_2(v)\in\MS(n,3)$ is $2v_{c,d}+v_{c,c}$. Equivalently, $\Psi_k$ can be defined by 
\begin{equation} \label{eq:second definition of Psi}
    \Psi_k\left(\delta_\ii\right) = \sum_{x\in[n]}\left(\#_x(\ii)+1\right)\delta_{\ii\sqcup\{x\}},
\end{equation}
where $\#_x(\ii)$\marginpar{$\#_x(\ii)$} is the multiplicity of $x$ in $\ii$.

\begin{prop} \label{prop:embedding MS(n,k) in MS(n,k+1) properties}
    The multiset space $\MS(n,k)$ satisfies the following properties.
    \begin{enumerate}
        \item \label{enu:embedding} The map $\Psi_k\colon\MS(n,k)\to\MS(n,k+1)$ is an embedding.
        \item \label{enu:equivariant} The map $\Psi_k$ is equivariant under $\N_B$ for all $B\subseteq[n]$, namely, the following diagram commutes:
        \[
            \xymatrix{
                \MS(n,k)\ar[rr]^{\N_B}\ar[d]^{\Psi_k} && \MS(n,k)\ar[d]^{\Psi_{k}} \\
                \MS(n,k+1)\ar[rr]^{\N_B} && \MS(n,k+1) 
            }
        \]
        \item \label{enu:orth complement} The orthogonal complement $\pure(n,k)\le \MS(n,k)$ is invariant under $\L(\Gamma,\kmp_k)$ for all $\Gamma$, and $\L(\Gamma,\kmp_k)|_{\pure(n,k)}=\L_\torinv(\Gamma,\rho_{(k,0,\ldots,0,-k)})$.
        \item \label{enu:representatives for pure} For any $x\in[n]$, and any $$g_1=\sum_{\ii\in\ms{[n]\setminus\{x\}}{k}}\alpha_\ii\delta_\ii\in MS(n,k)$$
        (with arbitrary coefficients $\alpha_\ii\in\C$), there exists a unique element $$g_2=\sum_{\ii\in\ms{[n]}{k}\setminus\ms{[n]\setminus\{x\}}{k}}\in\MS(n,k)$$ such that $g_1+g_2\in\pure(n,k)$.
    \end{enumerate}    
\end{prop}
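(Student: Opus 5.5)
The plan is to prove the four items in order, using the description \eqref{eq:second definition of Psi} of $\Psi_k$ and the identification of Theorem \ref{thm:kmp_k is central block of...}.

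\textbf{Item \ref{enu:embedding}.} I would order multisets by the multiplicity of a fixed element. Take $v=\sum\alpha_\ii\delta_\ii\neq0$ and fix $x\in[n]$. Among the $\ii$ with $\alpha_\ii\ne0$, choose one maximizing $\#_x(\ii)$. Then the coefficient of $\delta_{\ii\sqcup\{x\}}$ in $\Psi_k(v)$ is $\sum_t\alpha_{(\ii\sqcup\{x\})\setminus\{j_t\}}$. Any term other than removing $x$ itself would correspond to a multiset with larger $x$-multiplicity, hence has zero coefficient. So this coefficient equals $(\#_x(\ii)+1)\alpha_\ii\ne0$, and $\Psi_k$ is injective.

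\textbf{Item \ref{enu:equivariant}.} Equivariance is a direct computation. Split $\ii=\ii_1\sqcup\ii_2$ with $\ii_1$ inside $B$, $|\ii_1|=\ell$ and $b=|B|$. Then $\N_B\delta_\ii$ is $\frac{1}{\ms{b}{\ell}}\sum_{\jj_1}\delta_{\jj_1\sqcup\ii_2}$. Applying $\Psi_k$ and grouping by the added point $x$ gives two cases:
\begin{itemize}
\item For $x\notin B$, both sides give $\frac{\#_x(\ii_2)+1}{\ms{b}{\ell}}\sum_{\jj_1}\delta_{\jj_1\sqcup\ii_2\sqcup\{x\}}$.
\item For $x\in B$, I must check that $\frac{1}{\ms{b}{\ell}}\sum_{\jj_1}\sum_{x\in B}(\#_x(\jj_1)+1)\delta_{\jj_1\sqcup\{x\}\sqcup\ii_2}$ equals $\frac{\sum_{x\in B}(\#_x(\ii_1)+1)}{\ms{b}{\ell+1}}\sum_{\mathbf{K}}\delta_{\mathbf{K}\sqcup\ii_2}$, where $\mathbf K$ ranges over size-$(\ell+1)$ multisets in $B$.
\end{itemize}
In the second case, the coefficient of a fixed $\mathbf K$ on the left is $\frac{1}{\ms{b}{\ell}}\sum_{x}\#_x(\mathbf K)=\frac{\ell+1}{\ms{b}{\ell}}$. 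On the right it is $\frac{\ell+b}{\ms{b}{\ell+1}}$. These agree by the identity $\ms{b}{\ell+1}=\ms{b}{\ell}\frac{b+\ell}{\ell+1}$. This is the main bookkeeping step, but it is routine.

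\textbf{Item \ref{enu:orth complement}.} By Lemma \ref{lem:evalues of N_B are 0 or 1}, each $\N_B$ is self-adjoint. By item \ref{enu:equivariant}, $\Psi_{k-1}(\MS(n,k-1))$ is $\N_B$-invariant, so its orthogonal complement $\pure(n,k)$ is invariant as well. Hence $\pure(n,k)$ is invariant under $\L(\Gamma,\kmp_k)$.

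To identify the restriction, combine Theorem \ref{thm:kmp_k is central block of...} for $k$ and $k-1$. It gives $\L(\Gamma,\kmp_k)\cong\L(\Gamma,\kmp_{k-1})\oplus\L_\torinv(\Gamma,\rho_{(k,0,\ldots,0,-k)})$. We also have $\L(\Gamma,\kmp_k)\cong\L(\Gamma,\kmp_{k-1})\oplus\L(\Gamma,\kmp_k)|_{\pure(n,k)}$ via $\Psi_{k-1}$. A bare cancellation of spectra only gives cospectrality, not equality of operators. So for the stronger statement I would argue inside $\torinv(S_{k,k})$. The decomposition $S_{k,k}=S_{k-1,k-1}\oplus\rho_{(k,0,\ldots,0,-k)}$ (Corollary \ref{cor:decomp of sym-k tensor sym-m-dual}) is realized by multiplication by the invariant $\sum_x x_xy_x$, which maps $S_{k-1,k-1}$ equivariantly into $S_{k,k}$.

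I would then check that, in the normalized bases \eqref{eq:ortho basis of tor-inv of Skk}, this multiplication map is (a multiple of) $\Psi_{k-1}$. Indeed, $x_\ii y_\ii\cdot x_xy_x=x_{\ii\sqcup x}y_{\ii\sqcup x}$, and the ratio $c_\ii/c_{\ii\sqcup x}=\frac{\#_x(\ii)+1}{k}$ reproduces the factor $(\#_x(\ii)+1)$ in \eqref{eq:second definition of Psi}. Since the identification of Theorem \ref{thm:kmp_k is central block of...} is an isometry onto an orthonormal basis, orthogonal complements correspond. This gives $\pure(n,k)\leftrightarrow\torinv(\rho_{(k,0,\ldots,0,-k)})$.

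\textbf{Item \ref{enu:representatives for pure}.} The requirement $g_1+g_2\perp\Psi_{k-1}(\delta_\jj)$ for all $\jj\in\ms{[n]}{k-1}$ is a linear system in the unknown coefficients $\beta_\ii$ of $g_2$, indexed by the $\ii$ containing $x$. These unknowns biject with $\jj\in\ms{[n]}{k-1}$ via $\ii=\jj\sqcup\{x\}$, so the system is square. Each equation reads
\[
\sum_{y}(\#_y(\jj)+1)\left(\alpha+\beta\right)_{\jj\sqcup\{y\}}=0.
\]
Order the $\jj$ by decreasing $\#_x(\jj)$. The equation for $\jj$ involves $\beta_{\jj\sqcup\{x\}}$ with the nonzero coefficient $\#_x(\jj)+1$. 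Its other $\beta$-terms $\beta_{\jj\sqcup\{y\}}$ ($y\ne x$) have $x$-multiplicity $\#_x(\jj)$. I would instead index the unknowns by $\#_x$ of $\ii$ and show that the matrix is triangular in a suitable order with nonzero diagonal. Failing that, I would use a dimension count: the solution space has dimension at most the number of unknowns, and uniqueness follows from injectivity. Here injectivity means that a nonzero $g_2$ supported on multisets containing $x$ cannot lie in $\pure(n,k)$, which reduces to showing that such vectors meet $\pure(n,k)$ trivially.

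That last claim is the main obstacle. I would try to prove it by showing that the projection $\MS(n,k)\to\MS(n,k-1)$ given by $\Psi_{k-1}^*$ is injective on vectors supported on multisets containing $x$. The idea is to look at the entry $\jj=\ii\setminus\{x\}$ for $\ii$ with $\#_x$ maximal, mirroring the argument in item \ref{enu:embedding}. Existence of $g_2$ then follows from $\dim\pure(n,k)=\ms{n}{k}-\ms{n}{k-1}=\ms{n-1}{k}$, which is exactly the number of possible $g_1$.
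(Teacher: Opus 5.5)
Your Items~\ref{enu:embedding} and~\ref{enu:equivariant} follow the paper's arguments: the pivot at $\ii\sqcup\{x\}$, and the identity $\ms{b}{\ell+1}=\ms{b}{\ell}\frac{b+\ell}{\ell+1}$. The invariance half of Item~\ref{enu:orth complement} is also the paper's argument.

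\textbf{Gap in Item~\ref{enu:representatives for pure}.} You leave this item unfinished, and the argument you fall back on fails as written. Let $u\ne0$ be supported on multisets containing $x$, let $\ii$ maximize $\#_x$ on the support, and set $\jj=\ii\setminus\{x\}$. Then
\[
(\Psi_{k-1}^*u)_\jj=\#_x(\ii)\,u_\ii+\sum_{y\ne x}\bigl(\#_y(\jj)+1\bigr)\,u_{\jj\sqcup\{y\}}.
\]
Every $\jj\sqcup\{y\}$ with $y\ne x$ has $x$-multiplicity $\#_x(\ii)-1$. Maximality does not exclude these from the support, so cancellation is possible and nothing follows. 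The reason is that Item~\ref{enu:embedding} concerns $\Psi$, whereas here you need the transpose $\Psi_{k-1}^*$ restricted to the coordinates containing $x$, so the extremal choice must be reversed. Take $\ii$ in the support with $\#_x(\ii)\ge1$ \emph{minimal}. Then each $\jj\sqcup\{y\}$ with $y\ne x$ either avoids $x$ or has strictly smaller positive $x$-multiplicity, so $u$ vanishes there. The entry is therefore $\#_x(\ii)\,u_\ii\ne0$.

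You had in fact already written down what closes the item. In the equation indexed by $\jj$, the unknown $\beta_{\jj\sqcup\{x\}}$ has coefficient $\#_x(\jj)+1\ne0$ and $x$-multiplicity $\#_x(\jj)+1$, while every other unknown in that equation has $x$-multiplicity $\#_x(\jj)$. So the square system is triangular with nonzero diagonal. It is solved uniquely by induction on \emph{increasing} $\#_x(\jj)$; the equation with $\#_x(\jj)=0$ contains a single unknown, so decreasing order is the wrong direction for substitution. This gives existence and uniqueness at once, with no dimension count or separate injectivity step. It is exactly the paper's echelon-form remark, and the same induction as in Lemma~\ref{lem:values of g_x}.

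\textbf{Item~\ref{enu:orth complement}: a different route.} For the identification $\pure(n,k)\leftrightarrow\torinv(\rho_{(k,0,\ldots,0,-k)})$, the paper simply asserts that $\Psi_{k-1}(\MS(n,k-1))$ corresponds to $\torinv\bigl(\bigoplus_{j<k}\rho_{(j,0,\ldots,0,-j)}\bigr)$. You justify this step. You use multiplication by the invariant $\sum_x x_xy_x$, and the check $c_\ii/c_{\ii\sqcup\{x\}}=(\#_x(\ii)+1)/k$ shows this map is $\frac1k\Psi_{k-1}$ in the orthonormal bases. This is correct and supplies a step the paper leaves implicit. As a bonus, multiplication by an invariant commutes with every $P_B$, and $P_B$ restricts to $\N_B$ by Theorem~\ref{thm:kmp_k is central block of...}. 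So your route also yields Item~\ref{enu:equivariant} without the combinatorial computation.
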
 

To illustrate Item \ref{enu:representatives for pure}, consider the element $g_1=\alpha_{\{1,1\}}\delta_{\{1,1\}}+\alpha_{\{1,3\}}\delta_{\{1,3\}}+\alpha_{\{3,3\}}\delta_{\{3,3\}}\in\MS(3,2)$ (note that it avoids the element $x=2\in[3]$). It can be completed uniquely by some element $g_2=\alpha_{\{1,2\}}\delta_{\{1,2\}}+\alpha_{\{2,2\}}\delta_{\{2,2\}}+\alpha_{\{2,3\}}\delta_{\{2,3\}}\in\MS(3,2)$ so that $g_1+g_2\in\pure(3,2)$. Note that Item \ref{enu:representatives for pure} agrees with the dimension of $\pure(n,k)$, which is $\ms{n}{k}-\ms{n-1}{k}=\ms{n}{k-1}$.

\begin{proof}
    Fix $x\in[n]$. For every $\ii\in\ms{[n]}{k}$ there is a unique element in the support of $\Psi_k(\delta_\ii)$ with maximal multiplicity of $x$: the element $\delta_{\ii\sqcup\{x\}}$. Moreover, for every $\jj\in\ms{[n]}{k+1}$ with $\#_x(\jj)\ge1$, there is a unique $\ii\in\ms{[n]}{k}$ with $\jj=\ii\sqcup\{x\}$. This shows that if the basis elements $\ms{[n]}{k+1}$ are ordered so that $\#_x$ weakly decreases, then the vectors $$\left\{\Psi_k(\delta_\ii)\right\}_{\ii\in\ms{[n]}{k}}$$ are in Echelon form. This proves both Items \ref{enu:embedding} and \ref{enu:representatives for pure}.

    For Item \ref{enu:equivariant}, it is enough to prove that $\N_B(\Psi_k(\delta_\ii))=\Psi_k(\N_B(\delta_\ii))$ for every $\ii\in\ms{[n]}{k}$. Write $\ii=\ii_1\sqcup\ii_2$ with $\ii_1$ containing elements from $B$ and $\ii_2$ from $[n]\setminus B$, and denote $\ell=|\ii_1|$. Then 
    \begin{eqnarray*}
            \N_B(\Psi_k(\delta_\ii)) &=& \N_B\left( \sum_{x\in [n]}(\#_x(\ii)+1)\delta_{\ii\sqcup\{x\}} \right)\\
            &=& \left(\sum_{x\in B} (\#_x(\ii_1)+1)\frac{1}{\ms{|B|}{\ell+1}}\sum_{\jj'\in\ms{B}{\ell+1}}\delta_{\jj'\sqcup\ii_2}\right) \\
            &&+ \left(\sum_{x\in [n]\setminus B} (\#_x(\ii_2)+1)\frac{1}{\ms{|B|}{\ell}}\sum_{\jj'\in\ms{B}{\ell}}\delta_{\jj'\sqcup\ii_2\sqcup\{x\}}\right)\\
            &=& \frac{|B|+\ell}{\ms{|B|}{\ell+1}}\sum_{\jj'\in\ms{B}{\ell+1}}\delta_{\jj'\sqcup\ii_2} + \sum_{x\in [n]\setminus B} (\#_x(\ii_2)+1)\frac{1}{\ms{|B|}{\ell}}\sum_{\jj'\in\ms{B}{\ell}}\delta_{\jj'\sqcup\ii_2\sqcup\{x\}}.
    \end{eqnarray*}
    On the other hand,
    \begin{eqnarray*}
            \Psi_k(\N_B(\delta_\ii)) &=& \Psi_k\left( \frac{1}{\ms{|B|}{\ell}}\sum_{\jj'\in \ms{B}{\ell}}\delta_{\jj'\sqcup\ii_2} \right)\\
            &=& \frac{1}{\ms{|B|}{\ell}}\sum_{\jj'\in \ms{B}{\ell}}\left( \sum_{x\in B}(\#_x(\jj')+1)\delta_{\jj'\sqcup\ii_2\sqcup\{x\}} + \sum_{x\in [n]\setminus B}(\#_x(\ii_2)+1)  \delta_{\jj'\sqcup\ii_2\sqcup\{x\}} \right) \\
            &=& \frac{\ell+1}{\ms{|B|}{\ell}}\sum_{\jj''\in \ms{B}{\ell+1}} \delta_{\jj''\sqcup\ii_2} + \sum_{x\in [n]\setminus B}(\#_x(\ii_2)+1) \frac{1}{\ms{|B|}{\ell}}\sum_{\jj'\in \ms{B}{\ell}} \delta_{\jj'\sqcup\ii_2\sqcup\{x\}},
    \end{eqnarray*}
    where in the last equality we used the observation that for every $\jj''\in\ms{B}{\ell+1}$, the element $\delta_{\jj'\sqcup\ii_2}$ was obtained in the second row with total coefficient
    \[
        \frac{1}{\ms{|B|}{\ell}}\sum_{x\colon \#_x(\jj'')\ge1} \#_x\left(\jj''\setminus\{x\}\right)+1= \frac{1}{\ms{|B|}{\ell}}\sum_{x\in[n]} \#_x(\jj'') =  \frac{|\jj''|}{\ms{|B|}{\ell}}.
    \]
    As both expressions are equal, we established Item \ref{enu:equivariant}.

    Finally, for Item \ref{enu:orth complement}, assume that $v\in\pure(n,k)$, namely, that $\langle v, \Psi_{k-1}(\delta_\ii)\rangle=0$ for every $\ii\in\ms{[n]}{k-1}$, where the inner product is the standard one with respect to the basis corresponding to $\ms{[n]}{k}$. We need to show that $\N_B.v\in\pure(n,k)$. But $\N_B$ is self-adjoint (Lemma \ref{lem:evalues of N_B are 0 or 1}), so 
    \[
        \langle \N_B.v, \Psi_{k-1}(\delta_\ii)\rangle = \langle v, \N_B(\Psi_{k-1}(\delta_\ii))\rangle = \langle v, \Psi_{k-1}(\N_B(\delta_\ii))\rangle = 0.
    \]
    Hence $\pure(n,k)$ is invariant under $\N_B$ for all $B\subseteq[n]$ and thus also under $\L(\Gamma,\kmp_k)$ for every weighted hypergraph $\Gamma$. Being the orthogonal complement of $$\Psi_{k-1}(\MS(n,k-1))=\torinv\left(\bigoplus_{j=0}^{k-1}\rho_{(j,0,\ldots,0,-j)}\right),$$ it must coincide with $\torinv(\rho_{(k,0,\ldots,0,-k)})$.    
\end{proof}

We cannot complete the proof of Theorem \ref{thm:(k,-k) dominates non-balanced parts}. 

\begin{proof}[Proof of Theorem \ref{thm:(k,-k) dominates non-balanced parts}]
    As in the statement of the theorem, denote $\rho_k=\rho_{(k,0,\ldots,0,-k)}$. Given Lemma \ref{lem:blocks outside tor-int have large evalues}, it is enough to show that $\lambda_{\min}(\Gamma,\rho_k)\le\phi(\Gamma)$. By Proposition \ref{prop:embedding MS(n,k) in MS(n,k+1) properties}\eqref{enu:orth complement}, $\L_\torinv(\Gamma,\rho_k)=\L(\Gamma,\kmp_k)|_{\pure(n,k)}$. 
    
    We imitate the proof of Lemma \ref{lem:R11 dominates non-central blocks}. Fix $x\in[n]$ and consider the vector $v_x\in\pure(n,k)$ with coefficient 1 for every $\delta_\ii$ with $\ii\in\ms{[n]\setminus\{x\}}{k}$: the existence (and uniqueness) of $v_x$ is guaranteed by Proposition \ref{prop:embedding MS(n,k) in MS(n,k+1) properties}\eqref{enu:representatives for pure}. Clearly, if $B\not\ni x$, then $\N_B.v_x=v_x$ and $(I-\N_B).v_x=0$. Hence, 
    \[
        \frac{\left\langle \L(\Gamma,\kmp_k).v_x,v_x\right\rangle}{\langle v_x,v_x\rangle} = \sum_{B\subseteq[n]}w_B\frac{\left\langle (I-\N_B).v_x,v_x\right\rangle}{\langle v_x,v_x\rangle} = \sum_{B\ni x}w_B\frac{ \left\langle I-\N_B.v_x,v_x\right\rangle}{\langle v_x,v_x\rangle}\le \sum_{B\ni x} w_B=\phi_x(\Gamma),
    \]
    where in the inequality we relied again on Lemma \ref{lem:evalues of N_B are 0 or 1}. We conclude that $\lambda_{\min}(\Gamma,\rho_k)\le\min_x\phi_x(\Gamma)=\phi(\Gamma)$.  
\end{proof}

To end this section, we remark that the $n$-dimensional standard representation $\rho_\std=\rho_{(1,0,\ldots,0)}$ of $\U(n)$ (defined by $A\mapsto A$), satisfies that the $\L(\Gamma,\std)$ is diagonal with diagonal entries $\phi_1(\Gamma),\ldots,\phi_n(\Gamma)$. Indeed, this is an immediate application of Lemma \ref{lem:unbalanced integrals vanish}. Hence $\lambda_{\min}(\Gamma\rho_\std)=\phi(\Gamma)$. We obtain the following slight strengthening of Theorem \ref{thm:(k,-k) dominates non-balanced parts}.

\begin{cor}
    Let $\Gamma=([n],w)$ be a hypergraph with non-negative weights, and let $\alpha$ be an eigenvalue in the $\U(n)$-spectrum of $\Gamma$ associated with an eigenvector lying \textit{outside} the torus-invariant subspace. Then for all $k\ge1$
    \[
        \lambda_{\min}(\Gamma,\rho_{(k,0,\ldots,o,-k)}) \le \lambda_{\min}(\Gamma,\rho_\std) \le \alpha.
    \]
\end{cor}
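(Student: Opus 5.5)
The corollary has two inequalities, and I would prove them separately, combining the proof of Theorem \ref{thm:(k,-k) dominates non-balanced parts} with the remark made just before the statement.

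\emph{Second inequality.} I would first check the remark that $\L(\Gamma,\rho_\std)$ is diagonal. Its $(i,j)$ entry is $\sum_B w_B(\delta_{ij}-\int_{\U_B}A_{ij}\,d\mu_B)$. By Lemma \ref{lem:unbalanced integrals vanish}, applied with one factor and no conjugates, $\int_{\U_B}A_{ij}\,d\mu_B=0$ whenever $i,j\in B$. If $i\notin B$ or $j\notin B$, then $A_{ij}=\delta_{ij}$ for every $A\in\U_B$. So the matrix is diagonal with $(i,i)$ entry $\sum_{B\ni i}w_B=\phi_i(\Gamma)$. Hence $\lambda_{\min}(\Gamma,\rho_\std)=\phi(\Gamma)$.

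Now let $\alpha$ be an eigenvalue of $\L(\Gamma,\rho)$ whose eigenvector lies outside $\torinv(\rho)$. I would first make this precise. By Corollary \ref{cor:2 to the n invariant subspaces}, $\L(\Gamma,\rho)$ preserves both $\torinv(\rho)$ and its complement $V'$. So we may take the eigenvector in $V'$, i.e.\ $\alpha$ is an eigenvalue of the block on $V'$. This is the reading of ``not associated with $\L_\torinv$'' already used in Theorem \ref{thm:(k,-k) dominates non-balanced parts}. Lemma \ref{lem:blocks outside tor-int have large evalues} then gives $\alpha\ge\phi(\Gamma)=\lambda_{\min}(\Gamma,\rho_\std)$. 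The lemma is stated for a finite-dimensional representation, and eigenvalues in the $\U(n)$-spectrum come from irreps, so it applies directly.

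\emph{First inequality.} The proof of Theorem \ref{thm:(k,-k) dominates non-balanced parts} already establishes $\lambda_{\min}(\Gamma,\rho_k)\le\phi(\Gamma)$ via the Rayleigh quotient of the vector $v_x\in\pure(n,k)$. Combined with the identity $\phi(\Gamma)=\lambda_{\min}(\Gamma,\rho_\std)$, this gives the first inequality.

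The only point needing care is the Rayleigh-quotient argument, specifically that $v_x\neq0$. This holds because its coefficients on $\ms{[n]\setminus\{x\}}{k}$ are all $1$, and this set is nonempty for $n\ge2$. The rest is a direct assembly of earlier results.
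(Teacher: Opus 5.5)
Your proposal is correct and follows the paper's own argument. The paper likewise uses Lemma~\ref{lem:unbalanced integrals vanish} to see that $\L(\Gamma,\rho_\std)$ is diagonal with entries $\phi_1(\Gamma),\ldots,\phi_n(\Gamma)$, so $\lambda_{\min}(\Gamma,\rho_\std)=\phi(\Gamma)$. It then combines this with Lemma~\ref{lem:blocks outside tor-int have large evalues} and with the bound $\lambda_{\min}(\Gamma,\rho_{(k,0,\ldots,0,-k)})\le\phi(\Gamma)$ from the proof of Theorem~\ref{thm:(k,-k) dominates non-balanced parts}, exactly as you do.
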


\section{Hypergraphs supported on sets of size $n-1$} \label{sec:n-1}

In this Section we prove Theorem \ref{thm:n-1s}, which states the special case of Conjecture \ref{conj:main conj} when the weights of the hypergraph $\Gamma=([n],w)$ are supported on subsets of size $\ge n-1$. Note that we may assume the weights are supported on subsets of size precisely $n-1$: Indeed, for every $d$-dimensional representation $\rho$ of $\U(n)$, $\L(\Gamma,\rho)=\sum_{B\subseteq[n]}w_B(I_d-P_B)$ with $P_B$ the projection onto the $\U_B$-invariant subspace. When $B=[n]$, $P_{[n]}$ is the projection onto the trivial component of $\rho$, so $w_{[n]}(I_d-P_{[n]})$ only shifts the spectrum of \textit{all} non-trivial irreps by $w_{[n]}$. Hence, we may ignore this subset and assume without loss of generality that $w_{[n]}=0$. 

In addition, if $n\le2$, subsets of size $n-1$ are trivial: If $n=1$, everything is trivial. When $n=2$, subsets of size $n-1$ are singletons, but $P_{\{x\}}$ acts as the identity on all torus-invariant subspaces, so $\lambda^*_{\min}(\Gamma,\kmp_k)=0$ for all $k$ and Theorem \ref{thm:n-1s} is trivially true. \textbf{Hence, 
throughout the rest of this section, we assume that $n\ge3$} (unless stated explicitly otherwise) and that \textbf{$\Gamma$ is supported on subsets of size $n-1$.} We denote the weight $w_{[n]\setminus\{x\}}$ of $[n]\setminus\{x\}$ by $c_x$\marginpar{$c_x$}.

\begin{lemma} \label{lem:n-1s enough to consider k,0,...,0,l}
    When the weights of $\Gamma$ are supported on subsets of size $n-1$, the spectral gap of the $\U(n)$-spectrum of $\Gamma$ is obtained in the irreps of the form $\rho_{(k,0,\ldots,0,-m)}$, corresponding to highest weight vectors with at most one positive weight and at most one negative weight.
\end{lemma}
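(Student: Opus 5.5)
We use that $\L(\Gamma,\rho)$ only sees the $\U_B$-invariant vectors for $|B|=n-1$, and then invoke the branching rule (Theorem \ref{thm:branching rule}). By Lemma \ref{lem:L as sum of projections},
\[
\L(\Gamma,\rho)=\sum_{x\in[n]}c_x\bigl(I-P_{[n]\setminus\{x\}}\bigr),
\]
so $\L(\Gamma,\rho)$ acts as the scalar $\sum_x c_x$ on the orthogonal complement of the subspace
\[
W_\rho\defi\sum_{x\in[n]} V^{\U_{[n]\setminus\{x\}}}.
\]
Since each $I-P_B$ is positive semi-definite, $W_\rho$ is invariant under every $P_B$. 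The subspace $W_\rho$ is nonzero precisely when $\rho|_{\U_{[n]\setminus\{x\}}}$ contains the trivial irrep of $\U(n-1)$ for some (equivalently, by conjugating with a permutation matrix, for every) $x$. By Theorem \ref{thm:branching rule}, applied after conjugating $\U_{[n]\setminus\{x\}}$ to $\U_{[n-1]}$, this happens iff $(0,\ldots,0)\in\Z^{n-1}$ interlaces with $\mu$. That condition is $\mu_1\ge0\ge\mu_2$ and $\mu_{n-1}\ge0\ge\mu_n$, i.e.\ $\mu_2=\cdots=\mu_{n-1}=0$, so $\mu=(k,0,\ldots,0,-m)$ with $k,m\ge0$. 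For every other irrep, $W_\rho=0$ and $\L(\Gamma,\rho)=\bigl(\sum_x c_x\bigr)I$.

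It then remains to show that the value $\sum_x c_x$ coming from these other irreps never beats the minimum over the irreps $\rho_{(k,0,\ldots,0,-m)}$. One option is the crude bound from Lemma \ref{lem:L of reps - basic properties of spectrum}: every eigenvalue is at most $\sum_B w_B=\sum_x c_x$, so every $\lambda_{\min}(\Gamma,\rho)$ is at most $\sum_x c_x$. Alternatively, the irrep $\rho_{(1,0,\ldots,0,-1)}$ is among the allowed ones and has $\lambda_{\min}\le\phi(\Gamma)\le\sum_x c_x$ by Lemma \ref{lem:R11 dominates non-central blocks}. Either way, the minimum over nontrivial irreps of the allowed form is at most $\sum_x c_x$, so irreps outside the family never give a strictly smaller eigenvalue. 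We conclude that the spectral gap (the infimum of the nontrivial spectrum) is attained, or approached, within the family $\rho_{(k,0,\ldots,0,-m)}$. Both the value $\sum_x c_x$ and the family's contribution are nonnegative, and $n\ge3$ guarantees the family contains nontrivial members.

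The only step requiring care is the interlacing computation. It must be done for each $x$ separately via conjugation by a permutation matrix, which maps $\U_{[n]\setminus\{x\}}$ onto $\U_{[n-1]}$ and hence does not affect whether the trivial irrep appears. The computation itself is routine.
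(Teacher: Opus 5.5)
Your proposal is correct and is essentially the paper's proof. The branching rule forces $P_B=0$ for every $B$ of size $n-1$ unless the highest weight is $(k,0,\ldots,0,-m)$, so for every other irrep $\L(\Gamma,\rho)=\bigl(\sum_B w_B\bigr)I$, which is the largest value the a priori bound on the spectrum allows; you only spell out the conjugation by permutation matrices and the final comparison, which the paper leaves implicit. One minor slip: $W_\rho$ is invariant under each $P_B$ simply because $P_B(V)\subseteq W_\rho$, not because $I-P_B$ is positive semi-definite (and this invariance is not needed for the argument anyway).
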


\begin{proof}
    By Lemma \ref{lem:L of reps - basic properties of spectrum}, the $\U(n)$-spectrum of $\Gamma$ is contained in $[0,\sum w_B]$. Let $\rho\colon\U(n)\to\gl(V)$ be an irrep of $\U(n)$. By Lemma \ref{lem:L as sum of projections}, $\L(\Gamma,\rho)=\sum_{B\in[n]}w_B(I-P_B)$ with $P_B\in\End(V)$ the orthogonal projection onto the $\U_B$-invariant subspace. Namely, $P_B$ is the projection onto the subspace of $V$ corresponding to the trivial isotypic component of $\mathrm{Res}^{\U(n)}_{\U_B}(\rho)$. If $|B|=n-1$, the branching rule (Theorem \ref{thm:branching rule}) yields that $P_B=0$ unless $\rho$ is of the form $\rho_{(k,0,\ldots,0,-m)}$ (these are the only non-increasing vectors of length $n$ in $\Z^n$ which interlace the vector of $n-1$ zeros, which corresponds to the trivial irrep of $\U(n-1)$). Hence, for $\Gamma$ as in the statement of the lemma, for any irrep $\rho$ not of the form $\rho_{(k,0,\ldots,0,-m)}$, we have $\L(\Gamma,\rho)=(\sum w_B)I$, and all its eigenvalues are the largest possible.
\end{proof}

\begin{remark}
    The argument of Lemma \ref{lem:n-1s enough to consider k,0,...,0,l} applies to the same type of hypergraphs measures for any sequence of groups with a well-understood branching rule. In particular, for $\sym(n)$, the only non-trivial irrep which has a non-zero trivial component when restricted to $\sym(n-1)$ is the standard irrep $\pi_{(n-1,1)}$. Hence Caputo's Conjecture $\ref{conj:Caputo}$ is immediate in the case the hypergraph is supported on subsets of size $\ge n-1$. (This was observed already in \cite[$\S$7]{alon2025aldous-caputo}.)
\end{remark}

\begin{cor} \label{cor:n-1s enough to consider KMP}
    Let $\Gamma=([n],w)$ be a hypergraph with non-negative weights supported on subsets of size $n-1$. Then the spectral gap of the $\U(n)$-spectrum of $\Gamma$ is obtained in the KMP processes $\L(\Gamma,\kmp_k)$. 
\end{cor}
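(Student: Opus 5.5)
By Lemma \ref{lem:n-1s enough to consider k,0,...,0,l}, the infimum of the non-trivial $\U(n)$-spectrum of $\Gamma$ is attained (or approached) among the irreps $\rho_{(k,0,\ldots,0,-m)}$. The plan is to show that only the balanced ones, $k=m$, matter, and then to identify their relevant spectra inside the KMP Laplacians using Theorem \ref{thm:kmp_k is central block of...} and Proposition \ref{prop:embedding MS(n,k) in MS(n,k+1) properties}.

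First I handle the unbalanced case $k\ne m$. The irrep is then unbalanced, so Corollary \ref{cor:torus-inv subspace empty for unbalanced irreps} gives $\torinv(\rho)=\{0\}$. Theorem \ref{thm:(k,-k) dominates non-balanced parts} then yields $\lambda_{\min}(\Gamma,\rho_{(1,0,\ldots,0,-1)})\le\lambda_{\min}(\Gamma,\rho)$. So every unbalanced irrep is dominated by a balanced one of the same form, namely $\rho_{(1,0,\ldots,0,-1)}$, and it suffices to consider $\rho_k=\rho_{(k,0,\ldots,0,-k)}$ with $k\ge1$.

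Next I reduce each $\rho_k$ to its torus-invariant part. By Lemma \ref{lem:blocks outside tor-int have large evalues} and the proof of Theorem \ref{thm:(k,-k) dominates non-balanced parts}, every eigenvalue of $\L(\Gamma,\rho_k)$ outside $\torinv(\rho_k)$ is at least $\phi(\Gamma)\ge\lambda_{\min}(\L_\torinv(\Gamma,\rho_k))$. Hence $\lambda_{\min}(\Gamma,\rho_k)=\lambda_{\min}(\L_\torinv(\Gamma,\rho_k))$. By Proposition \ref{prop:embedding MS(n,k) in MS(n,k+1) properties}\eqref{enu:orth complement}, this operator is $\L(\Gamma,\kmp_k)|_{\pure(n,k)}$. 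Theorem \ref{thm:kmp_k is central block of...} gives
\[
\L(\Gamma,\kmp_k)\cong\bigoplus_{j=0}^k\L_\torinv(\Gamma,\rho_{(j,0,\ldots,0,-j)}),
\]
in which the trivial summand $j=0$ is one-dimensional. Therefore $\min_{1\le j\le k}\lambda_{\min}(\Gamma,\rho_j)=\lambda^*_{\min}(\Gamma,\kmp_k)$.

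Combining the two steps, the infimum over non-trivial irreps equals $\inf_{k\ge1}\lambda^*_{\min}(\Gamma,\kmp_k)$. This infimum is the limit of a non-increasing sequence, because the spectrum of $\kmp_k$ embeds in that of $\kmp_{k+1}$. That is the precise sense in which the spectral gap is ``obtained in the KMP processes.'' There is no real obstacle here; the only care needed is bookkeeping. One must check that the trivial summand is exactly the excluded eigenvalue and that the domination argument of Theorem \ref{thm:(k,-k) dominates non-balanced parts} applies verbatim to the non-torus-invariant parts of the balanced $\rho_k$. Both of these are already contained in the cited proofs.
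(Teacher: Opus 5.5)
Your proof is correct and takes essentially the paper's route: the paper deduces the corollary in one line from Lemma~\ref{lem:n-1s enough to consider k,0,...,0,l} together with Theorems~\ref{thm:(k,-k) dominates non-balanced parts} and~\ref{thm:kmp_k is central block of...}. You have unpacked that combination: unbalanced irreps are dominated by $\rho_{(1,0,\ldots,0,-1)}$, the non-torus-invariant parts of each $\rho_{(k,0,\ldots,0,-k)}$ lie above $\phi(\Gamma)$, and the torus-invariant parts assemble into $\L(\Gamma,\kmp_k)$, where the one-dimensional trivial summand accounts for the excluded zero eigenvalue.
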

\begin{proof}
    This follows from Lemma \ref{lem:n-1s enough to consider k,0,...,0,l}, together with Theorems \ref{thm:(k,-k) dominates non-balanced parts} and \ref{thm:kmp_k is central block of...}.
\end{proof}

So we have restricted the statement of Theorem \ref{thm:n-1s} to the statement of Conjecture \ref{conj:pure KMP} for hypergraphs supported on subsets of size $n-1$. Namely, we ought to show that for every $k\in\mathbb{Z}_{\ge2}$ we have
\[
    \lambda_{\min}^*(\Gamma,\kmp_k)=\lambda_{\min}^*(\Gamma,\kmp_2).
\]

The branching rule yields more: it shows that if $x\in[n]$ and $B=[n]\setminus\{x\}$, then the $\U_B$-invariant subspace of $\rho_{(k,0,\ldots,0,-k)}$ is one-dimensional (the trivial irrep, as all other irreps in the decomposition of $\mathrm{Res}^{\U(n)}_{U_B}\rho_{(k,0,\ldots,0,-k)}$, has multiplicity one). But Proposition \ref{prop:embedding MS(n,k) in MS(n,k+1) properties}\eqref{enu:representatives for pure} identifies this one-dimensional space as the span of \marginpar{$g_{k,x}$}$g_{k,x}\in\pure(n,k)=\torinv(\rho_{(k,0,\ldots,0,-k)})$ which is the unique vector in $\pure(n,k)$ with coefficient $1$ for $\delta_\ii$ for every $\ii\in\ms{[n]\setminus\{x\}}{k}$. So $\N_{[n]\setminus\{x\}}$ is the projection onto the one-dimensional subspace $\C g_{k,x}\le\pure(n,k)$. We obtain the following corollary.

\begin{cor} \label{cor:n-1 non-zero on span(g1,...,gn)}
    Let\marginpar{$W_k$} $$W_k\defi\mathrm{Span}_\C \{g_{k,1},\ldots, g_{k,n} \}.$$ Then $W_k$ is an invariant subspace of $\L(\Gamma,\kmp_k)$ and the smallest eigenvalue of $\L(\Gamma,\rho_{(k,0,\ldots,0,-k)})$ is obtained there. The orthogonal complement $W_k^\perp$ in the representation $\rho_{(k,0,\ldots,0,-k)}$ is also an invariant subspace, on which $\L(\Gamma,\rho_{(k,0,\ldots,0,-k)})$ acts as the constant $\sum w_B$.
\end{cor}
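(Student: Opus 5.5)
The plan is to work inside $\pure(n,k)$, which by Proposition \ref{prop:embedding MS(n,k) in MS(n,k+1) properties}\eqref{enu:orth complement} we identify with $\torinv(\rho_{(k,0,\ldots,0,-k)})$. Throughout, $\Gamma$ is supported on the sets $B_x=[n]\setminus\{x\}$ with weights $c_x$, so
\[
    \L(\Gamma,\kmp_k)=\sum_{x\in[n]} c_x\,(I-\N_{B_x}).
\]
The first step is to record what each summand does on $\pure(n,k)$. By the branching rule (Theorem \ref{thm:branching rule}), the $\U_{B_x}$-invariant subspace of $\rho_{(k,0,\ldots,0,-k)}$ is one-dimensional, since the trivial irrep of $\U(n-1)$ interlaces $(k,0,\ldots,0,-k)$ exactly once. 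Moreover, $\U_{B_x}\supseteq \U_{\{y\}}$ for every $y\ne x$, and $\U_{B_x}$ also commutes with $\U_{\{x\}}$. It follows that this invariant line is fixed by the whole torus, so it lies in $\torinv(\rho_{(k,0,\ldots,0,-k)})\cong\pure(n,k)$. Via the identification of $P_B|_{\torinv}$ with $\N_B$ from the proof of Theorem \ref{thm:kmp_k is central block of...}, we get that $\N_{B_x}|_{\pure(n,k)}$ is a rank-one orthogonal projection onto a line $L_x$.

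The second step is to identify $L_x=\C g_{k,x}$. Any vector fixed by $\N_{B_x}$ has coefficients $\alpha_\ii$ that depend only on $\#_x(\ii)$, since $\N_{B_x}$ averages over configurations with a given count at $x$. In particular, such a vector is constant on the multisets $\ii\in\ms{[n]\setminus\{x\}}{k}$, that is, those with $\#_x(\ii)=0$.

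Next I claim that a nonzero vector of $L_x$ has this constant nonzero. Suppose instead that it vanished on all $\ii$ avoiding $x$. Then by the uniqueness in Proposition \ref{prop:embedding MS(n,k) in MS(n,k+1) properties}\eqref{enu:representatives for pure}, applied with $g_1=0$, the vector itself would be $0$. Hence we may normalize so that this coefficient equals $1$. The uniqueness in item \eqref{enu:representatives for pure} then forces the normalized vector to be exactly $g_{k,x}$, so $\N_{B_x}|_{\pure(n,k)}$ is the orthogonal projection onto $\C g_{k,x}$.

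The third step is the splitting. Set $W_k=\mathrm{Span}\{g_{k,x}\}$. Each projection $\N_{B_x}$ maps $\pure(n,k)$ into $W_k$ and is self-adjoint (Lemma \ref{lem:evalues of N_B are 0 or 1}). Hence $W_k$ is invariant under every $\N_{B_x}$, and so is $W_k^\perp$ by self-adjointness. On $W_k^\perp$ each $\N_{B_x}$ vanishes, so $\L$ acts there as the scalar $\sum_x c_x=\sum_B w_B$.

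For the full representation, take the orthogonal complement of $W_k$ inside $\rho_{(k,0,\ldots,0,-k)}$. This is the torus part $W_k^\perp\cap\pure(n,k)$ together with the non-torus part $V'$. By the previous paragraph and the one-dimensionality of each $\U_{B_x}$-invariant line, all $P_{B_x}$ vanish on this complement, so again $\L=\sum_B w_B$ there.

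The last step is to see where the smallest eigenvalue sits. By Lemma \ref{lem:L of reps - basic properties of spectrum} all eigenvalues are at most $\sum_B w_B$, so the minimum is attained on $W_k$ (when $W_k\ne0$; otherwise everything is the constant and the claim is trivial). The only delicate point is justifying that $P_{B_x}$ on the full irrep is supported inside the torus-invariant part; the commuting and containment argument of the first step handles this.
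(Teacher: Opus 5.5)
Your route is the paper's. The branching rule makes the $\U_{B_x}$-fixed space of $\rho_{(k,0,\ldots,0,-k)}$ a line. Item \eqref{enu:representatives for pure} of Proposition \ref{prop:embedding MS(n,k) in MS(n,k+1) properties} pins that line down as $\C g_{k,x}$, so each $P_{B_x}$ is the orthogonal projection onto $\C g_{k,x}$. The splitting $W_k\oplus W_k^\perp$, with $\L$ acting as $\sum_B w_B$ on $W_k^\perp$, then follows. Your Steps 2 and 3 correctly flesh this out.

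The gap is in Step 1, at exactly the point you call delicate. You use two facts: $\U_{\{y\}}\le\U_{B_x}$ for $y\ne x$, and $\U_{\{x\}}$ commutes with $\U_{B_x}$. From these you only get that $\U_{\{x\}}$ preserves the line $L_x$, so it acts on it by some character $\theta\mapsto\theta^m$. Nothing you wrote forces $m=0$. In fact both facts also hold for the standard representation $\rho_{(1,0,\ldots,0)}$. There the $\U_{B_x}$-fixed line is $\C e_x$, on which $\U_{\{x\}}$ acts by $\theta$, so that line is not torus-invariant. Without $L_x\subseteq\torinv(\rho_{(k,0,\ldots,0,-k)})$ you lose two things. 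You cannot conclude that $\N_{B_x}|_{\pure(n,k)}$ has rank one; it would be $0$. Nor can you conclude that $P_{B_x}$ vanishes on the non-torus part $V'$, which your final paragraph relies on.

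The missing input is balancedness. The scalar matrix $\theta I_n$ acts on $\rho_{(k,0,\ldots,0,-k)}$ by $\theta^{k-k}=1$. Write $\theta I_n=d_x(\theta)\,u$, where $d_x(\theta)\in\U_{\{x\}}$ is the diagonal matrix with $\theta$ at position $x$ and $u\in\U_{B_x}$. Since $u$ fixes $L_x$, $d_x(\theta)$ acts trivially on $L_x$.

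Alternatively, you can sidestep the torus question entirely:
\begin{itemize}
\item $\N_{B_x}g_{k,x}$ lies in $\pure(n,k)$ by item \eqref{enu:orth complement}.
\item Its coefficient at each $\ii\in\ms{[n]\setminus\{x\}}{k}$ is the average of the coefficients of $g_{k,x}$ over that block, namely $1$.
\item By the uniqueness in item \eqref{enu:representatives for pure}, $\N_{B_x}g_{k,x}=g_{k,x}$.
\end{itemize}
Thus $g_{k,x}$ is a nonzero $\U_{B_x}$-fixed vector of $\rho_{(k,0,\ldots,0,-k)}$. One-dimensionality then gives $L_x=\C g_{k,x}$ inside the whole irrep, which is all your Steps 2 and 3 need.
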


\begin{lemma} \label{lem:values of g_x}
    We have $$g_{k,x}=\sum_{\ii\in\ms{[n]}{k}}(-1)^{\#_x(\ii)}\binom{n+k-2}{\#_x(\ii)}\delta_\ii.$$
\end{lemma}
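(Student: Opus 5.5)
We will verify directly that the proposed vector lies in $\pure(n,k)$ and has the normalization defining $g_{k,x}$, and then conclude by the uniqueness statement of Proposition \ref{prop:embedding MS(n,k) in MS(n,k+1) properties}\eqref{enu:representatives for pure}.

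Denote by $v$ the right hand side, $v=\sum_{\ii\in\ms{[n]}{k}}(-1)^{\#_x(\ii)}\binom{n+k-2}{\#_x(\ii)}\delta_\ii$. For every $\ii\in\ms{[n]\setminus\{x\}}{k}$ we have $\#_x(\ii)=0$, so the coefficient of $\delta_\ii$ in $v$ is $\binom{n+k-2}{0}=1$. Thus $v=g_1+g_2$, where $g_1$ is the sum of $\delta_\ii$ over all multisets avoiding $x$ and $g_2$ is supported on multisets containing $x$. By Proposition \ref{prop:embedding MS(n,k) in MS(n,k+1) properties}\eqref{enu:representatives for pure} and the definition of $g_{k,x}$, it therefore suffices to show that $v\in\pure(n,k)$.

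By definition of $\pure(n,k)$, this means showing $\langle v,\Psi_{k-1}(\delta_\ii)\rangle=0$ for every $\ii\in\ms{[n]}{k-1}$. Using the description \eqref{eq:second definition of Psi},
\[
    \langle v,\Psi_{k-1}(\delta_\ii)\rangle=\sum_{y\in[n]}\left(\#_y(\ii)+1\right)v_{\ii\sqcup\{y\}},
\]
where $v_\jj$ denotes the coefficient of $\delta_\jj$ in $v$. Set $a=\#_x(\ii)$ and $N=n+k-2$, and split the sum according to whether $y=x$.
\begin{itemize}
    \item For $y\ne x$ we have $\#_x(\ii\sqcup\{y\})=a$, so each such term carries the coefficient $(-1)^a\binom{N}{a}$. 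The weights add up to $\sum_{y\ne x}(\#_y(\ii)+1)=(k-1-a)+(n-1)=N-a$.
    \item For $y=x$ the term is $(a+1)(-1)^{a+1}\binom{N}{a+1}$.
\end{itemize}
Hence the inner product equals
\[
    (-1)^a\left[(N-a)\binom{N}{a}-(a+1)\binom{N}{a+1}\right],
\]
which vanishes by the standard identity $(a+1)\binom{N}{a+1}=(N-a)\binom{N}{a}$. This identity also holds at the boundary $a=N$, where both sides are $0$.

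Therefore $v\in\pure(n,k)$ with the required coefficients on $\ms{[n]\setminus\{x\}}{k}$, and uniqueness gives $v=g_{k,x}$. The only real obstacle is the bookkeeping of the weight $\sum_{y\ne x}(\#_y(\ii)+1)$, which collapses to $N-a$.
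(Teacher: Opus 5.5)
Your proof is correct and uses the same ingredients as the paper's proof. These are the orthogonality relation $\langle \Psi_{k-1}(\delta_\jj), g\rangle=0$ computed via \eqref{eq:second definition of Psi}, the weight count $\sum_{y\ne x}(\#_y+1)=n+k-2-\#_x$, and the identity $(a+1)\binom{N}{a+1}=(N-a)\binom{N}{a}$. The difference is only in how the argument is run: the paper uses these relations as a recursion and inducts on $\#_x(\ii)$ to derive the coefficients, whereas you verify that the closed form satisfies every relation and then invoke the uniqueness in Proposition \ref{prop:embedding MS(n,k) in MS(n,k+1) properties}\eqref{enu:representatives for pure}.
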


\begin{proof}
    Denote the coefficient of $\delta_\ii$ in $g_{k,x}$ by $\alpha_\ii$. We prove that $\alpha_\ii=(-1)^{\#_x(\ii)}\binom{n+k-2}{\#_x(\ii)}$ by induction on $\#_x(\ii)$. When $\#_x(\ii)=0$, $\alpha_\ii=1$ by definition. Now let $\ii\in \ms{[n]}{k}$ satisfy $\#_x(\ii)\ge1$ and assume the claim is true for any $\ii'$ with $\#_x(\ii')<\#_x(\ii)$. Denote $\jj=\ii\setminus\{x\}$. By definition, $g_{k,x}\in\pure(n,k)$, so it must satisfy $\langle \Psi_{k-1}(\delta_{\jj}),g_{k,x}\rangle=0$. By \eqref{eq:second definition of Psi},
    \[
        0=\langle \Psi_{k-1}(\delta_{\jj}),g_{k,x}\rangle = \sum_{y\in[n]}(\#_y(\jj)+1)\alpha_{\jj\sqcup\{y\}}.
    \]
    Using the induction hypothesis, we get
    \begin{eqnarray*}
        \alpha_\ii &=& -\frac{1}{\#_x(\ii)} \sum_{y\in[n]\setminus\{x\}} (\#_y(I)+1)\alpha_{\jj\sqcup\{y\}} \\
        &=& \frac{-1}{\#_x(\ii)}\left(n+k-\#_x(\ii)-1\right)\cdot (-1)^{\#_x(\ii)-1}\binom{n+k-2}{\#_x(\ii)-1} \\
        &=& (-1)^{\#_x(\ii)}\binom{n+k-2}{\#_x(\ii)}.
    \end{eqnarray*}
\end{proof}

\begin{lemma} \label{lem:N_without x on g_y}
    Let $x,y\in[n]$. Then,
    \[
        \N_{[n]\setminus\{x\}}(g_{k,y}) = \begin{cases}
            g_{k,x} & \mathrm{if~}y=x \\
            \frac{(-1)^k}{\binom{n+k-2}{k}}g_{k,x} & \mathrm{if~}y\ne x.
        \end{cases} 
    \]
\end{lemma}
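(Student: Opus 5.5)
The plan is to use that $\N_{[n]\setminus\{x\}}$, restricted to $\pure(n,k)$, is the orthogonal projection onto the line $\C g_{k,x}$. This was established just before Corollary \ref{cor:n-1 non-zero on span(g1,...,gn)}: $\pure(n,k)$ is invariant under $\N_B$ by Proposition \ref{prop:embedding MS(n,k) in MS(n,k+1) properties}, and $\N_B$ is self-adjoint with eigenvalues in $\{0,1\}$ by Lemma \ref{lem:evalues of N_B are 0 or 1}. Hence for $y\in[n]$ we have
\[
    \N_{[n]\setminus\{x\}}(g_{k,y})=\frac{\langle g_{k,y},g_{k,x}\rangle}{\langle g_{k,x},g_{k,x}\rangle}\,g_{k,x}.
\]
The case $y=x$ is then immediate. (Alternatively, $g_{k,x}$ is fixed because it is $\U_{[n]\setminus\{x\}}$-invariant, or directly because its coefficients depend only on $\#_x$.)

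For $y\ne x$, rather than computing both inner products, I would compute the scalar by comparing a single coefficient. Since $\N_{[n]\setminus\{x\}}(g_{k,y})=c\,g_{k,x}$ for some $c$, I read off the coefficient of $\delta_\ii$ with $\ii=\{y,\dots,y\}$ ($k$ copies of $y$). In $g_{k,x}$ this coefficient is $1$, since $\#_x(\ii)=0$. In $\N_{[n]\setminus\{x\}}(g_{k,y})$, the operator averages the coefficients of $g_{k,y}$ over all multisets $\jj$ with $\#_x(\jj)=\#_x(\ii)=0$, i.e.\ over $\ms{[n]\setminus\{x\}}{k}$, whose size is $\ms{n-1}{k}=\binom{n+k-2}{k}$. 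This uses that $\N_B$ is symmetric and equal to the uniform average on each block. Thus
\[
    c=\frac{1}{\binom{n+k-2}{k}}\sum_{\jj\in\ms{[n]\setminus\{x\}}{k}}(-1)^{\#_y(\jj)}\binom{n+k-2}{\#_y(\jj)},
\]
using Lemma \ref{lem:values of g_x}.

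The main step is evaluating this sum. Group by $j=\#_y(\jj)$: the remaining $k-j$ elements range over $[n]\setminus\{x,y\}$, giving $\ms{n-2}{k-j}=\binom{n+k-j-3}{k-j}$ choices. So I need
\[
    \sum_{j=0}^k(-1)^j\binom{n+k-2}{j}\binom{n-3+k-j}{k-j}=(-1)^k.
\]
I would prove this with generating functions. We have $\sum_j(-1)^j\binom{N}{j}t^j=(1-t)^N$ and $\sum_i\binom{n-3+i}{i}t^i=(1-t)^{-(n-2)}$. With $N=n+k-2$, the sum equals $[t^k](1-t)^{k}=(-1)^k$. Dividing by $\binom{n+k-2}{k}$ gives the claimed scalar.

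One check is needed: the projection description requires $g_{k,y}\in\pure(n,k)$, which holds by definition. The edge case $n-2=0$ is excluded since $n\ge3$.
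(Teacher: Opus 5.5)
Your proof is correct, and it has the same framework as the paper's: $\N_{[n]\setminus\{x\}}$ acts on $\pure(n,k)$ as the projection onto $\C g_{k,x}$, so the scalar is found by comparing one coefficient. The one-dimensionality of the image comes from the branching-rule argument the paper gives, not from the self-adjointness and $\{0,1\}$-eigenvalue facts you list; you do correctly cite it as already established.

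The difference is which coefficient you compare.

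\textbf{The paper's choice.} It uses $\ii=\{x,\ldots,x\}$. This multiset has no particles in $B=[n]\setminus\{x\}$, so its block under $\N_B$ is a singleton and the coefficient is left unchanged. Its value in $\N_B(g_{k,y})$ is therefore its value in $g_{k,y}$, namely $1$, since $\#_y(\ii)=0$. By Lemma \ref{lem:values of g_x}, its value in $g_{k,x}$ is $(-1)^k\binom{n+k-2}{k}$. The ratio gives the answer with no summation at all.

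\textbf{Your choice.} You use $\ii=\{y,\ldots,y\}$, which lies in the largest block $\ms{[n]\setminus\{x\}}{k}$. This forces you to average $g_{k,y}$ over that whole block and to prove
\[
\sum_{j=0}^k(-1)^j\binom{n+k-2}{j}\binom{n+k-j-3}{k-j}=(-1)^k .
\]
Your block count $\ms{n-2}{k-j}$ is right. Your generating-function evaluation $[t^k]\,(1-t)^{n+k-2}(1-t)^{-(n-2)}=[t^k]\,(1-t)^k=(-1)^k$ is also right. A check: for $n=3$ it reproduces $t_1=-\frac12$ and $t_2=\frac13$.

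\textbf{Comparison.} Your route works and incidentally gives an independent consistency check of Lemma \ref{lem:values of g_x}, but it does strictly more work than needed. Reading off the coefficient of a multiset on which $\N_{[n]\setminus\{x\}}$ acts trivially is what makes the paper's argument a one-line computation.
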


\begin{proof}
    As mentioned above, $\N_{[n]\setminus\{x\}}$ is the projection onto $\C g_{k,x}$, hence $\N_{[n]\setminus\{x\}}(g_{k,x})=g_{k,x}$ and $\N_{[n]\setminus\{x\}}(g_{k,y})=\alpha\cdot g_{k,x}$ for some $\alpha\in\R$ when $y\ne x$ (by symmetry, $\alpha$ is independent of $y$ as long as $y\ne x$). To compute the value of $\alpha$, it is enough to consider the coefficient in $\N_{[n]\setminus\{x\}}(g_{k,y})$ of $\delta_\ii$ for one arbitrary multiset $\ii$ and compare it to its coefficient in $g_{k,x}$. Let $\ii=\{x,\ldots,x\}\in \ms{[n]}{k}$. As $\N_{[n]\setminus\{x\}}$ fixes the coefficient of $\delta_\ii$, the coefficient of $\delta_\ii$ in $\N_{[n]\setminus\{x\}}(g_{k,y})$ is the same as its coefficient in $g_{k,y}$, which is $1$. By Lemma \ref{lem:values of g_x}, the coefficient of $\delta_\ii$ in $g_{k,x}$ is $(-1)^k\binom{n+k-2}{k}$. The value of $\alpha$ is the quotient of the two, which is precisely the value in the statemnet of the lemma.
\end{proof}

\begin{lemma} \label{lem:dim of W_k}
    For any $k\ge2$, $\dim(W_k)=n$. In contrast, $\dim(W_1)=n-1$, with the only linear dependence of the generators (up to multiplication by a scalar) being $g_{1,1}+\ldots+g_{1,n}=0$.
\end{lemma}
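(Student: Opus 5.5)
The natural route is through the Gram matrix of $g_{k,1},\ldots,g_{k,n}$. By Lemma~\ref{lem:N_without x on g_y}, $\N_{[n]\setminus\{x\}}$ is the orthogonal projection onto $\C g_{k,x}$ and sends $g_{k,y}$ to $\beta_k g_{k,x}$ for $y\ne x$, where
\[
    \beta_k\defi\frac{(-1)^k}{\binom{n+k-2}{k}}.
\]
Since $\N_{[n]\setminus\{x\}}$ is an orthogonal projection onto $\C g_{k,x}$, for $y\ne x$ we get
\[
    \langle g_{k,y},g_{k,x}\rangle=\beta_k\langle g_{k,x},g_{k,x}\rangle.
\]
By symmetry, $\|g_{k,x}\|^2=:N_k$ is independent of $x$, and $N_k>0$ because the coefficient of $\delta_\ii$ for $\ii$ avoiding $x$ is $1$. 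Hence the Gram matrix is
\[
    G=N_k\left((1-\beta_k)I_n+\beta_k J_n\right),
\]
with eigenvalues $N_k(1-\beta_k)$ (multiplicity $n-1$) and $N_k(1+(n-1)\beta_k)$ (multiplicity $1$, eigenvector the all-ones vector). Then $\dim W_k=\mathrm{rank}\,G$, and the linear dependencies among the $g_{k,x}$ are exactly the kernel of $G$.

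The remaining step is to check when these eigenvalues vanish. Since $|\beta_k|<1$ for $n\ge3$, $k\ge1$ (indeed $\binom{n+k-2}{k}\ge n-1\ge2$), the first eigenvalue is always positive. The second vanishes iff $(n-1)\beta_k=-1$, i.e. iff $k$ is odd and $\binom{n+k-2}{k}=n-1$.
\begin{itemize}
    \item For $k=1$: $\binom{n-1}{1}=n-1$, so the second eigenvalue vanishes. Thus $\dim W_1=n-1$, and the kernel is spanned by the all-ones vector, giving $g_{1,1}+\cdots+g_{1,n}=0$ as the unique dependence up to scalar.
    \item For $k\ge2$: $\binom{n+k-2}{k}\ge\binom{n}{2}>n-1$ for $n\ge3$. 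So if $k$ is even then $\beta_k>0$ and $1+(n-1)\beta_k>0$, while if $k$ is odd then $(n-1)|\beta_k|<1$. Either way $G$ is nonsingular and $\dim W_k=n$.
\end{itemize}

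The only mildly delicate point is the inequality $\binom{n+k-2}{k}>n-1$ for $k\ge2$, $n\ge3$. It follows because $\binom{n+k-2}{k}$ is increasing in $k$ for these parameters and equals $\binom{n}{2}$ at $k=2$. As a sanity check for $k=1$, Lemma~\ref{lem:values of g_x} gives $g_{1,x}=\sum_{i\ne x}\delta_i-(n-1)\delta_x$, and summing over $x$ gives $(n-1)\sum_i\delta_i-(n-1)\sum_i\delta_i=0$, consistent with the Gram computation.
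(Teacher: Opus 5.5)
Your proof is correct, but it is organized differently from the paper's.

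The paper never computes inner products. It takes a putative relation $\sum_i\alpha_i g_{k,i}=0$, chooses $x$ with $|\alpha_x|$ maximal, and applies $\N_{[n]\setminus\{x\}}$. From Lemma~\ref{lem:N_without x on g_y} it reads off the scalar equation $\alpha_x+\beta_k\sum_{i\ne x}\alpha_i=0$ (in your notation $\beta_k$). The resulting bound $|\alpha_x|\le (n-1)|\beta_k|\,|\alpha_x|$, together with $\binom{n+k-2}{k}>n-1$, kills the relation for $k\ge2$. For $k=1$, the same equation at every $x$ forces all $\alpha_i$ to be equal.

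The vector of these scalar equations is exactly $\bigl((1-\beta_k)I_n+\beta_kJ_n\bigr)\alpha=G\alpha/N_k$. So the paper is in effect showing, by a max-modulus argument, that your normalized Gram matrix is strictly diagonally dominant, while you diagonalize it outright.

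Your version gives an exact equivalence. Since $\ker G$ is precisely the space of relations, the all-ones vector lying in $\ker G$ itself proves that $g_{1,1}+\cdots+g_{1,n}=0$. The paper's argument only shows that any relation must have equal coefficients, and leaves the existence of that relation to the easy explicit check that you also include. You also get the full spectrum of $G$. The paper's version, in exchange, needs only the bound $(n-1)|\beta_k|<1$ rather than the exact shape of the matrix, and no inner products at all.

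A small point of phrasing: $\N_{[n]\setminus\{x\}}$ is the orthogonal projection onto $\C g_{k,x}$ only after restriction to $\pure(n,k)$. The identity $\langle g_{k,y},g_{k,x}\rangle=\beta_k\|g_{k,x}\|^2$ is most cleanly obtained as $\langle g_{k,y},\N_{[n]\setminus\{x\}}g_{k,x}\rangle=\langle \N_{[n]\setminus\{x\}}g_{k,y},g_{k,x}\rangle$, using the self-adjointness in Lemma~\ref{lem:evalues of N_B are 0 or 1}.
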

\begin{proof}
    Assume that $\sum_{i=1}^n\alpha_ig_{k,i}=0$. Without loss of generality, assume that $|\alpha_n|\ge|\alpha_i|$ for all $i\in[n-1]$. Acting by $\N_{[n-1]}$ we get
    \begin{equation} \label{eq:equation on alphas}
        0=\left[(-1)^k\frac{\alpha_1+\ldots+\alpha_{n-1}}{\binom{n+k-2}{k}}+\alpha_n\right]g_{k,n},
    \end{equation}
    so 
    \begin{equation} \label{eq:bound for alpha_n}
        |\alpha_n|\le\frac{1}{\binom{n+k-2}{k}}\sum_{i=1}^{n-1}|\alpha_i|.
    \end{equation}
    Recall our assumption throughout this section that $n\ge3$. Whenever $k\ge2$, we have $\binom{n+k-2}{k}> n-1$. Hence \eqref{eq:bound for alpha_n} is only possible if either $\alpha_i=0$ for all $i\in[n]$, or $k=1$. In the latter case, \eqref{eq:equation on alphas} shows that $\alpha_n$ is the mean of the others. But the same can be equally shown to hold for $\alpha_i$ for all $i$. Hence, the only linear dependence is precisely when the $\alpha_i$'s are all equal. This proves the claim.
\end{proof}

Recall that $J_n$ denotes the all-one $n\times n$ matrix and that $c_x=w_{[n]\setminus\{x\}}$.

\begin{prop} \label{prop:matrix for evalues in n-1s}
    Let $n\ge3$ and let $\Gamma=([n],w)$ be a hypergraph with non-negative weights supported on subsets of size $n-1$. For every $k\in\Z_{\ge1}$ consider the $n\times n$ matrix\marginpar{$M_k$}
    \[
        M_k\defi \left(\sum_{x\in[n]}c_x\right)I_n-\begin{pmatrix}
            c_1& &&\\
            &c_2&&\\
            &&\ddots&\\
            &&&c_n
        \end{pmatrix} 
        \left(I_n+\frac{(-1)^k}{\binom{n+k-2}{k}}\left(J_n-I_n\right)\right).
    \]
    Then the spectrum of $\L(\Gamma,\rho_{(k,0,\ldots,0,-k)})$ consists of the eigenvalues of $M_k$ together with copies of $\sum c_x$.
\end{prop}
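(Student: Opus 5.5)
Throughout, $\L(\Gamma,\rho_{(k,0,\ldots,0,-k)})=\sum_{x}c_x(I-P_{[n]\setminus\{x\}})$ by Lemma \ref{lem:L as sum of projections}. By the branching rule, each $P_{[n]\setminus\{x\}}$ is a rank-one projection. By Proposition \ref{prop:embedding MS(n,k) in MS(n,k+1) properties}\eqref{enu:orth complement} together with the discussion preceding Corollary \ref{cor:n-1 non-zero on span(g1,...,gn)}, its image is $\C g_{k,x}\subseteq\torinv(\rho_{(k,0,\ldots,0,-k)})\cong\pure(n,k)$. There it acts as $\N_{[n]\setminus\{x\}}$.

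The plan has two steps.

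\textbf{Step 1: the complement $W_k^\perp$.} On the orthogonal complement of $W_k$ inside the representation space, every $P_{[n]\setminus\{x\}}$ vanishes. So the Laplacian acts there as the scalar $\sum_x c_x$ (Corollary \ref{cor:n-1 non-zero on span(g1,...,gn)}). These are the ``copies of $\sum c_x$''.

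\textbf{Step 2: the matrix of $\L$ on $W_k$.} Using the spanning set $g_{k,1},\ldots,g_{k,n}$ and Lemma \ref{lem:N_without x on g_y}, with $\beta_k\defi \frac{(-1)^k}{\binom{n+k-2}{k}}$ we have
\[
\L(\Gamma,\kmp_k)\,g_{k,y}=\Big(\sum_x c_x\Big)g_{k,y}-\sum_{x}c_x\,\beta_{x,y}\,g_{k,x},
\]
where $\beta_{x,x}=1$ and $\beta_{x,y}=\beta_k$ for $x\ne y$. Thus the coefficient matrix with respect to the formal generators is exactly $M_k=(\sum c_x)I_n-\mathrm{diag}(c)\,(I_n+\beta_k(J_n-I_n))$, acting on coordinate vectors.

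For $k\ge2$, Lemma \ref{lem:dim of W_k} says the $g_{k,x}$ form a basis of $W_k$. Hence $M_k$ is the matrix of $\L|_{W_k}$ and we are done.

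\textbf{The case $k=1$.} This is the main subtlety. Here $\dim W_1=n-1$, so the linear map $\C^n\to W_1$, $e_y\mapsto g_{1,y}$, has kernel spanned by $(1,\ldots,1)$, and $M_1$ is the matrix of the lift of $\L$ to $\C^n$. We need $M_1$ to preserve this kernel. Indeed, for $k=1$ we have $\beta_1=-\frac{1}{n-1}$, so $I_n+\beta_1(J_n-I_n)$ annihilates $(1,\ldots,1)$, and therefore $M_1(1,\ldots,1)^T=(\sum c_x)(1,\ldots,1)^T$. Consequently:
\begin{itemize}
\item the spectrum of $M_1$ is the spectrum of $\L|_{W_1}$ (the induced map on the quotient $\C^n/\ker\cong W_1$), plus one extra eigenvalue $\sum c_x$;
\item this extra eigenvalue is absorbed into the ``copies of $\sum c_x$'', which exist because $W_1^\perp\neq0$ (the dimension of $\rho_{(1,0,\ldots,0,-1)}$ is $n^2-1>n-1$).
\end{itemize}

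I expect the only delicate points to be this $k=1$ bookkeeping and confirming that the coefficient matrix, not its transpose, is what appears. The latter does not affect the spectrum, so it is harmless.
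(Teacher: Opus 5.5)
Your proof is correct and follows essentially the same route as the paper: $\L$ acts as the scalar $\sum c_x$ on $W_k^\perp$; for $k\ge2$ the $g_{k,x}$ form a basis of $W_k$ in which Lemma~\ref{lem:N_without x on g_y} yields exactly $M_k$; and for $k=1$ one handles the dependency $\C\mathbbm{1}$, on which $M_1$ acts by $\sum c_x$. Your quotient (characteristic-polynomial) bookkeeping for $k=1$, together with the observation that the extra copy of $\sum c_x$ is absorbed because $W_1^\perp\neq0$, makes the multiplicity count a bit more explicit than the paper's eigenvector-based phrasing.
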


\begin{proof}
    By Corollary \ref{cor:n-1 non-zero on span(g1,...,gn)}, the eigenvalues of $\L(\Gamma,\rho_{(k,0,\ldots,0,-k)})$ restricted to the invariant subspace $W_k^\perp$ are all $\sum_{x\in[n]}c_x$. By Lemma \ref{lem:dim of W_k}, when $k\ge2$, the invariant subspace $W_k=\mathrm{Span}_\C\{g_{k,1},\ldots,g_{k,n}\}$ has dimension $n$, so $g_{k,1},\ldots,g_{k,n}$ is a basis, and the claim follows as $M_k$ precisely describes the linear action of $\L(\Gamma,\rho_{(k,0,\ldots,0,-k)})$ in this basis, using Lemma \ref{lem:N_without x on g_y}.

    Finally, let $k=1$. By Lemma \ref{lem:dim of W_k}, the dimension of $W_1$ is $n-1$ and the linear dependencies of $g_{1,1},\ldots,g_{1,n}$ are precisely the span of their sum. Let $\mathbbm{1}$ denote the all-ones vector. Any eigenvector of $M_1$ which is not in $\C\mathbbm{1}$ represents a non-zero eigenvector of $\L(\Gamma,\rho_{(1,0,\ldots,0,-1)})$ in $W_1$. And the vector $\mathbbm{1}$ itself is an eigenvector of $M_1$ with eigenvalue $\sum c_x$: Indeed, this follows from $J_n\mathbbm{1}=n\mathbbm{1}$ and 
    \[
        \left(I_n+\frac{(-1)^1}{\binom{n+1-2}{1}}\left(J_n-I_n\right)\right)\mathbbm{1}=0.
    \]    
\end{proof}

Recall that our goal is to show that the smallest eigenvalue in the $\U(n)$-spectrum of $\Gamma$ is obtained in one of $\L(\Gamma,\rho_{(1,0,\ldots,0,-1)})$ or $\L(\Gamma,\rho_{(2,0,\ldots,0,-2)})$. By Corollary \ref{cor:n-1s enough to consider KMP} and Proposition \ref{prop:matrix for evalues in n-1s}, this translates to showing that for any $k\ge3$,
\[
    \min\left\{\lambda_{\min}(M_1),\lambda_{\min}(M_2)\right\} \le \lambda_{\min}(M_k).
\]
What we really show is that 
\[
    \lambda_{\min}(M_1)\le \lambda_{\min}(M_3) \le \lambda_{\min}(M_5)\le \lambda_{\min}(M_7)\le\ldots
\]
and
\[
    \lambda_{\min}(M_2)\le \lambda_{\min}(M_4) \le \lambda_{\min}(M_6)\le \lambda_{\min}(M_8)\le\ldots
\]
For $t\in\R$, write\marginpar{$A_t$}
\[
    A_t\defi A_t(c_1,\ldots,c_n)\defi\left(\sum_{x\in[n]}c_x\right)I_n-\mathrm{diag}(c_1,\ldots,c_n)\cdot
    \left(I_n+t\left(J_n-I_n\right)\right),
\]
with $\mathrm{diag}(c_1,\ldots,c_n)$ being the diagonal $n\times n$ matrix with $c_1,\ldots,c_n$ in the diagonal. Note that $M_k=A_{t_k}$ with $t_k=\frac{(-1)^k}{\binom{n+k-2}{k}}$\marginpar{$t_k$}. 
Recall our assumption that $n\ge3$, and notice that 
\[
    \frac{-1}{n-1}=t_1<t_3<t_5<\ldots <0<\ldots<t_6<t_4<t_2=\frac{2}{n(n-1)}<1.
\]
Is thus enough to show that as $t\in\R_{<1}$ approaches $0$ from either side, the smallest eigenvalue of $A_t$ (weakly) increases.\footnote{The particular interval $\R_{<1}$ is the one our proof below works for. It is not a priori clear that $A_t$ is real-rooted throughout this interval, but we prove this in Lemma 
\ref{lem:interlacing and real-rootedness} below.}

Before we prove this result for general non-negative $c_1,\ldots,c_n$, we prove the much easier mean-field case, where all the weights are equal. This special case of mean-field with subsets of size $n-1$ only is the basis for the proof of Theorem \ref{thm:mean-field} in $\S$\ref{sec:mean-field}.

\begin{cor} \label{cor:smallest-evalue for n-1 mean-field}
    Let $n\ge2$ and $\Gamma=([n],w)$ with $w_B=1$ if $|B|=n-1$ and $w_B=0$ otherwise. Then the smallest eigenvalue of the $\U(n)$-spectrum of $\Gamma$ is obtained in $\rho_{(2,0,\ldots,0,-2)}$ and is equal to $\frac{(n+1)(n-2)}{n}$.    
\end{cor}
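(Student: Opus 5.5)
The plan is to specialize Proposition \ref{prop:matrix for evalues in n-1s} to $c_1=\ldots=c_n=1$, where every matrix $M_k=A_{t_k}$ can be diagonalized by hand. The reduction is already in place. By Corollary \ref{cor:n-1s enough to consider KMP}, together with Lemma \ref{lem:n-1s enough to consider k,0,...,0,l} and Theorem \ref{thm:(k,-k) dominates non-balanced parts}, the smallest non-trivial eigenvalue of the $\U(n)$-spectrum is attained in some $\rho_{(k,0,\ldots,0,-k)}$ with $k\ge1$. By Proposition \ref{prop:matrix for evalues in n-1s}, the spectrum of $\L(\Gamma,\rho_{(k,0,\ldots,0,-k)})$ consists of the eigenvalues of $M_k$ together with copies of $\sum c_x=n$. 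We first dispose of $n=2$: there the hyperedges are singletons, and, as noted at the start of this section, $\lambda^*_{\min}(\Gamma,\kmp_k)=0$ for all $k$. This agrees with the formula $\frac{(n+1)(n-2)}{n}=0$, since $\torinv(\rho_{(2,-2)})\neq0$ carries eigenvalue $0$. From now on we assume $n\ge3$.

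With all $c_x=1$ we have
\[
A_t=nI_n-\bigl(I_n+t(J_n-I_n)\bigr)=(n-1+t)I_n-tJ_n .
\]
Its eigenvalues are $(n-1)(1-t)$ on $\C\mathbbm{1}$ and $n-1+t$ with multiplicity $n-1$ on $\mathbbm{1}^\perp$. For $k=1$ the vector $\mathbbm{1}$ has to be discarded, as in the proof of Proposition \ref{prop:matrix for evalues in n-1s}; indeed $(n-1)(1-t_1)=n$ there, which is just the trivial value $\sum c_x$. The relevant candidate eigenvalues are therefore as follows. For odd $k$, with $t_k<0$, the minimum is $n-1+t_k$. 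For even $k$, with $0<t_k<1$, the minimum is $(n-1)(1-t_k)$, because $(n-1)(1-t)<n-1+t$ whenever $t>0$.

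Next we use the ordering $t_1<t_3<\ldots<0<\ldots<t_4<t_2$ recorded just before the corollary. Among odd $k$ the minimum is $n-1+t_1=n-1-\frac{1}{n-1}$. Among even $k$ the minimum is $(n-1)(1-t_2)=(n-1)\bigl(1-\frac{2}{n(n-1)}\bigr)=n-1-\frac2n=\frac{(n+1)(n-2)}{n}$. Finally we compare the two: $\frac{2}{n}>\frac{1}{n-1}$ holds exactly when $n>2$, so the even value is strictly smaller. Both values are also below the trivial value $n$. This yields the claim: the minimum is attained in $\rho_{(2,0,\ldots,0,-2)}$ and equals $\frac{(n+1)(n-2)}{n}$.

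The only delicate points are bookkeeping ones. The first is to exclude the $\mathbbm{1}$-direction for $k=1$, where $W_1$ has dimension only $n-1$. The second is to make sure the extra copies of $\sum c_x=n$, and the irreps outside the family $\rho_{(k,0,\ldots,0,-k)}$, never go below the candidate value; the reductions cited above already guarantee this. No real obstacle is expected.
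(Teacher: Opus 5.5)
Your proposal is correct and follows essentially the paper's own proof. Both arguments specialize $A_t$ to $c_x=1$, read off the eigenvalues $(n-1)(1-t)$ and $n-1+t$, use that the relevant minimum increases as $t\to0$ from either side (equivalently, the ordering of the $t_k$) to reduce to comparing $M_1$ and $M_2$, and then compute both values. Your separate treatment of $n=2$ and your explicit discarding of the $\mathbbm{1}$-direction for $k=1$ are small additions of care; the paper leaves these implicit, even though Proposition \ref{prop:matrix for evalues in n-1s} and the ordering of the $t_k$ are only valid for $n\ge3$.
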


\begin{proof}
    This is the case $c_1=\ldots=c_n=1$, and $$A_t=A_t(1,\ldots,1)=nI_n-(I_n+t(J_n-I_n))=(n-1+t)I_n-tJ_n.$$ The eigenvalues of $J_n$ are $n$ (once) and zero ($n-1$ times), so the eigenvalues of $A_t$ are $(n-1)(1-t)$ and $n-1+t$. 

    If $t>0$, then $(n-1)(1-t)<n-1<n-1+t$, so $\lambda_{\min}(A_t)=(n-1)(1-t)$, which clearly grows as $t$ approaches zero. If $t<0$, then $n-1+t<n-1<(n-1)(1-t)$, so $\lambda_{\min}(A_t)=n-1+t$, which clearly grows, too, as $t<0$ approaches zero. This shows that the smallest eigenvalue of the $\U(n)$-spectrum of $\Gamma$ is obtained in $M_1=A_{t_1}$ or in $M_2=A_{t_2}$.

    Finally, $$\lambda_{\min}(M_1)=n-1+t_1=n-1-\frac{1}{n-1}=\frac{n(n-2)}{n-1},$$ and $$\lambda_{\min}(M_2)=(n-1)(1-t_2)=(n-1)\left(1-\frac{1}{\binom{n}{2}}\right)=\frac{(n+1)(n-2)}{n},$$ so $\lambda_{\min}(M_2)\le\lambda_{\min}(M_1)$ (with strict inequality if $n\ge3$).
\end{proof}
 
\subsection{The proof of Theorem \ref{thm:n-1s}} \label{subsec:proof of n-1s general case}

Let $c_1=w_{[n]\setminus\{1\}},\ldots,c_n=w_{[n]\setminus\{n\}}\in\R_{\ge0}$ be non-negative numbers. By symmetry, we may assume without lost of generality that $c_1\ge c_2 \ge\ldots\ge c_n\ge0$. If $c_2=0$, namely, if there is at most one positive weight, the hypergraph is disconnected, and its $\U(n)$-spectrum contains the eigenvalue 0 with infinite multiplicity, including in $\L(\Gamma,\rho_{(k,0,\ldots,0,-k)})$ for all $k\ge0$. Theorem \ref{thm:n-1s} is thus trivial in this case. We assume from now on that $c_2>0$.

Instead of analyzing the matrices $A_t=A_t(c_1,\ldots,c_n)$ defined above, it is slightly more convenient to analyze the matrices\marginpar{$D_t$}
\[
    D_t\defi D_t(c_1,\ldots,c_n)\defi\left(\sum_{x\in[n]}c_x\right)I_n-A_t(c_1,\ldots,c_n)=\mathrm{diag}(c_1,\ldots,c_n)\cdot
    \left(I_n+t\left(J_n-I_n\right)\right).
\]
As explained above, it is enough to show that as $t\in\R_{<1}$ approaches $0$ from either side, the smallest eigenvalue of $A_t$ (strictly) increases or, equivalently, the largest eigenvalue of $D_t$ (strictly) decreases.\footnote{When restricting to the non-degenerate case where $c_1\ge c_2>0$, we can show strict increasing/decreasing.} In particular, we ought to show that these matrices have real eigenvalues, which is not apriori given when $t$ is not of the form $t_k$.

Denote by $P(x,t)=\det(xI-D_t) \in \R[x,t]$\marginpar{$P(x,t)$} the characteristic polynomial of $D_t$. Also denote $P(\under,t)\in\R[x]$ and $P(x,\under)\in\R[t]$ the resulting univariate polynomial when $t$ (repsectively, $x$) is fixed. 
The proof is along the following steps. 
\begin{enumerate}
    \item We find a real-rooted polynomial which interlaces $P(x,t)$, concluding that $P(\under,t)$ is real-rooted for all $t<1$.
    \item We prove the theorem when $c_1>c_2>\ldots>c_n>0$.
    \item We deduce the general case.
\end{enumerate}

\subsubsection*{Step I: The polynomial $Q(x,t)$, interlacing and real-rootedness}

Denote by $\overline{c}$ the column-vector with entries $(c_1,\ldots,c_n)$, and by $\mathbbm{1}$ the all-ones column vector of length $n$. Note that 
\[
    D_t = (1-t)\mathrm{diag}(c_1,\ldots,c_n) + t\overline{c}\cdot \mathbbm{1}^T.
\]
Hence, $D_t$ differs from $(1-t)\mathrm{diag}(c_1,\ldots,c_n)$ by the rank-1 matrix $t\overline{c}\cdot \mathbbm{1}^T$. Let $Q(x,t)$ be the characteristic polynomial of $(1-t)\mathrm{diag}(c_1,\ldots,c_n)$, namely,\marginpar{$Q(x,t)$}
\[
    Q(x,t)=\det\left(xI-(1-t)\mathrm{diag}(c_1,\ldots,c_n)\right)=\prod_{i=1}^n\left(x-(1-t)c_i\right).
\]

\begin{lemma} \label{lem:two formulas}
    We have the following two formulas for $P$ in terms of $Q$:
    \begin{enumerate}
        \item \label{enu:formula with dt} $P = Q - t\frac{\partial Q}{\partial t}$.
        \item \label{enu:formula with dx} For $t\in\R_{<1}$, $P = \left(1+\frac{nt}{1-t}\right)Q - \frac{t}{1-t}x\frac{\partial Q}{\partial x}$.
    \end{enumerate}
\end{lemma}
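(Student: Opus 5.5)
Both formulas follow from the matrix determinant lemma applied to the rank-one decomposition $D_t=(1-t)\mathrm{diag}(c_1,\ldots,c_n)+t\overline{c}\cdot\mathbbm{1}^T$. For $x$ not equal to any $(1-t)c_i$, the matrix $xI-(1-t)\mathrm{diag}(c)$ is invertible, and
\[
    P(x,t)=\det\left(xI-(1-t)\mathrm{diag}(c)\right)\left(1-t\,\mathbbm{1}^T\left(xI-(1-t)\mathrm{diag}(c)\right)^{-1}\overline{c}\right)=Q(x,t)\left(1-t\sum_{i=1}^n\frac{c_i}{x-(1-t)c_i}\right).
\]
Since both sides are polynomials in $(x,t)$ and they agree on a Zariski-dense set, this identity holds everywhere once we clear denominators. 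The plan is to recognize the sum $\sum_i \frac{c_i}{x-(1-t)c_i}$ as a logarithmic derivative of $Q$, taken once in $t$ and once in $x$.

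For item \ref{enu:formula with dt}, differentiate $Q=\prod_i(x-(1-t)c_i)$ in $t$. Each factor has $t$-derivative $c_i$, so
\[
    \frac{\partial Q}{\partial t}=Q\sum_i\frac{c_i}{x-(1-t)c_i}.
\]
Therefore $Q\cdot t\sum_i\frac{c_i}{x-(1-t)c_i}=t\frac{\partial Q}{\partial t}$, and the determinant-lemma expression gives $P=Q-t\frac{\partial Q}{\partial t}$ directly.

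For item \ref{enu:formula with dx}, differentiate $Q$ in $x$ instead, which gives $x\frac{\partial Q}{\partial x}=Q\sum_i\frac{x}{x-(1-t)c_i}$. For $t<1$, write
\[
    \frac{c_i}{x-(1-t)c_i}=\frac{1}{1-t}\left(\frac{x}{x-(1-t)c_i}-1\right).
\]
Summing over $i$ gives
\[
    t\sum_i\frac{c_i}{x-(1-t)c_i}=\frac{t}{1-t}\left(\frac{x\,\partial_xQ}{Q}-n\right),
\]
and substituting into the determinant-lemma expression yields $P=\left(1+\frac{nt}{1-t}\right)Q-\frac{t}{1-t}x\frac{\partial Q}{\partial x}$.

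Equivalently, both identities follow from item \ref{enu:formula with dt} once we note the homogeneity relation $(1-t)\partial_tQ=nQ-x\partial_xQ$. This relation holds because $Q$ is homogeneous of degree $n$ in the pair $(x,1-t)$, so Euler's identity applies.

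No step here is a real obstacle. The only point needing care is justifying the rational-function manipulations where $x=(1-t)c_i$. This is handled by the polynomial-identity (density) argument, or by expanding $\det(xI-D_t)$ directly via multilinearity in the rank-one perturbation.
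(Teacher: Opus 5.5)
Your argument is correct and is essentially the paper's: item 1 is the same matrix-determinant-lemma computation, and your partial-fraction identity for item 2, once summed and multiplied by $Q$, is exactly the Euler relation $(1-t)\frac{\partial Q}{\partial t}=x\frac{\partial Q}{\partial x}-nQ$ that the paper substitutes into item 1. One slip: your closing remark states this relation as $(1-t)\frac{\partial Q}{\partial t}=nQ-x\frac{\partial Q}{\partial x}$, which has the wrong sign (for $n=1$ the left side is $(1-t)c_1$ while your right side is $-(1-t)c_1$) and would give the wrong formula if substituted into item 1; your main derivation does not rely on that remark.
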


\begin{proof}
    \eqref{enu:formula with dt} The matrix determinant lemma (e.g., \cite{Harville2008MatrixAlgebra}) states that whenever $A$ is an $n\times n$ invertible matrix and $u$ and $v$ column vectors, then $\det(A+uv^T)=\det(A)(1+v^TA^{-1}u)$. For any fixed $t_0$, for all but finitely many values of $x$, the matrix $xI-(1-t_0)\mathrm{diag}(c_1,\ldots,c_n)$ is invertible, and then 
    \begin{eqnarray*}
        P(x,t_0)&=&\det(xI-D_{t_0})=\det\left(xI-(1-t_0)\mathrm{diag}(c_1,\ldots,c_n)-t_0\overline{c}\cdot \mathbbm{1}^T\right)\\
        &=& Q(x,t_0)\left(1-t_0\mathbbm{1}^T\cdot\mathrm{diag}\left(\frac{1}{x-(1-t_0)c_1},\ldots,\frac{1}{x-(1-t_0)c_n}\right)\overline{c}\right) \\
        &=& Q(x,t_0)\left(1-\sum_i \frac{t_0c_i}{x-(1-t_0)c_i}\right) = Q(x,t_0) - t_0\frac{\partial Q}{\partial t}(x,t_0).
    \end{eqnarray*}
    But if the two polynomials are equal for almost every $x$, they must be equal for every $x$.

    \eqref{enu:formula with dx} Because $Q(x,t)$ is homogeneous in $x$ and $(1-t)$, it is easily verified that $nQ(x,t)=x\frac{\partial Q}{\partial x} - (1-t)\frac{\partial Q}{\partial t}$. Hence, 
    $$ \frac{\partial Q}{\partial t} = \frac{x}{1-t}\frac{\partial Q}{\partial x}-\frac{n}{1-t}Q.$$
    Plugging this expression in the identity $P=Q-t\frac{\partial Q}{\partial t}$ yields the second identity.
\end{proof}

Let $f,g\in\R[x]$ be two real-rooted polynomials of degree $n$ with roots $\alpha_n\le\ldots\le\alpha_1$ (of $f$) and $\beta_n\le\ldots\le\beta_1$ (of $g$). We say that $f$ and $g$ \textbf{interlace} with the roots of $f$ larger if 
\[
    \beta_n\le \alpha_n \le \ldots\le \beta_2\le\alpha_2\le \beta_1\le \alpha_1.
\]

\begin{lemma} \label{lem:interlacing and real-rootedness}
    Fix $t_0\in\R_{<1}$. Then $P(\under,t_0)$ is a monic, real-rooted degree-$n$ polynomial which interlaces $Q(\under,t_0)$. If $t_0\in(0,1)$, the roots of $P(\under,t_0)$ are larger. If $t_0<0$, the roots of $P(\under,t_0)$ are smaller.
\end{lemma}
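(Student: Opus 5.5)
The plan is to use the explicit formula from Lemma \ref{lem:two formulas}\eqref{enu:formula with dt}, or equivalently the rank-one perturbation computation in its proof. For $x$ not a root of $Q(\under,t_0)$, that computation gives
\[
    \frac{P(x,t_0)}{Q(x,t_0)} = 1-\sum_{i=1}^n \frac{t_0c_i}{x-(1-t_0)c_i}.
\]
Monicity and degree $n$ are immediate, since $P(\under,t_0)$ is the characteristic polynomial of the $n\times n$ matrix $D_{t_0}$. Since $t_0<1$, the roots $(1-t_0)c_i$ of $Q$ are ordered like the $c_i$, with $(1-t_0)c_1\ge\cdots\ge(1-t_0)c_n\ge0$.

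First I handle the generic case, where the $c_i$ are distinct and positive (and $t_0\neq0$; for $t_0=0$ we have $P=Q$ and there is nothing to prove). Consider $F(x)=1-t_0\sum_i c_i/(x-q_i)$ with $q_i=(1-t_0)c_i$.
\begin{itemize}
\item On each bounded gap $(q_{i+1},q_i)$, $F$ tends to $\pm\infty$ at the two endpoints with opposite signs, because the residues $-t_0c_i$ all share the sign of $-t_0$. So $F$ has a root in each of these $n-1$ gaps.
\item For the last root, note that $F\to1$ as $x\to\pm\infty$. If $t_0\in(0,1)$, the residues are negative, so $F\to-\infty$ as $x\downarrow q_1$; hence $F$ has a root in $(q_1,\infty)$, giving $n$ roots with $P$'s roots larger.
\item If $t_0<0$, then $F\to-\infty$ as $x\uparrow q_n$, giving a root in $(-\infty,q_n)$, so $P$'s roots are smaller.
\end{itemize}
This accounts for all $n$ roots of $P$, so $P$ is real-rooted and interlaces $Q$ in the stated direction.

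The degenerate cases are equal $c_i$ or some $c_i=0$. I will reach them by continuity. The roots of $P(\under,t_0)$ depend continuously on $(c_1,\ldots,c_n)$ as a multiset, since $P$ is monic with coefficients polynomial in the $c_i$. A limit of real-rooted polynomials is real-rooted, and the non-strict interlacing inequalities are closed conditions, so both properties pass to the limit. Alternatively, one can argue directly: a repeated value $c$ with multiplicity $m$ contributes the factor $(x-(1-t_0)c)^{m-1}$ to both $P$ and $Q$, which reduces to the distinct case. I expect the main (mild) obstacle to be this bookkeeping for the degenerate cases together with the sign analysis at the outermost root. The continuity argument avoids case-by-case work, so I would use it.
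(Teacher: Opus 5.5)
Your proposal is correct and follows the paper's proof. Like the paper, you reduce to distinct positive $c_i$ by continuity of roots, then place one root of $P(\under,t_0)$ in each gap between consecutive roots of $Q(\under,t_0)$, plus one outside on the side determined by the sign of $t_0$. The only cosmetic difference is how you get the signs: you read them off the residues of $P/Q$, whereas the paper evaluates $P$ at the roots of $Q$ using the second formula of Lemma~\ref{lem:two formulas}; both come down to $P\bigl((1-t_0)c_i,t_0\bigr)=-t_0c_i\,\frac{\partial Q}{\partial x}\bigl((1-t_0)c_i,t_0\bigr)$.
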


\begin{proof}
    It is clear from the definition of $P$ that $P(\under,t_0)$ is monic of degree $n$. 
    The polynomial $Q(\under,t_0)$ is clearly monic degree-$n$ and real-rooted, with roots $0\le c_n(1-t_0)\le \ldots\le c_2(1-t_0)\le c_1(1-t_0)$. When $t_0=0$ the claim is obvious as $P(\under,0)=Q(\under,0)$ so assume $t_0\ne0$.    
    It is enough to prove the claim when $c_1>c_2>\ldots>c_n>0$: indeed, the map from the coefficients of a monic, degree-$n$ polynomial in $\C[x]$ to the unordered multiset of its $n$ roots is continuous (e.g., \cite[Thm.~V.4A]{whitney1972complex}). So assume that $c_1>c_2>\ldots>c_n>0$.
    
    The derivative $\frac{\partial}{\partial x}Q(\under,t_0)$ is positive at the largest root $c_1(1-t_0)$, negative at the second one $c_2(1-t_0)$ and so on alternately. Using Lemma \ref{lem:two formulas}\eqref{enu:formula with dx}, we conclude that 
    \[
        P\left(c_1(1-t_0),t_0\right)= - \frac{t_0}{1-t_0}\cdot c_1(1-t_0)\cdot\frac{\partial Q}{\partial x}(c_1(1-t_0),t_0)
        \begin{cases}
            <0 & \mathrm{if~} t_0\in(0,1) \\
            >0 & \mathrm{if~} t_0<0 
        \end{cases}.
    \]
    Similarly, 
    \[
        P\left(c_2(1-t_0),t_0\right)
        \begin{cases}
            >0 & \mathrm{if~} t_0\in(0,1) \\
            <0 & \mathrm{if~} t_0<0 
        \end{cases},
    \]
    and so on alternately. For any $t_0<1$ we obtain that $P(\under,t_0)$ must cross the real line between any two consecutive roots of $Q(\under,t_0)$. If $t_0\in(0,1)$, $P(\under,t_0)$ also crosses the real line once to the right of the largest root $c_1(1-t_0)$. If $t<0$, it must cross the real line once more to the left of the smallest root $c_n(1-t_0)$. This proves that $P(\under,t_0)$ is real-rooted and interlaces $Q(\under,t_0)$ as stated.
\end{proof}

\subsubsection*{Step II: Proving the theorem when $c_1>c_2>\ldots>c_n>0$}

\begin{lemma} \label{lem:largest root at least c_1 then h(t) monotone}
Assume that $c_1>c_2>\ldots>c_n>0$. For any $0\ne t<1$, denote by $h(t)$\marginpar{$h(t)$} the largest eigenvalue of $D_t$, namely, the largest root of $P(\under,t)$. Then the set $\{t\in(0,1)\,\mid\,h(t)>c_1\}$ is an open interval of the form $(t_c,1)$  and $h(t)$ is strictly increasing in it.\footnote{As for now, this interval is possibly empty. We will shortly show it is the entire $(0,1)$.} Likewise, the open set $\{t<0\,\mid\,h(t)>c_1\}$ is an open ray of the form $(-\infty,t_c)$, and $h(t)$ is strictly decreasing in it.\footnote{Again, the ray is possibly empty, but we will shortly show it is the entire ray $(-\infty,0)$.}
\end{lemma}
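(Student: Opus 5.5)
The plan is to describe the roots of $P(\under,t)$ through a secular equation, and then apply the implicit function theorem in the region $x>c_1$, where every relevant partial derivative has a definite sign. The interval structure then follows from a connectedness argument.

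\emph{The secular equation.} By the computation in the proof of Lemma \ref{lem:two formulas}\eqref{enu:formula with dt}, whenever $x$ is not a root of $Q(\under,t)$ we have
\[
    P(x,t)=Q(x,t)\bigl(1-g(x,t)\bigr),\qquad g(x,t)\defi\sum_{i=1}^n\frac{tc_i}{x-(1-t)c_i}.
\]
The proof of Lemma \ref{lem:interlacing and real-rootedness} shows that when $c_1>\ldots>c_n>0$ and $t\neq0$, $P$ does not vanish at the roots $(1-t)c_i$ of $Q$. Hence every root of $P(\under,t)$ is a genuine solution of $g(x,t)=1$ away from the poles.

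\emph{Partial derivatives.} A direct computation gives
\[
    \frac{\partial}{\partial t}\frac{tc_i}{x-(1-t)c_i}=\frac{c_i(x-c_i)}{(x-(1-t)c_i)^2},\qquad
    \frac{\partial}{\partial x}\frac{tc_i}{x-(1-t)c_i}=\frac{-tc_i}{(x-(1-t)c_i)^2}.
\]
Whenever $x>c_1\ge c_i$, every summand of $g_t$ is positive, so $g_t>0$. Also $g_x$ has the sign of $-t$, and in particular $g_x\neq0$ for $t\ne0$.

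Now fix $t_0\neq0$ with $h(t_0)>c_1$. Since $Q(h(t_0),t_0)\neq0$, we have $\partial_x(1-g)=-g_x\ne0$ there, so $h(t_0)$ is a simple root of $P(\under,t_0)$. The implicit function theorem then gives a smooth local branch $x(t)$ of roots through $(t_0,h(t_0))$. This branch coincides with $h$ near $t_0$: roots depend continuously on $t$, and the largest root is simple, so it stays separated from the others. Along the branch,
\[
    h'(t)=-\frac{g_t}{g_x}=\frac{\sum_i c_i(h-c_i)/(h-(1-t)c_i)^2}{t\sum_i c_i/(h-(1-t)c_i)^2}.
\]
This is strictly positive for $t\in(0,1)$ and strictly negative for $t<0$. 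For $t<0$ I need to be slightly careful, since some poles $(1-t)c_i$ exceed $c_1$. This is harmless: by Lemma \ref{lem:interlacing and real-rootedness}, $h(t)$ lies strictly between $(1-t)c_2$ and $(1-t)c_1$, hence away from all poles.

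\emph{Interval structure.} The largest root of a monic polynomial depends continuously on its coefficients, so $h$ is continuous and $S_+\defi\{t\in(0,1)\mid h(t)>c_1\}$ is open. Let $(a,b)$ be a connected component of $S_+$, and suppose $b<1$. Since $h$ is strictly increasing on $(a,b)$ and continuous at $b$, we get $h(b)=\lim_{t\to b^-}h(t)>c_1$. Then $b\in S_+$, contradicting that $b$ is an endpoint of a component. So every component has the form $(a,1)$, and therefore there is at most one, of the form $(t_c,1)$, on which $h$ is strictly increasing. Symmetrically, let $(a,b)$ be a component of $S_-\defi\{t<0\mid h(t)>c_1\}$ with $a>-\infty$. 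Since $h$ is strictly decreasing, $h(a)=\lim_{t\to a^+}h(t)>c_1$, which puts $a\in S_-$, again a contradiction. So $S_-$ is a ray $(-\infty,t_c)$ on which $h$ is strictly decreasing.

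The main obstacle I expect is the bookkeeping in the implicit-function step. One must confirm that the largest root is simple, that it avoids the poles of $g$ (especially for $t<0$, where the poles move to the right of $c_1$), and that the local branch really is $h$ and not some other root. All three points rest on the strict interlacing from Lemma \ref{lem:interlacing and real-rootedness} together with the non-vanishing of $P$ at the roots of $Q$.
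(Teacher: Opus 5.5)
Your proof is correct, but it takes a different route from the paper's.

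\textbf{The paper's route.} The paper never differentiates $h$. For fixed $x_0>c_1$ it views $P(x_0,\cdot)$ as a polynomial in $t$. Using $P=Q-t\,\partial_tQ$, it shows this polynomial is real-rooted with negative leading coefficient, and that its roots interlace those of $Q(x_0,\cdot)$, with one extra root to the right of $1-x_0/c_1<0$. Then, with $x_0=h(t_0)$, it uses that $t_0$ is the largest root (when $t_0>0$) or the second-largest root (when $t_0<0$) of $P(x_0,\cdot)$. This fixes the sign of $P(x_0,s)$ for $s$ near $t_0$. Combined with the $x$-interlacing of Lemma~\ref{lem:interlacing and real-rootedness}, this gives a pointwise ``local strict monotonicity'' of $h$, which is then globalized by a supremum argument.

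\textbf{Your route.} You work with the secular equation $g(x,t)=1$, which is the same matrix-determinant-lemma identity underlying Lemma~\ref{lem:two formulas}. You use the non-vanishing of $P$ at the roots of $Q$ to get simplicity of every root for $t\neq 0$. You then obtain the explicit formula $h'(t)=\frac{\sum_i c_i(h-c_i)/u_i^2}{t\sum_i c_i/u_i^2}$, with $u_i=h-(1-t)c_i$, whose sign is manifest once $h>c_1$. Your connected-component argument replaces the supremum argument and is cleaner, because strict monotonicity on each component is immediate from the sign of $h'$.

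\textbf{Comparison.} Your approach gives smoothness of $h$ and a transparent explanation of the threshold $c_1$: it is exactly where every summand of $\partial_t g$ becomes positive. It also avoids the second real-rootedness analysis in the variable $t$. The paper's approach stays entirely within sign-counting for interlacing polynomials and needs no implicit function theorem.

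The bookkeeping points you flag all check out. Every root is simple. For $t<0$, $h$ avoids the poles because $P$ does not vanish at the roots $(1-t)c_i$ of $Q$. The implicit-function branch coincides with $h$ by continuity of $h$ and local uniqueness of the solution near $(t_0,h(t_0))$.
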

\begin{proof}
    Assume that $c_1>\ldots >c_n>0$. We begin with a claim about real-rootedness and interlacing of the polynomial $P(x_0,\under)\in\R[t]$ whenever $x_0>c_1$. So fix $x_0>c_1$. The polynomial  $$Q(x_0,\under)=\prod_i\left(c_it-(c_i-x_0)\right)$$ is degree-$n$ with positive leading coefficient $\prod c_i$. Its roots are
    \[
        1-\frac{x_0}{c_n}<\ldots<1-\frac{x_0}{c_2}<1-\frac{x_0}{c_1}<0.
    \]
    The derivative $\frac{\partial Q}{\partial t}(x_0,\under)$ is positive at the largest root $1-\frac{x_0}{c_1}$, negative at the second root and so on alternately. The formula $P = Q - t\frac{\partial Q}{\partial t}$ from Lemma \ref{lem:two formulas}\eqref{enu:formula with dt}, yields that $P(x_0,\under)$ is of degree $n$ in $t$ with \textit{negative} leading coefficient $(1-n)\prod c_i<0$. Note that $$P\left(x_0,1-\frac{x_0}{c_1}\right)=Q\left(x_0,1-\frac{x_0}{c_1}\right)-\left(1-\frac{x_0}{c_1}\right)\frac{\partial Q}{\partial t}\left(x_0,1-\frac{x_0}{c_1}\right)>0,$$ so $P(x_0,\under)$ is positive at the largest root $1-\frac{x_0}{c_1}$. Similarly, it is negative at the second root $1-\frac{x_0}{c_2}$, and so on alternately. Hence $P(x_0,\under)$ must have a root between any two consecutive roots of $Q(x_0,\under)$, and an additional root to the right of the largest root $1-\frac{x_0}{c_1}$. Hence, it is real-rooted and interlaces $Q(x_0,\under)$ with the roots of $P(x_0,\under)$ larger.
    \medskip

    We now proceed to proving the statement of the lemma. Consider first the set $T=\{t\in(0,1)\,\mid\,h(t)>c_1\}$. It is enough to prove the following local property: for every $t_0\in T$ there is an open neighborhood $U$ such that for every $t_0\ne s\in U$, if $s<t_0$ then $h(s)<h(t_0)$ and if $t_0<s$ then $h(t_0)<h(s)$. Indeed, by the continuity of $h$, if $T$ is not a single (possibly empty) interval of the form $(t_c,1)$ or if $h$ is not strictly increasing in $T$, then there must exist $a,b\in T$ with $a<b$ and $h(a)\ge h(b)$. Let $S=\{x\in[a,b] \,\mid\, h(x)>h(a)\}$. By the local property at $a$, $S\ne\emptyset$. Let $s=\sup(S)$. Clearly, $a<s$ and $h(s)=h(a)$, but the local property fails at $s$, a contradiction.

    So let $t_0\in T$, and set $x_0=h(t_0)>c_1$. By the continuity of $h$, there is an open neighborhood $U\subseteq T$ of $t_0$ in which $h(t)>c_1$. By definition, $P(x_0,t_0)=P(h(t_0),t_0)=0$, namely, $t_0$ is a root of $P(x_0,\under)$. By the analysis in the first part of this proof, as $0<t_0$, $t_0$ must be the largest root of $P(x_0,\under)$, hence $P(x_0,\under)$ is \textit{decreasing} at $t_0$. Moreover, the second largest root of $P(x_0,\under)$ is negative. Hence, for every $s\in U\subseteq(0,1)$, if $s<t_0$ then  $P(x_0,s)>0$, and if $t_0<s$ then $P(x_0,s)<0$.
    
    If $s<t_0$, by Lemma \ref{lem:interlacing and real-rootedness} and the fact that $(1-s)c_1<c_1<x_0$, $x_0$ lies to the right of the second root of $P(\under,s)$. But $P(\under,s)$ is negative between the two largest roots, so $x_0$ lies to the right of the largest root. We conclude that $h(s)<x_0=h(t_0)$. Similarly, if $t_0<s$, then $P(x_0,s)<0$ means that $x_0$ lies to the left of the maximal root of $P(\under,s)$, namely, $h(t_0)=x_0<h(s)$.
    \medskip

    The case $t<0$ is very similar. As above, it is enough to prove that for every $t_0\in T'=\{t<0\,\mid\,h(t)>c_1\}$ there is an open neighborhood $U$ such that for every $t_0\ne s\in U$, if $s<t_0$ then $h(s)>h(t_0)$ and if $t_0<s$ then $h(t_0)>h(s)$. Again, set $x_0=h(t_0)$. By Lemma \ref{lem:interlacing and real-rootedness}, $(1-t_0)c_2<x_0<(1-t_0)c_1$, so 
    \[
        1-\frac{x_0}{c_2}<t_0<1-\frac{x_0}{c_1}.
    \]
    As $t_0$ is a root of $P(x_0,\under)$, it must be its second largest root by the first part of this proof. In particular, $P(x_0,\under)$ is \textit{increasing} in $t_0$. Let $U\subseteq T'\subseteq(-\infty,0)$ be a neighborhood of $t_0$ in which $P(x_0,\under)$ is increasing and $(1-s)c_2<x_0<(1-s)c_1$ for all $s\in U$. In particular, for every $s\in U$, if $s<t_0$ then $P(x_0,s)<0$, and if $t_0<s$ then $P(x_0,s)>0$.

    By Lemma \ref{lem:interlacing and real-rootedness}, the largest root $h(s)$ of $P(\under,s)$ lies in the interval $((1-s)c_2,(1-s)c_1)$, and $P(\under,s)$ is positive to the right of this root in this inerval and negative to the left. If $s<t_0$ is in $U$, as $P(x_0,s)<0$ we conclude that $h(s)>x_0=h(t_0)$. Likewise, if $t_0<s$, then $P(x_0,s)>0$ and we conclude that means that $h(t_0)=x_0>h(s)$.
\end{proof}

We can now complete the proof of Theorem \ref{thm:n-1s} when $c_1>c_2>\ldots>c_n>0$.

\begin{prop}
    Assume that $c_1>c_2>\ldots>c_n>0$. Then as $t\in\R_{<1}$ approaches $0$ from either side, the largest eigenvalue of $D_t$ strictly decreases.
\end{prop}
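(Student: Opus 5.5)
Given Lemma \ref{lem:largest root at least c_1 then h(t) monotone}, it remains to show that the sets $\{t\in(0,1)\,\mid\,h(t)>c_1\}$ and $\{t<0\,\mid\,h(t)>c_1\}$ are all of $(0,1)$ and all of $(-\infty,0)$. Then $h$ is strictly increasing on $(0,1)$ and strictly decreasing on $(-\infty,0)$. Equivalently, the largest eigenvalue of $D_t$ strictly decreases as $t$ approaches $0$ from either side, which is the claim.

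The plan is to prove that $h(t)>c_1$ for every $0\ne t<1$ directly, via an evaluation of $P$ at $x=c_1$. Formula \eqref{enu:formula with dx} of Lemma \ref{lem:two formulas} is awkward here because $c_1$ is not a root of $Q(\under,t)$. Instead I will use the determinant-lemma form from the proof of Lemma \ref{lem:two formulas}:
\[
    P(x,t)=Q(x,t)\left(1-\sum_i\frac{tc_i}{x-(1-t)c_i}\right).
\]
This is valid whenever $x\neq(1-t)c_i$ for all $i$, and in particular at $x=c_1$ when $t\neq0$, since then $c_1>(1-t)c_i$ for all $i$ if $t>0$. For $t\in(0,1)$ we have $c_1>(1-t)c_1\ge(1-t)c_i$, so $Q(c_1,t)>0$. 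The bracket equals $1-\frac{tc_1}{tc_1}-\sum_{i\ge2}\frac{tc_i}{c_1-(1-t)c_i}=-\sum_{i\ge2}(\text{positive})<0$, using $c_2>0$. Hence $P(c_1,t)<0$. Since $P(\under,t)$ is monic and real-rooted (Lemma \ref{lem:interlacing and real-rootedness}), $P(c_1,t)<0$ forces a root larger than $c_1$, so $h(t)>c_1$.

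For $t<0$ I would argue the same way, but more care is needed because $(1-t)c_1>c_1$. By Lemma \ref{lem:interlacing and real-rootedness} the roots of $P(\under,t)$ are smaller than those of $Q$, and $h(t)\in((1-t)c_2,(1-t)c_1)$. On that interval $Q(\under,t)<0$, as exactly one root of $Q$ lies to its right. I will use the sign pattern: $P(\under,t)$ is positive at $(1-t)c_1$ and has exactly one root in $((1-t)c_2,(1-t)c_1)$. Therefore $h(t)>c_1$ holds if and only if $P(c_1,t)<0$, provided $c_1$ lies in that interval. The latter condition means $(1-t)c_2<c_1$, i.e.\ $t>1-c_1/c_2$; if instead $c_1\le(1-t)c_2$ then trivially $h(t)>(1-t)c_2\ge c_1$. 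So assume $c_1\in((1-t)c_2,(1-t)c_1)$. The bracket at $x=c_1$ is
\[
    1+\frac{|t|c_1}{c_1-(1-t)c_1}+\sum_{i\ge2}\frac{|t|c_i}{c_1-(1-t)c_i}=2+\sum_{i\ge2}\frac{|t|c_i}{c_1-(1-t)c_i}.
\]
Each term in the sum is positive, since $c_1>(1-t)c_2\ge(1-t)c_i$. So the bracket is positive, and together with $Q(c_1,t)<0$ this gives $P(c_1,t)<0$, hence $h(t)>c_1$.

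The main obstacle is the sign bookkeeping for $t<0$. One must place $c_1$ correctly relative to the roots of $Q(\under,t)$ and make sure that the root of $P$ beyond $c_1$ really is the largest one, $h(t)$. The interlacing from Lemma \ref{lem:interlacing and real-rootedness} supplies exactly this. Once $h>c_1$ on both sides is established, Lemma \ref{lem:largest root at least c_1 then h(t) monotone} yields strict monotonicity on all of $(0,1)$ and all of $(-\infty,0)$, completing the proof.
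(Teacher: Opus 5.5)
Your proof is correct and takes a different route from the paper. There is one sign slip in the $t<0$ case that you should fix.

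\textbf{The slip.} For either sign of $t$ we have $c_1-(1-t)c_1=tc_1$, so the $i=1$ term of the bracket is $-\frac{tc_1}{tc_1}=-1$ and cancels the leading $1$, exactly as in your $t>0$ computation. For $t<0$ the denominator $tc_1$ is negative, so $\frac{|t|c_1}{c_1-(1-t)c_1}=-1$, not $+1$. The bracket therefore equals $\sum_{i\ge2}\frac{|t|c_i}{c_1-(1-t)c_i}$, not $2+\cdots$. This sum is still strictly positive because $c_2>0$ and $c_1>(1-t)c_2$, so $P(c_1,t)<0$ and your conclusion survives.

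\textbf{A cleaner form.} Expanding $P=Q-t\frac{\partial Q}{\partial t}$ directly at $x=c_1$ gives the polynomial identity
\[
    P(c_1,t)=-t^2c_1\sum_{i\ge2}c_i\prod_{j\ne1,i}\bigl(c_1-(1-t)c_j\bigr).
\]
This is visibly negative whenever $t\ne0$ and $c_1>(1-t)c_2$; that condition is automatic for $t\in(0,1)$. Monicity of $P(\under,t)$ alone then gives a root larger than $c_1$, so $h(t)>c_1$. Your sign-pattern ``iff'' argument is not needed, and Lemma \ref{lem:interlacing and real-rootedness} is required only for the subcase $c_1\le(1-t)c_2$, where $h(t)>(1-t)c_2\ge c_1$.

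\textbf{Comparison with the paper.} The paper shows $P(c_1,t)<0$ only on a punctured neighbourhood of $0$. It does this by checking $P(c_1,0)=\frac{\partial P}{\partial t}(c_1,0)=0$ and $\frac{\partial^2 P}{\partial t^2}(c_1,0)<0$. It then uses the interval half of Lemma \ref{lem:largest root at least c_1 then h(t) monotone}: the set where $h>c_1$ has the form $(t_c,1)$, respectively $(-\infty,t_c)$, so it must be all of $(0,1)$, respectively $(-\infty,0)$.

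Your global evaluation gives $h(t)>c_1$ for every $0\ne t<1$ at once. You therefore need only the strict-monotonicity half of that lemma. The price is the case split for $t<0$ and the appeal to interlacing. The paper's second-derivative computation is exactly the $t\to0$ limit of the identity above divided by $t^2$.
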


\begin{proof}
    By Lemma \ref{lem:largest root at least c_1 then h(t) monotone}, it is enough to show that $h(t)$, the largest root of $P(\under,t)$ (or the largest eigenvalue of $D_t$), satisfies $h(t)>c_1$ for all $t\ne0$ in a neighborhood of $0$. 
    Note that $h(0)=c_1$, as $P(\under,0)=Q(\under,0)=\prod_i(x-c_i)$. Recall that $P(\under,t)$ is monic, so it is enough to show that $P(c_1,t)<0$ in a punctured neighborhood of $0$. Indeed, we show that $t=0$ is a local maxima of $P(c_1,\under)$ by demonstrating that that $\frac{\partial P}{\partial t}(c_1,0)=0$ and $\frac{\partial^2 P}{\partial t^2}(c_1,0)<0$. Using again the formula $P = Q - t\frac{\partial Q}{\partial t}$ from Lemma \ref{lem:two formulas}\eqref{enu:formula with dt},
    $$ \frac{\partial P}{\partial t} = \frac{\partial Q}{\partial t} - \frac{\partial Q}{\partial t} - t \frac{\partial^2 Q}{\partial t^2} = -t \frac{\partial^2 Q}{\partial t^2}$$ which vanishes at $t=0$.
    Now, $$\frac{\partial^2 P}{\partial t^2} = -\frac{\partial^2 Q}{\partial t^2} -t\frac{\partial^3 Q}{\partial t^3}, $$ so $\frac{\partial^2 P}{\partial t^2}(c_1,0)=-\frac{\partial^2 Q}{\partial t^2}(c_1,0)$, and it remains to show that $\frac{\partial^2 Q}{\partial t^2}(c_1,0)>0$. 
    We compute
    \[
        \frac{\partial^2 Q}{\partial t^2}(c_1,0) = \sum_{i\ne j}c_ic_j\prod_{k\ne i,j} (c_kt+x-c_k)\big|_{x=c_1,t=0} =2\sum_{j=2}^n c_1c_j\prod_{k\ne1,j}(c_1-c_k),
    \]
    and all the terms here are positive.
\end{proof}

\subsubsection*{Step III: Completing the proof in the general case}

\begin{proof}[Proof of Theorem \ref{thm:n-1s}] 
    We now fix $c_1\ge c_2 \ge \ldots \ge c_n\ge0$ with $c_2>0$. We need to show that for every $0\ne t\in\R_{<1}$, $h(t)$, the largest root of $P(\under,t)$, is strictly decreasing as $t$ approaches $0$ from either side.

    We first prove that $h$ weakly decreases as $t$ approaches $0$. We elaborate the proof in the interval $(0,1)$, but the argument is completely parallel when $t<0$. So let $0 < t < s < 1$. By the continuity of the set of roots of a degree-$n$ monic polynomial as a function of its coefficients (again, e.g., \cite[Thm.~V.4A]{whitney1972complex}), as we know that $\lambda_{\max}(D_t(c'_1,\ldots,c'_n))<\lambda_{\max}(D_s(c'_1,\ldots,c'_n))$ if there are strict inequalities $c'_1>c'_2>\ldots>c'_n>0$, we conclude that $h(t)=\lambda_{\max}(D_t(c_1,\ldots,c_n))\le\lambda_{\max}(D_s(c_1,\ldots,c_n))=h(s)$. 

    Finally, let us show that $h$ is \textbf{strictly} monotone in each of the intervals $(-\infty, 0)$ and $(0,1)$. If there are two points $t <s$ in the same interval such that $h(t)=h(s)$, then $h$ is constant in the entire interval $[t,s]$. Let $x_0=h(t)=h(s)$. As $P(x_0, \underline{~})$ is a polynomial, we conclude that it is constant. 
    
    Let $k\in[n]\setminus\{1\}$ satisfy $c_1\ge \ldots \ge c_k >c_{k+1}=\ldots=c_n=0$. By the definition of $Q$, $$Q(x_0,t)=x_0^{n-k}\prod_{i=1}^{k}\left (c_it-(c_i-x_0) \right).$$ By the identity $P=Q-t\frac{\partial Q}{\partial t}$ from Lemma \ref{lem:two formulas}\eqref{enu:formula with dt}, the coefficient of $t^k$ in $P(x_0,\under)$ is $(1-k)x_0^{n-k}\prod_{i=1}^k c_i$, which is non-zero as $k\ge2$ (note that $x_0\ne0$ as we know from Lemma \ref{lem:interlacing and real-rootedness} that $0<c_2(1-t)\le x_0$). Hence $P(x_0,\under)$ cannot be a constant polynomial and $h$ must be strictly monotone.    
\end{proof}

\subsection{A conjecture and some remarks}

We have just proved, then, that whenever the weights in the hypergraph $\Gamma=([n],w)$ are supported on sets of size $\ge n-1$, the smallest non-trivial eigenvalue in the $\U(n)$-spectrum of $\Gamma$ is associated with $\kmp_2(\Gamma)$, and in terms of irreps -- with $\rho_{(1,0,\ldots,0,-1)}$ or with $\rho_{(2,0,\ldots,0,-2)}$. Corollary \ref{cor:smallest-evalue for n-1 mean-field} showed that when all $(n-1)$-subsets have equal weights, the smallest non-trivial eigenvalue is associated with $\rho_{(2,0,\ldots,0,-2)}$. The following is an example where this eigenvalue is associated with $\rho_{(1,0,\ldots,0,-1)}$, thus showing that, indeed, both irreps are needed.

\begin{exm} \label{exm:n-1s with smallest evalue in (1,...,-1)}
     Let $n=3$ and consider the (hyper-) graph $\Gamma=([3],w)$ supported on the subsets $\{1,2\}$ and $\{2,3\}$ with weight 1 on each. A short computation shows that the spectrum of $\L(\Gamma,\kmp_1)$ is $\{0,\frac12,\frac32\}$, so $\lambda_{\min}(\Gamma,\rho_{(1,0,\ldots,0,-1)})=\frac12$. The spectrum of $\L(\Gamma,\kmp_2)|_{\pure(3,2)}$ is $\{\frac23,\frac43,2\}$, so $\lambda_{\min}(\Gamma,\rho_{(2,0,\ldots,0,-2)})=\frac23$. Hence, in this example, 
     \[
        \lambda_{\min}(\Gamma, \rho_{(1,0,\ldots,0,-1)}) < \lambda_{\min}(\Gamma,\rho_{(2,0,\ldots,0,-2)}).
     \]
\end{exm}
    
Our proof of Theorem \ref{thm:n-1s} gives something slightly stronger than what Theorem \ref{thm:n-1s} states:

\begin{cor} \label{cor: order on even KMPs and on odd KMPs}
    Let $\Gamma=([n],w)$ be a hypergraph with non-negative weights supported on subsets of size $n-1$. For every $k\in\Z_{\ge1}$ denote \marginpar{$\omega_k$}$$\omega_k\defi\lambda_{\min}(\Gamma,\rho_{(k,0,\ldots,0,-k)})=\lambda_{\min}\left(\L(\Gamma,\kmp_k)|_{\pure(n,k)}\right).$$
    Then 
    \[
        \omega_1\le\omega_3\le\omega_5 \le\ldots~~~~~~~\mathrm{and}~~~~~~~\omega_2\le\omega_4\le\omega_6 \le\ldots
    \]
    with strict inequalities if $n\ge3$ and $\Gamma$ is connected. 
\end{cor}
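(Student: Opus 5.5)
The plan is to read the corollary off the machinery of Step I–III, after translating $\omega_k$ into the language of the matrices $A_t$. By Proposition \ref{prop:matrix for evalues in n-1s}, the spectrum of $\L(\Gamma,\rho_{(k,0,\ldots,0,-k)})$ consists of the eigenvalues of $M_k=A_{t_k}$ together with copies of $\sum_x c_x$. Since $A_t=(\sum c_x)I-D_t$, every eigenvalue of $A_t$ is at most $\sum c_x$ whenever the eigenvalues of $D_t$ are non-negative. These eigenvalues are real by Lemma \ref{lem:interlacing and real-rootedness}, and they are non-negative because they interlace those of $(1-t)\mathrm{diag}(c_i)$, which are $\ge0$.

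There is one exception: for $t<0$ the smallest root of $P(\under,t)$ could be negative. This does not matter, because we only care about the \emph{largest} root $h(t)$, and $h(t)\ge c_2(1-t)\ge0$. Hence $\omega_k=\sum_x c_x-h(t_k)$ for $k\ge2$. For $k=1$ the same formula holds: the extra eigenvalue $\sum c_x$ of $M_1$ (on $\mathbbm{1}$) corresponds to the root $0$ of $P(\under,t_1)$, which is not the maximal root once $c_2>0$.

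Next I use the ordering $t_1<t_3<t_5<\cdots<0<\cdots<t_6<t_4<t_2<1$. In Step III we showed that $h$ is weakly decreasing as $t\to0$ from either side, for arbitrary non-negative $c_i$; the argument is by continuity from the strict case. This gives $h(t_1)\ge h(t_3)\ge\cdots$ and $h(t_2)\ge h(t_4)\ge\cdots$, and therefore the two chains of weak inequalities for the $\omega_k$.

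It remains to handle the degenerate cases. If $c_2=0$, all $\omega_k=0$ and the weak chains hold trivially. If $n\le2$, every $\omega_k=0$ as well, by the discussion opening the section.

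For strictness, assume $n\ge3$ and $\Gamma$ connected. With support on $(n-1)$-subsets, connectivity for $n\ge3$ amounts to $c_2>0$: two distinct $(n-1)$-subsets already cover $[n]$ and intersect, since $n\ge3$. The strict-monotonicity part of Step III was proved exactly under $c_2>0$. It shows $h$ is strictly monotone on $(-\infty,0)$ and on $(0,1)$, because $P(x_0,\under)$ has a nonzero $t^k$-coefficient and so cannot be constant.

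The main point to verify carefully is the identification $\omega_k=\sum c_x-h(t_k)$. This needs two things: first, that the copies of $\sum c_x$ coming from $W_k^\perp$ (and the spurious eigenvector $\mathbbm{1}$ when $k=1$) never undercut $\sum c_x-h(t_k)$; second, that the minimum over the torus-invariant part coincides with the minimum over all of $\rho_{(k,0,\ldots,0,-k)}$. The second fact is Corollary \ref{cor:n-1 non-zero on span(g1,...,gn)} together with $W_k\subseteq\pure(n,k)$. The first follows from $h(t)\ge0$, which I would justify via the interlacing in Lemma \ref{lem:interlacing and real-rootedness}.
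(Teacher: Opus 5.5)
Your proposal is correct and follows the paper's own route. The paper reads this corollary off the proof of Theorem \ref{thm:n-1s}: the largest root $h(t)$ of $P(\under,t)$ is (strictly, when $c_2>0$) decreasing as $t$ approaches $0$ from either side, which, combined with the ordering $t_1<t_3<\cdots<0<\cdots<t_4<t_2<1$ and $\omega_k=\sum_x c_x-h(t_k)$, gives both chains. Your check that the copies of $\sum_x c_x$ from $W_k^\perp$ and the spurious eigenvector $\mathbbm{1}$ for $k=1$ cannot undercut this (via $h(t)\ge c_2(1-t)\ge0$) fills in a detail the paper leaves implicit.
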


Computer simulations support the conjecture that this corollary is true in general. Note that like Conjecture \ref{conj:pure KMP}, this is a conjecture about discrete KMP over hypergraphs, which can be stated without any reference to unitary groups. As above, given $\Gamma$, denote by $\omega_k=\omega_k(\Gamma)$ the smallest \textit{new} eigenvalue of $\L(\Gamma,\kmp_k)$: new in the sense that it realized within the invariant subspace $\pure(n,k)$ and not from $\L(\Gamma,\kmp_{k-1})$ via the embedding $\MS(n,k-1)\hookrightarrow\MS(n,k)$ from Proposition \ref{prop:embedding MS(n,k) in MS(n,k+1) properties}.

\begin{conj} \label{conj:order of omega_k}
    Let $\Gamma=([n],w)$ be a hypergraph with non-negative weights and for every $k\in\Z_{\ge1}$ let $\omega_k=\omega_k(\Gamma)$ denote the smallest new eigenvalue of $\L(\Gamma,\kmp_k)$. Then 
    \[
        \omega_1\le\omega_3\le\omega_5 \le\ldots~~~~~~~\mathrm{and}~~~~~~~\omega_2\le\omega_4\le\omega_6 \le\ldots
    \]
\end{conj}

Finally, our analysis also yields the following result about the spectral gap, which will be used in $\S$\ref{subsec:spectral gap for connected hypergraphs} to prove that every connected hypergraph admits a spectral gap.

\begin{cor} \label{cor:spectral gap for connected n-1s}
    Let $\Gamma=([n],w)$ be a hypergraph with non-negative weights supported on subsets of size $n-1$. If $\Gamma$ is connected then there is a (positive) spectral gap in its $\U(n)$-spectrum.
\end{cor}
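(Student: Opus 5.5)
The plan is to show that the non-trivial $\U(n)$-spectrum of $\Gamma$ is bounded below by $\min\{\lambda_{\min}(M_1),\lambda_{\min}(M_2)\}$, and that this minimum is strictly positive when $\Gamma$ is connected. By Lemma \ref{lem:n-1s enough to consider k,0,...,0,l}, every irrep not of the form $\rho_{(k,0,\ldots,0,-m)}$ has all its eigenvalues equal to $\sum_x c_x>0$. For the remaining irreps, the argument of Corollary \ref{cor:n-1s enough to consider KMP} applies: Theorem \ref{thm:(k,-k) dominates non-balanced parts} and Lemma \ref{lem:blocks outside tor-int have large evalues} show that unbalanced parts and non-torus-invariant parts are dominated by the $\rho_{(k,0,\ldots,0,-k)}$. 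Proposition \ref{prop:matrix for evalues in n-1s} then reduces everything to the matrices $M_k$, together with copies of $\sum c_x$. Theorem \ref{thm:n-1s}, in the monotone form of Corollary \ref{cor: order on even KMPs and on odd KMPs}, gives $\lambda_{\min}(M_k)\ge\min\{\lambda_{\min}(M_1),\lambda_{\min}(M_2)\}$ for all $k$. Hence the infimum of the non-trivial spectrum is in fact a minimum over finitely many explicit matrices, and it remains only to show that this minimum is positive.

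Order the weights as $c_1\ge\cdots\ge c_n$. Connectedness of $\Gamma$, with $n\ge3$ and support on $(n-1)$-subsets, is equivalent to $c_2>0$: if only one subset $[n]\setminus\{x\}$ has positive weight, then $\{x\}$ is separated from the rest, and conversely two distinct $(n-1)$-subsets overlap in all vertices but two and cover $[n]$.

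For $k=2$, I would use $D_t$. Positivity of $\lambda_{\min}(M_2)$ means $h(t_2)<\sum c_x$, where $h$ is the largest root of $P(\under,t_2)$ with $t_2\in(0,1)$. I would check this directly. By Lemma \ref{lem:two formulas}\eqref{enu:formula with dt}, $P(x,t)=Q(x,t)\bigl(1-\sum_i\frac{tc_i}{x-(1-t)c_i}\bigr)$. At $x=S:=\sum c_x$ we have $S-(1-t)c_i=tc_i+\sum_{j\ne i}c_j>0$, so $Q(S,t)>0$. Moreover each summand satisfies $\frac{tc_i}{tc_i+\sum_{j\ne i}c_j}\le1$, with strict inequality whenever $c_i>0$ has some other $c_j>0$, which holds because $c_2>0$. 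However, the sum of these terms might still reach $1$, so positivity of $P(S,t)$ does not follow immediately from this bound.

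A cleaner route is to observe that $D_t$ has column sums at most $S$ and entries that are non-negative for $t\in(0,1)$. Its largest eigenvalue, being its Perron root, is therefore at most the maximal column sum, namely $c_j+t\sum_{i\ne j}c_i\le S$. Equality would force $t=1$ or a vanishing of all $c_i$ with $i\ne j$, both of which are excluded by $t_2<1$ and $c_2>0$.

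For $k=1$, where $t_1<0$ and $D_t$ has negative entries, I would use that the eigenvalues of $M_1$ other than the $\mathbbm 1$-direction are exactly the spectrum of $\L(\Gamma,\kmp_1)$ minus the trivial eigenvalue. That spectrum is positive for connected $\Gamma$ by the argument on page \pageref{page:connected definition} for $\kmp_1$: invariance under all $\N_B$ with $w_B>0$ forces a constant vector.

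The step I expect to need the most care is handling the negative-$t$ matrix $M_1$ and making sure that no eigenvalue of $M_1$ outside $W_1$ contributes; routing $M_1$ through $\kmp_1$ avoids this. The remainder of the argument combines previously stated results.
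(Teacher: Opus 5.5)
Your argument is correct. The reduction to $\min\{\lambda_{\min}(M_1),\lambda_{\min}(M_2)\}$ and the equivalence of connectedness with $c_2>0$ match the paper; you differ only in how you prove these two values are positive.

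The paper treats $t_1$ and $t_2$ at once. The $2\times2$ principal block of $D_t$ has determinant $c_1c_2(1-t^2)\neq0$, so $D_t$ has at least two nonzero eigenvalues. The spectrum of $D_{t_k}=(\sum_x c_x)I_n-A_{t_k}$ lies in $[0,\sum_x c_x]$ by the Laplacian interpretation, and $\mathrm{tr}(D_t)=\sum_x c_x$. Hence the top eigenvalue cannot reach $\sum_x c_x$. This is short and uniform in $t$. It leans on the a priori spectral containment, and tacitly on the rank bound yielding nonzero eigenvalues. That last step is valid because $D_t$ shares its nonzero spectrum with the symmetric matrix $\mathrm{diag}(c)^{1/2}(I_n+t(J_n-I_n))\mathrm{diag}(c)^{1/2}$.

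Your column-sum bound for $t_2$ is self-contained matrix analysis. It holds for every $t\in[0,1)$ and needs no representation-theoretic input. For $t_1$ you instead identify $W_1=\pure(n,1)$, which is immediate from $W_1\subseteq\pure(n,1)$ and $\dim W_1=n-1$. You then use that zero-eigenvectors of the random walk on a connected hypergraph are constant.

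The same KMP reasoning would also have settled $k=2$. Indeed, $\ker\L(\Gamma,\kmp_2)$ consists of the constants, and the constant vector lies in $\Psi_1(\MS(n,1))$.

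Your abandoned first attempt also closes. Write $S=\sum_x c_x$. For $t\le1$ one has $\frac{tc_i}{tc_i+S-c_i}\le\frac{c_i}{S}$, strictly for $i=1$, so $P(S,t_2)>0$. Together with the a priori bound $h(t_2)\le S$, this gives $h(t_2)<S$.
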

\begin{proof}
    We already know the smallest non-trivial eigenvalue in the $\U(n)$-spectrum of $\Gamma$ is either $\lambda_{\min}(\Gamma,\rho_{(1,0,\ldots,0,-1)})$ or $\lambda_{\min}(\Gamma,\rho_{(2,0,\ldots,0,-2)})$. We need to show these two values are strictly positive whenever $\Gamma$ is connected. Equivalently, we need to show that $h(t)$, the largest eigenvalue of $D_t$, satisfies $h(t)<\sum_x c_x$ for $t_1=-\frac{1}{n-1}$ and for $t_2=\frac{2}{n(n-1)}$. 
    
    In the current setting, $\Gamma$ is connected if and only if $n\ge3$ and $c_2>0$ (we assume, as above, that $c_1\ge c_2 \ge\ldots\ge c_n\ge0$). In this case $-1<t_1<0<t_2<1$. The $2\times2$ upper-right block of $D_t$ is 
    \[
        \begin{pmatrix}
            c_1 & c_1t\\
            c_2t & c_2
        \end{pmatrix},
    \]
    whose determinant is $c_1c_2(1-t^2)$. Hence $D_{t_1}$ and $D_{t_2}$ both have at least two non-zero eigenvalues. By Lemma \ref{lem:L of reps - basic properties of spectrum}, the spectrum of $A_t$, and therefore of $D_t=(\sum_xc_x)I_n-A_t$, is contained in $[0,\sum_xc_x]$ for $t=t_1,t_2$. Hence $h(t_1),h(t_2)<\sum_xc_x$.
\end{proof}

\section{The mean-field case and spectral gap for connected hypergraphs} \label{sec:mean-field}

In this section we prove Theorem \ref{thm:mean-field} concerning the ``mean-field'' case: hypergraphs $\Gamma$ where the weight of a subset depends only on its size. We also prove that any connected hypergraph admits a positive spectral gap. These two results are put in the same section as their proofs are similar. 

\subsection{Proof of Theorem \ref{thm:mean-field}: the mean-field case}
Throughout this subsection we assume $n\ge2$. Let $c_0,\ldots,c_n\ge0$ and let $\Gamma=([n],w)$ be a weighted hypergraph defined by $w_B=c_{|B|}$ for all $B\subseteq[n]$. Recall that Theorem \ref{thm:mean-field} states that the smallest non-trivial eigenvalue in the $\U(n)$-spectrum of $\Gamma$ is obtained in $\rho_{(2,0,\ldots,0,-2)}$ and is equal to 
\begin{equation} \label{eq:gap in mean-field}
    \sum_{\ell=0}^n c_\ell \frac{n+1}{\ell+1}\binom{n-2}{\ell-2}.    
\end{equation}

First we show that the value in \eqref{eq:gap in mean-field} is indeed an eigenvalue of $\L(\Gamma,\rho_{(2,0,\ldots,0,-2)})$ or, equivalently (by Theorem \ref{thm:(k,-k) dominates non-balanced parts} and Proposition \ref{prop:embedding MS(n,k) in MS(n,k+1) properties}), of $\L(\Gamma,\kmp_2)|_{\pure(n,2)}$. Denote by $\gnl=([n],w)$\marginpar{$\gnl$} the hypergraph with $w(B)=1$ when $|B|=\ell$ and $w(B)=0$ otherwise. For $S\subseteq[n]$, we also denote by $\Gamma^S_\ell=([n],w)$\marginpar{$\Gamma^S_\ell$} the hypergraph with $w(B)=1$ when $B\subseteq S$ and $|B|=\ell$, and $w(B)=0$ otherwise.
So the hypergraph from Theorem \ref{thm:mean-field} can be thought of as $\sum c_\ell\gnl$.

\begin{lemma} \label{lem:mean-field eigenvalue of f_3}
    In the mean-field case, the value in \eqref{eq:gap in mean-field} is an eigenvalue of  $\L(\Gamma,\kmp_2)|_{\pure(n,2)}$.
\end{lemma}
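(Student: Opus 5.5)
We exhibit one explicit vector in $\pure(n,2)$ that is a simultaneous eigenvector of $\L(\Gamma^n_\ell,\kmp_2)$ for every $\ell$, and compute its eigenvalue $\lambda_\ell$. Linearity in $\Gamma=\sum_\ell c_\ell\Gamma^n_\ell$ then shows that the same vector has eigenvalue $\sum_\ell c_\ell\lambda_\ell$ for $\L(\Gamma,\kmp_2)$. It therefore suffices to prove that $\lambda_\ell=\frac{n+1}{\ell+1}\binom{n-2}{\ell-2}$ for each $\ell$.

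\textbf{Choice of vector.} The natural candidate is $f=\sum_{x\in[n]} g_{2,x}$, which is symmetric under $\sym(n)$. By Lemma \ref{lem:values of g_x}, $g_{2,x}$ has coefficient $1$ on $\delta_{\ii}$ when $x\notin\ii$, coefficient $-n$ when $\#_x(\ii)=1$, and coefficient $\binom{n}{2}$ when $\ii=\{x,x\}$. Summing over $x$ gives
\[
f=\alpha\sum_{a}\delta_{\{a,a\}}+\beta\sum_{a<b}\delta_{\{a,b\}},\qquad \alpha=n-1+\tbinom{n}{2},\quad \beta=n-2-2n=-n-2 .
\]
This $f$ lies in $\pure(n,2)$, since each $g_{2,x}$ does, and it is nonzero since $\dim W_2=n$. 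Alternatively, one can simply take the unique symmetric vector $a\sum\delta_{\{a,a\}}+b\sum\delta_{\{a,b\}}$ orthogonal to $\Psi_1(\MS(n,1))$. The orthogonality condition $\langle\Psi_1(\delta_{\{x\}}),f\rangle=2a+(n-1)b=0$ pins it down up to scale, and it is consistent with the values above only if $2\alpha+(n-1)\beta=0$. I would double-check this proportionality, and if the sum of the $g_{2,x}$ misbehaves, I would use the vector $(n-1,-2)$ directly.

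\textbf{Eigenvalue computation.} Since $\Gamma^n_\ell$ is $\sym(n)$-invariant, $\L(\Gamma^n_\ell,\kmp_2)$ commutes with $\sym(n)$. The space of $\sym(n)$-invariant vectors in $\MS(n,2)$ is $2$-dimensional, spanned by $u=\sum\delta_{\{a,a\}}$ and $v=\sum\delta_{\{a,b\}}$, and it contains the trivial (constant) vector $u+v$. Hence the operator preserves this $2$-dimensional space, as well as $\pure(n,2)$. Their intersection is spanned by $f$, so $f$ is an eigenvector automatically. To find the eigenvalue, it is enough to compute one coordinate, say the coefficient of $\delta_{\{1,1\}}$ in $\sum_{|B|=\ell}\N_B f$.

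For this count, split the sets $B$ according to whether $1\in B$. If $1\notin B$, the $\delta_{\{1,1\}}$-coefficient is unchanged, contributing $\binom{n-1}{\ell}\alpha$. If $1\in B$, the coefficient of $\delta_{\{1,1\}}$ becomes the average of $f$ over the $\ms{\ell}{2}$ multisets of size $2$ in $B$, namely
\[
\frac{\ell\alpha+\binom{\ell}{2}\beta}{\binom{\ell+1}{2}},
\]
and there are $\binom{n-1}{\ell-1}$ such sets. The eigenvalue is then
\[
\lambda_\ell=\binom{n}{\ell}-\frac{1}{\alpha}\left[\binom{n-1}{\ell}\alpha+\binom{n-1}{\ell-1}\frac{\ell\alpha+\binom{\ell}{2}\beta}{\binom{\ell+1}{2}}\right]
=\binom{n-1}{\ell-1}\left(1-\frac{2}{\ell+1}-\frac{(\ell-1)\beta}{(\ell+1)\alpha}\right).
\]

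\textbf{Main obstacle.} The only real work is the algebraic simplification of this expression to $\frac{n+1}{\ell+1}\binom{n-2}{\ell-2}$, using the correct ratio $\beta/\alpha$. With $\beta/\alpha=-2/(n-1)$, the bracket becomes
\[
\frac{\ell-1}{\ell+1}\left(1+\frac{2}{n-1}\right)=\frac{(\ell-1)(n+1)}{(\ell+1)(n-1)},
\]
and then $\binom{n-1}{\ell-1}\frac{\ell-1}{n-1}=\binom{n-2}{\ell-2}$ gives the claimed value. The step to verify carefully is therefore the ratio $\beta/\alpha=-2/(n-1)$ for the pure symmetric vector. This ratio comes from orthogonality to $\Psi_1$, and I would derive it from orthogonality rather than from the sum of the $g_{2,x}$, in case I have mis-summed the latter. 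The edge cases $\ell\le1$ give $0$, matching the convention for $\binom{n-2}{\ell-2}$.
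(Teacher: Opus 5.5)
Your proof is correct and is essentially the paper's argument. It uses the same $\sym(n)$-symmetric vector in $\pure(n,2)$: indeed $\alpha=\frac{(n-1)(n+2)}{2}$ and $\beta=-(n+2)$, so $2\alpha+(n-1)\beta=0$ and your $f$ is $-(n+2)$ times the paper's $v_0=\sum_{i<j}\delta_{\{i,j\}}-\frac{n-1}{2}\sum_i\delta_{\{i,i\}}$. It relies on the same observation that the line of symmetric vectors in $\pure(n,2)$ is preserved by each $\L(\Gamma^n_\ell,\kmp_2)$, and on the same one-coordinate computation. The only difference is that you read off the $\delta_{\{1,1\}}$-coordinate where the paper reads off $\delta_{\{1,2\}}$.

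Your hedge about the ratio $\beta/\alpha$ is therefore unnecessary. Nonvanishing of $f$ is immediate from $\beta\neq0$, so you do not need $\dim W_2=n$, which the paper only proves for $n\ge3$.
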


\begin{proof}
    Recall our notation of the indicator vector $\delta_\ii\in\MS(n,k)$ from $\S$\ref{subsec:embed KMP_k in KMP_k+1}. Consider the vector 
    \[
        v_0=\sum_{1\le i< j \le n}\delta_{\{i,j\}}-\sum_{i\in[n]}\frac{n-1}{2}\delta_{\{i,i\}} \in \MS(n,2).
    \]
    It is easy to check that $v_0$ is orthogonal to $\Psi_1(\delta_{\{i\}})$ for all $i\in[n]$ and therefore $v_0\in\pure(n,2)$ (see Definition \ref{def:embed MS(n,k) in MS(n,k+1)}). The one-dimensional space spanned by $v_0$ is the intersection of $\pure(n,2)$ and the subspace of $\MS(n,2)$ consisting of vectors invariant under the natural action of $\sym(n)$ (namely, vectors such that all multisets of the form $\{i,i\}$ have the same coefficient and so do all multisets of the form $\{i,j\}$ with $i\ne j$). For every $\ell=0,1,\ldots,n$, the operator $\L(\gnl,\kmp_2)$ preserves both subspaces of $\MS(n,2)$ and therefore their intersection. Hence $v_0$ is an eigenvector of $\L(\gnl,\kmp_2)$ for all $\ell$. It remains to show that for any given $\ell$, the corresponding eigenvalue is $\frac{n+1}{\ell+1}\binom{n-2}{\ell-2}$. Note that the eigenvalue is precisely the coefficient of $\delta_{\{1,2\}}$ in $\L(\gnl,\kmp_2).v_0$. 

    Let $B\subseteq[n]$ be of size $\ell$. The coefficients of $\delta_{\{1,2\}}$ in $\N_B.v_0$ is
    \[
        \begin{cases}
            \frac{1}{\ms{\ell}{2}}\left(\binom{\ell}{2}\cdot1-\ell\cdot \frac{n-1}{2}\right)& \mathrm{if}~ 1,2\in B\\
            1 & \mathrm{otherwise}.
        \end{cases}
    \]
    (Note that even if $B$ contains exactly one of $1$ or $2$, $\N_B$ does not alter the coefficient of $\delta_{\{1,2\}}$: it is an average of $\ell$ ones, which is equal to one.) As $\L(\gnl,\kmp_2)=\sum_{B\subseteq[n]\colon|B|=\ell}(I-\N_B)$, the coefficient of $\delta_{\{1,2\}}$ in $\L(\gnl,\kmp_2).v_0$ is
    \begin{eqnarray*}
        && \binom{n}{\ell}-\left[\binom{n-2}{\ell-2}\cdot \frac{\binom{\ell}{2}-\ell\cdot\frac{n-1}{2}}{\ms{\ell}{2}}+\left(\binom{n}{\ell}-\binom{n-2}{\ell-2}\right)\cdot 1\right] \\
        &=& \binom{n-2}{\ell-2}\left( 1-\frac{(\ell-1)-(n-1)}{\ell+1} \right) \\
        &=& \binom{n-2}{\ell-2}\cdot \frac{n+1}{\ell+1}.
    \end{eqnarray*}    
\end{proof}

Corollary \ref{cor:smallest-evalue for n-1 mean-field} established the statement of Theorem \ref{thm:mean-field} in the special case when $c_0=c_1=\ldots=c_{n-2}=0$. In fact, this special case is the most involved part of the proof: the general case follows from it by a rather short argument. The same argument appears, although in the disguise of a very different language, in the proof of \cite[Thm.~1.8]{bristiel2024entropy}, establishing the mean-field case of Caputo's conjecture in $\sym(n)$, as well as in older proofs of an analogous result for graphs in $\mathrm{SO}(n)$, analysing a model emanating from Kac's master equation \cite{maslen2003eigenvalues,carlen2003determination} (see $\S$\ref{subsec:related works} above).

\begin{proof}[Proof of Theorem \ref{thm:mean-field}]
    Given Lemma \ref{lem:mean-field eigenvalue of f_3}, it is enough to prove that \eqref{eq:gap in mean-field} is a lower bound for the non-trivial spectrum of $\Gamma$ in the mean-field case, and for this it is enough to prove that $\frac{n+1}{\ell+1}\binom{n-2}{\ell-2}$ is a lower bound for the non-trivial spectrum of $\gnl$.
    
    Recall our notation of the Laplacian $\L(\Gamma)$ in the regular representation and of inequalities of operators in the regular representation from $\S$\ref{subsec:spectrum is real non-negative and bounded}. 
    As explained in $\S$\ref{subsec:spectrum is real non-negative and bounded}, $P_{[n]}$ is the projection onto the trivial representation, so $\L(\gn_n)=\L_{[n]}=I-P_{[n]}$ is the projection onto the non-trivial irreps and its entire non-trivial spectrum consists of ones. This proves the $\ell=n$ case. It also shows that what we need to prove is equivalent to the operator inequality 
    \begin{equation}\label{eq:inequality of operators to prove}
        \L(\gnl) \ge \frac{n+1}{\ell+1}\binom{n-2}{\ell-2}\L(\gn_n),
    \end{equation}
    as both sides give 0 in the trivial representation, and the right hand side is $\frac{n+1}{\ell+1}\binom{n-2}{\ell-2}$ in any non-trivial irrep.

    Corollary \ref{cor:smallest-evalue for n-1 mean-field} yields the claim when $1\le\ell=n-1$, namely, it shows that
    \begin{equation} \label{eq:n-1 case in meanfield proof}
        \L(\gn_{n-1}) \ge \frac{(n+1)(n-2)}{n}\L(\gn_n).
    \end{equation}
    For every fixed $\ell\ge1$, we prove the general case by induction on $n$. The base cases of $n=\ell$ and $n=\ell+1$ were established. Assume that $n\ge\ell+2$. Then
    \begin{eqnarray*}
        \L(\gnl) &=& \frac{1}{n-\ell}\sum_{x\in[n]}\L\left(\Gamma_\ell^{[n]\setminus\{x\}}\right) \\
        &\stackrel{\text{by induction}}{\ge} & \frac{1}{n-\ell} \sum_{x\in[n]}\frac{n}{\ell+1}\binom{n-3}{\ell-2}\L\left(\Gamma^{[n]\setminus\{x\}}_{n-1}\right) \\
        &=& \frac{n}{(n-\ell)(\ell+1)}\binom{n-3}{\ell-2}\L(\gn_{n-1}) \\
        &\stackrel{\eqref{eq:n-1 case in meanfield proof}}{\ge}& \frac{n}{(n-\ell)(\ell+1)}\binom{n-3}{\ell-2} \cdot \frac{(n+1)(n-2)}{n}\L(\gn_n) \\
        &=& \binom{n-2}{\ell-2}\frac{n+1}{\ell+1}\L(\gn_n).
    \end{eqnarray*}
    In the first inequality we relied on the induction hypothesis and the observation from Lemma \ref{lem:inequalities remain true with branching} that operator inequalities of hypergraph-Laplacians remain true when the ambient vertex set is enlarged.
    
    Finally, when $\ell=0$, both sides of \eqref{eq:inequality of operators to prove} are trivially zero.
\end{proof}

\subsection{A spectral gap for any connected hypergraph} \label{subsec:spectral gap for connected hypergraphs}

Recall from page \pageref{page:connected definition} the (intuitive) definition of a connected hypergraph. We mentioned that the fact that a connected hypergraph has a (positive) spectral gap in its $\U(n)$-spectrum follows from \cite{benoist2016spectral}. We give here an alternative proof which goes along similar lines to the proof of Theorem \ref{thm:mean-field} above.

\begin{prop} \label{prop:spectral gap for connected hypergraphs}
Let $\Gamma=([n],w)$ be a connected hypergraph with non-negative weights. Then the $\U(n)$-spectrum of $\Gamma$ has a spectral gap.    
\end{prop}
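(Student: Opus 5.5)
The plan is to prove an operator inequality of the form $\L(\Gamma)\ge \varepsilon\,\L(\gn_n)$ in $L^2(\U(n))$ for some $\varepsilon>0$, in the same spirit as the proof of Theorem \ref{thm:mean-field}. Recall that $\L(\gn_n)=I-P_{[n]}$ vanishes on the trivial component and is the identity on every non-trivial irrep. Such an inequality therefore says precisely that every non-trivial eigenvalue in the $\U(n)$-spectrum of $\Gamma$ is at least $\varepsilon$. The case $n=1$ is vacuous, so assume $n\ge2$.

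\textbf{Step 1: reduction to a spanning tree.} Let $c>0$ be the minimal positive weight of $\Gamma$. Since $\Gamma$ is connected and $n\ge2$, there is a spanning tree $T$ on $[n]$ with the following property: every edge $\{u,v\}$ of $T$ is contained in some hyperedge $B$ with $w_B>0$. Singletons and the empty set play no role in connectivity. By Lemma \ref{lem:inequality between subset}, $\L_{\{u,v\}}\le \L_B$ whenever $\{u,v\}\subseteq B$. Assign each tree edge to a distinct such hyperedge; if several tree edges share a hyperedge, we only lose a factor of at most $n-1$. All terms $w_B\L_B$ are non-negative operators, so we obtain
\[
\L(\Gamma)\ \ge\ \frac{c}{n-1}\,\L(T),
\]
where $T$ is viewed as a hypergraph with weight $1$ on each of its edges. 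It thus suffices to prove $\L(T)\ge \varepsilon_n\L(\gn_n)$ for every tree $T$ on $n$ vertices.

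\textbf{Step 2: induction on $n$ via leaves.} For $n=2$, $T$ is the single edge $[2]$, so $\L(T)=\L(\gn_2)$. For $n\ge3$, the tree $T$ has two distinct leaves $x\ne y$. Then $T-x$ is a spanning tree on $[n]\setminus\{x\}$, and $T-y$ is a spanning tree on $[n]\setminus\{y\}$. Since $\L(T)\ge\L(T-x)$ and $\L(T)\ge\L(T-y)$, we get
\[
\L(T)\ \ge\ \tfrac12\bigl(\L(T-x)+\L(T-y)\bigr).
\]
By the induction hypothesis applied in $\U_{[n]\setminus\{x\}}$, combined with Lemma \ref{lem:inequalities remain true with branching} to transfer it to $L^2(\U(n))$, we have $\L(T-x)\ge\varepsilon_{n-1}\L_{[n]\setminus\{x\}}$, and likewise for $y$. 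Hence
\[
\L(T)\ \ge\ \tfrac{\varepsilon_{n-1}}{2}\,\L(\Gamma'),
\]
where $\Gamma'$ is the hypergraph with weight $1$ on $[n]\setminus\{x\}$ and on $[n]\setminus\{y\}$.

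\textbf{Step 3: the codimension-one base.} Since $n\ge3$, the two sets $[n]\setminus\{x\}$ and $[n]\setminus\{y\}$ intersect and their union is $[n]$, so $\Gamma'$ is connected and supported on subsets of size $n-1$. By Corollary \ref{cor:spectral gap for connected n-1s}, the $\U(n)$-spectrum of $\Gamma'$ has a positive gap $\delta>0$. Moreover, by Theorem \ref{thm:n-1s} this gap is attained as a minimum over irreps, so it is a genuine positive number rather than an infimum that could be $0$. This means $\L(\Gamma')\ge\delta\,\L(\gn_n)$ in every finite-dimensional representation, and hence in $L^2(\U(n))$. This closes the induction with $\varepsilon_n=\varepsilon_{n-1}\delta/2$. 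Note that $\delta$ depends only on $n$, because by symmetry all such pairs $\Gamma'$ are conjugate under permutation matrices.

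The substantive input is the codimension-one case, namely Corollary \ref{cor:spectral gap for connected n-1s} backed by the analysis of $\S$\ref{sec:n-1}, and this is already available. The step needing the most care is the bookkeeping of inequalities in the regular representation. I would verify that every operator involved is a sum of the non-negative operators $\L_B$, and that Lemma \ref{lem:inequalities remain true with branching} legitimately lifts inequalities from $\U_S$ to $\U(n)$. With those checks in place, the induction goes through.
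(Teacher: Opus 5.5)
Your proof is correct and follows essentially the same route as the paper. Both argue by induction on $n$: delete two non-separating vertices $x,y$, average via $\L(\Gamma)\ge\frac12(\L(\Gamma_x)+\L(\Gamma_y))$, lift the inductive bound with Lemma \ref{lem:inequalities remain true with branching}, and close with Corollary \ref{cor:spectral gap for connected n-1s}. The only difference is that you first reduce to a spanning tree, at the harmless cost of the factor $c/(n-1)$, so that leaves serve as the non-separating vertices. The paper instead keeps the hypergraph and deletes $x$ by merging weights, $w^x_B=w_B+w_{B\sqcup\{x\}}$; this stays connected because $x$ is non-separating in the induced graph.
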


\begin{proof}
    As explained in the proof of Theorem \ref{thm:mean-field}, a spectral gap for $\Gamma$ is the same as proving that $\L(\Gamma)\ge\varepsilon\L_{[n]}$ for some $\varepsilon>0$.
    
    First, for every $B\subseteq[n]$, the Laplacian $\L_B$ is a non-negative operator. Hence $\L(\Gamma)=\sum_Bw_B\L_B\ge w_{[n]}\L_{[n]}$, and we are done if $w_{[n]}>0$. Note that this argument establishes the proposition whenever $n\le2$. So assume from now on that $n\ge3$ and $w_{[n]}=0$. 

    We continue by induction on $n$. Assume that $n\ge3$ and that the statement holds for $n-1$. 
    The graph $G=([n],z)$ induced by $\Gamma$ is the ordinary graph obtained by replacing every hyperedge $B$ with all $\binom{|B|}{2}$ edges supported on subsets of size 2 in $B$ with the same weight (in particular, hyperedges of size $\le1$ are ignored). Namely, it is an ordinary weighted simple graph on the vertex set $[n]$, and for every $1\le i< j\le n$, the weight of the edge $\{i,j\}$ is $z_{\{i,j\}}=\sum_{B\ni i,j}w_B$.

    Clearly, $\Gamma$ is connected if and only if its induced graph $G$ is connected. Every finite connected graph on $\ge2$ vertices has at least two distinct vertices which are non-separating\footnote{A vertex of a connected graph is non-separating if the removal of this vertex together with all incident edges leaves the graph connected.} (this can be seen, for example, by considering a spanning tree and noticing it has at least two leaves). Let $x,y\in[n]$ be two distinct non-separating vertices of $G$. 

    Let $\Gamma_x=([n]\setminus\{x\},w^x)$ denote the weighted hypergraph defined as follows. For every $B\subseteq[n]\setminus\{x\}$ define $w^x_B=w_B+w_{B\sqcup\{x\}}$. From Lemma \ref{lem:inequality between subset} we deduce that $\L(\Gamma_x)\le \L(\Gamma)$. The graph induced from $\Gamma_x$ on $[n]\setminus\{x\}$ is connected as it is identical to the induced subgraph of $G$ with vertex set $[n]\setminus\{x\}$. Hence $\Gamma_x$ is a connected hypergraph on $[n]\setminus\{x\}$.
    
    Define analogously $\Gamma_y=([n]\setminus\{y\},w^y)$, which satisfies $\L(\Gamma_y)\le \L(\Gamma)$, and is connected on the vertex set $[n]\setminus\{y\}$. We deduce that 
    \[
        \L(\Gamma)\ge \frac12\left(\L(\Gamma_x)+\L(\Gamma_y)\right).
    \]
    By the induction hypothesis, there exist $\varepsilon_x,\varepsilon_y>0$ so that $\L(\Gamma_x)>\varepsilon_x\L_{[n]\setminus\{x\}}$ and $\L(\Gamma_y)>\varepsilon_y\L_{[n]\setminus\{y\}}$. Finally, Corollary \ref{cor:spectral gap for connected n-1s} says that $\varepsilon_x\L_{[n]\setminus\{x\}}+\varepsilon_y\L_{[n]\setminus\{y\}}$ admits a spectral gap in $\U(n)$, namely, there exists some $\varepsilon>0$ with $\varepsilon_x\L_{[n]\setminus\{x\}}+\varepsilon_y\L_{[n]\setminus\{y\}}\ge \varepsilon\L_{[n]}$. We are done as
    \[
        \L(\Gamma)\ge \frac12\left(\L(\Gamma_x)+\L(\Gamma_y)\right) \ge \frac12(\varepsilon_x\L_{[n]\setminus\{x\}}+\varepsilon_y\L_{[n]\setminus\{y\}})\ge\frac{\varepsilon}{2}\L_{[n]}.
    \]
\end{proof}


\section{The $\sym(n)$-spectrum is contained in the $\U(n)$-spectrum} \label{sec:sym(n) spectrum contained}

In the current section we prove Theorem \ref{thm:evalues of Sn} stating that for any weighted hypergraph $\Gamma=([n],w)$, the $\sym(n)$-spectrum of $\Gamma$ is contained in its $\U(n)$-spectrum. 

Recall that $\rho_\std\colon\U(n)\to\gl(V)$ marks the $n$-dimensional standard representation of $\U(n)$. We discussed above the tensor power $\R_{k,0}=V^{\otimes k}$ and the symmetric power $\Sym^k(\rho_\std)=\Sym^k(V)$ and described concrete realizations of the two in $\S$\ref{subsec:discrete KMP as tor-inv subspace}. We now also need the exterior power $\bigwedge^k(\rho_\std)=\bigwedge^k(V)$ for every $k\in[n]$. It can be constructed as follows. Its canonical basis is 
\[
    \left\{e_{i_1}\wedge e_{i_2}\wedge\ldots\wedge e_{i_k}\,\mid\, 1\le i_1< i_2 <\ldots<i_k \le n \right\}.
\]
The action of $A\in\U(n)$ on a basis element is given by
\[
    A.(e_{i_1}\wedge e_{i_2}\wedge\ldots\wedge e_{i_k})=\sum_{j_1,\ldots,j_k\in[n]}A_{j_1,i_1}\cdots A_{j_k,i_k} \cdot e_{j_1}\wedge\ldots\wedge e_{j_k},
\]
where for every $\sigma\in\sym(k)$ we identify $e_{j_{\sigma(1)}}\wedge\ldots\wedge e_{j_{\sigma(k)}} = \sign (\sigma)\cdot e_{j_1}\wedge\ldots\wedge e_{j_k}$. In particular, if two of the $j_i$'s are identical, then $e_{j_1}\wedge\ldots\wedge e_{j_k}=0$.

For $k\in[n]$, consider the $\U(n)$ representation \marginpar{$Y_k$} 
\[
    Y_k=\left(\bigwedge^k(\rho_\std)\right)\otimes(\rho_\std^*)^{\otimes k}=\left(\bigwedge^k(V)\right)\otimes(V^*)^{\otimes k}.
\]
Parallel to the notation from $\S$\ref{subsec:discrete KMP as tor-inv subspace}, we can mark the standard basis elements of $Y_k$ as
\[
    \left\{ \left(e_{i_1}\wedge\ldots\wedge e_{i_k}\right)\otimes\left( e^{j_1}\otimes\ldots \otimes e^{j_k} \right)\,\middle|\, \begin{gathered}
        1\le i_1< i_2 <\ldots<i_k \le n \\
        j_1,\ldots,j_k\in[n]
    \end{gathered} \right\}.
\]
The torus-invariant subspace of $Y_k$ is where the $\sym(n)$-spectrum of a hypergraph can be found within its $\U(n)$-spectrum. First, we identify this subspace.

\begin{prop} \label{prop:tor-inv subspace of exterior tensor R_0,k}
    Let $k\in[n]$. The torus-invariant subspace of $Y_k$ has a linear basis consisting of the vectors
    \[
        \left\{ v_{i_1,\ldots,i_k}\defi\left(e_{i_1}\wedge\ldots\wedge e_{i_k}\right)\otimes\left( e^{i_1}\otimes\ldots \otimes e^{i_k} \right)\,\middle|\, 
        i_1,i_2,\ldots,i_k\in[n]~\mathrm{all~distinct}  \right\}.
    \]
\end{prop}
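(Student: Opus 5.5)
The plan is to mirror the proof of Proposition \ref{prop:the torus-invariant subspace of Rkm}: compute the projection $P_{T_n}=\mathbb{E}_{A\in T_n}[Y_k(A)]$ directly in the standard basis of $Y_k$ and show that it is a diagonal $0/1$ matrix whose $1$'s sit exactly at the claimed basis vectors. For diagonal $A=\mathrm{diag}(\theta_1,\ldots,\theta_n)$, the action on a wedge basis element is simply
\[
    A.(e_{i_1}\wedge\ldots\wedge e_{i_k})=\theta_{i_1}\cdots\theta_{i_k}\,e_{i_1}\wedge\ldots\wedge e_{i_k},
\]
since only the terms $j_t=i_t$ survive in the defining sum. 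Likewise $A.(e^{j_1}\otimes\ldots\otimes e^{j_k})=\overline{\theta_{j_1}}\cdots\overline{\theta_{j_k}}\,e^{j_1}\otimes\ldots\otimes e^{j_k}$. So every standard basis element of $Y_k$ is an eigenvector of the whole torus, with character $\theta\mapsto\prod_t\theta_{i_t}\overline{\theta_{j_t}}$.

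Averaging over $T_n$ (Lebesgue measure on $(S^1)^n$) gives $\mathbb{E}\bigl[\prod_t\theta_{i_t}\overline{\theta_{j_t}}\bigr]$. This equals $1$ if the multisets $\{i_1,\ldots,i_k\}$ and $\{j_1,\ldots,j_k\}$ coincide, and $0$ otherwise, because each $\theta_x$ appears with exponent $\#_x(\ii)-\#_x(\jj)$. Hence $P_{T_n}$ is diagonal in this basis and projects onto the span of those basis elements with equal multisets.

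It remains to match these basis elements with the stated $v_{i_1,\ldots,i_k}$. In the standard basis the wedge indices satisfy $i_1<\ldots<i_k$ and are distinct, so the multiset condition says that $(j_1,\ldots,j_k)$ is a rearrangement $(i_{\sigma(1)},\ldots,i_{\sigma(k)})$ of them. Writing the wedge in the same order, $e_{i_{\sigma(1)}}\wedge\ldots\wedge e_{i_{\sigma(k)}}=\sign(\sigma)\,e_{i_1}\wedge\ldots\wedge e_{i_k}$, so each such basis element equals $\pm v_{i_{\sigma(1)},\ldots,i_{\sigma(k)}}$. Conversely, every $v$ with distinct indices is, up to sign, one of these basis elements. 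Distinct ordered tuples give distinct basis elements, since the sorted wedge together with the ordered dual tensor determines the tuple. Therefore the $v$'s are linearly independent and span $\torinv(Y_k)$.

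There is no genuine obstacle here. The only care needed is bookkeeping of signs, and making sure the basis is indexed by ordered $k$-tuples of distinct elements rather than by sets; the count is $n!/(n-k)!$.
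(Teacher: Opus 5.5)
Your proof is correct and follows the paper's argument. Both compute $P_{T_n}$ in the standard basis of $Y_k$ and observe that, for diagonal $A$, each basis element is a torus weight vector, so $P_{T_n}$ is a diagonal $0/1$ matrix whose surviving entries are those with $(j_1,\ldots,j_k)$ a rearrangement of the distinct $i_1<\ldots<i_k$; your explicit sign bookkeeping identifying these basis elements with $\pm v_{i_{\sigma(1)},\ldots,i_{\sigma(k)}}$, which the paper leaves implicit, is handled correctly.
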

\begin{proof}
    Recall that $T_n$ marks the diagonal subgroup of $\U(n)$, and consider the action of $P_{T_n}$, the projection on the torus-invariant subspace, on the basis element $(e_{i_1}\wedge\ldots\wedge e_{i_k})\otimes( e^{j_1}\otimes\ldots \otimes e^{j_k} )$ of $Y_k$ with $i_1<\ldots<i_k$ and $j_1,\ldots,j_k\in[n]$. In the resulting vector, the coefficient of $(e_{i'_1}\wedge\ldots\wedge e_{i'_k})\otimes( e^{j'_1}\otimes\ldots \otimes e^{j'_k} )$ (again, $i'_1<\ldots<i'_k$ and $j'_1,\ldots,j'_k\in[n]$) is
        \[
        \mathbb{E}_{A\in T_n}\left[\left(\sum_{\sigma\in\sym(k)}\sign(\sigma) A_{i'_{\sigma(1)},i_1}\cdots A_{i'_{\sigma(k)},i_k} \right)
        \cdot \overline{A_{j'_1,j_1}}\cdots \overline{A_{j'_k,j_k}} \right].        
    \]
    Recall that $A\in T_n$ is diagonal, so the first sum does not vanish only if $i'_t=i_t$ for all $t\in[k]$ and then the only non-vanishing summand is when $\sigma=\mathrm{id}$. Likewise, the second sum does not vanish only if $j'_t=j_t$ for all $t\in[k]$. So the action of $P_{T_n}$ on $(e_{i_1}\wedge\ldots\wedge e_{i_k})\otimes( e^{j_1}\otimes\ldots \otimes e^{j_k})$ gives the same basis element with coefficient 
    \[
        \mathbb{E}_{A\in T_n}\left[A_{i_1,i_1}\cdots A_{i_k,i_k} \cdot
        \overline{A_{j_1,j_1}}\cdots \overline{A_{j_k,j_k}} \right],
    \]
    which does not vanish, and is equal to one, precisely when $j_1,\ldots,j_k$ are an arbitrary rearrangement of $i_1,\ldots,i_k$.
\end{proof}

We now show that the action of $\L(\Gamma,Y_K)$ on $\torinv(Y_k)$ is precisely the action $\L(\Gamma,Z_k)$, where \marginpar{$Z_k$}$Z_k$ is the permutation-representation of $\sym(n)$ acting on $k$-tuples of distinct elements from $[n]$. This is via the identification of the basis element $v_{i_1,\ldots,i_k}$ of $\torinv(Y_K)$ and the basis element $(i_1,\ldots,i_k)$ of $Z_k$. 
Of course, showing this is true for any weighted hypergraph is equivalent to showing that this is true for any subset $B\subseteq[n]$ separately.

\begin{lemma} \label{lem:action on torinv of Y_k}
    Let $k\in[n]$ and $B\subseteq[n]$. The action of the projection $P_B$ in $\torinv(Y_k)$ is identical to the action of the projection $P_B$ in the $\sym(n)$-representation $Z_k$, namely, it is given by 
    \[
        P_B(v_{i_1,\ldots,i_k})=\frac{1}{|\sym(B)|}\sum_{\sigma\in\sym(B)}v_{\sigma(i_1),\ldots,\sigma(i_k)}.
    \]
\end{lemma}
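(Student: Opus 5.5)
The plan is to compute $P_B=\int_{\U_B}Y_k(A)\,d\mu_B$ directly on the basis vector $v_{i_1,\ldots,i_k}$, using Weingarten calculus (Theorem \ref{thm:weingarten}) together with the signed-sum identity of Lemma \ref{lem:sum and signed sum of weingarten}\eqref{enu:signed sum of wg}. This is the same strategy as the proof of Theorem \ref{thm:kmp_k is central block of...}, except that the alternating structure of $\bigwedge^k$ produces signs, and these are absorbed by the signed sum of $\wg$.

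First I reduce to indices in $B$. Split $(i_1,\ldots,i_k)$ into the positions $t$ with $i_t\in B$, say $\ell$ of them, and the remaining positions. Every $A\in\U_B$ fixes $e_{i_t}$ and $e^{i_t}$ when $i_t\notin B$. Reordering the wedge only introduces a global sign that appears on both sides of the claimed identity, so we may assume $i_1,\ldots,i_\ell\in B$ and the others are outside $B$. The problem then reduces to computing, for $A$ Haar in $\U_B\cong\U(b)$ with $b=|B|$, the coefficient of $(e_{j_1}\wedge\cdots\wedge e_{j_\ell})\otimes(e^{j'_1}\otimes\cdots\otimes e^{j'_\ell})$ in $\mathbb{E}\big[A.(e_{i_1}\wedge\cdots\wedge e_{i_\ell})\otimes \overline{A}.(e^{i_1}\otimes\cdots\otimes e^{i_\ell})\big]$.

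Since $P_B$ commutes with $P_{T_n}$ (Lemmas \ref{lem:B_1 contained in B_2} and \ref{lem:convolution of disjoint mu_Bs}), the image lies in $\torinv(Y_k)$. By Proposition \ref{prop:tor-inv subspace of exterior tensor R_0,k}, it therefore suffices to compute the coefficient of $v_{j_1,\ldots,j_\ell,i_{\ell+1},\ldots,i_k}$ for distinct $j_1,\ldots,j_\ell\in B$. That coefficient is
\[
\sum_{\sigma\in\sym(\ell)}\sign(\sigma)\int A_{j_{\sigma(1)},i_1}\cdots A_{j_{\sigma(\ell)},i_\ell}\,\overline{A_{j_1,i_1}\cdots A_{j_\ell,i_\ell}}\,d\mu_B .
\]
Because the $i$'s are distinct and the $j$'s are distinct, Theorem \ref{thm:weingarten} forces the column permutation to be the identity and the row permutation to be determined by $\sigma$. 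So the $\sigma$-term equals $\wg_{\ell,b}(\sigma^{\pm1})$, and the total is $\sum_\sigma\sign(\sigma)\wg_{\ell,b}(\sigma)=\frac{1}{b(b-1)\cdots(b-\ell+1)}$ by Lemma \ref{lem:sum and signed sum of weingarten}. Here $\ell\le b$ automatically.

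On the $\sym(n)$ side, $\frac{1}{b!}\sum_{\sigma\in\sym(B)}v_{\sigma(i_1),\ldots,\sigma(i_k)}$ is a uniform average over all injections of $i_1,\ldots,i_\ell$ into $B$, each with weight $\frac{(b-\ell)!}{b!}$, which matches the value above. The main obstacle is the sign bookkeeping. The wedge reorders $e_{j_{\sigma(\cdot)}}$ into the fixed order $e_{j_1}\wedge\cdots$, while the dual tensor factor is an ordered tuple, so I must check that the sign from alternation exactly cancels $\sign(\sigma)$ in pairing with $v_{j_1,\ldots,j_\ell,\ldots}$. The definition $v_{i_1,\ldots,i_k}=(e_{i_1}\wedge\cdots)\otimes(e^{i_1}\otimes\cdots)$ uses the same order in both factors, and that is exactly what should make this work. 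I would verify it carefully with $\ell=2$ first.
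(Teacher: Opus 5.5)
Your proposal is correct and follows essentially the same route as the paper's proof: you reduce to the indices lying in $B$, write the coefficient of $v_{j_1,\ldots,j_\ell,i_{\ell+1},\ldots,i_k}$ as $\sum_{\sigma}\sign(\sigma)\int A_{j_{\sigma(1)},i_1}\cdots A_{j_{\sigma(\ell)},i_\ell}\,\overline{A_{j_1,i_1}\cdots A_{j_\ell,i_\ell}}$, evaluate it to $\frac{(b-\ell)!}{b!}$ via Theorem \ref{thm:weingarten} and the signed-sum identity, and match it with the count of $\sigma\in\sym(B)$. The only cosmetic difference is that you discard the non-$v$ coefficients using the commutation of $P_B$ with $P_{T_n}$, where the paper invokes Lemma \ref{lem:unbalanced integrals vanish}; the sign bookkeeping you flag is already settled by your displayed formula, since the alternation sign is exactly the $\sign(\sigma)$ that turns the Weingarten sum into the signed one.
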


\begin{proof}
    Denote $b=|B|$. For simplicity of notation, we assume without loss of generality that the indices within $i_1,\ldots,i_k$ that belong to $B$ constitute a prefix of length $0\le r\le k$, namely, $i_1,\ldots,i_r\in B$ while $i_{r+1},\ldots,i_k\notin B$. As $i_1,\ldots,i_k$ are all distinct, we must have $b\ge r$. Then, 
    \begin{eqnarray*}
        P_B.v_{i_1,\ldots,i_k} &=& \int_{A\in \U_B}A.v_{i_1,\ldots,i_k}  \\
        &=& \sum_{\substack{i'_1,\ldots,i'_k\\j'_1,\ldots,j'_k}\in [n]} \int_{A\in \U_B}A_{i'_1i_1}\cdots A_{i'_ki_k}\cdot\overline{A_{j'_1i_1}\cdots A_{j'_ki_k}} \cdot \left(e_{i'_1}\wedge\cdots\wedge e_{i'_k}\otimes e^{j'_1}\otimes \ldots\otimes e^{j'_k}\right). \\
        &=& \sum_{\substack{i'_1,\ldots,i'_r\\j'_r,\ldots,j'_r}\in B} \int_{A\in \U_B}A_{i'_1i_1}\cdots A_{i'_ri_r}\cdot\overline{A_{j'_1i_1}\cdots A_{j'_ri_r}} \cdot \\
        &&~~~~~~~~~~~~~~~\left(e_{i'_1}\wedge\cdots\wedge e_{i'_r}\wedge e_{i_{r+1}}\cdots\wedge e_{i_k}\otimes e^{j'_1}\otimes \ldots\otimes e^{j'_r}\otimes e^{i_{r+1}}\otimes \ldots\otimes e^{i_k}\right),
\end{eqnarray*}
where in the last inequality we restricted to indices $i'_1,\ldots,i'_k,j'_1,\ldots,j'_k$ whose associated summands are not obviously vanishing for $A\in\U_B$. In fact, because of the properties of $\bigwedge^k(V)$, we must also have that $i'_1,\ldots,i'_r$ are all distinct. Then, by Lemma \ref{lem:unbalanced integrals vanish}, $j'_1,\ldots,j'_r$ must be a rearrangement of $i'_1,\ldots,i'_r$. We get that for every \textit{distinct} $j'_1,\ldots,j'_r\in B$, the coefficient of $e_{j'_1}\wedge\cdots\wedge e_{j'_r}\wedge e_{i_{r+1}}\cdots\wedge e_{i_k}\otimes e^{j'_1}\otimes \ldots\otimes e^{j'_r}\otimes e^{i_{r+1}}\otimes \ldots\otimes e^{i_k}$ in $P_B.v_{i_1,\ldots,i_k}$ is
\begin{eqnarray*}
    && \sum_{\sigma\in S_r}\sign(\sigma)\int_{A\in \U_B} A_{j'_{\sigma(1)}i_1}\cdots A_{j'_{\sigma(r)}i_r} \cdot \overline{A_{j'_1,i_1} \cdots A_{j'_r,i_r}}\\
    &\stackrel{\mathrm{Theorem~}\ref{thm:weingarten}}{=}& \sum_{\sigma\in S_r}\sign(\sigma) \wg_{r,b}(\sigma)\\
    &\stackrel{\mathrm{Lemma~}\ref{lem:sum and signed sum of weingarten}}{=}&\frac{1}{b(b-1)\cdots(b-r+1)}=\frac{(b-r)!}{b!}.
\end{eqnarray*}

On the other hand, consider the action of the projection $P_B$ in the $\sym(n)$-representation $Z_k$ on the basis element $(i_1,\ldots,i_k)$. The image is a linear combination over the elements $(j'_1,\ldots,j'_r,i_{r+1},\ldots,i_k)$ with $j'_1,\ldots,j'_r\in B$ distinct. The coefficient of $(j'_1,\ldots,j'_r,i_{r+1},\ldots,i_k)$ is
\begin{eqnarray*}
    && \frac{1}{\sym(B)}\#\left\{\sigma\in\sym(B)\,\mid\, \sigma(i_1)=j'_1,\ldots,\sigma(i_r)=j'_r\right\} = \frac{(b-r)!}{b!}.
\end{eqnarray*}
This proves the statement of the lemma.
\end{proof}

We can now easily prove a detailed version of Theorem \ref{thm:evalues of Sn}.

\begin{theorem} \label{thm:evalues of Sn - elaborated}
    Let $\Gamma=([n],w)$ be a weighted hypergraph. Let $\nu\vdash n$ and $\pi_\nu\in\irr(\sym(n))$ be the corresponding irreducible representation of $\sym(n)$. Assume that $\nu$ has at most $k\in[n]$ blocks outside the first row (namely, $\nu_1\ge n-k$). Then the eigenvalues of $\L(\Gamma,\pi_\nu)$ are contained, as a multiset, in the multiset of eigenvalues of $\L(\Gamma,Y_k)$, and thus also in that of $\L(\Gamma,R_{k,k})$.

    In particular, the entire $\sym(n)$-spectrum of $\Gamma$ is contained in $\L(\Gamma,Y_{n-1})$ or in $\L(\Gamma,R_{n-1,n-1})$, as well as in $\L(\Gamma,Y_n)$ or in $\L(\Gamma,R_{n,n})$.    
\end{theorem}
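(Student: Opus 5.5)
The plan is to combine the two results just established: Proposition~\ref{prop:tor-inv subspace of exterior tensor R_0,k} and Lemma~\ref{lem:action on torinv of Y_k}. Together they show that the linear map sending the basis vector $v_{i_1,\ldots,i_k}$ of $\torinv(Y_k)$ to the basis vector $(i_1,\ldots,i_k)$ of $Z_k$ is a bijection of bases. Under this map, $P_B|_{\torinv(Y_k)}$ corresponds to the projection $P_B=\frac{1}{|\sym(B)|}\sum_{\sigma\in\sym(B)}Z_k(\sigma)$ for every $B\subseteq[n]$. Summing with weights gives
\[
\L_\torinv(\Gamma,Y_k)\cong\sum_B w_B(I-P_B)=\L(\Gamma,Z_k).
\]
Here the right-hand side is exactly the $\sym(n)$-Laplacian $\L(Q_\Gamma,Z_k)$, because $\frac{1}{|B|!}\sum_{\sigma\in\sym(B)}Z_k(\sigma)$ is precisely the average defining $Q_\Gamma$.

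By Proposition~\ref{prop:torus-inv subspace is invariant for every Gamma}, $\torinv(Y_k)$ is an $\L(\Gamma,Y_k)$-invariant subspace. It also has an invariant complement, by Corollary~\ref{cor:2 to the n invariant subspaces}. Hence the spectrum of $\L(\Gamma,Z_k)$ is a sub-multiset of the spectrum of $\L(\Gamma,Y_k)$.

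It remains to show that $\pi_\nu$ occurs in $Z_k$ whenever $\nu_1\ge n-k$. Then $\L(\Gamma,\pi_\nu)$ is a direct summand of $\L(\Gamma,Z_k)$, by block decomposition along isotypic components. The representation $Z_k$ is the permutation representation on injective $k$-tuples, so
\[
Z_k\cong \mathrm{Ind}_{\sym(n-k)}^{\sym(n)}\triv,
\]
where $\sym(n-k)$ is the pointwise stabilizer of $k$ points. By Frobenius reciprocity, the multiplicity of $\pi_\nu$ equals $\dim \pi_\nu^{\sym(n-k)}$. By the branching rule for $\sym(n)$, restricting $\pi_\nu$ successively down to $\sym(n-k)$ yields every $\pi_\eta$ with $\eta\subseteq\nu$ and $|\eta|=n-k$. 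In particular it yields $\eta=(n-k)$, which is contained in $\nu$ exactly when $\nu_1\ge n-k$. So the multiplicity is positive. For $k\in\{n-1,n\}$ the condition holds for all $\nu$, since $\nu_1\ge1$. This gives the statements about $Y_{n-1}$ and $Y_n$. (For $k=n-1$ one can also note that $Z_{n-1}\cong Z_n$ is the regular representation.)

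Finally, to pass from $Y_k$ to $R_{k,k}$, observe that $\bigwedge^k V$ is a subrepresentation of $V^{\otimes k}$, being the antisymmetric tensors. Hence $Y_k$ is a subrepresentation of $R_{k,k}=V^{\otimes k}\otimes(V^*)^{\otimes k}$, and $\L(\Gamma,Y_k)$ is a direct summand of $\L(\Gamma,R_{k,k})$. The multiset containment then follows because each Laplacian respects the decomposition into subrepresentations. The main point needing care is the identification of $Z_k$ with the induced representation and the branching argument; everything else is bookkeeping with invariant subspaces.
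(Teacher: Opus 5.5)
Your proposal is correct and follows the paper's proof. You identify $\L_\torinv(\Gamma,Y_k)$ with $\L(\Gamma,Z_k)$ via Lemma \ref{lem:action on torinv of Y_k}, and then use that $\pi_\nu$ occurs in $Z_k$ whenever $\nu_1\ge n-k$. The only differences are minor. The paper cites Young's rule (Kostka numbers, \cite[Cor.~4.39]{fulton2013representation}) for the occurrence of $\pi_\nu$ in $Z_k$, whereas you derive it from $Z_k\cong\mathrm{Ind}_{\sym(n-k)}^{\sym(n)}\triv$, Frobenius reciprocity and the branching rule. You also make explicit routine steps the paper leaves implicit, namely $\sum_B w_B(I-P_B)=\L(Q_\Gamma,Z_k)$ and $Y_k\le R_{k,k}$.
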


\begin{proof}
    This follows from Lemma \ref{lem:action on torinv of Y_k} together with the fact that the $\sym(n)$-representation $Z_k$ decomposes as a direct sum of all irreps $\pi_\nu$ with $\nu\vdash n$ having at most $k$ blocks outside the first row (where each of this irreps has a non-zero multiplicity, given by a corresponding Kostka number) -- see, e.g., \cite[Cor.~4.39]{fulton2013representation}. 
\end{proof}

\section{Possible and impossible extensions} \label{sec:Possible and impossible extensions}
Our study, results and conjectures suggest natural extensions. Some of them were ruled out by simulations we conducted. Others are still plausible. We list a few of them.

\begin{itemize}
    \item \textbf{A more direct analog of transpositions:} The original Aldous' conjecture in $\sym(n)$ dealt with transpositions. In the $n$-dimensional standard representation of $\sym(n)$, these are permutation matrices $P$ such that $\mathrm{rank}(I-P)=1$. One may wonder if some kind of an Aldous phenomenon holds when one considers symmetric probability distributions on $\U(n)$ supported on matrices $A\in\U(n)$ with $\mathrm{rank}(I-A)=1$. These are matrices with spectrum $\{\lambda,1,\ldots,1\}$ for some $1\ne\lambda\in S^1$. However, simulations we conducted found no deterministic Aldous-type phenomenon is this regime.

    \item \textbf{Stabilizers of general linear subspaces of $\C^n$:} Our hypergraph measures on $\U(n)$ consist of arbitrary positive measures on the set of subsets $B\subseteq[n]$, or, equivalently, on linear subspaces $W\le\C^n$ which are \textit{parallel} to the axes. Once such a subspace is chosen, we take a Haar-random element from the pointwise-stabilizer of $W^\perp$. Is it possible that Conjecture \ref{conj:main conj} holds for arbitrary linear subspaces, which are not necessarily parallel to the axes? Simulations we conducted rule out this option.

    \item \textbf{Non-Haar distributions on the subgroups $\U_B$:} In our hypergraph measures on $\U(n)$, we take an arbitrary non-negative measure on the subsets $B\subseteq[n]$, but once some $B\subseteq[n]$ is chosen, always the Haar measure on $\U_B$. It is plausible that one may consider more general measures. These measures should probably be restricted in certain senses (e.g., symmetric in the sense of $g$ vs.~$g^{-1}$), and possibly parallel for different subsets $B$ of the same size (e.g., if $\sigma\in\sym(n)$ and $B'=\sigma.B$, then $\mu_{B'}(E)=\mu_B(\sigma^{-1}E\sigma)$). For example, in the works \cite{maslen2003eigenvalues,carlen2003determination}, the authors analyze the complete graph on $n$ vertices (with constant weight 1 on every edge), and consider the resulting measure on $\mathrm{SO}(n)$. However, the measures they consider on $\mathrm{SO}_{i,j}\cong\mathrm{SO}(2)\cong S^1$, are quite a wide range of the possible measures on the unit circle. In particular, some parts of their proofs, like the part we generalized in our proof of Theorem \ref{thm:mean-field}, work seamlessly in this more general regime. We did not pursue this direction and cannot rule it out.
\end{itemize}

\section{Open problems} \label{sec:open problems}
We gather some open problems that naturally arise from this paper.

\paragraph{Aldous in $\U(n)$:} In our view, the most appealing open question this paper gives rise to is Conjecture \ref{conj:main conj}, or its equivalent Conjecture \ref{conj:main KMP style}, which claim that the phenomenon we proved in the mean-field case of for hypergraphs supported on subsets of size $\ge n-1$, holds for an arbitrary hypergraph with non-negative weights. 

Is there a more detailed phenomenon about more irreps that are always ``spectrally-dominated'' by other irreps (as in Theorem \ref{thm:(k,-k) dominates non-balanced parts})? In particular, is it possible that for any $\rho\in\irr(\U(n))$, $\rho\ne\triv,\rho_{(2,0,\ldots,0,-2)}$, we have  $$\lambda_{\min}(\Gamma,\rho_{(1,0,\ldots,0,-1)})\le \lambda_{\min}(\Gamma,\rho)?$$
If true, then together with Theorem \ref{thm:evalues of Sn}, this would yield Caputo's Conjecture \ref{conj:Caputo}.

\paragraph{Discrete KMP on hypergraphs:} Along the paper we mentioned two conjectures that can be stated completely in terms of the discrete KMP processes on the hypergraphs $\Gamma$ without any relation to $\U(n)$-representation. The first, Conjecture \ref{conj:pure KMP}, asks if the smallest possible non-trivial eigenvalue of $\L(\Gamma,\kmp_k)$ for any $k$ is always obtained when $k=2$. The second, Conjecture \ref{conj:order of omega_k}, asks if the order we established on the smallest new eigenvalue of $\L(\Gamma,\kmp_k)$ for different $k$'s in the case of hypergraphs supported on sets of size $\ge n-1$, holds for all hypergraphs.

\paragraph{An Octopus inequality:} The proof of the original Aldous' conjecture in $\cite{caputo2010proof}$ relies on a mysterious inequality in the group algebra $\C[\sym(n)]$, which is coined the ``octopus inequality'' in that paper (see \cite{alon2025aldous-caputo} for some generalizations). Is there an analogous inequality in $\U(n)$? If so, is it useful for making progress towards Conjecture \ref{conj:main conj}? 

\paragraph{A much more general regime?} The paper \cite{chen2025incompressibility} proves a spectral gap phenomenon, similar in spirit to our results and conjectures here, but when small Haar-unitaries are embedded diagonally by $A\mapsto A\times A\times\ldots\times A\in \U(n)$, where $A\times\ldots\times A$ is a block-diagonal element in $\U(n)$. Specifically, they hint that a similar phenomenon to Conjecture \ref{conj:main conj} holds in their model (see Page 12 in the arXiv version of \cite{chen2025incompressibility}). Is there a model for measures on $\U(n)$ which incorporates both our hypergraph measures and the measures from \cite{chen2025incompressibility}) in which the Aldous phenomenon holds?

\paragraph{Aldous phenomena in other groups:} Hypergraph measures on $\U(n)$ and $\sym(n)$ have natural analogs in many other sequences of matrix groups, such as $\gl_n(q)$ (matrix groups over finite fields) or $\mathrm{O}(n)$ (orthogonal groups). Is there an analogous Aldous phenomenon in these cases? We mention that Levhari and the second author have studied hypergraph measures on wreath products of the form $G\wr \sym(n)$, where $G$ is an arbitrary finite group, and found in \cite{levhari2026wreath} that an Aldous phenomenon there follows from the one in $\sym(n)$.

\bibliographystyle{alpha}
\bibliography{unitary-aldous-bib}

\newcommand{\etalchar}[1]{$^{#1}$}
\begin{thebibliography}{ACD{\etalchar{+}}20}

\bibitem[ACD{\etalchar{+}}20]{aldous2020life}
D.~Aldous, P.~Caputo, R.~Durrett, A.~E. Holroyd, P.~U. Jung, and A.~L. Puha.
\newblock The life and mathematical legacy of {T}homas {M}. {L}iggett.
\newblock {\em Notices Amer. Math. Soc.}, 68(1), 2020.

\bibitem[AG26]{AlonGhosh25}
G.~Alon and S.~Ghosh.
\newblock Spectral gap for the signed interchange process with arbitrary sets.
\newblock preprint arXiv:2510.04244, 2026+.

\bibitem[AK13]{aldousorder}
G.~Alon and G.~Kozma.
\newblock Ordering the representations of {$S_n$} using the interchange
  process.
\newblock {\em Canad. Math. Bull.}, 56(1):13--30, 2013.

\bibitem[AKP25]{alon2025aldous-caputo}
G.~Alon, G.~Kozma, and D.~Puder.
\newblock On the {A}ldous-{C}aputo spectral gap conjecture for hypergraphs.
\newblock {\em Math. Proc. Cambridge Philos. Soc.}, 179(2):259--298, 2025.

\bibitem[BC24]{bristiel2024entropy}
A.~Bristiel and P.~Caputo.
\newblock Entropy inequalities for random walks and permutations.
\newblock {\em Ann. Ins. Henri Poincar{\'e} (B) Probab. Stat.}, 60(1):54--81,
  2024.

\bibitem[BCH{\etalchar{+}}94]{benkart1994tensor}
G.~Benkart, M.~Chakrabarti, T.~Halverson, R.~Leduc, C.~Y. Lee, and J.~Stroomer.
\newblock Tensor product representations of general linear groups and their
  connections with {B}rauer algebras.
\newblock {\em J. Algebra}, 166(3):529--567, 1994.

\bibitem[BdS16]{benoist2016spectral}
Y.~Benoist and N.~de~Saxc{\'e}.
\newblock A spectral gap theorem in simple {L}ie groups.
\newblock {\em Invent. Math.}, 205(2):337--361, 2016.

\bibitem[BG08]{bourgain2008spectral}
J.~Bourgain and A.~Gamburd.
\newblock On the spectral gap for finitely-generated subgroups of {$SU(2)$}.
\newblock {\em Invent. Math.}, 171(1):83--121, 2008.

\bibitem[BG12]{bourgain2012spectral}
J.~Bourgain and A.~Gamburd.
\newblock A spectral gap theorem in {$SU(d)$}.
\newblock {\em J. Eur. Math. Soc. (JEMS)}, 14(5), 2012.

\bibitem[Bou17]{bourgain2017random}
J.~Bourgain.
\newblock On random walks in large compact {L}ie groups.
\newblock In B.~Klartag and E.~Milman, editors, {\em Geometric Aspects of
  Functional Analysis: Israel Seminar (GAFA) 2014--2016}, volume 2169, pages
  55--63. Springer, 2017.

\bibitem[Bum13]{bump2013lie}
D.~Bump.
\newblock {\em Lie Groups}, volume 225 of {\em Graduate texts in Mathematics}.
\newblock Springer Science \& Business Media, second edition, 2013.

\bibitem[CCL03]{carlen2003determination}
E.~A. Carlen, M.~C. Carvalho, and M.~Loss.
\newblock Determination of the spectral gap for {K}ac's master equation and
  related stochastic evolution.
\newblock {\em Acta Math.}, 191:1--54, 2003.

\bibitem[Ces16]{cesi2016few}
F.~Cesi.
\newblock A few remarks on the octopus inequality and {A}ldous’ spectral gap
  conjecture.
\newblock {\em Comm. Algebra}, 44(1):279--302, 2016.

\bibitem[Ces20]{cesi2020spectral}
F.~Cesi.
\newblock On the spectral gap of some {C}ayley graphs on the {W}eyl group
  {$W(Bn)$}.
\newblock {\em Linear Algebra Appl.}, 586:274--295, 2020.

\bibitem[CHH{\etalchar{+}}25]{chen2025incompressibility}
C.F. Chen, J.~Haah, J.~Haferkamp, Y.~Liu, T.~Metger, and X.~Tan.
\newblock Incompressibility and spectral gaps of random circuits.
\newblock In {\em 2025 IEEE 66th Annual Symposium on Foundations of Computer
  Science (FOCS)}, pages 1304--1312. IEEE, 2025.
\newblock Full version available in arXiv:2406.07478v3.

\bibitem[CLR10]{caputo2010proof}
P.~Caputo, T.~Liggett, and T.~Richthammer.
\newblock Proof of {A}ldous' spectral gap conjecture.
\newblock {\em J. Amer. Math. Soc.}, 23(3):831--851, 2010.

\bibitem[Col03]{collins2003moments}
B.~Collins.
\newblock Moments and cumulants of polynomial random variables on
  unitarygroups, the {I}tzykson-{Z}uber integral, and free probability.
\newblock {\em Int. Math. Res. Not.}, 2003(17):953--982, 2003.

\bibitem[C{\'S}06]{collins2006integration}
B.~Collins and P.~{\'S}niady.
\newblock Integration with respect to the {H}aar measure on unitary, orthogonal
  and symplectic group.
\newblock {\em Comm. Math. Phys.}, 264(3):773--795, 2006.

\bibitem[FH13]{fulton2013representation}
W.~Fulton and J.~Harris.
\newblock {\em Representation theory: a first course}, volume 129.
\newblock Springer Science \& Business Media, 2013.

\bibitem[Fol16]{folland2016course}
G.~B. Folland.
\newblock {\em A course in abstract harmonic analysis}.
\newblock CRC press, 2016.

\bibitem[Gho26]{Ghosh23}
Subhajit Ghosh.
\newblock Aldous-type spectral gap results for the complete monomial group.
\newblock {\em Ann. Inst. Henri Poincar\'{e} Probab. Stat.}, 2026+.
\newblock to appear; also at arXiv:2309.12154.

\bibitem[Har08]{Harville2008MatrixAlgebra}
D.~A. Harville.
\newblock {\em Matrix algebra from a statistician’s perspective}.
\newblock Springer, 2008.

\bibitem[HLT25]{haah2025efficient}
J.~Haah, Y.~Liu, and X.~Tan.
\newblock Efficient approximate unitary designs from random {P}auli rotations.
\newblock {\em Comm. Math. Phys.}, 406(12):1--24, 2025.

\bibitem[Kac56]{kac1956foundations}
M.~Kac.
\newblock Foundations of kinetic theory.
\newblock In J.~Neyman, editor, {\em Proceedings of The third Berkeley
  symposium on mathematical statistics and probability}, volume~3, pages
  171--197, 1956.

\bibitem[KMP82]{kipnis1982heat}
C.~Kipnis, C.~Marchioro, and E.~Presutti.
\newblock Heat flow in an exactly solvable model.
\newblock {\em J. Stat. Phys.}, 27(1):65--74, 1982.

\bibitem[Koi89]{koike1989decomposition}
K.~Koike.
\newblock On the decomposition of tensor products of the representations of the
  classical groups: by means of the universal characters.
\newblock {\em Adv. Math.}, 74(1):57--86, 1989.

\bibitem[KQS25]{kim2025spectral}
S.~Kim, M.~Quattropani, and F.~Sau.
\newblock Spectral gap of the {KMP} and other stochastic exchange models on
  arbitrary graphs.
\newblock preprint arXiv:2505.02400, 2025.

\bibitem[KS24]{kim2024spectral}
S.~Kim and F.~Sau.
\newblock Spectral gap of the symmetric inclusion process.
\newblock {\em Ann. Appl. Probab.}, 34(5):4899--4920, 2024.

\bibitem[KW26]{Kanegae04032026}
K.~Kanegae and H.~Wachi.
\newblock The analogue of {A}ldous’ spectral gap conjecture for the
  generalized exclusion process.
\newblock {\em Comm. Algebra}, 54(3):1256--1272, 2026.

\bibitem[LP]{levhari2026wreath}
N.~Levhari and D.~Puder.
\newblock Aldous-type spectral gaps in generalized symmetric groups.
\newblock In preparation.

\bibitem[Mac98]{macdonald1998symmetric}
I.~G. Macdonald.
\newblock {\em Symmetric functions and {H}all polynomials}.
\newblock Oxford university press, 1998.

\bibitem[Mas03]{maslen2003eigenvalues}
D.~K. Maslen.
\newblock The eigenvalues of {K}ac's master equation.
\newblock {\em Math. Z.}, 243(2):291--331, 2003.

\bibitem[Pir10]{piras2010generalizations}
D.~Piras.
\newblock {\em Generalizations of {A}ldous’ Spectral Gap Conjecture}.
\newblock PhD thesis, Tesi di Laurea, Universit{\'a} degli Studi Roma Tre,
  2010.

\bibitem[PP20]{parzanchevski2020aldous}
O.~Parzanchevski and D.~Puder.
\newblock Aldous’s spectral gap conjecture for normal sets.
\newblock {\em Trans. Amer. Math. Soc.}, 373(10):7067--7086, 2020.

\bibitem[PS23]{puder2023stable}
D.~Puder and Y.~Shomroni.
\newblock Stable invariants of words from random matrices.
\newblock preprint arXiv:2311.17733v3, 2023.

\bibitem[QS23]{quattropani2023mixing}
M.~Quattropani and F.~Sau.
\newblock Mixing of the averaging process and its discrete dual on
  finite-dimensional geometries.
\newblock {\em Ann. Appl. Probab.}, 33(2):1136--1171, 2023.

\bibitem[Sam80]{samuel1980integral}
Stuart Samuel.
\newblock U(n) integrals, 1/{N}, and the {D}e {W}it–’t {H}ooft anomalies.
\newblock {\em J. Math. Phys.}, 21(12):2695--2703, 12 1980.

\bibitem[Whi72]{whitney1972complex}
H.~Whitney.
\newblock {\em Complex analytic varieties}.
\newblock Addison Wesley, 1972.

\end{thebibliography}

~\\
Gil Alon\\
Department of Mathematics and Computer Science, The Open University of Israel, 1 University Road, Raanana 4353701, Israel\\
\noindent\texttt{gilal@openu.ac.il}\\
\\
Doron Puder\\
School of Mathematical Sciences, Tel Aviv University, Tel Aviv, 6997801, Israel\\
and IAS Princeton, School of Mathematics, 1 Einstein Drive, Princeton NJ 08540, USA\\
\texttt{doronpuder@gmail.com}~\\

\end{document}